\newcommand{\eps}{\epsilon}
\newcommand{\leqc}{\lesssim}
\newcommand{\grad}{\nabla}
\newcommand{\norm}[1]{\left|\left| #1 \right|\right|}
\newcommand{\abs}[1]{\left| #1 \right|}
\newcommand{\set}[1]{\left\{ #1 \right\}}
\newcommand{\brak}[1]{\left\langle #1 \right\rangle} 
\newcommand{\R}{\mathbb{R}}
\newcommand{\N}{\mathbb{N}}
\newcommand{\T}{\mathbb{T}}
\renewcommand{\P}{\mathbf{P}}
\newcommand{\E}{\mathbf{E}}
\newcommand{\PP}{\mathbf P}
\newcommand{\Pt}{\mathcal{P}}
\newcommand{\Hhyp}{H^1_{\text{hyp}}}
\newcommand{\Hhypd}{H^1_{\text{hyp},\delta}}
\newtheorem{theorem}{Theorem}[section]
\newtheorem{corollary}[theorem]{Corollary}
\newtheorem{lemma}[theorem]{Lemma}
\newtheorem*{lemma*}{Lemma}
\newtheorem{assumption}{Assumption}
\theoremstyle{definition}
\newtheorem{definition}[theorem]{Definition}
\newtheorem{remark}{Remark}
\numberwithin{equation}{section}
\begin{document}

\title{Quantitative spectral gaps and uniform lower bounds in the small noise limit for Markov semigroups generated by hypoelliptic stochastic differential equations} 
\author{Jacob Bedrossian\thanks{\footnotesize Department of Mathematics, University of Maryland, College Park, MD 20742, USA \href{mailto:jacob@math.umd.edu}{\texttt{jacob@math.umd.edu}}.    Both J.B. and K.L were supported by NSF CAREER grant DMS-1552826 and NSF RNMS \#1107444 (Ki-Net)}  \and Kyle Liss\thanks{\footnotesize Department of Mathematics, University of Maryland, College Park, MD 20742, USA \href{mailto:kliss@umd.edu}{\texttt{kliss@umd.edu}}}}

\maketitle

\begin{abstract}
  We study the convergence rate to equilibrium for a family of Markov semigroups $\{\Pt_t^{\epsilon}\}_{\epsilon > 0}$ generated by a class of hypoelliptic stochastic differential equations on $\R^d$, including Galerkin truncations of the incompressible Navier-Stokes equations, Lorenz-96, and the shell model SABRA. 
  In the regime of vanishing, balanced noise and dissipation, we obtain a sharp (in terms of scaling) quantitative estimate on the exponential convergence in terms of the small parameter $\epsilon$.
  By scaling, this regime implies corresponding optimal results both for fixed dissipation and large noise limits or fixed noise and vanishing dissipation limits. 
As part of the proof, and of independent interest, we obtain uniform-in-$\epsilon$ upper and lower bounds on the density of the stationary measure.
Upper bounds are obtained by a hypoelliptic Moser iteration, the lower bounds by a de Giorgi-type iteration (both uniform in $\epsilon$).
The spectral gap estimate on the semigroup is obtained by a weak Poincar\'e inequality argument combined with quantitative hypoelliptic regularization of the time-dependent problem. 
\end{abstract}

\setcounter{tocdepth}{2}
{\small\tableofcontents}


\section{Introduction}\label{sec:Intro}

In this paper, we obtain optimal (in terms of scaling in $\epsilon$) quantitative estimates on the exponential convergence to equilibrium for a class of hypoelliptic PDEs on $\R^d$ of the form 
\begin{align} 
  \partial_t \mu_t & = L^*_\epsilon \mu_t  := \epsilon \left( \sum_{j=1}^r (Z_j \cdot \grad)^2 + Ax\cdot \grad + \text{Tr}(A)\right) \mu_t + \epsilon^\alpha Bx \cdot \grad \mu_t + N \cdot \grad \mu_t \label{eq:L*}
\end{align}
for parameters $0 < \epsilon \ll 1$ and $\alpha \geq 0$. Here, $\{Z_j\}_{j=1}^r$ is a collection of vector fields on $\R^d$ assumed to be constant in $x$, $A \in \mathbb{M}^{d\times d}$ (the vector space of $d\times d$ matrices with real entries) is a positive definite matrix that plays the role of dissipation, $B \in \mathbb{M}^{d\times d}$ is skew-symmetric (possibly zero), and $N:\R^d \to \R^d$ is a smooth, nonlinear drift such that $N(x): = N(x,x,\ldots,x)$ for a multlinear function $N(x_1,x_2,\ldots,x_p)$ of $p \ge 2$ arguments. We assume moreover that $N(x)$ is divergence free and obeys the energy conservation property
\begin{equation} \label{eq:energycons}
N(x) \cdot x = 0 \quad \forall x\in \R^d.
\end{equation}
The skew-symmetry of $B$ implies that $Bx$ satisfies (\ref{eq:energycons}) and $\grad \cdot Bx = \text{Tr}(B) = 0$, so that the term $\epsilon^\alpha Bx$ plays the role of a linear (and lower order when $\alpha > 0$) conservative drift. Complete statements of all our main assumptions are given at the beginning of Section~\ref{sec:IntroResults}.

The equation \eqref{eq:L*} is the forward Kolmogorov equation for the diffusion process on $\R^d$
\begin{equation}\label{eq:SDE}
dx_t^\epsilon = -\epsilon A x^\epsilon_t dt - \epsilon^\alpha B x^\epsilon_t dt -  N(x^\epsilon_t)dt + \sum_{j=1}^r \sqrt{2\epsilon} Z_j dW_t^{(j)}, 
\end{equation}
where $\set{W_t^{(j)}}_{j=1}^r$ are independent one-dimensional Wiener processes on a common filtered probability space. Under our assumptions on the vector fields in \eqref{eq:L*}, for any initial condition $x_0^\epsilon = x \in \R^d$ the SDE \eqref{eq:SDE} admits a unique, global solution $(x_t^{\epsilon})_{t \ge 0}$ (see Lemma~\ref{lem:StochasticWP} for a precise statement) that defines a Markov process with generator 
\begin{equation} \label{eq:L}
L_\epsilon : = \epsilon \sum_{j=1}^r (Z_j \cdot \grad)^2 - Ax\cdot \grad - \epsilon^\alpha Bx \cdot \grad - N\cdot \grad.
\end{equation}
The form of (\ref{eq:SDE}) is fairly general and captures a number of fundamental dynamical systems driven by a white-in-time forcing, such as Lorenz-96 \cite{Lorenz1996} and Galerkin truncations of the Navier-Stokes or the complex Ginzburg-Landau equations (see \cite{Majda16} and Section~\ref{sec:examples} below). Notice that we have chosen the scaling for which one can hope to prove bounds on the equilibrium density that do not depend on $\epsilon$ (often called \emph{fluctuation-dissipation scaling}).
Due to the homogeneity of $N$, by rescaling time and $x_t^\epsilon$, treating this scaling also implies corresponding statements for both the large forcing and the small dissipation cases. 

By \textit{hypoelliptic}, we mean that while $\set{Z_j}_{j=1}^r$ may not span $\R^d$, we assume that the set of vector fields $\set{\epsilon Ax + \epsilon^\alpha Bx + N,Z_1,...,Z_r}$ satisfies the classical \emph{parabolic H\"ormander condition} (see discussions in e.g. \cite{H11} and the references therein). In the remainder of the paper we identity vector fields on $\R^d$ and first-order differential operators, and for an open set $\Omega \subseteq \R^d$ we write $T(\Omega)$ for the collection of all smooth vector fields defined on $\Omega$. For $X,Y \in T(\Omega)$ we denote by $[X,Y] \in T(\Omega)$ the vector field
$$[X,Y] = XY - YX.$$
\begin{definition}[Locally uniform parabolic H\"ormander] \label{def:uniformHor}
For an open set $\Omega \subseteq \R^d$ and $\{X_0,X_1,\ldots,X_k\} \subseteq T(\Omega)$, let $V_0 = \{X_1, \ldots, X_k\}$ and
\begin{equation} \label{eq:Vn}
 V_n = V_{n-1} \cup \{[X_j,Y]: 0 \le j \le k, Y \in V_{n-1}\} \quad \forall n\ge 1.
\end{equation}
We say that the family  $\{X_0,\dots,X_k\}$ satisfies the \textit{parabolic H\"{o}rmander condition} on $\Omega$ if $\cup_{n=0}^\infty V_n$ spans $\R^d$ at every point $x \in \Omega$. We say that $\{X_0,\dots,X_k\}$ satisfies the \textit{uniform parabolic H\"{o}rmander condition on $\Omega$ with constants $(N_0,C_0) \in \N \times (0,\infty)$} if for every $x \in \Omega$ there exists a set $\{Y_i\}_{i=1}^d \subseteq V_{N_0}$ such that 
\begin{equation}
\label{eq:span}
|\text{det}(Y_1(x),Y_2(x),\ldots,Y_d(x))|^{-1} \le C_0.
\end{equation}
In many settings, especially time-independent ones, it is natural to allow $X_0$ in the definition of $V_0$ above. In this case, if $\cup_{n=0}^\infty V_n$ spans $\R^d$ at every point $x \in \Omega$, then $\{X_0,\ldots,X_k\}$ is said to satisfy \textit{H\"{o}rmander's condition} on $\Omega$. The notion of uniformity extends in the obvious way.
\end{definition}

Physically, Definition \ref{def:uniformHor} describes how the randomness injected into the system by the stochastic forcing spreads to all degrees of freedom through the action of the drift term. If $\{\epsilon A x + \epsilon^\alpha Bx + N, Z_1,\ldots, Z_r\}$ satisfies the parabolic H\"{o}rmander condition on $\R^d$ then the semigroup ($\Pt_t^\epsilon)^*$ generated by $L_\epsilon^*$ is instantly smoothing, despite the fact that $L_\epsilon^*$ is not elliptic. Moreover, it is well-known that in this case there is a unique probability measure $\mu_\epsilon$ solving $L_\epsilon^* \mu_\epsilon = 0$, and that for all $\epsilon > 0$, $\mu_\epsilon$ is absolutely continuous with respect to Lebesgue measure with a smooth density $f_\epsilon$ (see Lemma~\ref{lem:MarkovSemigroup}).
This measure is also the unique stationary measure for the Markov semigroup $\Pt_t^\epsilon$ generated by \eqref{eq:SDE}.
Denote the transition probabilities for a Markov process $x_t$ on a Polish space $\mathcal{X}$ as $\Pt_t(x,A) = \PP \set{x_t \in A | x_0 = x}$ for all $A \in \mathcal{B}(\mathcal{X})$ (the set of Borel sets) and $x \in \R^d$.
Recall that the Markov semigroup $\Pt_t$ is defined to act on bounded measurable functions $f: \mathcal{X} \to \R$ by
\begin{equation}
\Pt_t f(x) := \int_{\R^d} f(y)\Pt_t(x,dy),
\end{equation}
and a measure $\mu \in \mathcal{M}(\mathcal{X})$ (the space of Borel probability measures on $\mathcal{X}$) is called \textit{stationary} (or \textit{invariant}) for $\Pt_t$ if for every $A \in \mathcal{B}(\mathcal{X})$ one has
\begin{equation} \label{eq:stationary}
\Pt_t^*\mu(A) := \int_{\R^d}\Pt_t(x,A)\mu(dx) = \mu(A).
\end{equation} 
Finally, it is known that $\forall \epsilon > 0$ the semigroup $\Pt_t^\epsilon$ converges exponentially on suitable weighted spaces. In particular, let $V:\mathcal{X} \to [0,\infty)$ be continuous and for measurable functions $f: \mathcal{X} \to \R$ define 
\begin{equation} \label{eq:defCV}
\|f\|_{V} = \sup_{x \in \mathcal{X}} \frac{|f(x)|}{1+V(x)}.
\end{equation}
Then, it is known that if $V \in C^2(\R^d)$ satisfies 
 $$L_\epsilon V \le -\theta V + \lambda$$
for some constants $\lambda, \theta > 0$, i.e., $V$ satisfies a drift condition, then there exists $C_\epsilon, \gamma_\epsilon > 0$ (both depending on $\epsilon$) such that for all measurable $f:\R^d \to \R$ with $\|f\|_{V} < \infty$ there holds
\begin{align} 
\norm{\Pt_t^\epsilon f - \mu_\epsilon(f)}_{V} \leq C_\epsilon e^{-\gamma_\epsilon t} \norm{f - \mu_\epsilon(f)}_{V}, \label{ineq:PteCV}
\end{align}
where we have written $\mu_\epsilon(f) = \int f d\mu_\epsilon$. Even for infinite-dimensional analogs of \eqref{eq:SDE} (for e.g. complex Ginzburg-Landau in $\T^d$, Navier-Stokes in $\T^2$, or hyper-viscous Navier-Stokes in $\T^3$), the existence and uniqueness of a stationary measure $\mu_\epsilon$ (see e.g. \cite{FM95,HM06, HM11, KNS20,KNS20II,KZ20}) with smooth finite-dimensional projections \cite{MattPard06, HM11} is known for $\epsilon> 0$, as are exponential convergence results similar to \eqref{ineq:PteCV} (see e.g. \cite{HM08,GM05,GM06,KNS20} and the references therein).     

In general, it is a very important question to understand the limit $\epsilon \to 0$, both to characterize properties of $\mu_\epsilon$ and to quantify $C_\epsilon, \gamma_\epsilon$ as functions of $\epsilon$.
In the case of e.g. the (infinite dimensional) 3D stochastic Navier-Stokes equations, characterizing $\mu_\epsilon$ as $\epsilon \to 0$ is equivalent to understanding many properties of turbulence in the statistically stationary regime, whereas quantifying $\gamma_\epsilon$ amounts to estimating the convergence rate to statistical equilibrium with respect to the fluid viscosity in the inviscid limit, also a question of fundamental importance to the theory of turbulence (see discussions in \cite{Kup}).
In spite of its importance, little is known about quantifying $\gamma_\epsilon$, $C_\epsilon$.
In finite dimensions, it is not difficult to deduce (see Theorem \ref{thrm:optimal} below) that $\gamma_\epsilon \lesssim \epsilon$, but lower bounds are much harder to come by.
In infinite dimensions, the methods of e.g. \cite{HM08} yield a lower bound on $\gamma_\epsilon$ that is exponentially bad in $\epsilon$, even if one takes non-degenerate stochastic forcing. The situation in finite dimensions is not significantly better, as standard proofs of convergence to equilibrium for (\ref{eq:SDE}) similarly yield a spectral gap that is not even polynomial in $\epsilon$. See Remark \ref{rmk:Turb} below for further discussion on the differences between finite and infinite dimensions, especially in the context of fluid mechanics.

A key reason that previously proven lower bounds on $\gamma_\epsilon$ (in either finite or infinite dimensions) are far from optimal is a lack of quantitative irreducibility results. It is well-known that unique ergodicity and the convergence rate to equilibrium for a Markov semigroup is in part determined by its irreducibility properties \cite{MTBook,HMHarris,MT94}, in particular the extent to which the support of transition probabilities arising from distinct points either overlap (see e.g. [Assumption 2, \cite{HMHarris}]) or become arbitrarily close to each other (see e.g. \cite{HairerMattinglyScheutzow2011} and [Assumption 6, \cite{HM08}]). The former is common for finite-dimensional diffusions, while in degenerate, infinite-dimensional settings one often must resort to the latter.
In most of the previous works, the irreducibility is obtained by taking advantage of rare events in the forcing.  
Previous works such as e.g. \cite{HM08, EM}, use that for any initial condition, there is a small probability that the energy input from the noise is low enough that the dissipation causes the process to drift back to any neighborhood of the origin. Along with some regularity of transition probabilities and a suitable Lyapunov structure, this is a strong enough irreducibility statement to prove exponential convergence statements such as \eqref{ineq:PteCV}.  However, rare excursions to the origin are clearly not the actual mechanism for irreducibility in high-dimensional, chaotic systems, and as such the estimates on the mixing time one obtains from such an analysis are sub-optimal \cite{Kup}. 
More sophisticated approaches to irreducibility rely on H\"{o}rmander's condition and optimal control theory (see e.g. \cite{AgrachevSachkov, ScalSatGHM} and the references therein).
However, these arguments similarly rely on rare events where the diffusion completely dominates the drift and hence still do not yield any type of uniform-in-$\epsilon$ irreducibility for the transition probabilities of (\ref{eq:SDE}), nor do they capture true mechanisms behind mixing in the fluctuation dissipation limit.

While improving estimates on $\gamma_\epsilon$ in infinite dimensions seems to be an extremely challenging problem and currently out of reach, in this paper we rectify the above issue in finite dimensions and obtain the optimal estimates $\gamma_\epsilon \approx \epsilon$ and $C_\epsilon$ independent of $\epsilon$. As a necessary step, we also obtain a uniform-in-$\epsilon$, pointwise Gaussian upper bound on $f_\epsilon$ (the density of the stationary measure) and moreover obtain a uniform-in-$\epsilon$, pointwise lower bound on every compact set. 

\subsection{Main results and discussion}\label{sec:IntroResults}

In the statements of our results below, and throughout the remainder of the entire paper, we write $a \leqc b$ to mean that $a \le Cb$ for a constant $C$ depending possibly on $A$, $B$, $N$, $\{Z_j\}_{j=1}^r$, and the dimension $d$, but not on $\epsilon$. Any $\epsilon$ dependence in estimates or constants we define will always be made explicit. Also, throughout the entire paper we write $B_R$ for the open ball of radius $R$ centered at the origin. 

\subsubsection{Statement of main assumptions}
We now state precisely our main assumptions. They consist of the dissipative and conservative structures of $A$, $B$, and $N$, a uniform-in-$\epsilon$ nondegeneracy condition, and strict (but \textit{qualitative}) positivity of the stationary density $f_\epsilon$ for every $\epsilon > 0$. 
\begin{assumption} \label{Assumption:VF}
The matrix $A \in \mathbb{M}^{d\times d}$ is positive definite, $B \in \mathbb{M}^{d\times d}$ is skew-symmetric, and $N(x):=N(x,\ldots,x)$ for a smooth, multilinear function of $p \ge 2$ arguments satisfying the conservation properties $\grad \cdot N(x) = 0$ and \eqref{eq:energycons}.
\end{assumption}

\begin{assumption} \label{Assumption:spanning}
For every $R > 0$ there exists $M \in \N$ and $C > 0$ (depending possibly on $R$) so that for every $\epsilon_1, \epsilon_2 \in [0,1]$ the collection of vector fields
$$\{N + \epsilon_1 Ax + \epsilon_2 B x,Z_1, \ldots, Z_r\}$$ 
satisfies the uniform parabolic H\"{o}rmander condition on $B_R$ with constants $(M,C)$. 
\end{assumption}

\begin{assumption} \label{Assumption:positivity}
	For every $\epsilon > 0$ the smooth density $f_\epsilon$ of the unique stationary measure $\mu_\epsilon$ for $\Pt_t^\epsilon$ is strictly positive. 
\end{assumption}

We discuss specific examples that satisfy Assumptions~\ref{Assumption:VF}-\ref{Assumption:positivity} in Section~\ref{sec:examples}. For now, we remark that the uniform-in-$\epsilon$ spanning condition of Assumption~\ref{Assumption:spanning} is quite natural for \eqref{eq:SDE}. Indeed, one usually verifies the parabolic H\"{o}rmander condition by showing that the collection 
$$ \{Z_1,\ldots,Z_r\} \cup\{Y: Y = [\ldots[N,Z_{i_1}],Z_{i_2}],\ldots],Z_{i_{p-1}}],Z_{i_p}], 1\le i_j \le r\} $$
satisfies H\"{o}rmander's condition on $\R^d$; see for example \cite{EM}. Since $p > 1$, in this situation the linear drift terms $\epsilon_1 Ax$ and $\epsilon_2 Bx$ do not change the bracket structure and so Assumption~\ref{Assumption:spanning} is satisfied. Regarding Assumption~\ref{Assumption:positivity}, strict positivity of the stationary density is typically proven by combining a suitable hypoellipticity assumption (in our setting implied by Assumptions~\ref{Assumption:VF} and~\ref{Assumption:spanning}) with exact controllability (see e.g. \cite{ScalSatGHM,HerMat15} and the references therein). 

\begin{remark} \label{rem:Assumptions}
	Beyond stating our main results in Sections~\ref{sec:IntroHypoBounds} and~\ref{sec:IntroGeo} below, throughout the entire paper we will always assume that Assumptions~\ref{Assumption:VF} and~\ref{Assumption:spanning} both hold unless remarked otherwise. On the other hand, Assumption~\ref{Assumption:positivity} is only needed in select locations, and so we will always indicate explicitly when it is required. 	
\end{remark}

\subsubsection{Uniform-in-$\epsilon$ hypoelliptic estimates} \label{sec:IntroHypoBounds}
In this section we state quantitative hypoelliptic estimates, in particular uniform-in-$\epsilon$ pointwise bounds on the equilibrium density $f_\epsilon$, and a long-time $L^2 \to L^\infty$ regularization estimate for $\Pt_t^\epsilon$. We are motivated to obtain such bounds mostly to use as lemmas in the proof that $\gamma_\epsilon \approx \epsilon$. However, they are of independent interest, as estimates on hypoelliptic equations that are uniform in a small parameter are a delicate matter.

\begin{theorem}[Uniform-in-$\epsilon$ estimates on $f_\epsilon$]\label{thrm:stationary}
Under Assumptions~\ref{Assumption:VF} and~\ref{Assumption:spanning} there exists $\lambda_- > 0$ so that the smooth density $f_\epsilon$ of the unique stationary measure $\mu_\epsilon$ for (\ref{eq:SDE}) satisfies the pointwise estimate
\begin{equation} \label{eq:Gaussubd}
\sup_{\epsilon \in (0,1)} f_\epsilon(x)  \leqc e^{-\lambda_- |x|^2}.
\end{equation}
If in addition Assumption~\ref{Assumption:positivity} is satisfied, then we also have the lower bound
\begin{equation}
\label{eq:lbd}
\inf_{\epsilon \in (0,1)}\inf_{|x| \le R}f_\epsilon(x)  \gtrsim_R 1.
\end{equation}
\end{theorem}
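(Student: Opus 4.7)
The plan is to prove the upper and lower bounds separately, in each case reducing matters to a uniform-in-$\epsilon$ hypoelliptic regularity estimate for the stationary equation $L_\epsilon^* f_\epsilon = 0$.

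\textbf{Upper bound.} First I would establish a uniform Gaussian moment bound $\int e^{\beta|x|^2}\,d\mu_\epsilon \lesssim 1$ for some $\beta > 0$ independent of $\epsilon$, via a Lyapunov argument with $V(x) = e^{\beta|x|^2}$: by \eqref{eq:energycons} and skew-symmetry of $B$, the $N$- and $B$-drift contributions to $L_\epsilon V$ vanish, while the matched $\epsilon$-scaling of the diffusion and the $Ax$-drift yields $L_\epsilon V \le \epsilon(-c|x|^2 + C)V$, so that stationarity closes the estimate uniformly in $\epsilon$. Second, I would prove a uniform local bound $\|f_\epsilon\|_{L^\infty(B_{r/2}(x_0))} \lesssim \|f_\epsilon\|_{L^1(B_r(x_0))}$ via a hypoelliptic Moser iteration on $L_\epsilon^* f_\epsilon = 0$: testing against $\chi^2 f_\epsilon^{q-1}$ produces a horizontal-direction energy inequality for $f_\epsilon^{q/2}$, which combines with a uniform hypoelliptic Sobolev embedding (supplied by Assumption~\ref{Assumption:spanning}) to trade horizontal regularity for higher integrability, and iteration in $q$ closes the $L^1 \to L^\infty$ bound. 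The two ingredients combine: for $|x_0|$ large,
\begin{equation*}
f_\epsilon(x_0) \lesssim \int_{B_r(x_0)} f_\epsilon(y)\,dy \le e^{-\beta(|x_0|-r)^2}\int e^{\beta|y|^2} f_\epsilon\,dy \lesssim e^{-\lambda_-|x_0|^2}
\end{equation*}
for any $\lambda_- \in (0,\beta)$.

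\textbf{Lower bound.} From $\int f_\epsilon\,dx = 1$ and the Gaussian upper bound, there exists $R_0$ independent of $\epsilon$ with $\int_{B_{R_0}} f_\epsilon \ge \tfrac12$, and by pigeonhole a unit ball $B_1(x^*_\epsilon) \subset B_{R_0}$ satisfies $\int_{B_1(x^*_\epsilon)} f_\epsilon \gtrsim 1$. A de Giorgi iteration on the super-level sets of $(k - f_\epsilon)_+$ for small $k > 0$, using the same horizontal Caccioppoli and hypoelliptic Sobolev machinery as above, upgrades this integral mass to a pointwise lower bound $f_\epsilon \ge c_0 > 0$ on $B_{1/2}(x^*_\epsilon)$ with $c_0$ uniform in $\epsilon$. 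Finally, to propagate from the (possibly $\epsilon$-dependent) ball $B_{1/2}(x^*_\epsilon)$ to any fixed $B_R$, I would invoke stationarity $(\Pt_t^{\epsilon})^* f_\epsilon = f_\epsilon$ together with a uniform lower bound on the hypoelliptic transition density between $x^*_\epsilon$ and any $y \in B_R$; the latter follows from a controllability argument for \eqref{eq:SDE} enabled by Assumption~\ref{Assumption:spanning}, combined with the qualitative strict positivity granted by Assumption~\ref{Assumption:positivity}.

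\textbf{Main obstacle.} The difficulty is the \emph{uniform-in-$\epsilon$} character of these hypoelliptic estimates. The diffusion in $L_\epsilon^*$ is scaled by $\epsilon$ while the nonlinear transport $N\cdot\nabla$ is of order one, so a naive localized energy estimate loses a factor of $\epsilon^{-1}$ through the interaction between the cutoff derivative and $N$. Circumventing this in both the Moser and de Giorgi iterations requires carefully exploiting the divergence-free and energy-conserving structure of $N$ in the choice of localization, and extracting from Assumption~\ref{Assumption:spanning} a hypoelliptic Sobolev estimate that is both quantitative and properly scale-invariant in $\epsilon$.
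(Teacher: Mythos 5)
Your overall blueprint---moment bound, hypoelliptic Moser iteration, Gaussian decay, then de Giorgi for the lower bound---is in the right family, but two of the load-bearing steps cannot be executed as written and in fact run headfirst into the central obstruction the paper is built around.

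\textbf{The translated Moser/de Giorgi estimates do not uniformize in $\epsilon$.} Both your upper-bound chain (local $L^1 \to L^\infty$ bound on $B_r(x_0)$, then $f_\epsilon(x_0) \lesssim \int_{B_r(x_0)} f_\epsilon$) and your lower-bound chain (pigeonhole a ball $B_1(x^*_\epsilon)$, de Giorgi there, then propagate) rely on Caccioppoli-type energy estimates localized around an \emph{arbitrary} center. In the Caccioppoli step, multiplying $L_\epsilon^* f \ge 0$ by $\chi^2 f^{q-1}$ generates the commutator term $\int (N\cdot \nabla \chi)\,\chi f^q$. The operator's diffusion is $O(\epsilon)$ while $N\cdot\nabla$ is $O(1)$, so this term is $O(1)$ and sits against the $O(\epsilon)$ good terms; you lose $\epsilon^{-1}$ exactly as you yourself flag in the ``Main obstacle'' paragraph, but offer no way around it. The paper's fix is that this commutator vanishes \emph{identically} when $\chi$ is radially symmetric about the origin, because $N\cdot x = Bx\cdot x = 0$ means $N\cdot\nabla\chi = Bx\cdot\nabla\chi = 0$. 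This forces every localization in the paper to be a spherical shell or ball centered at $0$; the estimates are explicitly \emph{not} translation invariant, and the paper says so. Consequently the paper never obtains a local $L^\infty(B_r(x_0))$ bound; it obtains $\|f_\epsilon\|_{L^\infty(B_R)}$ for balls at the origin, and then upgrades to Gaussian decay by a barrier/comparison-principle argument (Lemma~\ref{lem:ubd2Gauss}) using $G_\lambda = e^{-\lambda|x|^2/2}$, whose radial symmetry is precisely what lets $L_\epsilon^* G_\lambda$ stay negative at infinity uniformly in $\epsilon$.

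\textbf{The lower-bound propagation step is circular.} Propagating from $B_{1/2}(x_\epsilon^*)$ to $B_R$ ``via a uniform lower bound on the hypoelliptic transition density enabled by Assumption~\ref{Assumption:spanning} and controllability'' is asking for exactly the kind of uniform-in-$\epsilon$ irreducibility that the paper's introduction explains is \emph{not} available from control-theoretic arguments: those arguments produce lower bounds that degenerate badly as $\epsilon\to 0$ because they rely on the diffusion dominating the drift. If such a transition density bound were available, it would essentially deliver the spectral gap Theorem~\ref{thrm:GeoErg} directly, obviating most of the paper. Instead, the paper works entirely on the stationary equation, with radially symmetric cutoffs on $B_R$, and replaces the classical De Giorgi isoperimetric inequality (which would need an $H^1$ bound unavailable here) by the compactness--rigidity intermediate value Lemma~\ref{lem:IVT}, whose compactness input comes from the quantitative H\"ormander inequality (Lemma~\ref{lem:HineqOrigQuant}) and whose rigidity input is Lemma~\ref{lem:rigidity}. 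You should also note that even the $L^2\to L^\infty$ iteration at the origin requires a separate argument for a uniform-in-$\epsilon$ $L^2$ bound on $f_\epsilon$ (Lemma~\ref{lem:L2bound}): in the hypoelliptic case the energy estimate plus Gagliardo--Nirenberg does not close directly, and one has to interpolate the $R$-dependent loss in the H\"ormander inequality against the Gaussian moment bound dyadically in annuli.
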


\begin{remark}\label{rem:lbdWOpositivity}
	Without Assumption~\ref{Assumption:positivity}, our proof of \eqref{eq:lbd} shows that for every $R > 0$ there exists $\epsilon_*(R) \in (0,1)$ such that 
	$$ \inf_{\epsilon \in (0,\epsilon_*)}\inf_{|x| \le R} f_\epsilon(x) \gtrsim_R 1. $$
\end{remark}

\begin{remark}
	A consequence of the proof of Theorem~\ref{thrm:stationary} is that we have uniform-in-$\epsilon$ control on $\|f_{\epsilon}\|_{H^s_{\text{loc}}}$ for some $s$ sufficiently small (independent of $\epsilon$). We do not know how to obtain uniform-in-$\epsilon$ bounds in higher regularity, nor do we even necessarily expect such bounds to be true. In particular, we do not have continuity in any sense that is uniform as $\epsilon \to 0$.
\end{remark}

\begin{lemma}[Quantitative $L^2_{\mu_\epsilon} \to L^\infty$ regularization]\label{lem:ParReg} 
	Write $\|f\|_{L^2_\mu}^2 = \int_{\R^d}|f(x)|^2 \mu(dx)$ for a measure $\mu \in \mathcal{M}(\R^d)$ and measurable function $f$. Under Assumptions~\ref{Assumption:VF}-\ref{Assumption:positivity}, for every bounded and measurable observable $f:\R^d \to \R$ and $R \ge 1$ there holds, uniformly in $\epsilon \in (0,1)$, 
	\begin{equation} \label{eq:ParRegThrm}
	\|\Pt_{\epsilon^{-1}}^\epsilon f\|_{L^\infty(B_R)} \leqc_R \|f\|_{L_{\mu_\epsilon}^2}. 
	\end{equation}
\end{lemma}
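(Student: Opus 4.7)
\medskip\noindent\textbf{Proof plan.} The strategy combines three ingredients: (i) $L^2_{\mu_\epsilon}$-contractivity of the Markov semigroup, (ii) the uniform lower bound on $f_\epsilon$ from Theorem~\ref{thrm:stationary}, and (iii) a uniform-in-$\epsilon$ hypoelliptic Moser iteration applied to the backward Kolmogorov equation $\partial_t u = L_\epsilon u$. The key observation is that the time $\epsilon^{-1}$ is precisely the natural hypoelliptic regularization time when the diffusion coefficient is $\epsilon$: under the fluctuation--dissipation scaling, the diffusion spreads a distance of order $\sqrt{\epsilon \cdot \epsilon^{-1}} = 1$ in the $Z_j$-directions, and then the uniform H\"ormander condition of Assumption~\ref{Assumption:spanning} transfers this spreading to all directions with constants independent of $\epsilon$ on each compact set.

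First, I would split the time interval as $[0,\epsilon^{-1}] = [0,\epsilon^{-1}/2] \cup [\epsilon^{-1}/2, \epsilon^{-1}]$. On the first half, the invariance of $\mu_\epsilon$ together with Jensen's inequality yields the contraction
\[
\|\Pt_{\epsilon^{-1}/2}^\epsilon f\|_{L^2_{\mu_\epsilon}} \le \|f\|_{L^2_{\mu_\epsilon}}.
\]
Setting $g = \Pt_{\epsilon^{-1}/2}^\epsilon f$ and choosing $R' = 2R$, the lower bound \eqref{eq:lbd} from Theorem~\ref{thrm:stationary} (whence the role of Assumption~\ref{Assumption:positivity}) gives $\inf_{\epsilon}\inf_{|x|\le R'} f_\epsilon \gtrsim_R 1$, and therefore
\[
\|g\|_{L^2(B_{R'})}^2 \leqc_R \int_{B_{R'}} g^2 f_\epsilon\, dx \le \|g\|_{L^2_{\mu_\epsilon}}^2 \le \|f\|_{L^2_{\mu_\epsilon}}^2.
\]
On the second half, the function $u(\tau,x) := \Pt^{\epsilon}_{\epsilon^{-1}/2 + \tau}f(x)$ solves $\partial_\tau u = L_\epsilon u$ on $\tau \in [0,\epsilon^{-1}/2]$ with $u(0,\cdot) = g$. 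I would then apply a hypoelliptic Moser iteration to $|u|^{2p}$, in direct analogy with the iteration used for the upper bound in Theorem~\ref{thrm:stationary}, but now on parabolic space-time cylinders for the time-dependent Kolmogorov equation. The basic $L^2$ energy identity produces the hypoelliptic dissipation $\epsilon\sum_j \int (Z_j \cdot \grad u)^2$; the full drift $-(Ax + \epsilon^\alpha Bx + N)\cdot \grad$ integrates by parts against $u^{2p}$ to the harmless term $\tfrac{1}{2p+1}\tr(A) \int u^{2p+1}$ because $\grad\cdot N = 0$ and $\tr(B) = 0$. Combined with the uniform hypoelliptic Sobolev embedding associated with Assumption~\ref{Assumption:spanning}, this produces the Caccioppoli/reverse-H\"older inequalities needed to iterate and yield the uniform estimate
\[
\|u(\epsilon^{-1}/2,\cdot)\|_{L^\infty(B_R)} \leqc_R \|g\|_{L^2(B_{R'})},
\]
which chains with the previous step to conclude.

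The main obstacle is verifying that the Moser iteration is uniform in $\epsilon$ over the long time interval $\epsilon^{-1}/2$. The natural rescaling $s = \epsilon \tau$ turns the equation into one with $O(1)$ diffusion but $O(\epsilon^{-1})$-size drift; however, precisely because this drift is conservative, it does not appear in the $L^2$-energy/Caccioppoli estimates beyond a bounded dimensional constant, so the iteration is driven entirely by the rescaled hypoelliptic diffusion. The H\"ormander commutators bringing regularity from the $Z_j$-directions to the full $\R^d$ involve $N$, $Ax$, and $Bx$, and by Assumption~\ref{Assumption:spanning} they are controlled uniformly in $\epsilon\in(0,1)$ on each compact set. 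These two facts together close the iteration with constants that depend on $R$ but not $\epsilon$, delivering the desired bound.
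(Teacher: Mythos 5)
Your overall strategy is the same as the paper's: combine $L^2_{\mu_\epsilon}$-monotonicity and the uniform lower bound on $f_\epsilon$ with a hypoelliptic, parabolic Moser iteration on the (effectively time-rescaled) backward Kolmogorov equation. However, there are two genuine gaps in how you close the argument.

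First, the Moser iteration does not produce the bound $\|u(\epsilon^{-1}/2,\cdot)\|_{L^\infty(B_R)}\lesssim_R \|g\|_{L^2(B_{R'})}$ as stated. A parabolic De Giorgi--Moser scheme bounds $\|u\|_{L^\infty}$ over an interior space-time cylinder by the $L^2$ norm of $u$ over a larger space-time cylinder, not by the $L^2$ norm of the initial slice. To close, you must instead bound the space-time integral $\int \|u(s)\|_{L^2(B_{R'})}^2\,ds$ (after the rescaling $s=\epsilon\tau$) using the lower bound \eqref{eq:lbd} and the dissipation identity \eqref{eq:lem2.8.1} applied over the whole cylinder:
\begin{equation*}
\int_0^{2}\|u(s)\|_{L^2(B_{R'})}^2\,ds \lesssim_R \int_0^{2}\|u(s)\|_{L^2_{\mu_\epsilon}}^2\,ds \lesssim \|f\|_{L^2_{\mu_\epsilon}}^2.
\end{equation*}
This is exactly what the paper does; your front-loaded application of the lower bound to $g$ alone is not the ingredient that feeds the iteration.

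Second, and more substantially, the Sobolev/H\"ormander step inside the iteration cannot be the time-stationary inequality (Lemma~\ref{lem:HineqOrigQuant} / Lemma~\ref{lem:HineqBall/Ann}) applied slice-by-slice. From the equation one gets $\epsilon\partial_t u + Z_{0,\epsilon}\cdot\grad u = \epsilon(\delta\Delta+\sum_j Z_j^2)u$, so only the \emph{combination} $\|(\epsilon\partial_t+Z_{0,\epsilon})u\|_{L^2\mathscr{X}^*}$ is controlled by the energy — not $\|Z_{0,\epsilon}u(t)\|_{\mathscr{X}^*}$ at a fixed $t$, because $\epsilon\partial_t u$ is not itself small in the dual norm. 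Consequently one needs the genuinely parabolic H\"ormander inequality (Lemma~\ref{lem:Horparabolic} in the paper) that treats $\epsilon\partial_t+X_0$ uniformly in $\epsilon$; proving this requires nontrivial modifications to H\"ormander's scheme (e.g., adapting the Campbell--Baker--Hausdorff product decomposition to commute through the time direction). Your appeal to ``the uniform hypoelliptic Sobolev embedding associated with Assumption~\ref{Assumption:spanning}'' glosses over precisely this technical heart of the lemma. With the parabolic H\"ormander inequality in hand, your argument goes through.

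Minor additional remark: the paper also introduces the $\delta\Delta$-regularized semigroup $\tilde{\Pt}^{\epsilon,\delta}_t$ to justify the Moser manipulations and passes $\delta\to 0$ afterward; you will need some analogous regularization, or an argument that the computations are already justified without it.
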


There have been many methods put forward for obtaining $L^2_\text{loc} \to L^\infty_{\text{loc}}$ type bounds and pointwise estimates of hypoelliptic equations or their time-evolution counterparts.\footnote{sometimes called ``ultraparabolic'' in the literature, however, we find this name misleading and so we do not use it.}
To our knowledge, there has not been any work that is quantitative in a small parameter such as $\epsilon$ here; instead, much of the work is focused on reducing regularity requirements on the coefficients. One general set of methods is focused on obtaining pointwise upper and lower bounds on fundamental solutions,
often using an explicit local approximation combined with optimal control arguments; see e.g.~\cite{LanconelliPolidoro1994,Polidoro1997global,Pascucci2004,DiFranPolidori2006,CintiEtAl2010,Polidoro16,Anceschi2019,LanconelliEtAl2020,FarhanGiulio19} and the references therein. Such upper and lower bounds then provide a relatively straightforward path towards adapting many classical parabolic and elliptic methods. There are also the related works \cite{WangZhang2011,WangZhang2009}, which prove H\"{o}lder regularity for a class of degenerate parabolic equations with measurable coefficients.
Another set of methods, at least specifically in the context of kinetic theory, have recently been proposed which focus on adapting de Giorgi-Nash-Moser methods, with the key starting point being a local gain of integrability available from velocity averaging lemmas \cite{Bouchut2002}; see e.g.~\cite{GIMV16,Mouhot2018} and the earlier preprints \cite{GV15,IM15}. Recently on kinetic Fokker-Planck there is also \cite{AM19}, which is closer in spirit to the original work of H\"ormander \cite{H67}, and isolates the natural functional framework in which the variational treatment of elliptic equations extends to kinetic Fokker-Planck. 

Obtaining uniform-in-$\epsilon$ estimates is a somewhat different problem than lowering regularity requirements on coefficients. In fact, it is quantifying a priori estimates in the original work of H\"{o}rmander \cite{H67} combined with the adaptation of certain ideas from \cite{GIMV16} for extending de Giorgi-Nash-Moser theory to hypoelliptic settings that form the basis for our proof of Theorem~\ref{thrm:stationary} and Lemma~\ref{lem:ParReg}. We obtain both upper bounds (\ref{eq:Gaussubd}) and (\ref{eq:ParRegThrm}) with hypoelliptic Moser iterations.
With proper use of the structural assumptions $\grad \cdot N = N \cdot x = 0$, for both results, the main difficulty is obtaining a local gain of integrability that does not depend on $\epsilon$.
For this we derive suitable uniform H\"ormander inequalities (see Section \ref{sec:outline} for discussion).
The parabolic version (Lemma \ref{lem:Horparabolic}) is the more delicate of the two, and while the proof does not require deep modifications to H\"{o}rmander's original methods, to our knowledge nothing quite analogous can be found in the literature. 
An additional challenge for \eqref{eq:Gaussubd} as compared to previous works such as \cite{GIMV16,GV15, IM15}, is that to close the iteration scheme we must deduce a uniform-in-$\epsilon$ bound on $\|f_\epsilon\|_{L^2}$, which requires using a H\"ormander inequality that is quantitative also in the diameter of the set (Lemma \ref{lem:HineqBall/Ann}) and moment bounds coming from the drift condition that $x\cdot N = 0$ allows to deduce.   

The main difficulty in obtaining the lower bound \eqref{eq:lbd} is to adapt a compactness-rigidity argument from \cite{GV15,IM15} used to prove an isoperimetic inequality for subsolutions of a kinetic Fokker-Planck to our setting. In particular, we show that subsolutions to (\ref{eq:L*}) obeying some additional conditions satisfy a uniform-in-$\epsilon$ isoperimetric inequality (see Lemma~\ref{lem:IVT}). This is then combined with a quantitative hypoelliptic $L^2 \to L^\infty$ estimate for solutions to $L_\epsilon^* f = 0$ (Lemma~\ref{lem:Moser}) and classical ideas from the de Giorgi elliptic theory \cite{Vasseur2016}. It remains an interesting question to obtain quantitative H\"{o}lder regularity in the small parameter limits for PDEs with the form of (\ref{eq:L*}). This remains out of reach with our current techniques and a direction of future interest, since our present methods rely crucially on $N(x) \cdot x = 0$ and are not invariant under translations. 

\subsubsection{Quantitative geometric ergodicity and consequences} \label{sec:IntroGeo}
In this section we give precise statements of our results on the geometric ergodicity of (\ref{eq:SDE}) and some immediate consequences when combined with the results from Section~\ref{sec:IntroHypoBounds}. First, we need to define an appropriate notion of a uniform drift condition.

\begin{definition} \label{def:uniformLyapunov}
We say that a nonnegative function $V \in C^2(\R^d)$ is a \textit{uniform Lyapunov function for $\{\Pt_t^\epsilon\}_{\epsilon \in (0,1)}$} if there exists $\kappa, b > 0$ so that for all $\epsilon \in (0,1)$ and $\delta \in [0,1]$ there holds
\begin{equation} \label{eq:LyapFuncGen}
\epsilon \delta \Delta V + L_\epsilon V \le - \epsilon \kappa V + \epsilon b.
\end{equation}
\end{definition} 
We include the term $\epsilon \delta \Delta V$ on the left-hand side since at times it will be convenient to work with the regularized operator $\epsilon \delta \Delta + L_\epsilon^*$. It is easy to check that $V(x) = e^{\gamma x^2}$ is a uniform Lyapunov function provided that $\gamma$ is chosen sufficiently small. Along with the notations from Section~\ref{sec:Intro} we then have the following.

\begin{theorem}[Quantitative geometric ergodicity]\label{thrm:GeoErg}
Let $V$ be a uniform Lyapunov function for $\{\Pt_t^\epsilon\}_{\epsilon \in (0,1)}$. Under Assumptions~\ref{Assumption:VF}-\ref{Assumption:positivity} there exists $K, \delta > 0$ that do not depend on $\epsilon$ such that for all $\epsilon \in (0,1)$, $t > 0$, and measurable $f:\R^d \to \R$ satisfying $\|f\|_{V} < \infty$ there holds
	\begin{equation} \label{eq:GeoErg}
	\|\Pt_t^\epsilon f - \mu_\epsilon(f)\|_{V} \le Ke^{-\epsilon \delta t}\|f - \mu_\epsilon(f)\|_{V}.
	\end{equation}
\end{theorem}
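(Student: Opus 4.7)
The plan is to apply the Harris theorem in its Hairer--Mattingly formulation to the time-rescaled semigroup $\tilde{\Pt}_s^\epsilon := \Pt_{s/\epsilon}^\epsilon$. Showing that $\tilde{\Pt}^\epsilon$ converges geometrically with rate $\delta > 0$ and constant $K$ independent of $\epsilon$ yields~\eqref{eq:GeoErg} at rate $\epsilon\delta$ after undoing the rescaling. This reduces the problem to verifying, uniformly in $\epsilon$, the two classical Harris ingredients for $\tilde{\Pt}_1^\epsilon$: a geometric drift condition in $V$, and a minorization of $\tilde{\Pt}_1^\epsilon(x,\cdot)$ on a sublevel set $\{V \le R_0\}$ by a common nontrivial measure.

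The drift condition is essentially immediate: from the uniform Lyapunov estimate $L_\epsilon V \le -\epsilon\kappa V + \epsilon b$, It\^o's formula and Gr\"onwall's inequality give
\[
\Pt_t^\epsilon V(x) \le e^{-\epsilon\kappa t} V(x) + \tfrac{b}{\kappa}\bigl(1 - e^{-\epsilon\kappa t}\bigr),
\]
so in particular $\tilde{\Pt}_1^\epsilon V \le e^{-\kappa} V + b/\kappa$, a strict $V$-contraction with constants that do not depend on $\epsilon$.

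The serious work lies in the uniform minorization: I need $T_*, R_0, \alpha > 0$ and a probability measure $\tilde\nu$, all independent of $\epsilon$, so that $\tilde{\Pt}_{T_*}^\epsilon(x,\cdot) \ge \alpha \tilde\nu$ whenever $V(x) \le R_0$. The strategy is to combine Lemma~\ref{lem:ParReg} with the two-sided density bounds of Theorem~\ref{thrm:stationary}. First, the drift contraction and Chebyshev's inequality force $\tilde{\Pt}_1^\epsilon(x,\cdot)$ to place a uniformly positive fraction of its mass on some fixed ball $B_{r_0}$ for all $x$ with $V(x) \le R_0$. Second, Lemma~\ref{lem:ParReg} provides uniform-in-$\epsilon$ $L^2(\mu_\epsilon)$ control on the density of $\tilde{\Pt}_2^\epsilon(x,\cdot)$ with respect to $\mu_\epsilon$ on $B_{r_0}$. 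Combining this $L^2$ control with the uniform lower bound $f_\epsilon \ge c_{r_0} > 0$ on $B_{r_0}$, then iterating once more to convert $L^2$ into $L^\infty$ via Lemma~\ref{lem:ParReg} and using Assumption~\ref{Assumption:positivity} to propagate positivity, should yield a uniform pointwise lower bound on $p_{T_*/\epsilon}^\epsilon(x,y)$ for $x \in \{V \le R_0\}$ and $y \in B_{r_0}$, which gives minorization by $\tilde\nu = \Leb|_{B_{r_0}}/|B_{r_0}|$.

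The main obstacle is precisely this extraction of pointwise lower bounds from the one-sided $L^2$ hypoelliptic regularization. Classical routes---Stroock--Varadhan support arguments with Malliavin calculus, or parabolic Harnack inequalities---typically produce constants that degenerate exponentially as $\epsilon \to 0$, since the Brownian trajectories required to drive the system across phase space become exponentially rare in the fluctuation--dissipation scaling. The essential new input bypassing this obstruction is the two-sided uniform density bound of Theorem~\ref{thrm:stationary}, which together with Lemma~\ref{lem:ParReg} and positivity propagation allows one to sidestep the control-theoretic route and obtain the uniform minorization directly. Once this is established, Hairer--Mattingly delivers a $V$-weighted contraction of $\tilde{\Pt}_1^\epsilon$ with $\epsilon$-independent constants; undoing the time rescaling yields~\eqref{eq:GeoErg}.
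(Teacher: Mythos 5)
Your framing via Harris' theorem and the drift condition is on the right track, but the route you propose for the ``small set'' condition has a genuine gap that cannot be repaired with the tools you invoke. You aim to produce a Doeblin minorization $\tilde{\Pt}_{T_*}^\epsilon(x,\cdot)\ge \alpha\tilde\nu$ by combining the drift-driven concentration on $B_{r_0}$, the $L^2_{\mu_\epsilon}\to L^\infty$ regularization from Lemma~\ref{lem:ParReg}, and the two-sided bounds on $f_\epsilon$ from Theorem~\ref{thrm:stationary}. The trouble is that Lemma~\ref{lem:ParReg} (and its dualization) yields only \emph{upper} bounds: it tells you the Radon--Nikodym derivative $h^\epsilon_t(x,\cdot)$ of $(\Pt^\epsilon_t)^*\delta_x$ with respect to $\mu_\epsilon$ lies in $L^2_{\mu_\epsilon}$ (and, after another iterate, in $L^\infty$ locally), uniformly in $\epsilon$. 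An $L^\infty$ \emph{upper} bound on a nonnegative density with fixed total mass is compatible with that density vanishing on sets of positive Lebesgue measure; nothing in the regularization estimate prevents $h_t^\epsilon(x,y)$ from being tiny or zero at individual $y$. The two-sided bound on $f_\epsilon$ does not help here, because $f_\epsilon$ is the stationary density, not the transition kernel, and there is no comparison lemma in the paper linking them pointwise from below. Assumption~\ref{Assumption:positivity} gives strict positivity of $f_\epsilon$ only; positivity of $p^\epsilon_t(x,y)$ for each fixed $\epsilon$ is classical from H\"{o}rmander and control theory, but precisely this positivity degenerates as $\epsilon\to0$, which is the whole obstruction the theorem must circumvent. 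Your final paragraph names the right obstacle but the claim that Theorem~\ref{thrm:stationary} plus Lemma~\ref{lem:ParReg} ``sidesteps'' it is unsupported.

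What the paper actually does is use a strictly weaker small-set condition: Lemma~\ref{thrm:Harris} only requires $|\Pt f(x)-\Pt f(y)|\le 2-\eta$ for $\|f\|_\infty\le 1$ and $x,y$ in a sublevel set, i.e.\ non-singularity of $(\Pt)^*\delta_x$ and $(\Pt)^*\delta_y$ rather than a common reference measure. This is verified by showing both are close to the same measure $\mu_\epsilon$: the key claim is \eqref{eq:irreducible2}, namely that $\|\Pt^\epsilon_{t\epsilon^{-1}}f-\mu_\epsilon(f)\|_{L^\infty(B_R)}\to0$ uniformly in $\epsilon$ and $\|f\|_\infty\le1$. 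Proving this requires an ingredient your proposal omits entirely: the uniform-in-$\epsilon$ \emph{weak Poincar\'e inequality} (Lemmas~\ref{lem:poincinterp} and~\ref{lem:weakpoinc}), which yields the $L^\infty\to L^2_{\mu_\epsilon}$ decay of Lemma~\ref{lem:Linfty2L2decay}. Only then is Lemma~\ref{lem:ParReg} applied --- to upgrade $L^2_{\mu_\epsilon}$ smallness at time $t\epsilon^{-1}/2$ to local $L^\infty$ smallness at time $t\epsilon^{-1}$ --- and here, as you correctly noted, the lower bound on $f_\epsilon$ from Theorem~\ref{thrm:stationary} enters, but only to compare $L^2(B_{2R})$ with $L^2_{\mu_\epsilon}$, not to produce pointwise lower bounds on transition densities. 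In short: the drift piece of your argument is correct, but the minorization strategy is both logically unsupported and not what the paper does; the missing idea is the hypoelliptic weak Poincar\'e inequality and the softer Harris condition it is designed to feed into.
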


\begin{remark}
As $\Pt_t:C_V \to C_V$ (where $C_V$ denotes the closure of $C^\infty_0$ under the norm $\max_{x} f(x)/(1 + V(x)) < \infty$) defines a $C_0$-semigroup for all $V$ which are uniform Lyapunov functions, Theorem \ref{thrm:GeoErg} implies that $f \equiv 1$ is an isolated, dominant of eigenvalue of $\Pt_t$ and provides a quantitative estimate on the spectral gap separating it from the rest of the spectrum (in particular $\sigma(\Pt_t) \subset \set{1} \cup \set{z \in \mathbb C : \abs{z} \leq e^{-\epsilon \delta' t}}$ for all $\delta' < \delta$).
Using standard semigroup theory (e.g. [Theorem 3.6, \cite{EN01}]) a corresponding estimate holds also on the generator $L_\epsilon$, i.e. $\sigma(L_\epsilon) \subset \set{0} \cup \set{z \in \mathbb C : \textbf{Re} \, z < -\epsilon \delta}$. 
\end{remark}

By duality, Theorem~\ref{thrm:GeoErg} also implies a corresponding statement on the convergence of the law of $x_t^\epsilon$ as a measure on $\R^d$ to $\mu_\epsilon$ in a weighted total variation space. For a Polish space $\mathcal{X}$, a continuous function $V:\mathcal{X} \to [0,\infty)$, and $\mu, \nu \in \mathcal{M}(\mathcal{X})$ we write 
\begin{equation} \label{eq:defTVV}
\|\mu - \nu\|_{TV,V} = \sup_{\|f\|_{V} \le 1}\int f(x)(\mu(dx) - \nu(dx)).
\end{equation} 

\begin{corollary} \label{cor:GeoErg}
	Under the assumptions and notations of Theorem~\ref{thrm:GeoErg}, there exists $K, \delta > 0$ that do not depend on $\epsilon$ such that for all $\epsilon \in (0,1)$, $t > 0$, and measures $\mu \in \mathcal{M}(\R^d)$ satisfying $\int V(x) \mu(dx) < \infty$ there holds
	\begin{equation}
	\|(\Pt_t^{\epsilon})^* \mu - \mu_\epsilon\|_{TV,V} \le K e^{-\epsilon \delta t}\|\mu - \mu_\epsilon\|_{TV,V}.
	\end{equation}	
\end{corollary}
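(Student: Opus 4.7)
The plan is to derive the corollary from Theorem~\ref{thrm:GeoErg} by a standard duality argument, with the only nontrivial input being a uniform-in-$\epsilon$ moment bound on the stationary measure $\mu_\epsilon$ against the Lyapunov function $V$.

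First I would unwind the definitions. Fix $\mu \in \mathcal{M}(\R^d)$ with $\int V d\mu < \infty$ and any measurable $f$ with $\|f\|_V \le 1$. Since $\mu$ and $\mu_\epsilon$ are both probability measures, the constant $\mu_\epsilon(f)$ integrates to zero against $\mu - \mu_\epsilon$, and the stationarity of $\mu_\epsilon$ gives $\int \Pt_t^\epsilon f \, d\mu_\epsilon = \mu_\epsilon(f)$. Therefore
\begin{equation*}
\int f \, d\bigl((\Pt_t^\epsilon)^*\mu - \mu_\epsilon\bigr) = \int \Pt_t^\epsilon f \, d(\mu - \mu_\epsilon) = \int \bigl(\Pt_t^\epsilon f - \mu_\epsilon(f)\bigr) \, d(\mu - \mu_\epsilon).
\end{equation*}
By the definition \eqref{eq:defTVV} of the weighted total-variation norm, the right-hand side is bounded by $\|\Pt_t^\epsilon f - \mu_\epsilon(f)\|_V \cdot \|\mu - \mu_\epsilon\|_{TV,V}$. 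Applying Theorem~\ref{thrm:GeoErg} then yields
\begin{equation*}
\left|\int f \, d\bigl((\Pt_t^\epsilon)^*\mu - \mu_\epsilon\bigr)\right| \le K e^{-\epsilon \delta t} \|f - \mu_\epsilon(f)\|_V \, \|\mu - \mu_\epsilon\|_{TV,V}.
\end{equation*}

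Next I would bound $\|f - \mu_\epsilon(f)\|_V$ uniformly in $\epsilon$ for $\|f\|_V \le 1$. By the triangle inequality and $\|1\|_V \le 1$ one has $\|f - \mu_\epsilon(f)\|_V \le 1 + |\mu_\epsilon(f)|$, and since $|f| \le 1 + V$ pointwise we get $|\mu_\epsilon(f)| \le 1 + \mu_\epsilon(V)$. The uniform-in-$\epsilon$ bound $\mu_\epsilon(V) \le b/\kappa$ is obtained from the uniform Lyapunov inequality \eqref{eq:LyapFuncGen} (taking $\delta = 0$): by a standard Dynkin/truncation argument, integrating $L_\epsilon V \le -\epsilon\kappa V + \epsilon b$ against the stationary measure yields $0 \le -\epsilon\kappa \mu_\epsilon(V) + \epsilon b$, i.e.\ $\mu_\epsilon(V) \le b/\kappa$. (Strictly speaking one first localises $V$ by cutoff, uses that $\mu_\epsilon(L_\epsilon V_R) = 0$ for smooth compactly supported $V_R$, and then monotone-converges $V_R \nearrow V$; the exponential Gaussian upper bound \eqref{eq:Gaussubd} on $f_\epsilon$ also makes $\mu_\epsilon(V)$ automatically finite for the standard choices $V = e^{\gamma|x|^2}$.)

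Combining, for every $f$ with $\|f\|_V \le 1$,
\begin{equation*}
\left|\int f \, d\bigl((\Pt_t^\epsilon)^*\mu - \mu_\epsilon\bigr)\right| \le K\bigl(1 + b/\kappa\bigr) e^{-\epsilon \delta t} \, \|\mu - \mu_\epsilon\|_{TV,V},
\end{equation*}
and taking the supremum over such $f$ gives the claim after redefining the constant $K$. The only real content in the argument is the moment bound $\mu_\epsilon(V) \lesssim 1$; the rest is purely formal duality. I would not expect any serious obstacle, though some care is required to justify the integration-by-parts against $\mu_\epsilon$ for unbounded $V$, which is the reason I would either appeal to a truncation argument or (given the strong Gaussian tail estimate \eqref{eq:Gaussubd} from Theorem~\ref{thrm:stationary}) directly verify $\mu_\epsilon(V) < \infty$ and then run Dynkin's formula on a stopped process to confirm the drift-condition bound.
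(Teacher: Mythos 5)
Your proof is correct and is exactly the duality argument the paper has in mind — the paper states Corollary~\ref{cor:GeoErg} without proof, with only the remark ``by duality, Theorem~\ref{thrm:GeoErg} also implies...''. One small simplification: the uniform moment bound $\mu_\epsilon(V)\le b/\kappa$ that you re-derive via Dynkin/truncation is already recorded as \eqref{eq:AppDistrMoment} in Lemma~\ref{lem:MarkovSemigroup} (and, more directly, follows from letting $t\to\infty$ in \eqref{eq:drift}), so you may cite it rather than rerunning that argument.
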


Many techniques exist for studying exponential convergence to equilibrium of a Markov process. Perhaps the most well-known and flexible methods are Harris type theorems, which combine drift towards a ``small set'' and a type of local irreducibility there to yield an explicitly computable rate of convergence in weighted total variation or Wasserstein distances; see e.g.~\cite{HairerMattinglyScheutzow2011,HMHarris,MT94}. Related criterion for subgeometric rates of convergence have also been studied \cite{Butkovsky2014, Durmus2016, Douc2009}. For examples of works using a Harris theorem framework in the setting of (\ref{eq:SDE}) we refer to \cite{EM} and \cite{HM08} mentioned above. In finite-dimensional situations, an entirely different class of techniques exist that use the Kolmogorov equation (i.e., PDE approaches) and functional inequalities involving the equilibrium density; see e.g.~\cite{Villani2009, Bak08, Arnold2001} and the references therein. Most directly, for elliptic generators with the form $L = \Delta + X \cdot \grad$, a Poincar\'{e} inequality in $L^2_\mu$ ($\mu$ being the stationary measure) implies exponential convergence to equilibrium in the same space (for related results and discussion see e.g. \cite{Bak08}). This is a consequence of the a priori estimate
\begin{equation} \label{eq:L2muApriori}
\frac{d}{dt}\|\Pt_t f - \mu(f)\|_{L^2_\mu}^2 = - 2 \|\grad(\Pt_t f - \mu(f))\|_{L^2_\mu}^2.
\end{equation}
 Poincar\'{e} inequalities also play a crucial role in degenerate settings; see for example [Theorem 35, \cite{Villani2009}], which shows that exponential convergence to equilibrium for the kinetic Fokker-Planck equation with a $C^2$ confining potential $V:\R^d \to \R$ (satisfying a natural upper bound) is implied by an $L^2$ Poincar\'{e} inequality for $e^{-V(x)}dx$. Methods for proving convergence to equilibrium based on weaker functional inequalities are also known. For example, weak Poincar\'{e} inequalities, which trace back to \cite{Liggett1991} and were extended to a more general form in \cite{RockWang01} (see also \cite{Grothaus2019, Hu2019} for applications in degenerate settings). The key feature of weak Poincar\'{e} inequalities is that they allow for a small loss of a norm stronger than $L^2_\mu$ on the right-hand side, the most common example being
\begin{equation} \label{eq:WeakPoincEx}
\|f - \mu(f)\|_{L^2} \le \beta(s)\|\grad f\|_{L^2_\mu} + s \|f\|_\infty,
\end{equation}
where $\|f\|_\infty:=\sup_{x\in \R^d}|f(x)|$ and the inequality is required to hold for every $s > 0$ and some nonincreasing function $\beta:(0,\infty) \to [1,\infty)$ that possibly blows up as $s \to 0$ (see [Theorem 1.4, \cite{Bak08}] for a related but more general inequality). As such, they are much more forgiving to prove than standard Poincar\'{e} inequalities, but when applied in \eqref{eq:L2muApriori} only result in a subgeometric rate of convergence and from a stronger norm to a weaker norm.

The only uniform-in-$\epsilon$ information on $f_\epsilon$ that we currently have are the pointwise bounds stated in Theorem~\ref{thrm:stationary}, which are far from enough to imply a Poincar\'{e} inequality (see e.g.~\cite{BakryCattGuill2008, Bak08, Villani2009} for common conditions on a measure that yield a Poincar\'{e} inequality). Moreover, as discussed in Section~\ref{sec:Intro}, uniform-in-$\epsilon$ irreducibility statements are not forthcoming from standard methods. As such, it is not clear what the starting point for a proof of (\ref{eq:GeoErg}) should be. Our idea is to extend to the hypoelliptic setting the interesting fact that any measure $\mu(dx) = Ce^{-V(x)}dx$ for $V: \R^d \to \R$ that is merely locally bounded satisfies an (elliptic) \emph{weak} Poincar\'{e} inequality. This is done in Lemmas~\ref{lem:poincinterp} and~\ref{lem:weakpoinc}, where we prove a hypoelliptic version of (\ref{eq:WeakPoincEx}) that implies the decay estimate 
\begin{equation} \label{eq:IntroWeakPoinc}
\|\Pt_t^\epsilon f - \mu_\epsilon(f)\|_{L^2_{\mu_\epsilon}} \le \psi(\epsilon t)\|f - \mu_\epsilon(f)\|_{L^\infty}
\end{equation}
for some function $\psi: [0,\infty) \to [0,\infty)$ with $\lim_{t\to \infty}\psi(t) = 0$ and every bounded, Borel measurable function $f: \R^d \to \R$. One of our main insights is that the hypoelliptic regularization of Lemma~\ref{lem:ParReg}, when combined with a uniform Lyapunov function $V$ and the local equivalence of $\|\cdot\|_{L^2_{\mu_\epsilon}}$ and $\|\cdot\|_{L^2}$ given by Theorem~\ref{thrm:stationary}, allows to upgrade (\ref{eq:IntroWeakPoinc}) to exponential decay in $\|\cdot\|_V$. This is done by using \eqref{eq:lbd} and \eqref{eq:ParRegThrm} to show that for every $R \ge 1$ there is a $T(R) > 0$ such that
\begin{equation} \label{eq:IntroLinftyDecay}
\|\Pt_{\epsilon^{-1}T}^\epsilon f - \mu_\epsilon(f)\|_{L^\infty(B_R)} \le \frac{1}{2}\|f - \mu_\epsilon(f)\|_{L^\infty},
\end{equation}
which is then applied as the ``small set" condition in a standard Harris theorem; see Section~\ref{sec:OutlineTD} for details of the argument. To our knowledge, this particular scheme for obtaining exponential convergence by combining a weak Poincar\'{e} inequality with a local regularization estimate and drift condition has not appeared in the literature. We believe that this approach is of general interest and could be useful in other related problems. 

The fact that Theorem~\ref{thrm:GeoErg} is optimal with respect to the scaling of $\gamma_\epsilon, C_\epsilon$ is described in the following. 
\begin{theorem}[Optimal $\epsilon \to 0$ scaling of Theorem~\ref{thrm:GeoErg}]\label{thrm:optimal}
Let $\gamma$ be small enough so that $V = e^{\gamma x^2}$ is a uniform Lyapunov function and suppose that Assumptions~\ref{Assumption:VF}-\ref{Assumption:positivity} are satisfied. For every $s < 1$ and $K,\delta > 0$ there exists a measurable function $f:\R^d \to \R$ satisfying $\|f\|_{V} < \infty$ and an $\epsilon_0 > 0$ so that for all $\epsilon \in (0,\epsilon_0)$ there exists $t_*(\epsilon)$ such that
$$ \|\Pt_{t_*}^\epsilon f - \mu_\epsilon(f)\|_{V} \ge Ke^{-\epsilon^s \delta t_*}\|f - \mu_\epsilon(f)\|_{V}.$$
Similarly, if $\delta > 0$ and $\{K_\epsilon\}_{\epsilon \in (0,1)}$ are such that 
$$\|\Pt_t^\epsilon f - \mu_\epsilon(f)\|_{V} \le K_\epsilon e^{-\epsilon \delta t}\|f-\mu_\epsilon(f)\|_{V}$$ 
for every $t > 0$ and measurable $f:\R^d \to f$ with $\|f\|_{V} < \infty$, then $\liminf_{\epsilon \to 0}K_\epsilon > 0.$
\end{theorem}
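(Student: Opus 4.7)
The plan is to test both assertions against the polynomial observable $f(x) = |x|^2$, whose evolution under $\Pt_t^\epsilon$ can be tracked semi-explicitly thanks to the conservation properties $N(x) \cdot x = 0$ and $x \cdot Bx = 0$ from Assumption~\ref{Assumption:VF}. These cancellations eliminate the nonlinear and conservative-linear drift contributions to $d|x_t^\epsilon|^2$ and isolate the dissipation and noise, both of which operate on the timescale $1/\epsilon$. Concretely, It\^o's formula applied to~\eqref{eq:SDE} yields
\[
\frac{d}{dt}\E_x|x_t^\epsilon|^2 = -2\epsilon\,\E_x(x_t^\epsilon \cdot A x_t^\epsilon) + 2\epsilon c, \qquad c := \sum_{j=1}^r|Z_j|^2,
\]
and, writing $\lambda_1$ for the largest eigenvalue of $A$, the pointwise bound $x \cdot Ax \leq \lambda_1 |x|^2$ together with Gronwall produce
\[
\E_x|x_t^\epsilon|^2 \geq (|x|^2 - c/\lambda_1)\,e^{-2\epsilon\lambda_1 t} + c/\lambda_1.
\]
Testing $L_\epsilon|x|^2$ against $\mu_\epsilon$ and invoking stationarity (justifiable via the Lyapunov moments) also gives $\mu_\epsilon(|x|^2) \leq c/\lambda_{\min}(A) =: M$, uniformly in $\epsilon$; the key point is that $|x|^2$ cannot relax to its equilibrium value faster than at rate $O(\epsilon)$.

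Take $f(x) = |x|^2$; then $\|f\|_V < \infty$ and $\|f - \mu_\epsilon(f)\|_V \leq \|f\|_V + M$ uniformly in $\epsilon$. Fix $T_0 \geq 1$ and $x_0 \in \R^d$ with $|x_0|^2$ large enough that $(|x_0|^2 - c/\lambda_1)\,e^{-2\lambda_1 T_0} \geq 2M$. Then for any $(t,\epsilon)$ with $\epsilon t \leq T_0$,
\[
\|\Pt_t^\epsilon f - \mu_\epsilon(f)\|_V \geq \frac{\E_{x_0}|x_t^\epsilon|^2 - \mu_\epsilon(|x|^2)}{1+V(x_0)} \geq \frac{M}{1+V(x_0)} =: c_1 > 0,
\]
a positive constant independent of $\epsilon$. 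For the second assertion, apply this at $t = 1/\epsilon$ (so $\epsilon t = 1 \leq T_0$): the hypothesis $\|\Pt_t^\epsilon f - \mu_\epsilon(f)\|_V \leq K_\epsilon e^{-\epsilon\delta t}\|f-\mu_\epsilon(f)\|_V$ gives $c_1 \leq K_\epsilon e^{-\delta}(\|f\|_V + M)$, hence $K_\epsilon \geq c_1 e^{\delta}/(\|f\|_V + M) > 0$ uniformly in $\epsilon$. For the first assertion, suppose toward contradiction that $\|\Pt_t^\epsilon f - \mu_\epsilon(f)\|_V \leq K e^{-\epsilon^s \delta t}\|f - \mu_\epsilon(f)\|_V$ for every $t > 0$ and small $\epsilon$, and set $t_* = T/\epsilon^s$ for a constant $T$ to be chosen. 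Since $s < 1$, $\epsilon t_* = T\epsilon^{1-s} \to 0$, so $\epsilon t_* \leq T_0$ for $\epsilon$ small, and the lower bound above yields $c_1 \leq K(\|f\|_V + M)\,e^{-\delta T}$. Choosing $T$ large enough that $e^{-\delta T} < c_1/(K(\|f\|_V + M))$ contradicts this inequality, producing the required $t_*(\epsilon)$.

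The computation is short, and the only subtle choice is the scaling $t_* = T/\epsilon^s$: because $s < 1$, the rescaled time $\epsilon t_*$ vanishes, keeping the Gronwall lower bound far from its fully decayed regime and enabling the comparison with the faster hypothesized rate $\epsilon^s$. The structural input that makes the whole scheme work is $N(x) \cdot x = 0$, which ensures $|x|^2$ is unaffected by the nonlinear transport and therefore relaxes strictly on the dissipation timescale $1/\epsilon$; without this cancellation the observable could in principle decay faster due to nonlinear effects and invalidate the sharpness claim.
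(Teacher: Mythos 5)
Your argument is correct and uses essentially the same approach as the paper: both test against the observable $f(x)=|x|^2$, apply It\^o's formula together with the conservation properties $N(x)\cdot x = Bx\cdot x = 0$, and exploit the fact that the energy relaxes only on the timescale $\epsilon^{-1}$. The only cosmetic difference is that you start from a large $|x_0|$ and use the upper bound $\mu_\epsilon(|x|^2)\lesssim 1$ to keep $\E_{x_0}|x_t^\epsilon|^2$ above equilibrium, whereas the paper starts from the origin and uses the complementary lower bound $\mu_\epsilon(|x|^2)\gtrsim 1$ to keep $\Pt_t^\epsilon f(0)$ below it; both bounds follow from the same stationarity identity $\int Ax\cdot x\, d\mu_\epsilon = \sum_j|Z_j|^2$.
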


\begin{remark} \label{rmk:Turb}
Note that fixing dimension and sending $\epsilon \to 0$ will yield very different results from sending $\epsilon \to 0$ in infinite dimensional problems, due to the possible development of turbulence. Unlike in the finite dimensional case, due to anomalous dissipation, different balances of dissipation vs forcing are possible and what one can see in each scaling depends on whether one has a direct cascade and/or inverse cascade of conserved quantities (see \cite{Nazarenko2011} for a discussion on inverse and direct cascades). 

It is clear that any results will be deeply tied to the topology, for example, for Batchelor-regime passive scalar turbulence, in fluctuation dissipation scaling as in \eqref{eq:SDE}, $\mu_\epsilon \rightharpoonup \delta_0$ in $H^s$ for $s < 1$ and $\mu_\epsilon \rightharpoonup 0$ for $H^s$ for $s>1$ (losing all mass to infinity); one requires a different scaling to capture non-trivial dynamics \cite{BBPS18,BBPS19}.  
In the hypoelliptic setting at least, there is no known, reasonable reference measure with respect to which one can study the stationary density, and even in the case of non-degenerate forcing, it is unclear what estimates could be expected for systems such as \eqref{eq:SDE} and the methods for proving any such estimates are essentially non-existent at the current time. 
As far as $\gamma_\epsilon$ is concerned, it is not clear what could be expected, for example, the presence of anomalous dissipation could conceivably result in $\gamma_\epsilon$ being larger than is possible in finite dimensions. 
\end{remark}

\subsection{Examples} \label{sec:examples}
A wide variety of systems fall under the general form \eqref{eq:SDE} that satisfy Assumptions \ref{Assumption:VF}--\ref{Assumption:positivity}. Working in vorticity form, Galerkin truncations (of arbitrary dimension) of the 2D Navier-Stokes equations in a periodic box can be written in the form \eqref{eq:SDE} with Assumption \ref{Assumption:VF}.
Minimal conditions on the forcing to obtain Assumption \ref{Assumption:spanning} were obtained in \cite{EM,HM06}.
The verification of Assumption \ref{Assumption:positivity} follows from the geometric control theory discussions in \cite{ScalSatGHM,HerMat15}. 
In \cite{Lorenz1996}, Lorenz put forward the following model (now known as \emph{Lorenz-96}) for $n$ real-valued oscillators $u_1,...,u_n$ in a periodic ensemble $u_{i+kn} = u_i$ (after rescaling to  match \eqref{eq:SDE}) 
\begin{align}
\partial_t u_m = (u_{m+1}-u_{m-2})u_{m-1} - \epsilon u_m + \sqrt{2 \epsilon} q_m dW_t^{(m)}, \label{def:L96intro}
\end{align}
where $\set{W_t^{(m)}}$ are independent Brownian motions and $\set{q_m}$ are fixed parameters. This model has been studied as a prototypical chaotic, high dimensional system (see e.g. \cite{Majda16,KP10,LK98}).
It is not hard to check the structural Assumption \ref{Assumption:VF}, and moreover, it is not hard to check that the uniform parabolic H\"ormander condition, Assumption \ref{Assumption:spanning}, is satisfied provided that $q_1, q_2 \neq 0$.
See discussions in \cite{ScalSatGHM,HerMat15} for verification of Assumption \ref{Assumption:positivity}, as like 2D Navier-Stokes, Lorenz-96 satisfies the structural assumptions sufficient to use geometric control arguments despite the even nonlinearity.   
The SABRA shell model was introduced in \cite{LvovEtAl98} to mimic many of the properties of turbulence; truncated to finite dimensional $u \in \mathbb C^J$, which we will regard as evolving on $\mathbb R^{2J}$, the model becomes (with the obvious convention that $u_k = 0$ if $k \not\in\set{1,...,J}$)
\begin{align}
\partial_t u_m & = i 2^m\left(\overline{u_{m+1}} u_{m+2} - \frac{\delta}{2}\overline{u_{m-1}}u_{m+1} + \frac{\delta-1}{4} u_{m-2}u_{m-1}\right) \\ & \quad - \epsilon 2^{2m} u_m + \sqrt{\epsilon} q_m dW_t^{(m;R)} + i \sqrt{\epsilon} p_m dW_t^{(m;I)},
\label{def:SABRA}  
\end{align}
for real parameters $q_m,p_m$ and a fixed parameter $\delta \in (0,2) \setminus \{ 1 \}$ (for $\delta \in (0,1)$ the model is meant to capture some properties of the energy/enstrophy cascades in 2D Navier-Stokes and for $\delta \in (1,2)$, the energy/helicity cascade in 3D Navier-Stokes). See \cite{Ditlevsen2010} for more discussion on this model.
Assumption~\ref{Assumption:VF} is straightforward as for Lorenz-96. Writing in real variables $u = a + i b$ we see that 
\begin{align*}
Z_{0} & = -\sum_{\ell=1}^J 2^\ell \left(\left(a_{\ell+1} b_{\ell+2}  - b_{\ell+1} a_{\ell+2}\right) + \frac{\delta}{2}\left(a_{\ell-1} b_{\ell+1} - b_{\ell-1} a_{\ell+1} \right)  + \frac{\delta-1}{4}\left(b_{\ell-2}a_{\ell-1} + a_{\ell-2} b_{\ell-1} \right)  \right)\partial_{a_\ell} \\
& \quad + \sum_{\ell=1}^J 2^\ell \left( \left(a_{\ell+1} a_{\ell+2} + b_{\ell+1} b_{\ell+2}\right) + \frac{\delta}{2}\left(a_{\ell-1}a_{\ell+1} + b_{\ell-1}b_{\ell+1}\right) + \frac{\delta-1}{4}\left(a_{\ell-2}a_{\ell-1} - b_{\ell-2} b_{\ell-1}\right) \right)\partial_{b_\ell}.  
\end{align*}
Despite the appearance, it is actually straightforward to verify Assumption \ref{Assumption:spanning} for this model under the condition that $q_1,q_2,p_1,p_2$ are all non-zero.
The local coupling allows for a relatively easy proof by induction, supposing first that the Lie algebra contains $\set{\partial_{a_j},\partial_{b_j}}_{j=1}^m$ and then using this to deduce that it also contains the directions $\set{\partial_{a_{m+1}},\partial_{b_{m+1}}}$.
The argument is essentially dictated by the terms containing the $b_{\ell-2}$ or $a_{\ell-2}$ factors, as one sees when computing the brackets of the form $[\partial_{a_{m-1}},Z_0]$.
As in the case of Lorenz-96,  Assumption \ref{Assumption:positivity} follows from discussions in \cite{ScalSatGHM,HerMat15}. 

\section{Outline} \label{sec:outline}
In this section we discuss the main steps in the proofs of Theorems~\ref{thrm:stationary} and~\ref{thrm:GeoErg}. Recall the convention from Remark~\ref{rem:Assumptions}.

\subsection{Time-stationary problem: upper bounds} 
Our goal in this section is to sketch the proof of the uniform-in-$\epsilon$ local upper bound
\begin{equation} \label{eq:ubd}
\sup_{\epsilon  \in (0,1)}\|f_\epsilon\|_{L^\infty(B_R)} \leqc_R 1.
\end{equation}
From there, the Gaussian upper bound (\ref{eq:Gaussubd}) follows from a comparison principle argument; see Section \ref{sec:StationaryGaussian} for details.

The natural starting point for a proof of (\ref{eq:ubd}) is to determine the uniform-in-$\epsilon$ a priori estimates that are available for solutions to the problem 
\begin{equation} \label{eq:statprob} 
\begin{cases}
L^*_\epsilon f_\epsilon = 0, \\ 
f_\epsilon \ge 0,\\ 
 \int f_\epsilon = 1.
\end{cases}
\end{equation}
From a probabilistic point-of-view, the most immediate estimate is a moment bound following from the fact that $V(x) = e^{\gamma x^2}$ is a uniform Lyapunov function (see Definition~\ref{def:uniformLyapunov}) whenever $\gamma \ll 1$. Integrating $0 = V L_\epsilon^* f_\epsilon$ and using that $\int f_\epsilon = 1$ yields, in the notation of \eqref{eq:LyapFuncGen},
$$
\int e^{\gamma x^2} f_\epsilon \le \frac{b}{\kappa} \leqc 1. 
$$
On the other hand, from the perspective of elliptic PDEs, the natural a priori bound is the energy estimate that holds for sufficiently regular, nonnegative subsolutions obtained by pairing $L_\epsilon^* f \ge 0$ with $f$ and integrating by parts.
Let $\mathscr{X}$ denote the natural energy norm defined by 
$$\|f\|_{\mathscr{X}}:= \|f\|_{L^2} + \sum_{j = 1}^r \|Z_j f\|_{L^2}.$$
The contributions from $N$ and $B$ to the energy estimate both vanish due to $\grad \cdot Bx = \grad \cdot N = 0$, and so we obtain 
\begin{equation}\label{eq:apriorienergy}
L_\epsilon^* f \ge 0 \implies \|f\|_{\mathscr{X}} \leqc \|f\|_{L^2}.
\end{equation}
Since the collection $\{Z_j\}_{j=1}^r$ does not alone satisfy H\"{o}rmander's condition, \eqref{eq:apriorienergy} should be supplemented with some type of uniform estimate on the drift vector field 
\begin{equation} \label{eq:Z_0eps}
Z_{0,\epsilon} := \epsilon Ax + \epsilon^\alpha B x + N.
\end{equation}
This bound is far more subtle than (\ref{eq:apriorienergy}) and comes in the form of an estimate in the norm dual to $\mathscr{X}$. In particular, define the norm
$$\|f\|_{\mathscr{X}^*}:=\sup_{\varphi \in C_0^\infty, \|\varphi\|_{\mathscr{X}} \le 1} \int \varphi f.$$
Then, using $L^*_\epsilon f_\epsilon = 0$ and (\ref{eq:apriorienergy}), we see that
\begin{align}
\|Z_{0,\epsilon} f_\epsilon\|_{\mathscr{X}^*} &  = \sup_{\varphi \in C_0^\infty, \|\varphi\|_{\mathscr{X}} \le 1} \epsilon \int \varphi \left(\sum_{j=1}^rZ_j^2 + \text{Tr}(A)\right)f_\epsilon  \leqc \epsilon \|f_\epsilon\|_{\mathscr{X}} \leqc \epsilon \|f_\epsilon\|_{L^2}. \label{eq:aprioriX*}
\end{align}
We summarize the a priori bounds discussed above in the following lemma. 
\begin{lemma}[Uniform-in-$\epsilon$ a priori estimates] \label{lem:AprioriBounds}
	Let $f\ge 0$ be a sufficiently smooth and well localized solution to $L_\epsilon^* f \ge 0$. Then, $f$ satisfies the energy estimate
	\begin{equation} \label{eq:apriorienergy2}
	\|f\|_{\mathscr{X}} \leqc \|f\|_{L^2}.
	\end{equation}
	If in addition $L_\epsilon^* f = 0$, then there exists $\gamma \ll 1$ (independent of $\epsilon$) such that	 
	\begin{align}
    \int e^{\gamma x^2}f & \leqc \|f\|_{L^1},  \label{eq:momentbound}\\ 
    \|Z_{0,\epsilon}f\|_{\mathscr{X}^*} &\leqc \|f\|_{L^2}. \label{eq:aprioridual}
	\end{align}
	All of the implicit constants above do not depend on $\epsilon$. 
\end{lemma}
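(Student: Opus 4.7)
The plan is to establish each of the three estimates by a direct integration by parts against $L_\epsilon^* f$ (or $L_\epsilon^* f \ge 0$), exploiting the structural cancellations built into Assumption~\ref{Assumption:VF}. Throughout the sketch I treat $f$ as smooth and decaying enough that boundary terms vanish; when the lemma is applied to the stationary density $f_\epsilon$, the manipulations need to be justified by a cutoff/approximation argument whose error terms are controlled \emph{uniformly} in $\epsilon$. This is the main obstacle: one must avoid circularly using the Gaussian tail control one is trying to prove, typically by exhausting $\R^d$ with smooth cutoffs and using only the a priori Lyapunov moment bound to pass to the limit.

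For the energy estimate \eqref{eq:apriorienergy2}, I would pair $L_\epsilon^* f \ge 0$ with $f$ and track the terms. Each $Z_j$ is a constant vector field, so $\int f (Z_j\cdot\grad)^2 f = -\|Z_j f\|_{L^2}^2$. Divergence-freeness of $N$ gives $\int f(N\cdot\grad f) = \tfrac{1}{2}\int N\cdot\grad(f^2) = 0$, and skew-symmetry of $B$ gives $\tr(B)=0$ and hence $\int f(Bx\cdot\grad f) = 0$. The dissipative contributions combine: $\int f(Ax\cdot\grad f) = -\tfrac{1}{2}\tr(A)\|f\|_{L^2}^2$, which together with the $\epsilon\tr(A)f$ mass term in $L_\epsilon^*$ leaves $\tfrac{\epsilon}{2}\tr(A)\|f\|_{L^2}^2$. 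Summing,
\begin{equation*}
0 \le \int f\, L_\epsilon^* f = -\epsilon\sum_j \|Z_j f\|_{L^2}^2 + \tfrac{\epsilon}{2}\tr(A)\|f\|_{L^2}^2,
\end{equation*}
which rearranges to \eqref{eq:apriorienergy2}.

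For the moment bound \eqref{eq:momentbound}, I would use that $V(x) = e^{\gamma x^2}$ is a uniform Lyapunov function for $\gamma \ll 1$, so $L_\epsilon V \le -\epsilon\kappa V + \epsilon b$. Pairing $L_\epsilon^* f = 0$ with $V$ and integrating by parts yields $0 = \int V\,L_\epsilon^* f = \int (L_\epsilon V)\,f \le -\epsilon\kappa \int V f + \epsilon b\,\|f\|_{L^1}$, so $\int V f \le (b/\kappa)\|f\|_{L^1}$ uniformly in $\epsilon$.

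For the dual bound \eqref{eq:aprioridual}, I would use $L_\epsilon^* f = 0$ to express the drift derivative as $Z_{0,\epsilon}\cdot\grad f = -\epsilon\sum_j Z_j^2 f - \epsilon\tr(A)\,f$ in the sense of distributions. Testing against $\varphi\in C_0^\infty$ with $\|\varphi\|_{\mathscr{X}}\le 1$ and shifting one $Z_j$ onto $\varphi$ gives
\begin{equation*}
\int \varphi\,(Z_{0,\epsilon}\cdot\grad f) = \epsilon\sum_j \int (Z_j\varphi)(Z_j f) - \epsilon\tr(A)\int \varphi f,
\end{equation*}
which by Cauchy--Schwarz is bounded by $\epsilon\|\varphi\|_{\mathscr{X}}\|f\|_{\mathscr{X}} \lesssim \|\varphi\|_{\mathscr{X}}\|f\|_{L^2}$ after invoking the just-proven energy estimate. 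Taking the supremum over admissible $\varphi$ produces \eqref{eq:aprioridual}. As noted above, the algebra here is essentially forced by the conservation structure of $N$, $B$, and the constancy of $Z_j$; the real technical work, deferred to the application stage, is justifying all of these integrations by parts for $f_\epsilon$ with constants that do not degenerate as $\epsilon \to 0$.
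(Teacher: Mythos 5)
Your proposal is correct and follows exactly the paper's line of reasoning: pair $L_\epsilon^* f \ge 0$ with $f$ and use $\grad\cdot N = \grad\cdot Bx = 0$ to kill the transport terms for \eqref{eq:apriorienergy2}; test $L_\epsilon^*f = 0$ against the Lyapunov function $V = e^{\gamma|x|^2}$ for \eqref{eq:momentbound}; and use the equation to rewrite $Z_{0,\epsilon}f = -\epsilon(\sum_j Z_j^2 + \tr A)f$ before testing against $\varphi$ and invoking \eqref{eq:apriorienergy2} for \eqref{eq:aprioridual}. The only difference is that you spell out the algebra more explicitly, and the paper handles the rigor concern you flag by working throughout with the $\delta$-regularized density $f_{\epsilon,\delta}$, which is a priori in weighted $H^k$ spaces (Lemma~\ref{lem:AppL2density}), rather than by a cutoff exhaustion.
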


\begin{remark} \label{rem:ellipticL2}
Notice that Lemma \ref{lem:AprioriBounds} does \emph{not} contain a uniform-in-$\epsilon$ a priori estimate on $\norm{f}_{L^2}$. When $L_\epsilon^*$ is elliptic; i.e., there exists $c > 0$ such that
\begin{align}
\sum_{j = 1}^r |Z_j \cdot \xi|^2 \ge c|\xi|^2 \quad \forall\xi \in \R^d,  \label{def:ellip}
\end{align}
	then (\ref{eq:apriorienergy2}) implies an a priori $L^2$ bound. Indeed, by the Gagliardo-Nirenberg inequality and $\int f_\epsilon = 1$ there exists $\theta \in (0,1)$ such that 
	$$ \|f_\eps\|_{L^2} \leqc \|f_\eps\|_{L^1}^{1-\theta}\|\grad f_\eps\|_{L^2}^\theta =  \|\grad f_\eps\|_{L^2}^\theta \leqc \|f_\eps\|_{\mathscr{X}}^\theta \leqc \|f_\eps\|_{L^2}^\theta, $$
	which immediately yields
	\begin{equation} \label{eq:aprioriL2}
	\sup_{\epsilon \in (0,1)}\|f_\epsilon\|_{L^2} \leqc 1.
	\end{equation}
However, in the hypoelliptic case, a more complicated argument is required to deduce \eqref{eq:aprioriL2} (see Section~\ref{sec:OutlineL2}).  
\end{remark}

\subsubsection{H\"{o}rmander's inequality and Moser iteration} \label{sec:OutlineMoser}
The proof of (\ref{eq:ubd}) is based on combining a local gain of integrability with a uniform-in-$\epsilon$ bound on the $L^2$ norm. In this section we discuss the gain of integrability estimate, and in Section~\ref{sec:OutlineL2} below we describe how to obtain uniform-in-$\epsilon$ $L^2$ control (see Remark \ref{rem:ellipticL2} above).  
\begin{lemma} \label{lem:Moser}
Let $\delta \in (0,1)$ and suppose that $f \in C^\infty(\R^d)$ satisfies $f \ge 0$ and
\begin{equation} \label{eq:deltainequation}
(\epsilon \delta \Delta + L_\epsilon^*) f \ge 0.
\end{equation}
Then, for any $R \ge 1$, uniformly in $\epsilon, \delta \in (0,1)$ there holds
$$\|f\|_{L^\infty(B_R)} \leqc_R \|f\|_{L^2(B_{2R})}.$$
\end{lemma}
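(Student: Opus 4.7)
My plan is a Moser iteration adapted to the hypoelliptic setting, with two essential modifications relative to the standard elliptic argument: a radial cutoff that exploits $N \cdot x = 0$, and a uniform-in-$\epsilon$ hypoelliptic gain of integrability playing the role of Sobolev embedding.

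For the Caccioppoli step, I will fix $R \ge 1$ and for each $k \ge 0$ introduce $R_k = R(1 + 2^{-k})$ together with a smooth \emph{radial} cutoff $\chi_k$ that equals $1$ on $B_{R_{k+1}}$, vanishes outside $B_{R_k}$, and satisfies $|\grad \chi_k| \leqc 2^k/R$. Testing the subsolution inequality $(\epsilon\delta\Delta + L_\epsilon^*)f \ge 0$ against $\chi_k^2 f^{q-1}$ for $q \ge 2$ and integrating by parts, the conservation properties $\grad \cdot N = \grad \cdot Bx = 0$ suppress the bulk zeroth-order contributions from $N$ and $Bx$. The key observation is that since $\chi_k$ is radial, $\grad\chi_k \parallel x$, so $N \cdot \grad \chi_k = 0$ by $N \cdot x = 0$ and $Bx \cdot \grad \chi_k = 0$ by skew-symmetry. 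The only surviving drift contribution is $\epsilon Ax \cdot \grad \chi_k$, which carries the same explicit factor of $\epsilon$ as the second-order diffusion terms. After Cauchy--Schwarz absorption and division by $\epsilon$, this yields the Caccioppoli estimate
\begin{equation*}
\delta \|\grad(\chi_k f^{q/2})\|_{L^2}^2 + \sum_{j=1}^r \|Z_j(\chi_k f^{q/2})\|_{L^2}^2 \leqc_R q^2 4^k \|f^{q/2}\|_{L^2(B_{R_k})}^2,
\end{equation*}
uniformly in $\epsilon,\delta \in (0,1)$.

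Next, I will apply a uniform-in-$\epsilon$ elliptic H\"ormander inequality of the schematic form
\begin{equation*}
\|u\|_{H^s(\R^d)}^2 \leqc \sum_{j=1}^r \|Z_j u\|_{L^2}^2 + |\langle Z_{0,\epsilon} u, u\rangle| + \|u\|_{L^2}^2
\end{equation*}
to $u = \chi_k f^{q/2}$, where $s > 0$ depends only on the bracket depth from Assumption~\ref{Assumption:spanning}. The quadratic form $\langle Z_{0,\epsilon} u, u\rangle = -\tfrac12(\grad \cdot Z_{0,\epsilon})\|u\|_{L^2}^2 = -(\epsilon \text{Tr}(A)/2)\|u\|_{L^2}^2$ by a single integration by parts (the $N$ and $Bx$ contributions vanish since $\grad \cdot N = \grad \cdot Bx = 0$), and hence is absorbed into the $\|u\|_{L^2}^2$ term. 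Sobolev embedding $H^s \hookrightarrow L^{q_*}$ with $q_* = 2d/(d-2s)$ and $\kappa = q_*/2 > 1$ then gives the Moser inequality $\|f\|_{L^{q\kappa}(B_{R_{k+1}})} \leqc_R (Cq^2 4^k)^{1/q}\|f\|_{L^q(B_{R_k})}$. Iterating with $q_k = 2\kappa^k$ and passing to $k \to \infty$ (so $R_k \to R$) telescopes to $\|f\|_{L^\infty(B_R)} \leqc_R \|f\|_{L^2(B_{2R})}$, since $\sum_k \log(Cq_k^2 4^k)/q_k$ converges by the geometric growth of $q_k$.

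The main obstacle is the uniform-in-$\epsilon$ elliptic H\"ormander inequality. Passing through H\"ormander's classical commutator argument, the constants naturally depend on the vector fields $\{Z_{0,\epsilon}, Z_1, \ldots, Z_r\}$ and a priori could deteriorate as $\epsilon \to 0$ since $Z_{0,\epsilon}$ varies. The uniform parabolic H\"ormander condition (Assumption~\ref{Assumption:spanning}) is designed precisely for this: iterated brackets span $\R^d$ with a determinant lower bound independent of $\epsilon$, and the task is to verify that the $H^s$ estimate can be pushed through with constants depending only on this uniform spanning data. A secondary but important subtlety is that a non-radial cutoff would produce a drift term of order $1$ from $N \cdot \grad \chi_k$ that, after dividing by $\epsilon$, would blow up as $\epsilon \to 0$; the radial choice is essential for uniformity.
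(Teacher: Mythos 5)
Your Caccioppoli step is sound, and the use of a radial cutoff to kill $N\cdot\grad\chi_k$ and $Bx\cdot\grad\chi_k$ via $N\cdot x = Bx\cdot x = 0$ is exactly the right structural observation. However, the H\"ormander inequality you invoke is not correct, and the error hides precisely the difficulty that this lemma is designed to overcome.

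You claim an inequality of the schematic form
\begin{equation*}
\|u\|_{H^s}^2 \leqc \sum_{j=1}^r \|Z_j u\|_{L^2}^2 + |\langle Z_{0,\epsilon} u, u\rangle| + \|u\|_{L^2}^2,
\end{equation*}
and then observe (correctly) that $\langle Z_{0,\epsilon}u,u\rangle = -\tfrac{1}{2}(\grad\cdot Z_{0,\epsilon})\|u\|_{L^2}^2 = -\tfrac{\epsilon\,\mathrm{Tr}(A)}{2}\|u\|_{L^2}^2$. But that means the middle term on the right is just a multiple of $\|u\|_{L^2}^2$, so your inequality collapses to $\|u\|_{H^s} \leqc \|u\|_{\mathscr{X}}$. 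That cannot hold in the genuinely hypoelliptic case: the $\{Z_j\}_{j=1}^r$ do not span $\R^d$, so a function nearly constant along the $Z_j$ directions and rapidly oscillating transversally has bounded $\mathscr{X}$ norm but unbounded $H^s$ norm for any $s>0$. The drift field $Z_{0,\epsilon}$ must enter in a way that supplies the missing directions. The correct form (Lemmas~\ref{lem:H1hypembed}, \ref{lem:HineqOrigQuant}, \ref{lem:HineqBall/Ann}) uses the \emph{dual norm} $\|Z_{0,\epsilon}u\|_{\mathscr{X}^*} = \sup_{\|\varphi\|_{\mathscr{X}}\le 1}\int\varphi\, Z_{0,\epsilon}u$, i.e.\ one must test $Z_{0,\epsilon}u$ against \emph{arbitrary} $\varphi$, not just $\varphi = u$. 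Pairing $Z_{0,\epsilon}u$ with $u$ alone loses exactly the information one needs, because $Z_{0,\epsilon}$ is antisymmetric modulo a zeroth-order term.

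Once you use the correct inequality, the real obstruction surfaces: for a \emph{subsolution}, the inequality $(\epsilon\delta\Delta + L_\epsilon^*)f\ge 0$ does not control $\|Z_{0,\epsilon}(\chi_k f^{q/2})\|_{\mathscr{X}_\delta^*}$. Pairing the subsolution inequality with a test function $\varphi$ of indefinite sign gives no bound, because you cannot use the sign of the inequality; the dual estimate \eqref{eq:aprioridual} in the paper is available only for exact solutions $L_\epsilon^* f = 0$. This is precisely the gap that the paper addresses (see the discussion after \eqref{eq:intgain} and the reference to \cite{GIMV16}): rather than applying the H\"ormander inequality directly to the subsolution, the paper introduces the auxiliary Dirichlet problem with the same source $\mathcal{C}$, whose solution $g$ is an exact solution and dominates $v_k = \chi_k w_k$ by the weak elliptic maximum principle. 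The $\Hhypd$ norm of $\chi g$ (with a further cutoff) \emph{can} then be estimated, the H\"ormander inequality applied to $\chi g$, and the resulting $L^{2\alpha}$ bound transferred back to $v_k$ via $v_k \le g$. Your proposal omits this comparison step entirely, and without it (or some substitute for estimating the drift in the dual norm) the iteration does not close.
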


\begin{remark} \label{rem:deltareg}
	The purpose of regularizing $L_\epsilon^*$ with $\epsilon \delta \Delta$ is to make formal computations easier to justify. This does not cause any difficulties in extracting information about $f_\epsilon$, since if $f_{\epsilon,\delta}$ denotes the unique solution to the problem
\begin{equation} \label{eq:regprob}
\begin{cases} 
(\epsilon \delta \Delta + L_\epsilon^*)f_{\epsilon,\delta} = 0, \\ 
f_{\epsilon,\delta} \ge 0, \\ 
\int f_{\epsilon,\delta} = 1,
\end{cases}
\end{equation}
then $\lim_{\delta \to 0}f_{\epsilon,\delta} = f_\epsilon$ in $H^k_{\text{loc}}$ for each fixed $\epsilon > 0$ and $k \in \N$ (see Lemma~\ref{lem:deltatozero}).
\end{remark}

The first step in the proof of Lemma~\ref{lem:Moser} is to derive localized versions of \eqref{eq:apriorienergy2} and \eqref{eq:aprioridual} that preserve the uniformity in $\epsilon$. Fix $0 < r_1 < r_2$ and let $\chi \in C_0^\infty(B_{r_2})$ be a radially symmetric cutoff with $\chi(x) = 1$ for all $|x| \le r_1$. The localized estimates take the form
\begin{align} 
\|\chi f\|_{\mathscr{X}} &\leqc \left(\frac{1+r_2}{r_2 - r_1}\right)\|f\|_{L^2(B_{r_2})},  \quad L_\epsilon^* f\ge 0 \label{eq:LocalizedX}, \\ 
\|Z_{0,\epsilon}(\chi f)\|_{\mathscr{X}^*} &\leqc \left(\frac{1+r_2}{r_2 - r_1}\right)\|f\|_{L^2(B_{r_2})}, \label{eq:LocalizedDual}  \quad L_\epsilon^* f = 0.
\end{align}
An important point is that we are forced to use a radially symmetric localization to preserve the structures that made our estimates uniform in $\epsilon$. Indeed, the energy structure assumed on the conservative drifts $B$ and $N$ is that they leave spherical shells invariant, so only for radially symmetric $\chi$ do we have $[\chi,Bx\cdot \grad] = [\chi,N\cdot \grad] = 0$.

In the elliptic setting (i.e.~\eqref{def:ellip}), Lemma~\ref{lem:Moser} follows from (\ref{eq:LocalizedX}) and the classical Moser iteration method (see e.g.~\cite{GT}). Recall that the main idea is that, in the elliptic case, the left-hand side of (\ref{eq:LocalizedX}) controls $\|\grad(\chi f)\|_{L^2}$, so by Sobolev embedding, $\exists \alpha > 1$ such that 
\begin{equation} \label{eq:intgain}
\|f\|_{L^{2\alpha}(B_{r_1})} \leqc \left(\frac{1+r_2}{r_2 - r_1}\right)\|f\|_{L^2(B_{r_2})}.
\end{equation}
By the convexity of $z \mapsto z^\beta$ for $\beta \ge 1$, if $f$ is a nonnegative subsolution, then $f^\beta$ is a subsolution to essentially the same equation (see \eqref{eq:MoserConvex}). The integrability gain (\ref{eq:intgain}) is then iterated with $f \to f^{\alpha^n}$, $n \in \N$ along a sequence of decreasing radii, ultimately yielding an $L^\infty$ bound. 

However, the iteration just described does not directly apply in the hypoelliptic setting because $\|\chi f\|_{\mathscr{X}}$ does not control $\|\chi f\|_{H^1}$.
Instead, one requires a Sobolev inequality that uses both (\ref{eq:LocalizedX}) and (\ref{eq:LocalizedDual}); i.e., defining
\begin{equation} \label{eq:defH1hyp}
\|f\|_{\Hhyp}:= \|f\|_{\mathscr{X}} + \|Z_{0,\epsilon} f\|_{\mathscr{X}^*}
\end{equation}
one wishes to prove the following.
\begin{lemma}[H\"{o}rmander inequality for $\Hhyp$]\label{lem:H1hypembed}
	There exists $s > 0$ such that for all $R \ge 1$ and $f \in C_0^\infty(B_R)$ there holds, uniformly in $\epsilon \in (0,1)$,
	\begin{equation} \label{eq:H1hypembed}
	\|f\|_{H^s} \leqc R^{1-s}\|f\|_{\Hhyp}. 
	\end{equation}
\end{lemma}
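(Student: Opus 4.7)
The plan is to establish this as a variant of H\"ormander's classical subelliptic estimate \cite{H67} for sum-of-squares operators with a first-order drift, adapted to the $\Hhyp$ functional framework and tracking the $\epsilon$- and $R$-dependence carefully. The starting observation is that
$$\|Z_{0,\epsilon}\phi\|_{H^{-1}} \leq C\|Z_{0,\epsilon}\phi\|_{\mathscr{X}^*} \leq C\|\phi\|_{\Hhyp}$$
on any bounded set, so $\|\cdot\|_{\Hhyp}$ dominates precisely the data that appears in the classical parabolic subelliptic estimate, with $Z_{0,\epsilon}$ playing the role of the weight-$2$ drift and each $Z_j$ a weight-$1$ field.

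The main steps are as follows. First, a basic commutator inequality: for smooth vector fields $X,Y$ with polynomial coefficients and $\phi \in C_0^\infty(B_R)$,
$$\|[X,Y]\phi\|_{H^{-(s_1+s_2)-\eta}}^2 \lesssim \|X\phi\|_{H^{-s_1}}\|Y\phi\|_{H^{-s_2}} + (\text{lower order in }\phi),$$
obtained via integration by parts combined with standard pseudodifferential calculus. Second, iterate this estimate through the bracket hierarchy of $\{Z_{0,\epsilon},Z_1,\ldots,Z_r\}$: starting from the bounds $\|Z_j\phi\|_{L^2}$ and $\|Z_{0,\epsilon}\phi\|_{H^{-1}}$, each commutator costs at most a fixed fraction of a derivative, so every $Y \in V_M$ satisfies $\|Y\phi\|_{H^{-\sigma_M}} \lesssim \|\phi\|_{\Hhyp}$ for some $\sigma_M < 1$ determined only by $M$ from Assumption~\ref{Assumption:spanning}. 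Third, use the uniform spanning: Assumption~\ref{Assumption:spanning} supplies at each $x \in B_R$ a basis $\{Y_1(x),\ldots,Y_d(x)\} \subseteq V_M$ with $|\det(Y_1,\ldots,Y_d)|^{-1}\leq C_0$, and hence via a partition of unity subordinate to a finite cover of $B_R$ by balls on which a single such basis has uniformly bounded inverse, each $\partial_{x_i}\phi$ is a bounded linear combination of the $Y_j\phi$. This yields $\|\nabla\phi\|_{H^{-\sigma_M}}\lesssim \|\phi\|_{\Hhyp}$, hence $\|\phi\|_{H^s}\lesssim R^{1-s}\|\phi\|_{\Hhyp}$ with $s := 1 - \sigma_M > 0$; the $R^{1-s}$ factor emerges from the scale of the localization and the interpolation from fractional negative to positive Sobolev norms on a ball of radius $R$.

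The main obstacle, and the reason this cannot be a direct citation of \cite{H67}, is ruling out any factor of $\epsilon^{-1}$ in the constants. The key is to never divide through by $\epsilon$: Assumption~\ref{Assumption:spanning} is formulated so that uniform spanning holds for every pair $(\epsilon_1,\epsilon_2) \in [0,1]^2$, so it applies directly at $(\epsilon,\epsilon^\alpha)$ without rescaling. Since $\epsilon,\epsilon^\alpha \leq 1$, each iterated bracket of $Z_{0,\epsilon} = \epsilon Ax + \epsilon^\alpha Bx + N$ with the constant fields $Z_j$ has coefficients that are polynomials in $x$ bounded uniformly in $\epsilon \in (0,1)$, and hence bounded by a power of $R$ on $B_R$. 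Tracking these polynomial-in-$R$ factors through the iteration is routine and produces the advertised $R^{1-s}$ scaling; the only genuinely delicate point is verifying that no inverse power of $\epsilon$ sneaks in when H\"ormander's classical argument is written out in our degenerate setting.
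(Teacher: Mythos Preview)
Your commutator-iteration outline is a legitimate alternative route to the base subelliptic estimate on a \emph{fixed} ball --- this is essentially Kohn's approach rather than H\"ormander's original finite-difference argument, and on $B_1$ with coefficients uniformly bounded in $\epsilon$ (as guaranteed by Assumption~\ref{Assumption:spanning}) it can be made to yield some $s>0$ with an $\epsilon$-independent constant. That part of the proposal is plausible, although the specific commutator inequality you write is dimensionally off (the left side is squared, the right is not) and would need to be replaced by the standard Kohn-type estimate.

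The genuine gap is the $R$-dependence. You assert that ``tracking these polynomial-in-$R$ factors through the iteration is routine and produces the advertised $R^{1-s}$ scaling,'' but this is not so. On $B_R$ the drift $Z_{0,\epsilon}$ has coefficients of size $R^{p-1}$ (since $N$ is homogeneous of degree $p$), and each iterated bracket picks up further polynomial factors; pushing these through a commutator hierarchy of depth $M$ yields a constant that grows like a high power of $R$ depending on $p$ and $M$, not $R^{1-s}$. The paper obtains the sharp $R^{1-s}$ by a different mechanism: it rescales $\bar g(x)=g(Rx)$ so that $\bar g\in C_0^\infty(B_1)$, and observes that under this rescaling the drift becomes $N + \epsilon R^{1-p}Ax + \epsilon^\alpha R^{1-p}Bx$, which by the homogeneity of $N$ has coefficients bounded \emph{independently of $R$} on $B_2$ and is still covered by Assumption~\ref{Assumption:spanning} (since $\epsilon R^{1-p}\le 1$ for $R\ge 1$). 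One then applies the $R=1$ H\"ormander inequality with a uniform constant and rescales back; the factor $R^{1-s}$ arises purely from the scaling of the $H^s$ and $\mathscr X$, $\mathscr X^*$ norms. Without exploiting the homogeneity of $N$ via this rescaling, there is no reason to expect the correct power of $R$, and this matters downstream: in Section~\ref{sec:OutlineL2} the precise exponent $1-s<1$ is what allows the moment bound to absorb the loss when summing over dyadic shells.
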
 
The idea that (\ref{eq:defH1hyp}) is the natural norm for extending elliptic regularization to hypoelliptic operators with the general form of $L_\epsilon^*$ dates back to H\"{o}rmander's seminal paper on hypoellipticity \cite{H67} (see also discussions in \cite{AM19}). For $R=1$, Lemma~\ref{lem:H1hypembed} follows from a careful reading of \cite{H67} with the goal of quantifying [(3.4), \cite{H67}]. What we will actually need is a version of Lemma~\ref{lem:H1hypembed} that is uniform also in $\delta \in (0,1)$ and adapted to the a priori estimates provided by the regularized operator $\epsilon \delta \Delta + L_\epsilon^*$.
The $R$ dependence is deduced using a rescaling argument and the homogeneity of the drift term. For a precise statement and proof sketch of the H\"{o}rmander inequality that allows us to 
obtain the needed generalization of Lemma~\ref{lem:H1hypembed}, we refer to Lemmas~\ref{lem:HineqOrigQuant} and \ref{lem:HineqBall/Ann} in Section~\ref{sec:PrelimHineqStat}.

\begin{remark} 
The recent paper \cite{AM19} is the first to use the notation $\Hhyp$, wherein the authors develop a well-posedness theory in the complete space associated with a norm analogous to $\|\cdot\|_{\Hhyp}$ for the kinetic Fokker-Planck equation that mimics the classical $H^1$ variational theory for elliptic PDEs. Here, we mostly only need to be concerned with a priori estimates, but the terminologies remain quite natural nonetheless. 
\end{remark}

Using (\ref{eq:LocalizedX}), (\ref{eq:LocalizedDual}), and Lemma~\ref{lem:H1hypembed}, the proof of Lemma~\ref{lem:Moser} is similar to the classical Moser iteration scheme. A difficulty that prevents one from directly applying the iteration method as described after (\ref{eq:intgain}) is that (\ref{eq:LocalizedDual}) does not hold for subsolutions.
To remedy this, we adapt an argument from \cite{GIMV16}. Namely, since the ultimate goal is only to upgrade integrability (and not regularity), we instead estimate the exact solution to a closely related PDE, and then use a weak elliptic maximum principle; see Section \ref{sec:StationaryMoser} for details.

\subsubsection{Uniform in $\epsilon$ bound on $\|f_\eps\|$ in hypoelliptic case} \label{sec:OutlineL2}
As discussed in Remark~\ref{rem:ellipticL2}, Lemma~\ref{lem:AprioriBounds} does not directly imply the required uniform-in-$\epsilon$ estimate on $\|f_\epsilon\|_{L^2}$ to obtain (\ref{eq:ubd}) from Lemma~\ref{lem:Moser}. The estimate from Remark~\ref{rem:ellipticL2} would generalize to the hypoelliptic setting if we had a uniform estimate on any $\dot{H}^s$ seminorm in terms of $L^2$. However, this is not immediately possible (even formally) with Lemma~\ref{lem:H1hypembed} since the constant in (\ref{eq:H1hypembed}) scales like $R^{1-s}$. The natural thing we do here is to interpolate against the moment bound \eqref{eq:momentbound}. 
First, we write 
$$ f_\epsilon = \bar{\chi} f_\epsilon + \sum_{R = 2^j:j\ge 0}\chi_R f_\epsilon$$ 
for radially symmetric $\bar{\chi}, \chi_R$ with $\bar{\chi} \in C_0^\infty(B_1)$ and $\chi_R \in C_0^\infty(B_{2R}\setminus B_{R/2})$.
Next, we will use \eqref{eq:H1hypembed} on each annulus and interpolate (using the Gagliardo-Nirenberg inequality) against $L^1$, employing (\ref{eq:momentbound}) to absorb the loss that occurs when applying Lemma~\ref{lem:H1hypembed}. By (\ref{eq:LocalizedX}) and (\ref{eq:LocalizedDual}) there holds
\begin{equation} \label{eq:H1hypByL2LocBound}
\|\chi_R f_\epsilon\|_{\Hhyp} \leqc \|f_\epsilon\|_{L^2},
\end{equation}
so from (\ref{eq:H1hypembed}) and the Gagliardo-Nirenberg inequality, $\exists \theta \in (0,1)$ such that 
\begin{align*} 
 \sum_{R = 2^j: j \ge 0} \|\chi_R f_\epsilon\|_{L^2} & \leqc \|f_\epsilon\|_{L^2}^\theta \sum_{R = 2^j: j \ge 0} R^\theta\|\chi_R f_\epsilon\|_{L^1}^{1-\theta} \\ 
& \leqc \|f_\epsilon\|_{L^2}^\theta \left(1+\sum_{R = 2^j: j \ge 0}\left\|R^\frac{2\theta}{1-\theta}\chi_R f_\epsilon \right\|_{L^1}\right) \\ 
&   \leqc \|f_\epsilon\|_{L^2}^\theta \left(1 + \int e^{\gamma x^2}f_\epsilon \right) \leqc \|f_\epsilon\|_{L^2}^\theta.
\end{align*}
We can estimate $\|\bar{\chi} f_\epsilon\|_{L^2}$ in a similar fashion. These computations form the basis of the following lemma, which we prove in detail in Section~\ref{sec:StationaryL2}.
\begin{lemma}\label{lem:L2bound}
	With the notation of (\ref{eq:regprob}), there holds
	$$\sup_{\epsilon,\delta \in (0,1)} \|f_{\epsilon,\delta}\|_{L^2} \leqc 1.$$
	As a consequence,
	\begin{equation}\label{eq:H1hypuniform}
	\sup_{\epsilon \in (0,1)}\|f_\epsilon\|_{\Hhyp} \leqc \sup_{\epsilon \in (0,1)}\|f_\epsilon\|_{L^2} \leqc 1.
	\end{equation}
\end{lemma}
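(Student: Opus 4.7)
My plan is to carry out the dyadic decomposition and interpolation scheme sketched in the text, but stated carefully uniformly in both $\epsilon$ and $\delta$. I would work throughout with $f_{\epsilon,\delta}$, the stationary density of the regularized equation $(\epsilon\delta\Delta + L_\epsilon^*)f_{\epsilon,\delta} = 0$, since this satisfies the same equation $L_\epsilon^* f_{\epsilon,\delta} = -\epsilon\delta\Delta f_{\epsilon,\delta}$ perturbed in a way that does not destroy any of the a priori bounds. The key observation is that the localized energy estimate \eqref{eq:LocalizedX}, the localized dual estimate \eqref{eq:LocalizedDual}, and the moment bound \eqref{eq:momentbound} all continue to hold uniformly in $\delta \in (0,1)$ as well, because the extra $\epsilon\delta \Delta$ term only improves the elliptic regularization (adding $\epsilon\delta\|\grad\chi f\|_{L^2}^2$ type terms on the good side) and the exponential weight $V=e^{\gamma x^2}$ remains a uniform Lyapunov function in the sense of Definition~\ref{def:uniformLyapunov} by the extra $\epsilon\delta$ factor in front of $\Delta V$.

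The core of the proof is then to fix a radially symmetric partition of unity $1 = \bar\chi(x)^2 + \sum_{j\ge 0}\chi_{R_j}(x)^2$ with $R_j = 2^j$, where $\bar\chi \in C_0^\infty(B_1)$ and each $\chi_R$ is supported in the annulus $B_{2R}\setminus B_{R/2}$ with $|\grad \chi_R| \leqc 1/R$. Radial symmetry is essential so that $\chi_R$ commutes with $Bx \cdot \grad$ and $N\cdot \grad$ and the localized a priori estimates keep their clean form. For each dyadic piece I combine \eqref{eq:LocalizedX} and \eqref{eq:LocalizedDual} (applied with $r_2/r_1$ of order one) with Lemma~\ref{lem:H1hypembed} to obtain
\begin{equation*}
\|\chi_R f_{\epsilon,\delta}\|_{H^s} \leqc R^{1-s}\|\chi_R f_{\epsilon,\delta}\|_{\Hhyp} \leqc R^{1-s}\|f_{\epsilon,\delta}\|_{L^2(B_{4R})} \leqc R^{1-s}\|f_{\epsilon,\delta}\|_{L^2},
\end{equation*}
and then apply the Gagliardo--Nirenberg inequality $\|g\|_{L^2} \leqc \|g\|_{H^s}^\theta \|g\|_{L^1}^{1-\theta}$ for a suitable $\theta = \theta(s,d) \in (0,1)$ to interpolate against $L^1$. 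This yields the bound
\begin{equation*}
\|\chi_R f_{\epsilon,\delta}\|_{L^2} \leqc R^{(1-s)\theta}\|f_{\epsilon,\delta}\|_{L^2}^{\theta}\,\|\chi_R f_{\epsilon,\delta}\|_{L^1}^{1-\theta}.
\end{equation*}

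Summing over the dyadic scales, I absorb the polynomial factor $R^{(1-s)\theta}$ into the $L^1$ tail: on $\mathrm{supp}\,\chi_R$ we have $1 \leqc e^{-\gamma R^2/4}e^{\gamma |x|^2}$, so $\sum_R R^{(1-s)\theta/(1-\theta)}\|\chi_R f_{\epsilon,\delta}\|_{L^1} \leqc \int e^{\gamma x^2} f_{\epsilon,\delta} \leqc 1$ by the uniform moment bound, and by Hölder (in the dyadic $\ell^p$ sense) this controls $\sum_R R^{(1-s)\theta}\|\chi_R f_{\epsilon,\delta}\|_{L^1}^{1-\theta}$ up to a finite geometric series. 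The term $\bar\chi f_{\epsilon,\delta}$ is estimated identically on $B_1$ with no weight needed. Putting everything together,
\begin{equation*}
\|f_{\epsilon,\delta}\|_{L^2} \leqc \|\bar\chi f_{\epsilon,\delta}\|_{L^2} + \sum_{j\ge 0}\|\chi_{R_j} f_{\epsilon,\delta}\|_{L^2} \leqc C\, \|f_{\epsilon,\delta}\|_{L^2}^{\theta},
\end{equation*}
and since $\theta \in (0,1)$ this self-improving inequality gives $\|f_{\epsilon,\delta}\|_{L^2} \leqc 1$ uniformly in $\epsilon$ and $\delta$. To deduce the second statement I pass to the limit $\delta\to 0$ using Remark~\ref{rem:deltareg} and lower semi-continuity of the $L^2$ norm, and then the $\Hhyp$ bound follows from Lemma~\ref{lem:AprioriBounds} applied to $f_\epsilon$ itself.

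The step I expect to be the main obstacle is justifying that the localized dual estimate \eqref{eq:LocalizedDual} and the H\"ormander inequality hold uniformly in $\delta$ for the regularized equation (one must verify the extra elliptic term does not leak into the $\mathscr{X}^*$ estimate with a constant that blows up in $\delta$), and making sure the dyadic sum is truly summable after weighting — this is a small bookkeeping point but it is where the structural assumption $N\cdot x = 0$, via the Lyapunov-based moment bound, plays its essential role.
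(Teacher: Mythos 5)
Your proposal is correct and follows essentially the same route as the paper: the dyadic radially symmetric partition of unity, the localized energy and dual estimates applied annulus by annulus (with the $\delta$-regularized norm $\Hhypd$ rather than $\Hhyp$ — the delta-uniform H\"ormander inequality is Lemma~\ref{lem:HineqBall/Ann} in the paper, and you correctly flag that one must verify uniformity in $\delta$ here), the Gagliardo--Nirenberg interpolation against $L^1$, absorption of the polynomial loss into the exponential moment bound, and the resulting self-improving inequality $\|f_{\epsilon,\delta}\|_{L^2} \lesssim \|f_{\epsilon,\delta}\|_{L^2}^\theta$. The only stylistic differences are the use of $\sum\chi_R^2=1$ rather than $\sum\chi_R=1$, and your dyadic Hölder step where the paper instead applies Young's inequality $ab \le a^{1/\theta}+b^{1/(1-\theta)}$ termwise — both are fine ways of making the geometric series converge, though you should make the small extra power of $R$ used to generate summability explicit when writing this up.
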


\subsection{Time-stationary problem: lower bounds} \label{sec:Outlinelower}

In this section we discuss the key ideas that go into proving the lower bound (\ref{eq:lbd}).
The first observation is that (\ref{eq:ubd}) and (\ref{eq:momentbound}) together imply that there exist constants $c_1, c_2 > 0$ such that for every $R \gtrsim 1$ there holds, uniformly in $\epsilon \in (0,1)$, 
\begin{equation} \label{eq:Outlinelbd1}
|\{x \in B_{R}: f_\epsilon/c_1 \ge 1\}| \ge c_2.
\end{equation}
In other words, $f_\epsilon$ stays uniformly bounded away from zero on a set of positive measure. A classical idea in the H\"{o}lder regularity theory for second-order elliptic equations with rough coefficients is that weak solutions ``cannot oscillate too much'' in the sense that if $-1 \le u \le 1$ solves $Lu = 0$ on $B_2$ for a suitable elliptic operator $L$, then $|\{x \in B_1: u(x) \le 0\}| > 0$ implies that $u$ must remain uniformly bounded away from $1$ \textit{everywhere} on the smaller set $B_{1/2}$; see for example the review \cite{Vasseur2016} (in particular, Proposition 9) and the references therein. For nonnegative solutions and $L_\epsilon^*$ elliptic, (\ref{eq:Outlinelbd1}) would then imply the estimate
\begin{equation} \label{eq:Outlinelbd2}
\inf_{x \in B_R}f_\epsilon(x) \gtrsim_{R,\epsilon, c_1,c_2} 1. 
\end{equation}
We have indicated that the implicit constant a priori depends on $\epsilon$, although we will see below that this is not the case. 

Our strategy is to extend the argument that yields (\ref{eq:Outlinelbd2}) from (\ref{eq:Outlinelbd1}) (with a constant independent of $\epsilon$) to the hypoelliptic setting.
We begin by recalling a clever trick from de Giorgi's approach to H\"{o}lder regularity for elliptic PDEs with rough coefficients (see e.g. \cite{Vasseur2016}).
For $\theta \in (0,1)$ consider the rescaled functions
\begin{equation}
w_k = \left(1 - \theta^{-k} (f_\epsilon/c_1)\right)_+, \end{equation}
structured so that 
\begin{align}
|\{x \in B_R:w_k = 0\}| & \ge c_2, \label{eq:Outlinelbd3}\\ 
|\{x \in B_R: w_k \ge 1-\theta\}| &\ge \int_{B_R} |w_{k+1}|^2,\label{eq:Outlinelbd4}
\end{align}
and observe that a uniform lower bound on $f_\epsilon$ follows provided that $\liminf_{k \to \infty}\|w_k\|_{L^2(B_{R})} = 0$ uniformly in $\epsilon \in (0,1)$. Indeed, $L_\epsilon^* w_k \ge 0$ by the convexity of $z \mapsto z_+$,  and hence Lemma~\ref{lem:Moser} (suppose for the sake of discussion that it is true for $f \in H^1$ and $\delta = 0$) gives
$$ \inf_{x \in B_{R/2}}f_\epsilon(x) \ge c_1 \theta^k\left(1-\|w_k\|_{L^\infty(B_{R/2})}\right) \ge c_1 \theta^k\left(1 - C(R)\|w_k\|_{L^2(B_{R})}\right)\quad \forall k\in \N.$$

The proof that $\|w_k\|_{L^2}$ eventually gets small uses an iteration argument hinging on an isoperimetric inequality, which in the elliptic case controls the amount of possible oscillation of nonnegative subsolutions $L^*_\epsilon f \geq 0$ in terms of the natural $H^1$ energy norm.  
The essential point is that if $\|w_{k+1}\|^2_{L^2(B_R)} \ge \kappa(R) > 0$, then due to (\ref{eq:Outlinelbd4}) there holds
\begin{equation} \label{eq:Outlinelbd5}
|\{x \in B_R: w_k \ge 1-\theta\}| \ge \kappa \quad \& \quad |\{x \in B_R: w_k = 0\}| \ge c_2, 
\end{equation}
and so since the collection $\{\{0 < w_k < 1-\theta\}\}_{k=1}^\infty$ is pairwise disjoint, $\|w_{k_*}\|_{L^2} \ge \kappa$ must fail for some large enough $k_* = k_*(\kappa,c_1,c_2,R)$ provided that (\ref{eq:Outlinelbd5}) and $L_\epsilon^* w_k \ge 0$ together imply 
\begin{equation} \label{eq:Outlinelbd6}
|\{x\in B_R: 0 < w_k < 1-\theta\}| \gtrsim_{\kappa,c_1,c_2,R} 1;
\end{equation}
see the proof of Lemma~\ref{lem:wkiteration} for more explanation. It is important to note that proving (\ref{eq:lbd}) requires the constant in (\ref{eq:Outlinelbd6}) to be independent of $\epsilon$.  

In the elliptic setting, (\ref{eq:Outlinelbd6}) is provided by (\ref{eq:LocalizedX}) and the classical De Giorgi isoperimetric inequality (see e.g. [Lemma 10, \cite{Vasseur2016}]), which explicitly quantifies a lower bound for $|\{x\in B_R: 0 < w_k < 1-\theta\}|$ in terms of $\|w_k\|_{H^1(B_R)}$ and the quantities in (\ref{eq:Outlinelbd5}). This approach does not apply in the hypoelliptic setting because $L_\epsilon^* w_k \ge 0$ is not sufficient to provide a uniform-in-$\epsilon$ bound on $\|w_k\|_{H^1(B_R)}$ (see also discussions in \cite{GIMV16}). Nevertheless, for each fixed $R \ge 1$ we are able to prove an isoperimetric inequality that holds uniformly in $0 < \epsilon \ll 1$. It is stated as follows (recall the notation from (\ref{eq:regprob})).

\begin{lemma}[An intermediate value lemma] \label{lem:IVT}
	Fix $R \ge 1$ and $\alpha_1$, $\alpha_2 > 0$. There exists $\epsilon_0 > 0$, $\mu > 0$, and $\theta \in (0,1/2)$ such that if $\epsilon \le \epsilon_0$, $\delta \in (0,1)$, and $w \in C^\infty(B_{2R})$ with $0 \le w \le 1$ satisfies
	 \begin{equation} \label{eq:IVTassumption}
	 0 \le \delta \Delta w + \frac{1}{\epsilon}L_\epsilon^* w  \le \frac{1}{\sqrt{\epsilon}}\left(1+ \delta|\grad f_{\epsilon,\delta}|^2 +\sum_{j=1}^r |Z_j f_{\epsilon,\delta}|^2\right)
	 \end{equation} 
	  on $B_{2R}$, then the inequalities 
	$$|\{w  = 0\} \cap B_R| \ge \alpha_1$$
	and
	$$|\{w  \ge 1-\theta\} \cap B_R| \ge \alpha_2$$
	together imply
	$$|\{0<w <1-\theta\} \cap B_R| \ge \mu.$$
\end{lemma}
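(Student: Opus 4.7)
The plan is to argue by contradiction via a compactness-rigidity scheme in the spirit of \cite{GV15, IM15}. Suppose the lemma fails. Then there exist sequences $\epsilon_n \to 0$, $\delta_n \in (0,1)$, $\theta_n \to 0$, $\mu_n \to 0$, and $w_n \in C^\infty(B_{2R})$ with $0 \le w_n \le 1$ satisfying \eqref{eq:IVTassumption} along with the measure bounds $|\{w_n = 0\}\cap B_R| \ge \alpha_1$ and $|\{w_n \ge 1-\theta_n\}\cap B_R| \ge \alpha_2$, but with $|\{0<w_n<1-\theta_n\}\cap B_R| < \mu_n$. Multiplying \eqref{eq:IVTassumption} by $\epsilon_n$ recasts the two-sided bound as
$$ 0 \le \epsilon_n\delta_n\Delta w_n + L_{\epsilon_n}^* w_n \le \sqrt{\epsilon_n}\,R_n, \qquad R_n := 1+\delta_n|\grad f_{\epsilon_n,\delta_n}|^2+\sum_j|Z_j f_{\epsilon_n,\delta_n}|^2, $$
where $\|R_n\|_{L^1(B_{2R})} \leqc 1$ uniformly in $n$ by Lemma~\ref{lem:L2bound} together with the $\delta$-regularized energy estimate for $f_{\epsilon,\delta}$.

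The next step is to produce uniform-in-$n$ regularity of $\{w_n\}$ sufficient for strong $L^2_{\text{loc}}$ compactness. Fix a radially symmetric cutoff $\chi \in C_0^\infty(B_{2R})$ with $\chi \equiv 1$ on $B_{(2-\eta)R}$. Testing the subsolution inequality $\epsilon_n\delta_n\Delta w_n + L_{\epsilon_n}^* w_n \ge 0$ against $\chi^2 w_n$ and integrating by parts, the identities $N(x)\cdot x = 0$ and $Bx\cdot x = 0$ combined with the radial symmetry of $\chi$ kill the contributions from the nonlinear and skew-symmetric drifts, and after absorbing cutoff errors and the $A$-drift one obtains the uniform bound
$$ \delta_n \|\chi\grad w_n\|_{L^2}^2 + \sum_j \|\chi Z_j w_n\|_{L^2}^2 \leqc 1. $$
Together with the identity $Z_{0,\epsilon_n} w_n = L_{\epsilon_n}^* w_n - \epsilon_n(\sum_j Z_j^2 + \text{Tr}(A))w_n$ and the upper bound on $L_{\epsilon_n}^* w_n$, this yields $\|Z_{0,\epsilon_n}(\chi w_n)\|_{\mathscr{X}^*} \leqc 1$. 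Applying the H\"{o}rmander inequality of Lemma~\ref{lem:H1hypembed} gives a uniform $H^s_{\text{loc}}(B_{(2-\eta)R})$ bound for some $s\in(0,1)$, so by Rellich a subsequence converges $w_n \to w_\infty$ strongly in $L^2_{\text{loc}}(B_{2R})$ and pointwise a.e., with $0\le w_\infty\le 1$.

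The limit inherits strong structural properties. Because $\theta_n \to 0$ and $|\{0<w_n<1-\theta_n\}|\to 0$, Fatou applied to the a.e.\ convergence forces $w_\infty \in \{0,1\}$ a.e., and passes the measure bounds $|\{w_\infty = 0\}\cap B_R| \ge \alpha_1$, $|\{w_\infty = 1\}\cap B_R|\ge \alpha_2$. The equation passes too: for any $\varphi \in C_c^\infty(B_{2R})$, integration by parts gives
$$ 0 \le \int w_n (\epsilon_n\delta_n \Delta\varphi + L_{\epsilon_n}\varphi)\,dx \le \sqrt{\epsilon_n}\|\varphi\|_{L^\infty}\|R_n\|_{L^1} \longrightarrow 0, $$
while $L_{\epsilon_n}\varphi \to -N\cdot\grad\varphi$ (respectively $-(Bx+N)\cdot\grad\varphi$ when $\alpha = 0$) locally uniformly. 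Hence $N\cdot\grad w_\infty = 0$ (respectively $(Bx+N)\cdot\grad w_\infty = 0$) in $\mathcal{D}'(B_{2R})$. Meanwhile, weak-$L^2$ compactness of $\{\chi Z_j w_n\}$ gives $Z_j w_\infty \in L^2_{\text{loc}}$ for each $j$; but since $w_\infty$ is a.e.\ the indicator of a measurable set $E$, the distribution $Z_j w_\infty$ equals the singular measure $(\nu\cdot Z_j)\,d\mathcal{H}^{d-1}|_{\partial^* E}$ supported on a Lebesgue-null set, and coincidence with an $L^2$ function forces $Z_j w_\infty = 0$ in $\mathcal{D}'$ for every $j$.

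Rigidity now yields the contradiction. By Assumption~\ref{Assumption:spanning}, the family $\{N,Z_1,\dots,Z_r\}$ (or $\{Bx+N,Z_1,\dots,Z_r\}$ when $\alpha=0$) satisfies the parabolic H\"{o}rmander condition on $B_{2R}$, so the operator $P := \sum_j Z_j^2 - N\cdot\grad$ (respectively with $Bx+N$) is hypoelliptic. Since $Pw_\infty = 0$ in $\mathcal{D}'(B_{2R})$, \cite{H67} gives $w_\infty \in C^\infty(B_{2R})$, and a smooth $\{0,1\}$-valued function on the connected open set $B_{2R}$ must be identically $0$ or identically $1$ --- contradicting the simultaneous lower bounds on $|\{w_\infty = 0\}\cap B_R|$ and $|\{w_\infty = 1\}\cap B_R|$. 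The principal obstacle is the compactness step: while the subsolution energy estimate bounds $\chi Z_j w_n$ cleanly, extracting a uniform $\mathscr{X}^*$-bound on $Z_{0,\epsilon_n}w_n$ that is genuinely independent of both $\epsilon_n$ and $\delta_n$ requires careful use of both sides of \eqref{eq:IVTassumption} together with the $\Hhyp$-control on $f_{\epsilon,\delta}$ supplied by Lemma~\ref{lem:L2bound}.
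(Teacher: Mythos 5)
Your overall compactness--rigidity scheme matches the paper's, and your rigidity step is actually a clean shortcut: since the limit $w_\infty$ is a.e.\ an indicator, one can pass through the single hypoelliptic operator $P=\sum_j Z_j^2 - N\cdot\grad$ to get $w_\infty\in C^\infty$ and then invoke connectedness, rather than the full controllability argument in Lemma~\ref{lem:rigidity} (which the paper proves in the generality of an arbitrary $L^2_{\text{loc}}$ function because it is reused for the weak Poincar\'e inequality). You also correctly track the $\alpha=0$ case. However, the compactness step has a genuine gap: the uniform $\mathscr{X}^*$-bound on $Z_{0,\epsilon_n}(\chi w_n)$ that you assert, and that you would need to invoke Lemma~\ref{lem:H1hypembed}, is \emph{not} available from \eqref{eq:IVTassumption}. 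Writing $G_n := \epsilon_n\delta_n\Delta w_n + L_{\epsilon_n}^* w_n$, one has $0\le G_n\le \sqrt{\epsilon_n}R_n$ with only an $L^1$ control $\|R_n\|_{L^1(B_{2R})}\leqc 1$ (since $R_n$ is quadratic in first derivatives of $f_{\epsilon,\delta}$, which are controlled in $L^2$ by $\|f_{\epsilon,\delta}\|_{\Hhypd}$). Consequently, when one estimates $\bigl|\int \varphi\,\chi\, G_n\bigr|$ there is no way around paying $\|\varphi\|_{L^\infty}$, which is not controlled by $\|\varphi\|_{\mathscr{X}_\delta}$. The paper flags precisely this in the outline (Section~\ref{sec:Outlinelower}): ``\emph{we cannot directly apply Lemma~\ref{lem:H1hypembed}\ldots because the upper bound in \eqref{eq:IVTassumption} is too weak to provide a uniform estimate on $\|Z_{0,\epsilon}w\|_{\mathscr{X}^*}$}.''

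The correct bound one can actually extract is $\|Z_{0,\epsilon_n}(\chi w_n)\|_{\tilde{\mathscr{X}}_{\delta_n}^*}\leqc 1$, i.e.\ with respect to the augmented norm pair $(\tilde{\mathscr{X}}_\delta,\tilde{\mathscr{X}}^*_\delta)$ from \eqref{eq:defXdeltainf} that includes $L^\infty$. Since $0\le w_n\le 1$ gives $\|\chi w_n\|_{L^\infty}\le 1$ for free, one then applies the quantitative H\"ormander inequality Lemma~\ref{lem:HineqOrigQuant} with $\bar{\mathscr{X}}=\tilde{\mathscr{X}}_\delta$ — this is exactly the reason that variant of the inequality (with $L^\infty$ in the norm pair) was developed. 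Your closing caveat correctly identifies the compactness step as the crux, but the fix is not ``more careful use of both sides of \eqref{eq:IVTassumption} to get a $\mathscr{X}^*$-bound''; it is to replace $\mathscr{X}^*$ by $\tilde{\mathscr{X}}^*_\delta$ and Lemma~\ref{lem:H1hypembed} by Lemma~\ref{lem:HineqOrigQuant}.
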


The proof of Lemma~\ref{lem:IVT} follows a compactness-rigidity argument motivated by [Lemma 14, \cite{GIMV16}]. The desired compactness is deduced with a uniform H\"{o}rmander inequality. However, we cannot directly apply Lemma~\ref{lem:H1hypembed} to obtain $\|w\|_{H^s} \leqc 1$ because the upper bound in (\ref{eq:IVTassumption}) is too weak to provide a uniform estimate on $\|Z_{0,\epsilon}w\|_{\mathscr{X}^*}$. Instead, for radially symmetric $\chi \in C_0^\infty(B_{2R})$ with $\chi(x) = 1$ for $|x| \le R$ and a test function $\varphi \in C_0^\infty$, the natural $\epsilon$-independent estimate is (supposing $\delta = 0$ for simplicity)
\begin{align*}
\left|\int \varphi Z_{0,\epsilon} (\chi w) \right| &\leqc \epsilon \|\varphi\|_{\mathscr{X}} \|w\|_{L^2(B_{2R})} + \sqrt{\epsilon}\|\varphi\|_{L^\infty}\left(1 + \|f_\epsilon\|_{\Hhyp}^2 \right)  \leqc \sqrt{\epsilon} \left(\|\varphi\|_{L^\infty} + \|\varphi\|_{\mathscr{X}}\right),
\end{align*}
where in the second inequality we used (\ref{eq:H1hypuniform}) to control $\|f_\epsilon\|_{\Hhyp}$ and $0\le w \le 1$ to control $\|w\|_{L^2(B_{2R})}$. Since $w$ is a priori bounded, this estimate suggests that we obtain the needed compactness with a H\"{o}rmander inequality that includes $L^\infty$ in the norm pair. We derive such an inequality in Lemma~\ref{lem:HineqOrigQuant} by making some small modifications to the original arguments in \cite{H67}.

The rigidity step is resolved by passing to the limit and deriving a contradiction with the supposed counter-example obtained at $\theta = \epsilon = \mu = 0$ satisfying the $\epsilon$-independent estimates provided by (\ref{eq:IVTassumption}). This only requires one to know that there cannot exist a non-constant characteristic function $\xi$ satisfying (in the sense of distributions) $N \cdot \grad \xi = 0$ and $Z_j  \cdot \grad \xi = 0$ for $j=1,\ldots,r$. While this can be achieved directly from H\"{o}rmander's theorem, we have the following stronger rigidity statement, which plays a key role in the proof of our hypoelliptic weak Poincar\'{e} inequality (see Lemmas~\ref{lem:poincinterp} and~\ref{lem:weakpoinc}).
\begin{lemma}[Rigidity lemma]\label{lem:rigidity}
	Let $\Omega \subset \R^d$ be an open, connected set. Suppose that $\{X_j\}_{j=0}^k \subseteq T(\Omega)$ satisfies H\"{o}rmander's condition on $\Omega$. Let $f \in L^2_{loc}(\R^d)$ be a distributional solution to
	$$X_0  f = 0.$$
	If $X_j f = 0$ for each $j = 1, \ldots, k$, then $f$ is constant on $\Omega$. 
\end{lemma}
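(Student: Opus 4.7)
The plan is to bootstrap the given distributional identities $X_j f = 0$ ($j = 0, \ldots, k$) through all iterated Lie brackets, then apply Hörmander's spanning condition to conclude that $\partial_m f = 0$ locally, and close the proof via connectedness of $\Omega$. More precisely, I would show by induction on $n \ge 0$ that every $Y \in V_n$ satisfies $Y f = 0$ in $\cD'(\Omega)$, adopting the convention $V_0 = \{X_0, X_1, \ldots, X_k\}$ permitted by Hörmander's condition as stated in the excerpt. The base case is exactly the hypothesis. For the inductive step, given $Y \in V_{n-1}$ with $Yf = 0$ and $j \in \{0, \ldots, k\}$, the distributional identity
\begin{equation*}
[X_j, Y] f = X_j(Yf) - Y(X_j f)
\end{equation*}
holds on $\Omega$: it reduces after pairing against $\varphi \in C_0^\infty(\Omega)$ to the pointwise identity $[X_j,Y]^T \varphi = Y^T X_j^T \varphi - X_j^T Y^T \varphi$ on test functions (where $T$ denotes the formal $L^2$-adjoint), which is valid because all coefficients are smooth. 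Since both $X_j f = 0$ and $Yf = 0$, the right-hand side vanishes and hence $[X_j, Y] f = 0$, completing the induction.

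With the induction in hand, fix $x_0 \in \Omega$. The spanning condition produces some $N \in \N$ and vector fields $Y_1, \ldots, Y_d \in V_N$ that are linearly independent at $x_0$. By smoothness of their coefficients, the nondegeneracy persists on an open neighborhood $U \subset \Omega$ of $x_0$, and inverting the frame yields smooth $c_{m,\ell}$ on $U$ with $\partial_m = \sum_{\ell = 1}^d c_{m,\ell} Y_\ell$. Applied to $f$, this gives $\partial_m f = 0$ in $\cD'(U)$ for each $m = 1, \ldots, d$, so $f$ equals a constant almost everywhere on $U$. A standard open-closed argument using the connectedness of $\Omega$ then upgrades this to $f$ being a.e.\ constant on all of $\Omega$.

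The argument is short and requires no hypoelliptic regularity theory or Moser-type iteration—only the distributional commutator identity and the spanning condition. The one place where care is needed, and the only conceptual obstacle, is interpreting the compositions $X_j(Yf)$ and $Y(X_j f)$ consistently as distributions and iterating the identity cleanly as $Y$ ranges through arbitrarily deep brackets; smoothness of all the coefficients of elements of $V_n$ makes this routine, but it is the sole step where the fact that $f$ is merely an $L^2_{\text{loc}}$ distribution (rather than a pointwise function) enters the analysis.
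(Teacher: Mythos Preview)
Your proof is correct and takes a genuinely different route from the paper. The paper first invokes H\"ormander's hypoellipticity theorem to upgrade $f$ to $C^\infty(\Omega)$, and then argues via geometric control theory: using that the reachable set from any point along integral curves of the $X_j$ is open (a Chow--Rashevskii type statement), together with the observation that $f$ is constant along such curves, it concludes $f$ is locally constant and hence constant by connectedness.

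Your argument is more elementary. By inducting the distributional identity $[X_j,Y]f = X_j(Yf) - Y(X_jf)$ through the bracket hierarchy, you bypass both the regularity theorem and the controllability result entirely, and work with $f$ purely as a distribution throughout. The only cost is that you must check the operator identity $\partial_m = \sum_\ell c_{m,\ell} Y_\ell$ passes correctly to distributions (it does, since multiplication by smooth functions commutes with the distributional action of a vector field), whereas the paper, having $f$ smooth, can simply differentiate along curves. The paper's approach has the minor conceptual advantage of making the geometric mechanism (transport along flows) explicit, but yours is self-contained and does not rely on the deep machinery of \cite{H67} as a black box.
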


\begin{proof}
	The assumptions of the lemma imply that $f \in L^2_{\text{loc}}(\Omega)$ is a distributional solution to 
	$\sum_{j=1}^k X_j^2 f + X_0 f = 0,$
	and so by H\"{o}rmander's theorem [Theorem 1.1, \cite{H67}] it follows that $f \in C^\infty(\Omega)$. Thus, it suffices to prove the result for smooth functions satisfying $X_j f = 0$ for $j = 0,1,\ldots, k$ in the classical sense on $\Omega$.
	
	Let $B(x,r)$ denote the ball of radius $r$ centered at $x$ with the usual Euclidean metric, and for $c \in \R$, let
	$$S_c = \{x \in \Omega: \exists r > 0 \text{ such that }f \equiv c\text{ on } B(x,r)\}.$$
	We will prove that there is some $c$ so that $S_c$ is open, nonempty, and relatively closed in $\Omega$. The fact that $S_c$ is open for each $c$ follows by its definition. To prove that $\exists c$ such that $S_c$ is both relatively closed and nonempty, it suffices to show that for every $x \in \Omega$ $\exists r_x > 0$ so that $f$ is constant on $B(x,r_x)$. Fix $x_0 \in \Omega$ and let $\mathcal{U} \subset \Omega$ be an open ball containing $x_0$. Define the \textit{$\mathcal{U}$-reachable set at $x_0$} to be the points $x_1 \in \mathcal{U}$ such that there exist bounded, measurable functions $\{c_j:[0,1] \to \R\}_{j=0}^k$ and a curve $\gamma: [0,1] \to \mathcal{U}$ such that $\gamma(0) = x_0$, $\gamma(1) = x_1$, and 
	\begin{equation} \label{eq:gammacurve}
	\gamma'(t) = \sum_{j=0}^k c_j(t)X_j(\gamma(t)) \quad \text{a.e. } t \in [0,1].
	\end{equation}
	A well-known fact in the theory of local controllability is that the $\mathcal{U}$-reachable set at $x_0$ is an open neighborhood of $x_0$ as soon as $\{X_j\}_{j=0}^k$ satisfies H\"{o}rmander's condition on $\mathcal{U}$; see e.g. [Theorem 2.2, \cite{Hermann1977}]. Since $f$ is constant along any curve $\gamma$ satisfying \eqref{eq:gammacurve} we conclude that it must be constant on some open ball containing $x_0$, completing the proof. 	
\end{proof}

We have already sketched the main ideas in using a uniform-in-$\epsilon$ intermediate value lemma to obtain a local lower bound, though due to the complexity of Lemma~\ref{lem:IVT} there are some additional details to fill in. This is done in Section~\ref{sec:StatIVT}, wherein we prove the following. 
\begin{lemma} \label{lem:IVTtolbd}
	Suppose that Lemma~\ref{lem:IVT} holds. Then, for all $R \ge 1$ there exists $\epsilon_*(R) > 0$ such that 
	\begin{equation} \label{eq:IVTtolbd}
	\inf_{\epsilon \in (0,\epsilon_*)}\inf_{|x|\le R} f_\epsilon(x) \gtrsim_R 1.
	\end{equation}
	If Assumption~\ref{Assumption:positivity} is satisfied, then \eqref{eq:lbd} also holds.
\end{lemma}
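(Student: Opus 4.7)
The plan is to execute the de Giorgi iteration sketched in Section~\ref{sec:Outlinelower}, using Lemma~\ref{lem:IVT} as the hypoelliptic isoperimetric input and Lemma~\ref{lem:Moser} as the regularizing step. All computations would be performed on the regularized density $f_{\epsilon,\delta}$ solving \eqref{eq:regprob}, with the limit $\delta \to 0$ taken at the end via Lemma~\ref{lem:deltatozero}. The small-$\epsilon$ bound \eqref{eq:IVTtolbd} is obtained first by the iteration, and \eqref{eq:lbd} is then deduced from Assumption~\ref{Assumption:positivity} by a compactness argument over the remaining compact range $[\epsilon_*,1)$.

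First I would combine \eqref{eq:ubd}, \eqref{eq:momentbound}, and the normalization $\int f_{\epsilon,\delta}=1$ to produce constants $c_1, c_2>0$ and a radius $R_1 \ge 2R$, all independent of $\epsilon,\delta$, with $|\{x \in B_{R_1}: f_{\epsilon,\delta}(x) \ge c_1\}| \ge c_2$. For $\theta \in (0,1/2)$ set $w_k = \phi_k(f_{\epsilon,\delta})$, where $\phi_k$ is a smooth convex approximation of $u \mapsto (1 - \theta^{-k}u/c_1)_+$ with smoothing scale of order $\sqrt\epsilon$ in $u$, so that $\phi_k'' \lesssim 1/\sqrt\epsilon$. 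Using $\grad\cdot Bx = \grad\cdot N = 0$, $(\epsilon\delta\Delta + L_\epsilon^*)f_{\epsilon,\delta} = 0$, and the chain rule, one computes
\[
(\epsilon\delta\Delta + L_\epsilon^*)[\phi_k(f_{\epsilon,\delta})] = \epsilon\phi_k''(f_{\epsilon,\delta})\!\left(\delta|\grad f_{\epsilon,\delta}|^2 + \sum_{j=1}^r |Z_j f_{\epsilon,\delta}|^2\right) + \epsilon\,\text{Tr}(A)\bigl[\phi_k(f_{\epsilon,\delta}) - f_{\epsilon,\delta}\phi_k'(f_{\epsilon,\delta})\bigr].
\]
Both $\phi_k''$ and $\phi_k(u) - u\phi_k'(u)$ are nonnegative (the latter equals the indicator of $\{u < c_1\theta^k\}$ for the unsmoothed $\phi$), so the right-hand side is $\ge 0$; dividing by $\epsilon$ and inserting the bound on $\phi_k''$ verifies the upper estimate required by \eqref{eq:IVTassumption}. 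Clearly $0 \le w_k \le 1$, so $w_k$ meets the hypotheses of Lemma~\ref{lem:IVT} on balls contained in $B_{2R_1}$.

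Now iterate. By construction $\{w_k = 0\} \supset \{f_{\epsilon,\delta} \ge c_1\}$ for every $k$, so $|\{w_k = 0\}\cap B_{R_1}| \ge c_2$; moreover $\{w_k \ge 1-\theta\} \supset \{w_{k+1} > 0\}$, giving $|\{w_k \ge 1-\theta\}\cap B_{R_1}| \ge \int_{B_{R_1}} w_{k+1}^2$. Fix $\mu_* > 0$ so small that Lemma~\ref{lem:Moser} applied to the subsolution $w_{k+1}$ yields $\|w_{k+1}\|_{L^\infty(B_{R_1/2})} \le 1/2$ whenever $\|w_{k+1}\|_{L^2(B_{R_1})}^2 < \mu_*$. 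If this inequality fails for every $k \le K$, then Lemma~\ref{lem:IVT} with $\alpha_1 = c_2$, $\alpha_2 = \mu_*$ produces $\mu > 0$, independent of $k,\epsilon,\delta$, such that $|\{0 < w_k < 1-\theta\}\cap B_{R_1}| \ge \mu$ for each such $k$. Since these sets correspond to the pairwise disjoint level ranges $c_1\theta^{k+1} < f_{\epsilon,\delta} < c_1\theta^k$, summing forces $K \le |B_{R_1}|/\mu$, so some $k_* \le \lceil|B_{R_1}|/\mu\rceil$ must satisfy $\|w_{k_*+1}\|_{L^2(B_{R_1})}^2 < \mu_*$. Moser then gives $f_{\epsilon,\delta}(x) \ge c_1\theta^{k_*+1}/2 \gtrsim_R 1$ for $x \in B_{R_1/2} \supset B_R$, uniformly in $\epsilon \in (0,\epsilon_0(R))$ and $\delta \in (0,1)$, and passing $\delta \to 0$ via Lemma~\ref{lem:deltatozero} produces \eqref{eq:IVTtolbd}.

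To upgrade to \eqref{eq:lbd} under Assumption~\ref{Assumption:positivity}, only $\epsilon \in [\epsilon_*(R), 1)$ remains. Each such $f_\epsilon$ is smooth and strictly positive, hence bounded below on the compact set $\bar B_R$; standard hypoelliptic local estimates applied on the compact interval $[\epsilon_*, 1]$, on which $L_\epsilon$ is uniformly non-degenerate, give continuity of $\epsilon \mapsto f_\epsilon$ into $C(\bar B_R)$, and compactness then yields $\inf_{\epsilon \in [\epsilon_*,1)} \min_{\bar B_R} f_\epsilon > 0$. The main technical obstacle I anticipate is the very first step: engineering $\phi_k$ so that convexity (for $(\epsilon\delta\Delta + L_\epsilon^*) w_k \ge 0$), the quantitative bound $\phi_k'' \lesssim 1/\sqrt\epsilon$ (for the upper estimate in \eqref{eq:IVTassumption}), and sharp control on the level sets used in the disjointness count coexist uniformly as $\epsilon \to 0$; everything downstream is a clean iteration provided this approximation is set up correctly.
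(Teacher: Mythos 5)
Your proposal reproduces the correct architecture: reduce to the regularized density $f_{\epsilon,\delta}$, get a level-set lower bound $|\{f_{\epsilon,\delta} \ge c_1\}\cap B_{R_1}| \ge c_2$ from the uniform $L^\infty$ bound and the moment bound, build a de Giorgi ladder $w_k$ of convex truncations that are subsolutions, iterate via Lemma~\ref{lem:IVT} using disjointness of the level bands, and conclude via Lemma~\ref{lem:Moser}; then upgrade to all $\epsilon$ by compactness over $[\epsilon_*,1)$ under Assumption~\ref{Assumption:positivity}. This matches the paper (which executes exactly this scheme via Lemmas~\ref{lem:phieps}, \ref{lem:wkprop}, and \ref{lem:wkiteration}, and passes $\delta\to 0$ via Lemma~\ref{lem:deltatozero}; the $\epsilon\ge\epsilon_*$ range is Lemma~\ref{lem:largeps}). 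Your identity for $(\epsilon\delta\Delta + L_\epsilon^*)\phi(f_{\epsilon,\delta})$ and the sign argument ($\phi'',\ \phi-u\phi' \ge 0$) are also correct.

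The genuine gap is precisely the step you flag as the anticipated obstacle, and it is not cosmetic. Your claim that smoothing $u\mapsto (1-\theta^{-k}u/c_1)_+$ at scale $\sqrt{\epsilon}$ in $u$ yields $\phi_k'' \lesssim \epsilon^{-1/2}$ is inconsistent: the first derivative of the unsmoothed function jumps by $\theta^{-k}/c_1$, so a mollification at width $\sqrt{\epsilon}$ in $u$ gives $\phi_k'' \sim \theta^{-k}/(c_1\sqrt{\epsilon})$, which violates the upper inequality in \eqref{eq:IVTassumption} once $k\ge 1$. Enlarging the smoothing scale to a $k$-dependent $\theta^{-k}\sqrt{\epsilon}/c_1$ restores the $\phi_k''\lesssim \epsilon^{-1/2}$ bound but then breaks the level-set inclusions $\{f_{\epsilon,\delta}\ge c_1\}\subseteq\{w_k=0\}$ and $\{w_{k+1}>0\}\subseteq\{w_k\ge 1-\theta\}$ on which the disjointness count rests, unless $\epsilon$ is taken small in a way that must be controlled in $k$. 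The paper resolves the tension by two deliberate choices that your write-up omits: it mollifies a single universal $\phi_\epsilon$ approximating $z\mapsto z_+$ at the \emph{weaker} scale $\epsilon^{1/4}$ (so $\|\phi_\epsilon''\|_{L^\infty}\lesssim \epsilon^{-1/4}$, not $\epsilon^{-1/2}$), and applies it to the rescaled variable $\tilde w_{k,\theta}=1-(4/\theta)^k f_{\epsilon,\delta}/c_1$ rather than to $f_{\epsilon,\delta}$ directly. The chain rule then produces a factor $(4/\theta)^{2k}$ multiplying $\phi_\epsilon''$, and the leftover $\epsilon^{-1/4}$ buffer below the target $\epsilon^{-1/2}$ is what absorbs $(4/\theta)^{2k_*}$ by taking $\epsilon < \epsilon_*(k_*,\theta,R)$ (Lemma~\ref{lem:wkprop}); the base $4/\theta$ rather than $\theta^{-1}$ is what makes the mollified level-set inclusions \eqref{eq:wk1}--\eqref{eq:wk3} hold for all $\epsilon<1/16$. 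Crucially, the order of quantifiers is then: apply Lemma~\ref{lem:IVT} to fix $\mu,\theta_*,\epsilon_0$; set $K \gtrsim |B_R|/\mu$; only then choose $\epsilon_*$ small depending on $K$ (Lemma~\ref{lem:wkiteration}). Without the $\epsilon^{1/4}$ (as opposed to $\sqrt{\epsilon}$) smoothing there is no room to run this chain, so the step as you have written it would fail.
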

To obtain \eqref{eq:lbd} from \eqref{eq:IVTtolbd}, we use that since $\epsilon_*$ in Lemma~\ref{lem:IVTtolbd} depends only on $R$ it suffices to show
\begin{equation}
\inf_{\epsilon \in [\epsilon_*, 1)} \inf_{|x| \le R}f_\epsilon(x) \gtrsim_{R,\epsilon_*} 1.
\end{equation}
Such a bound follows in a straightforward way provided that Assumption~\ref{Assumption:positivity} is satisfied (see Lemma~\ref{lem:largeps}).

\subsection{Time-dependent problem} \label{sec:OutlineTD}

In this section we prove Theorem~\ref{thrm:GeoErg}, with the needed intermediate results stated as lemmas to be proven in Section \ref{sec:GeoErg}. 

We quantify the spectral gap for $\Pt_t^\epsilon$ using a Harris theorem that follows easily from the techniques in \cite{HMHarris}. For a Polish space $\mathcal{X}$ and any bounded, Borel measurable function $f:\mathcal{X} \to \R$ we write 
\begin{equation}
\|f\|_{\infty} := \sup_{x \in \mathcal{X}}|f(x)|.
\end{equation}
Recall also the notations $\|\cdot\|_{V}$ and $\|\cdot\|_{TV,V}$ defined in (\ref{eq:defCV}) and (\ref{eq:defTVV}), respectively. The Harris theorem then reads as follows.

\begin{lemma}[Harris]\label{thrm:Harris}
	Let $\Pt$ be a Markov semigroup on a Polish space $\mathcal{X}$. Suppose that there exists a continuous function $V:\mathcal{X} \to [0,\infty)$ such that: 
	\begin{itemize}
		\item There exists $\bar{b} \ge 1$ such that for all $x \in \mathcal{X}$ there holds 
		\begin{equation} \label{eq:Harrisdrift}
		\Pt V(x) \le \frac{1}{2}V(x) + \bar{b}.
		\end{equation}
		\item There exists $\eta \in (0,2)$ such that if $x,y \in \mathcal{X}$ satisfy $V(x) + V(y) \le 10\bar{b}$ and $f:\mathcal{X} \to \R$ is a bounded, Borel measurable function with $\|f\|_\infty \le 1$, then
		\begin{equation}  \label{eq:irreducible0}
	    |\Pt f(x) - \Pt f(y)| \le 2-\eta.
		\end{equation}
	\end{itemize}
	Then, there exists $c \in (0,1)$ satisfying $c \gtrsim \eta$ and $K \ge 1$ depending only on $\bar{b}$ such that for any two measures $\mu, \nu \in \mathcal{M}(\mathcal{X})$ with 
	$\int_\mathcal{X} V(x) \mu(dx) + \int_\mathcal{X} V(x)\nu(dx) < \infty$
	and any $n \in \N$ there holds 
	$$ \|(\mathcal{P}^*)^{n}(\mu - \nu)\|_{TV,V} \le K (1-c)^n \|\mu - \nu\|_{TV,V}. $$
	As a consequence, $\mathcal{P}$ can admit only one stationary measure, and if $\mu_\infty$ is the unique stationary measure there exists $\bar{K}\ge 1$ depending only on $\bar{b}$ so that for any $n \in \N$ and measurable function $f: \R^d \to \R$ with $\|f\|_{V} < \infty$ there holds 
	$$ \|\Pt^{n} f - \mu_\infty(f)\|_{V} \le \bar{K}(1-c)^n \|f - \mu_\infty(f)\|_{V}. $$
\end{lemma}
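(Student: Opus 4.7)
The plan is to follow the semi-metric approach of Hairer-Mattingly \cite{HMHarris}. Introduce on $\mathcal{X}$ the distance-like function
$$ \rho_\beta(x,y) = (2 + \beta V(x) + \beta V(y))\, \1_{x\neq y} $$
for a $\beta > 0$ to be chosen, and consider the associated Wasserstein-$1$ distance $W_{\rho_\beta}$ on $\mathcal{M}(\mathcal{X})$. A direct duality computation shows that $W_{\rho_\beta}(\mu,\nu)$ and $\|\mu-\nu\|_{TV,V}$ are equivalent, with constants depending only on $\beta$, on pairs of probability measures with finite $V$-moment (one inclusion uses that $\|f\|_V \leq 1$ implies $f$ is $1$-Lipschitz with respect to $\rho_1$; the other uses that a $1$-Lipschitz function with respect to $\rho_\beta$ grows at most linearly in $\beta V$ up to an additive constant, which is killed on mean-zero signed measures). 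It therefore suffices to establish that $\mathcal{P}$ is a strict contraction in $W_{\rho_\beta}$ with factor $\alpha$ satisfying $1-\alpha \gtrsim \eta$.

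The central estimate is the pointwise contraction
$$ W_{\rho_\beta}(\mathcal{P}(x,\cdot), \mathcal{P}(y,\cdot)) \leq \alpha\, \rho_\beta(x,y) \qquad \text{for all } x,y \in \mathcal{X}, $$
which I would split into two cases. When $V(x) + V(y) > 10\bar{b}$, the independent coupling together with the drift condition (\ref{eq:Harrisdrift}) gives an upper bound of $2 + \tfrac{\beta}{2}(V(x) + V(y)) + 2\beta\bar{b}$, which is dominated by $\alpha\,\rho_\beta(x,y)$ provided $2(1-\alpha) \leq (10\alpha - 7)\beta\bar{b}$ (using $V(x)+V(y) > 10\bar{b}$ in the worst case). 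When $V(x) + V(y) \leq 10\bar{b}$, the minorization condition (\ref{eq:irreducible0}) is equivalent, via the dual characterization of total variation, to the existence of a coupling $\Gamma_{x,y}$ of $\mathcal{P}(x,\cdot)$ and $\mathcal{P}(y,\cdot)$ with $\Gamma_{x,y}(\{x' \neq y'\}) \leq 1-\eta/2$, yielding
$$ \int \rho_\beta(x',y')\, d\Gamma_{x,y}(x',y') \leq 2(1-\eta/2) + \tfrac{\beta}{2}(V(x)+V(y)) + 2\beta\bar{b}, $$
which is at most $\alpha\, \rho_\beta(x,y)$ as soon as $\alpha \geq 1 - \eta/2 + \beta\bar{b}$. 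The two sets of constraints can be balanced simultaneously by taking, e.g., $\beta\bar{b} = \eta/4$ and $\alpha = 1-\eta/4$, producing $c := 1-\alpha \gtrsim \eta$ (for $\eta$ below a fixed universal threshold; the remaining regime reduces to this one since the hypothesis is monotone in $\eta$).

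Iterating the pointwise contraction and invoking the norm equivalence gives the first conclusion, which in particular forces uniqueness of any stationary measure $\mu_\infty$ since two such measures would each be fixed by the strict contraction. For the final $\|\cdot\|_V$ bound, I would integrate (\ref{eq:Harrisdrift}) against $\mu_\infty$ to obtain $\mu_\infty(V) \leq 2\bar{b}$, whence $\|\delta_x - \mu_\infty\|_{TV,V} \lesssim 1 + V(x)$; writing
$$ \mathcal{P}^n f(x) - \mu_\infty(f) = \int (f - \mu_\infty(f))\, d((\mathcal{P}^*)^n \delta_x - \mu_\infty) $$
and applying the first conclusion then yields the pointwise estimate $|\mathcal{P}^n f(x) - \mu_\infty(f)| \leq \bar{K}(1-c)^n \|f-\mu_\infty(f)\|_V (1+V(x))$, which is the desired bound after dividing by $1+V(x)$. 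The main technical nuisance is arranging the Case A and Case B constraints so that they are simultaneously satisfiable with a contraction rate $1-\alpha$ linear in $\eta$; this is routine once one writes the inequalities explicitly, but does require the linked choice of $\beta \sim \eta/\bar{b}$, so that the contribution of the drift on the high-$V$ region is not itself too slow.
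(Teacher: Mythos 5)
The paper provides no proof of this lemma, stating only that it follows from the techniques of \cite{HMHarris}; your proposal reproduces the Hairer--Mattingly semi-metric Wasserstein argument, which is precisely the intended route, and the two-case pointwise contraction estimate is correctly set up and resolved by the linked choice $\beta\bar{b}=\eta/4$, $\alpha=1-\eta/4$ (valid for $\eta$ below a universal threshold, with larger $\eta$ handled by monotonicity as you note).

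One discrepancy worth flagging. The lemma claims $K$ depends only on $\bar{b}$, but your norm equivalence gives $\|\mu-\nu\|_{TV,V}\le\beta^{-1}W_{\rho_\beta}(\mu,\nu)$: indeed $\|f\|_{V}\le 1$ forces $|f(x)-f(y)|\le 2+V(x)+V(y)\le\beta^{-1}\bigl(2+\beta V(x)+\beta V(y)\bigr)$ for $\beta\le 1$, and no better constant than $1/\beta$ is possible. With $\beta=\eta/(4\bar{b})$ the resulting prefactor is $K\sim\bar{b}/\eta$, i.e.\ $\eta$-dependent, and this propagates to your $\bar{K}$ in the final $\|\cdot\|_{V}$ bound. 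This dependence appears unavoidable via $\rho_\beta$: taking $\beta$ larger (to shrink $1/\beta$) breaks the Case B constraint $\alpha\ge 1-\eta/2+\beta\bar{b}$, so there is no rebalancing that makes $K$ uniform in $\eta$ without sacrificing $c\gtrsim\eta$. I read this as a minor overstatement in the lemma rather than a defect of your argument (the same dependence is present in \cite{HMHarris}), and it is harmless in the one place the lemma is used (Theorem~\ref{thrm:GeoErg}, where $\eta$ is already $\epsilon$-independent), but your write-up should not assert that the constant depends only on $\bar{b}$ when your own chain of inequalities delivers $K=K(\bar{b},\eta)$.
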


Let $V$ be a uniform Lyapunov function for $\Pt_t^\epsilon$ (see Definition~\ref{def:uniformLyapunov}). By Lemma~\ref{thrm:Harris}, to prove (\ref{eq:GeoErg}) for discrete times $t = nt_*$ it is enough to show that there exists $C_* > 0$ so that if $t_* = C_*\epsilon^{-1}$ then (\ref{eq:Harrisdrift}) and (\ref{eq:irreducible0}) hold with $\Pt = \Pt_{t_*}^\epsilon$ and constants $\bar{b}$, $\eta$ that do not depend on $\epsilon$. For the drift condition, directly from \eqref{eq:AppMoment} we have
\begin{equation} \label{eq:drift}
\Pt_t^\epsilon V(x) = e^{-\kappa \epsilon t}V(x) + \frac{b}{\kappa} : = e^{-\kappa \epsilon t}V(x) + \bar{b},
\end{equation}
and so $\Pt_{t_*}^\epsilon$ satisfies (\ref{eq:Harrisdrift}) uniformly in $\epsilon$ for $C_*$ sufficiently large. As alluded to in Section~\ref{sec:Intro}, establishing (\ref{eq:irreducible0}) is much more difficult. First, notice that by duality and the fact that $V$ and $\bar{b}$ do not depend on $\epsilon$, it would suffice to show that for every $R \ge 1$ there exists $\eta(R) \in (0,2)$ such that whenever $|x|, |y| \le R$ and $C_*$ is sufficiently large depending only on $R$ there holds
\begin{equation} \label{eq:irreducible}
\|(\Pt^\epsilon_{t_*})^*\delta_x - (\Pt^\epsilon_{t_*})^*\delta_y\|_{TV} \le 2 - \eta.
\end{equation}
If $C_*$ could be chosen depending on $\epsilon$, then using the dissipative structure of \eqref{eq:SDE}, the regularizing properties of $(\Pt_t^{\epsilon})^*$, and the fact that there is a nonzero probability that the driving Wiener process remains small over $[0,t_*]$, one can prove that \eqref{eq:irreducible} is satisfied for some $\eta > 0$ that depends badly on $\epsilon$. What makes Theorem~\ref{thrm:GeoErg} and its proof novel is that we are forced to show that $\eta$ and $C_*$ can actually be chosen independently of $\epsilon$. To this end we will prove that
\begin{equation} \label{eq:irreducible2}
\lim_{t\to \infty}\sup_{\epsilon \in (0,1),\|f\|_{\infty}\le 1}\|\Pt_{t \epsilon^{-1}}^\epsilon f - \mu_\epsilon(f)\|_{L^\infty(B_R)} = 0. 
\end{equation}

Our proof of (\ref{eq:irreducible2}) is based on a two step procedure that makes crucial use of Theorem~\ref{thrm:stationary}. First, we show that $\Pt_t^\epsilon$ satisfies the quantitative $L^2_{\mu_\epsilon} \to L^\infty$ ``parabolic'' regularization estimate stated in Lemma~\ref{lem:ParReg}. For fixed $\epsilon$, similar regularization estimates can be obtained by several known methods, hence the main challenge in proving Lemma~\ref{lem:ParReg} is obtaining uniformity in $\epsilon \in (0,1)$.
As mentioned earlier, this requires us to derive a slightly subtle space-time H\"{o}rmander inequality better adapted to a parabolic framework (Lemma~\ref{lem:Horparabolic}). With such an inequality in hand, one can apply a suitably adapted Moser iteration.
However, the Moser iteration controls the $L^\infty(B_R)$ norm in terms of the time integral of the $L^2(B_{2R})$ norm. In order to estimate this in terms of the initial condition, we crucially apply the uniform lower bounds in Theorem \ref{thrm:stationary} and the monotonicity of $\Pt_t^\epsilon$ with respect to $L^2_{\mu_\epsilon}$; see Section \ref{sec:ParRegul} for details.    

Given Lemma~\ref{lem:ParReg}, the proof of (\ref{eq:irreducible2}) reduces to showing that
\begin{equation} \label{eq:Outlineweakpoinc1}
\lim_{t \to \infty} \sup_{\epsilon \in (0,1), \|f\|_{\infty} \le 1}\|\Pt_{t \epsilon^{-1}}^\epsilon f - \mu_\epsilon(f)\|_{L^2_{\mu_\epsilon}} = 0.
\end{equation}
In other words, we need to prove that $\Pt_t^\epsilon$ satisfies a $\|\cdot\|_\infty \to \|\cdot\|_{L^2_{\mu_\epsilon}}$  decay estimate with timescale $\epsilon^{-1}$ on functions which are mean zero with respect to $\mu_\epsilon$. This task is much more tractable than proving, say, an $L^2_{\mu_\epsilon} \to L^2_{\mu_\epsilon}$ exponential decay estimate for $\Pt_t^\epsilon$ (which would require a Poincar\'{e} inequality for $\mu_\epsilon$; see e.g. \cite{Bak08}), and is provided by the following lemma.  

\begin{lemma}\label{lem:Linfty2L2decay}
Under Assumption~\ref{Assumption:positivity}, there exists a function $\psi:[0,\infty) \to (0,\infty)$ with $\lim_{t\to \infty} \psi(t) = 0$ such that for every bounded, Borel measurable function $f:\R^d \to \R$ and $\epsilon \in (0,1)$ there holds
	\begin{equation} \label{eq:Linfty2L2decay}
	\|\Pt_t f-\mu_\epsilon(f)\|_{L^2_{\mu_\epsilon}}^2 \le \psi(\epsilon t)\|f-\mu_\epsilon(f)\|_{L^\infty}^2.
	\end{equation}
\end{lemma}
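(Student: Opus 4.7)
The plan is to deduce Lemma~\ref{lem:Linfty2L2decay} from the hypoelliptic weak Poincar\'e inequality (Lemmas~\ref{lem:poincinterp} and~\ref{lem:weakpoinc}) via the classical Liggett--R\"ockner--Wang scheme for converting weak functional inequalities into subgeometric decay rates. As a first step I would reduce to the mean-zero case by replacing $f$ with $f - \mu_\epsilon(f)$, which is permissible since $\Pt_t^\epsilon$ preserves constants, so it suffices to show $\|\Pt_t^\epsilon f\|_{L^2_{\mu_\epsilon}}^2 \leq \psi(\epsilon t)\|f\|_\infty^2$ whenever $\mu_\epsilon(f) = 0$. Next I would record the $L^2_{\mu_\epsilon}$ dissipation identity: a direct computation of the carr\'e du champ for $L_\epsilon$ shows $\Gamma(g,g) = \epsilon\sum_{j=1}^r (Z_j g)^2$, with the drifts $-Ax\cdot\grad$, $-\epsilon^\alpha Bx\cdot\grad$, and $-N\cdot\grad$ contributing nothing because they appear antisymmetrically in $L_\epsilon(g^2) - 2gL_\epsilon g$. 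Combined with the stationarity $\int L_\epsilon h\, d\mu_\epsilon = 0$, this yields
\begin{equation} \label{eq:planDiss}
\frac{d}{dt}\|\Pt_t^\epsilon f\|_{L^2_{\mu_\epsilon}}^2 = -2\epsilon \sum_{j=1}^r \|Z_j \Pt_t^\epsilon f\|_{L^2_{\mu_\epsilon}}^2.
\end{equation}

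The heart of the argument relies on using Lemmas~\ref{lem:poincinterp} and~\ref{lem:weakpoinc} to obtain an inequality of the form
$$\|g\|_{L^2_{\mu_\epsilon}}^2 \;\leq\; \beta(s)\sum_{j=1}^r \|Z_j g\|_{L^2_{\mu_\epsilon}}^2 \;+\; s\|g\|_\infty^2$$
valid for every $s \in (0,1]$ and every smooth $g$ with $\mu_\epsilon(g) = 0$, where $\beta\colon(0,1]\to[1,\infty)$ is a nonincreasing function \emph{independent of $\epsilon$}. Applying this with $g = \Pt_t^\epsilon f$, using the $L^\infty$ contractivity $\|\Pt_t^\epsilon f\|_\infty \leq \|f\|_\infty$, and substituting~\eqref{eq:planDiss}, I obtain for $u(t) := \|\Pt_t^\epsilon f\|_{L^2_{\mu_\epsilon}}^2$ the differential inequality
$$u(t) \;\leq\; -\frac{\beta(s)}{2\epsilon}\, u'(t) \;+\; s\|f\|_\infty^2, \qquad \text{equivalently} \qquad u'(t) \leq -\frac{2\epsilon}{\beta(s)}\bigl(u(t) - s\|f\|_\infty^2\bigr).$$
Gronwall's inequality together with $u(0) \leq \|f\|_\infty^2$ (a probability measure) gives $u(t) \leq \|f\|_\infty^2 \bigl(e^{-2\epsilon t/\beta(s)} + s\bigr)$ for every admissible $s$, and setting
$$\psi(\tau) := \inf_{s \in (0,1]}\!\left( e^{-2\tau/\beta(s)} + s \right)$$
delivers the required bound. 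The function $\psi$ depends only on the product $\epsilon t$ (as claimed in the statement) and tends to zero as $\tau \to \infty$: fix any small $s_0$, wait until $\tau$ is large compared to $\beta(s_0)$, and let $s_0 \to 0$.

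I expect the main obstacle to be not the differential-inequality step above but rather securing the uniform-in-$\epsilon$ hypoelliptic weak Poincar\'e inequality that drives it. The underlying qualitative rigidity is Lemma~\ref{lem:rigidity}: any $g$ with $Z_j g = 0$ for $j=1,\ldots,r$ and $N\cdot \grad g = 0$ is forced to be constant. Promoting this to a quantitative estimate with an $\epsilon$-independent $\beta(s)$ requires a compactness--rigidity strategy of the same flavor as Lemma~\ref{lem:IVT}: split $g$ into behavior on a large ball $B_R$ and a tail, treat the tail with the Gaussian upper bound~\eqref{eq:Gaussubd} so that its contribution is absorbed into $s\|g\|_\infty^2$, and on $B_R$ use the lower bound~\eqref{eq:lbd} to switch from $L^2_{\mu_\epsilon}$ to $L^2$, apply a local hypoelliptic Poincar\'e inequality derived from the uniform H\"ormander inequality (Lemma~\ref{lem:H1hypembed}), and rule out blow-up of the constant as $\epsilon \to 0$ by a contradiction argument invoking Lemma~\ref{lem:rigidity} on the limit.
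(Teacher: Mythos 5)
Your broad outline --- dissipation identity, weak Poincar\'e inequality, then the Liggett--R\"ockner--Wang ODE step --- is the right skeleton and matches the paper's. The dissipation identity \eqref{eq:planDiss} is correct and is exactly what the paper uses. However, there is a concrete gap in the central step: the weak Poincar\'e inequality you write down,
$$\|g\|_{L^2_{\mu_\epsilon}}^2 \leq \beta(s)\sum_{j=1}^r \|Z_j g\|_{L^2_{\mu_\epsilon}}^2 + s\|g\|_\infty^2,$$
is a \emph{single-time} (static) estimate, and neither Lemma~\ref{lem:poincinterp} nor Lemma~\ref{lem:weakpoinc} provides it. What those lemmas actually give is inherently \emph{time-averaged}: both sides of the inequality are integrated over a unit-length time window. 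This is not a cosmetic difference. The compactness step in the proof of Lemma~\ref{lem:poincinterp} uses the \emph{parabolic} H\"ormander inequality for the family $\{\partial_t + Z_{0,\epsilon}, Z_1, \ldots, Z_r\}$; the time direction is what furnishes control on $\|(\partial_t + Z_{0,\epsilon})(\Pt_t^\epsilon f)\|_{\mathscr{X}^*}$. If you try to extract a static version at fixed $t$, you would instead invoke Lemma~\ref{lem:H1hypembed} (as your last paragraph suggests), and that requires a bound on $\|Z_{0,\epsilon}(\Pt_t^\epsilon f)\|_{\mathscr{X}^*}$ at a single time. The only handle on this is the Kolmogorov equation $\partial_t \Pt_t^\epsilon f = L_\epsilon \Pt_t^\epsilon f$, i.e.\ $Z_{0,\epsilon}\cdot\grad(\Pt_t^\epsilon f) = \epsilon\sum_j Z_j^2 \Pt_t^\epsilon f - \partial_t \Pt_t^\epsilon f$, and the unknown $\partial_t \Pt_t^\epsilon f$ is just $L_\epsilon \Pt_t^\epsilon f$ again, so the estimate is circular. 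Without time integration there is no way to close this, so the static weak Poincar\'e inequality you invoke is not available, and compactness of a bounded sequence with only $\|g_n\|_\infty$ and $\sum_j\|Z_j g_n\|_{L^2}$ controlled genuinely fails (the $Z_j$ alone do not span).

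The consequence is that the Gr\"onwall step also has to change. The paper integrates \eqref{eq:planDiss} over $(t,t+1)$, feeds in the time-averaged weak Poincar\'e inequality (which relates $\int_{t+1/4}^{t+3/4} E(\tau)\,d\tau$ to $\sum_j\int_t^{t+1}\|Z_j\Pt_\tau^\epsilon f\|_{L^2_{\mu_\epsilon}}^2 d\tau$ plus $s\|f-\mu_\epsilon(f)\|_\infty^2$), and then uses the monotonicity of $E$ to bound $\int_{t+1/4}^{t+3/4}E(\tau)d\tau \ge \tfrac{1}{2}E(t+1)$, yielding the discrete recurrence $E(t+1)\le (1+\epsilon/\beta(s))^{-1}\bigl(E(t) + \tfrac{2\epsilon s}{\beta(s)}\|f-\mu_\epsilon(f)\|_\infty^2\bigr)$, iterated over integer times. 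Your definition of $\psi$ by an infimum over $s$ is essentially the same as the paper's $\bar\psi$, so once you replace the continuous Gr\"onwall step with this discrete iteration (on unit time increments), the rest of your argument goes through.
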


From essentially the ODE computation in [Theorem 2.1, \cite{RockWang01}], we can prove Lemma~\ref{lem:Linfty2L2decay} provided we can show that $\Pt_t^\epsilon$ satisfies a type of uniform-in-$\epsilon$, hypoelliptic weak Poincar\'{e} inequality. This is accomplished in Lemmas~\ref{lem:poincinterp} and \ref{lem:weakpoinc} using a compactness-rigidity argument that relies on Lemma~\ref{lem:rigidity}.

We now prove Theorem~\ref{thrm:GeoErg} assuming Lemmas~\ref{lem:ParReg} and~\ref{lem:Linfty2L2decay}.
\begin{proof}[Proof of Theorem~\ref{thrm:GeoErg}]
	We first prove the result for discrete times. That is, we show that there exists $K,\delta, C_* > 0$ that do not depend on $\epsilon$ so that if $t_* = C_*\epsilon^{-1}$ then for every $n \in \N$, $\epsilon \in (0,1)$, and measurable $f:\R^d \to \R$ with $\|f\|_{V} < \infty$ there holds
	\begin{equation} \label{eq:GeoErgDisc}
	\|\Pt_{nt_*}^\epsilon f - \mu_\epsilon(f)\|_{V} \le Ke^{-\delta \epsilon nt_*}\|f-\mu_\epsilon(f)\|_{V}.
	\end{equation} 
	From Lemma~\ref{thrm:Harris} and the discussion proceeding it we just need to prove (\ref{eq:irreducible2}). Fix $R \ge 1$. By Lemma~\ref{lem:ParReg} and the monotonicity of $\Pt_t^\epsilon$ with respect to $L^2_{\mu_\epsilon}$ we have, for any $t \ge 2$, 
	$$ 
	\sup_{\epsilon \in (0,1),\|f\|_{\infty}\le 1}\|\Pt_{t \epsilon^{-1}}^\epsilon f - \mu_\epsilon(f)\|_{L^\infty(B_R)} \leqc_R \sup_{\epsilon \in (0,1), \|f\|_{\infty}\le 1}\|\Pt^\epsilon_{\frac{t}{2}\epsilon^{-1}}f-\mu_\epsilon(f)\|_{L^2_{\mu_\epsilon}}.
	$$
Hence, applying Lemma~\ref{lem:Linfty2L2decay} we obtain
\begin{align*}
	\limsup_{t\to \infty} \sup_{\epsilon \in (0,1), \|f\|_{\infty}\le 1} \|\Pt^\epsilon_{t\epsilon^{-1}}f-\mu_\epsilon(f)\|_{L^\infty(B_R)} 	& \leqc \limsup_{t\to \infty}\sup_{\epsilon \in (0,1), \|f\|_{\infty}\le 1}\sqrt{\psi(t/2)}\|f - \mu_\epsilon(f)\|_{L^\infty} \\
    & \leqc \limsup_{t\to\infty}\sqrt{\psi(t/2)} = 0,
	\end{align*}
	as desired.	
	
	It only remains to upgrade (\ref{eq:GeoErgDisc}) to continuous time. Let $t \ge 0$ and choose $n \in \N$ so that $t \in [nt_*, (n+1)t_*)$. Using the semigroup property, there exists $s \in [0,t_*)$ so that 
	$$\|\Pt_t^\epsilon f - \mu_\epsilon(f)\|_{V} = \|\Pt_s^\epsilon(\Pt_{nt_*}^\epsilon f - \mu_\epsilon(f))\|_{V} \le Ke^{-\delta \epsilon n t_*}\|\Pt_s^{\epsilon}\|_{V \to V}\|f- \mu_\epsilon(f)\|_{V}. $$
	Whenever $n \ge 1$ one has 
	$ \delta n \epsilon t_* \ge \epsilon (\delta/2) t$, and so (\ref{eq:GeoErg}) follows provided
	\begin{equation} \label{eq:PtVbound}
	\sup_{0 \le s \le C_* \epsilon^{-1}}\|\Pt_s^\epsilon\|_{V \to V} \leqc_{C_*} 1.
	\end{equation}
	This is proven at the beginning of Section~\ref{sec:GeoErg}; see Lemma~\ref{lem:PtVbound}.
\end{proof}

\section{Uniform H\"{o}rmander inequalities} \label{sec:Prelim}
\label{sec:PrelimHineq}

In this section we state and sketch proofs of various uniform H\"{o}rmander type inequalities. Throughout the entire section, $\Omega \subset \R^d$ denotes an open, bounded set and $K \subset \Omega$ is compact. The proof techniques in this section are not used elsewhere in the paper, and the reader interested only in statements of the H\"{o}rmander inequalities to be used can safely skip to Section~\ref{sec:PrelimHineqStat}. 

\subsection{Notation and basic facts} \label{sec:HineqDef}
We begin with some notation and basic facts that will be needed in the proof sketches to follow. 

We use the notation in \cite{H67} for the $L^2$-based H\"{o}lder regularity of a function $u$ along a vector field $X \in T(\Omega)$. For any $0 < t_0 \ll 1$ and $s \in (0,1]$ we write 
\begin{equation} \label{eq:DefFiniteDiff1}
|u|^{t_0}_{X,s} = \sup_{|t| \le t_0}|t|^{-s}\|e^{tX}u - u\|_{L^2}, \quad u \in C_0^\infty(K).
\end{equation} 
This is well defined since $e^{tX}$ maps $C_0^\infty(K)$ into $C_0^\infty(\Omega)$ provided that $|t|$ is sufficiently small depending only on $K$, $\Omega$, and the derivatives of $X$. We also define an isotropic $s$-norm by
\begin{equation}  \label{eq:DefFiniteDiff2}
|u|_s^{t_0} = \sup_{|h| \le t_0}|h|^{-s}\|u(\cdot + h) - u(\cdot)\|_{L^2}, \quad u \in C_0^\infty(K).
\end{equation}
In Section~\ref{sec:HineqTime} we will need to consider differential operators of the form $\epsilon \partial_t + X$ for $\epsilon > 0$ and functions that depend on time. In this situation, we write 
\begin{equation} \label{eq:DefFiniteDiff3}
|u|_{\epsilon\partial_t + X,s}^{t_0} = \sup_{|\tau| \le t_0}|\tau|^{-s}\|e^{\tau X}u(\cdot + \epsilon \tau,\cdot) - u(\cdot,\cdot)\|_{L^2}, \quad u \in C_0^\infty((a,b)\times K).
\end{equation}

The seminorm $|\cdot|^{t_0}_{s}$ is related to the usual homogeneous Sobolev spaces by the equivalence
\begin{equation} \label{eq:Besov}
\|u\|_{\dot{B}_{p,r}^s} \approx_{s,p,r} \left\|\frac{\|u(\cdot - y) - u(\cdot)\|_{L^p}}{|y|^s}\right\|_{L^r(\R^d;|y|^{-d}dy)}, 
\end{equation}
which holds for any $s \in (0,1)$ and $(p,r) \in [1,\infty]^2$; see e.g. [Theorem 2.36, \cite{BCD11}]. Here, $\dot{B}_{p,r}^s$ denotes the usual homogeneous Besov space. We refer to \cite{BCD11} for definitions and basic results. A straightforward consequence of (\ref{eq:Besov}) and $\|u\|_{\dot{B}^s_{2,\infty}} \le \|u\|_{\dot{B}^s_{2,2}} \approx \|u\|_{\dot{H}^s}$ is that for any $s \in (0,1)$ and $s' > s$ there holds
\begin{equation} \label{eq:finitediff2}
|u|_{s}^{t_0} \leqc_{s} \|u\|_{H^{s}} \leqc_{s'-s} C(t_0,s) \left(\|u\|_{L^2} + |u|^{t_0}_{s'} \right), \quad u \in C_0^\infty(K),
\end{equation}
where $C$ is nonincreasing in $|t_0|$.

As usual, define $\text{ad}X (Y): = [X,Y]$ for $X, Y \in T(\Omega)$. Then, for $\{(X_j,s_j)\}_{j=0}^r \subseteq T(\Omega) \times (0,1]$ and a multi-index $I = (i_1,\ldots,i_k)$, $0 \le i_j \le r$ we write 
\begin{equation} \label{eq:indexnotation}
\begin{aligned} 
X_I &= \text{ad}X_{i_k} \text{ad}X_{i_{k-1}}\ldots \text{ad}X_{i_2} X_{i_1}, \\ 
\frac{1}{s(I)} & = \sum_{j=1}^k \frac{1}{s_{i_j}}, \quad m(I) = \frac{1}{s(I)}.
\end{aligned}
\end{equation}

\subsection{Time-independent H\"{o}rmander inequalities} \label{sec:PrelimHineqStat}

We begin by defining the regularized H\"{o}rmander norm pairs natural for studying the operator $\epsilon \delta \Delta + L_\epsilon^*$. For $\{X_j\}_{j=1}^r\subseteq T(\Omega)$, an open set $\Omega' \subseteq \Omega$, and $\delta \in [0,1]$ we define 
\begin{equation} \label{eq:defXdelta}
\begin{aligned} 
\|g\|_{\mathscr{X}_\delta(\Omega')}& : = \|g\|_{L^2(\Omega')} + \sum_{j=1}^r \|X_j g\|_{L^2(\Omega')} + \sqrt{\delta}\|\grad g\|_{L^2(\Omega')}, & \\ \|g\|_{\mathscr{X_\delta^*}(\Omega')} &: = \sup_{\varphi \in C_0^\infty(\Omega'), \|\varphi\|_{\mathscr{X_\delta}(\Omega')}\le 1}\int_{\Omega'} \varphi g;
\end{aligned}
\end{equation}
\begin{equation}\label{eq:defXdeltainf}
\begin{aligned} 
 \|g\|_{\mathscr{\tilde{X}_\delta}(\Omega')}& : = \|g\|_{\mathscr{X_\delta}(\Omega')} + \|g\|_{L^\infty(\Omega')}, \\
 \|g\|_{\mathscr{\tilde{X}_\delta^*}(\Omega')}&:=\sup_{\varphi \in C_0^\infty(\Omega'), \|\varphi\|_{\mathscr{\tilde{X}_\delta}(\Omega')}\le 1}\int_{\Omega'} \varphi g.
\end{aligned}
\end{equation}
Typically, the functions $g$ we consider have compact support in $\Omega$ and $\Omega' = \Omega$. In this case, we do not indicate any domain in the notation. Also, by an abuse of notation, throughout the paper we will write ($\mathscr{X}_\delta$, $\mathscr{X_\delta^*}$), ($\mathscr{X}$, $\mathscr{X}^*$), etc., regardless of whether the vector fields involved are a general collection $\{X_j\}_{j=1}^r \subseteq T(\Omega)$ or the specific vector fields $\{Z_j\}_{j=1}^r \subseteq T(\R^d)$ from \eqref{eq:L*} since the meaning will always be clear from context. 

The following lemma is a generalized and quantitative version of Theorem~\ref{lem:H1hypembed}. It holds uniformly in the regularization parameter $\delta$ and is indifferent to whether or not $L^\infty$ is included in the H\"{o}rmander norm. The proof we give is a straightforward adaptation of the techniques from \cite{H67}. Recall the terminology from Definition~\ref{def:uniformHor}. 
\begin{lemma}[Quantitative H\"{o}rmander inequality] \label{lem:HineqOrigQuant}
	Let $\Omega \subset \R^d$ be an open, bounded set, $K \subset \Omega$ be compact, and $\bar{\mathscr{X}}$ be either $\mathscr{X_\delta}$ or $\mathscr{\tilde{X}_\delta}$. Suppose that $\{X_j\}_{j=0}^r \subseteq T(\Omega)$ satisfies the uniform H\"{o}rmander condition on $\Omega$ with constants $(N_0,C_0) \in \N \times (0,\infty)$. There exists $s(N_0) > 0$ and a constant $C$ such that for all $u \in C_0^\infty(K)$ and $\delta \in [0,1]$ there holds 
	$$ \|u\|_{H^s} \le C\left(\|u\|_{\bar{\mathscr{X}}} + \|X_0 u\|_{\bar{\mathscr{X}^*}}\right).$$ 
    The constant $C$ depends on $\{X_j\}_{j=0}^r$ only through $r$, $N_0$, $C_0$, and an upper bound on
    $ \sum_{j=0}^r\|X_j\|_{C^k(\Omega)}$ for some $k(N_0) > 0$ sufficiently large.
\end{lemma}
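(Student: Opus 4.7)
The plan is to follow the inductive bracket scheme from \cite{H67}, but quantify each step so that the final constant depends on $\{X_j\}_{j=0}^r$ only through $r$, $N_0$, $C_0$, and finitely many $C^k$ norms, and so that all estimates are uniform in $\delta \in [0,1]$ and insensitive to whether $L^\infty$ is included in the norm pair. The overall idea is to work with the directional seminorms $|u|_{X,s}^{t_0}$ defined in \eqref{eq:DefFiniteDiff1}, establish base estimates along $X_0, X_1, \ldots, X_r$ from the hypotheses on $\|u\|_{\bar{\mathscr{X}}}$ and $\|X_0 u\|_{\bar{\mathscr{X}}^*}$, propagate them to iterated brackets via a commutator inequality, and finally convert directional control along a spanning frame into isotropic $H^s$ control using the quantitative determinant bound \eqref{eq:span}.

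For the base estimates I would first observe that $\|X_j u\|_{L^2} \le \|u\|_{\bar{\mathscr{X}}}$ for $j=1,\ldots,r$ immediately gives $|u|_{X_j,1}^{t_0} \lesssim \|u\|_{\bar{\mathscr{X}}}$ for $|t_0|$ small enough depending on $K$, $\Omega$ and finitely many derivatives of $X_j$ (this is where the $C^k$ dependence enters, and via Gronwall the constant depends only on $\|X_j\|_{C^1(\Omega)}$). The more delicate base estimate is for $X_0$: by a Kohn-type duality argument, writing
\begin{equation*}
\|e^{tX_0}u - u\|_{L^2}^2 = 2\int_0^t \langle X_0 u(\cdot), e^{-sX_0}(e^{sX_0}u - u)\rangle \, ds + O(t\|u\|_{L^2}^2),
\end{equation*}
and noting that $e^{sX_0}u - u \in C_0^\infty(\Omega)$ with $\|e^{sX_0}u - u\|_{\bar{\mathscr{X}}} \lesssim \|u\|_{\bar{\mathscr{X}}}$ uniformly for small $s$, one pairs against $X_0 u \in \bar{\mathscr{X}}^*$ to obtain
\begin{equation*}
|u|_{X_0,1/2}^{t_0} \lesssim \|u\|_{\bar{\mathscr{X}}} + \|X_0 u\|_{\bar{\mathscr{X}}^*}.
\end{equation*}
In the $\tilde{\mathscr{X}_\delta}$ case the $L^\infty$ contribution behaves identically under the flow $e^{sX_0}$ for small $s$, so the duality closes without modification; the $\sqrt{\delta}\nabla$ term only helps, uniformly in $\delta$.

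Next, I would propagate to brackets using the commutator estimate
\begin{equation*}
|u|^{t_0}_{[X,Y],\sigma} \lesssim |u|^{t_0}_{X,s_1} + |u|^{t_0}_{Y,s_2}, \qquad \frac{1}{\sigma} = \frac{1}{s_1} + \frac{1}{s_2},
\end{equation*}
which is the quantitative version of H\"ormander's iterated bracket lemma and which I would prove by expanding $e^{-tY}e^{-tX}e^{tY}e^{tX}$ to second order in $t$ and comparing to $\exp(t^2[X,Y]) + O(t^3)$. This respects the assignment $m(X_I) = \sum_{i_j}m(X_{i_j})$ from \eqref{eq:indexnotation} with $m(X_0)=2$ and $m(X_j)=1$ for $j\ge 1$, so every $Y \in V_{N_0}$ satisfies $|u|^{t_0}_{Y,s(Y)}^{t_0} \lesssim \|u\|_{\bar{\mathscr{X}}} + \|X_0 u\|_{\bar{\mathscr{X}}^*}$ with $s(Y) \ge s_* := 1/(2N_0)$ (or a similar explicit bound). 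At each stage the constants depend only on commutators of $X_0, \ldots, X_r$ up to length $N_0$, hence on finitely many $C^k$ norms for $k = k(N_0)$.

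Finally, for each $x \in \Omega$ the uniform H\"ormander condition provides $Y_1, \ldots, Y_d \in V_{N_0}$ with $|\det(Y_1(x),\ldots,Y_d(x))|^{-1} \le C_0$, and by continuity the same $d$-tuple forms a uniformly nondegenerate frame on a neighborhood whose size depends only on $C_0$ and $\|Y_i\|_{C^1}$. A standard partition of unity subordinate to a finite cover of $K$ by such neighborhoods, combined with the change-of-variables passing from directional finite differences along the $Y_i$ to isotropic finite differences (which introduces the frame's determinant in the Jacobian, bounded by $C_0$), gives
\begin{equation*}
|u|^{t_0}_{s_*} \lesssim \sum_{i=1}^{d_0} |u|^{t_0}_{Y_i,s_*} + \|u\|_{L^2},
\end{equation*}
for a finite collection $\{Y_i\} \subseteq V_{N_0}$. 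Taking $s$ slightly smaller than $s_*$ and invoking \eqref{eq:finitediff2} converts this to $\|u\|_{H^s} \lesssim \|u\|_{L^2} + |u|^{t_0}_{s_*} \lesssim \|u\|_{\bar{\mathscr{X}}} + \|X_0 u\|_{\bar{\mathscr{X}}^*}$. The main obstacle I anticipate is bookkeeping: ensuring that the commutator iteration produces a constant depending only on $r$, $N_0$, $C_0$, and a \emph{finite} number of derivatives $k = k(N_0)$, rather than on growing derivatives at each iteration. This is handled by proving the commutator estimate with an explicit dependence on $\|X\|_{C^1}, \|Y\|_{C^1}$ only, and noting that each $X_I$ with $|I|\le N_0$ is itself a polynomial in derivatives of $X_0, \ldots, X_r$ of order at most $N_0$, so the entire chain closes at $k = N_0 + 1$. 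Uniformity in $\delta$ is automatic since $\sqrt{\delta}\nabla$ contributes positively to $\bar{\mathscr{X}}$ and is never inverted anywhere in the argument.
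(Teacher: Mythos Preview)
Your overall architecture (base estimates $\to$ bracket propagation $\to$ spanning frame $\to$ isotropic $H^s$) is the right intuition, but the base estimate for $X_0$ has a genuine gap that causes the whole scheme to fail as written. You claim that $\|e^{sX_0}u - u\|_{\bar{\mathscr{X}}} \lesssim \|u\|_{\bar{\mathscr{X}}}$ uniformly for small $s$, which is what your duality argument needs to close. But this is false in general: computing $X_j(e^{sX_0}u)$ produces a commutator term of size $s\,[X_j,X_0]\cdot\nabla u + O(s^2)$, and $[X_j,X_0]$ is typically a \emph{new} direction not in $\operatorname{span}\{X_1,\ldots,X_r\}$, so $\|[X_j,X_0]u\|_{L^2}$ is not controlled by $\|u\|_{\bar{\mathscr{X}}}$. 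Hence the flow $e^{sX_0}$ is \emph{not} bounded on $\bar{\mathscr{X}}$, and you cannot extract $|u|_{X_0,1/2}^{t_0}$ from $\|X_0 u\|_{\bar{\mathscr{X}}^*}$ alone. A related problem: your bracket inequality $|u|_{[X,Y],\sigma}\lesssim |u|_{X,s_1}+|u|_{Y,s_2}$ is also too optimistic; the correct version (H\"ormander's Lemma~4.6) carries an extra isotropic remainder $|u|_\gamma$ coming from the higher-order Campbell--Baker--Hausdorff terms.

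These are not bookkeeping issues but the core difficulty of the subelliptic estimate: the $X_0$ and bracket seminorms cannot be obtained sequentially from $\|u\|_{\bar{\mathscr{X}}}+\|X_0 u\|_{\bar{\mathscr{X}}^*}$; they must be closed \emph{simultaneously} with a small isotropic seminorm that is later absorbed. The paper (following \cite{H67}, Section~5) does this by introducing the aggregate quantity $\bar{M}(u)=\|u\|_{\mathscr{X}}+\sum_{I\in\mathcal{J}}|u|_{X_I,s(I)}+|u|_\sigma$ and a regularizer $S_t$ that smooths along all bracket directions $X_I$. The duality with $X_0 u$ is then performed against $v_{t,\tau}=(e^{\tau X_0}S_t)^*(e^{\tau X_0}S_t u - S_t u)$ rather than $e^{-sX_0}(e^{sX_0}u-u)$; the point of $S_t$ is precisely that $\|v_{t,\tau}\|_{\mathscr{X}}$ \emph{is} controlled by $\bar{M}(u)$ (this is [(5.16),\,\cite{H67}]), because the dangerous commutators $[X_j,X_0]$ now hit a function already regularized in those directions. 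The $\delta_1\|\nabla v_{t,\tau}\|_{L^2}$ and $\delta_2\|v_{t,\tau}\|_{L^\infty}$ contributions are then handled separately using that $S_t$, $e^{tX_0}$ and their adjoints are bounded on $H^1$ and $L^\infty$ for small $t$. To fix your argument you would need to replace the naive $X_0$ base estimate by this regularizer-based closure.
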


\begin{proof}[Proof sketch]
	Recall the definition of the norm $\mathscr{X}$ from Section~\ref{sec:OutlineMoser}. For $\delta_1, \delta_2 \in [0,1]$, and functions $g \in C_0^\infty(\Omega)$ we define the H\"{o}rmander norm pair 
	\begin{align}
	\|g\|_{\mathscr{X}_{\delta_1, \delta_2}} &:= \|g\|_{\mathscr{X}} + \delta_1\|\grad g\|_{L^2} + \delta_2 \|g\|_{L^\infty}, \quad  \|g\|_{\mathscr{X}_{\delta_1, \delta_2}^*} := \sup_{\varphi \in C_0^\infty(\Omega), \|\varphi\|_{\mathscr{X}_{\delta_1, \delta_2}}\le 1}\int \varphi g.
	\end{align}	
	Our goal is to show that uniformly in $\delta_1, \delta_2 \in [0,1]$ there holds 
	\begin{equation} \label{eq:lemA2goal}
	\|u\|_{H^s} \le C\left(\|u\|_{\mathscr{X}_{\delta_1, \delta_2}} + \|X_0u\|_{\mathscr{X}_{\delta_1, \delta_2}^*}\right), \quad u\in C_0^\infty(K),
	\end{equation}	
	where $s$ and $C$ are as in the statement of the lemma. In the remainder of this proof, $C > 0$ denotes any such constant and $u$ denotes an arbitrary function in $C_0^\infty(K)$.
	
	Let $s_j = 1$ for $j = 1,\ldots,r$, and $s_0 = 1/2$. Let $\sigma, s' > 0$ satisfy 
	\begin{equation} \label{eq:defsigma}
	N_0^{-1} \leqc \sigma < s' < \min_{X_I \in V_{N_0}} s(I),
	\end{equation}
	where $X_I$ and $V_{N_0}$ are as in (\ref{eq:indexnotation}) and Definition~\ref{def:uniformHor}, respectively.
	Then, let $\mathcal{J}$ be the set of multi-indices with $\sigma m(I) \le 1$ that contain both zero and nonzero indices, and for $t > 0$ to be taken sufficiently small define
	\begin{equation} 
	\bar{M}(u) = \|u\|_{\mathscr{X}} + \sum_{I \in \mathcal{J}}|u|_{X_I,s(I)} + |u|^{t}_\sigma.
	\end{equation}
	Observe that $\bar{M}(u)$ is nothing more than the quantity $M(u)$ defined in the equation preceding [(5.6), \cite{H67}] but with the dual norm removed. It is clear from a reading of \cite{H67} that [Lemma 5.2, \cite{H67}] and [(5.16), \cite{H67}] both hold with $M(u)$ replaced $\bar{M}(u)$.
	
	Let $S_t$ denote the regularizer defined in the paragraph directly after the statement of [Theorem 5.1, \cite{H67}], and set $v_{t,\tau} = (e^{\tau X_0}S_t)^*(e^{\tau X_0}S_t u - S_t u)$ for $0 \le \tau \le t^2$. To prove (\ref{eq:lemA2goal}), we follow the proof of [(3.4), \cite{H67}] exactly, except we replace the estimate of the second term in [(5.15), \cite{H67}] with 
	\begin{align*} 
	\left|\int_\Omega (X_0 u) v \right| \le \|X_0 u\|_{\mathscr{X}_{\delta_1, \delta_2}^*} \|v_{t,\tau}\|_{\mathscr{X}_{\delta_1, \delta_2}} \leqc \|X_0 u\|_{\mathscr{X}_{\delta_1, \delta_2}^*}^2 + \|v_{t,\tau}\|_{\mathscr{X}}^2 +  \delta_1^2\|\grad v_{t,\tau}\|_{L^2}^2 + \delta_2^2\|v_{t,\tau}\|_{L^\infty}^2.
	\end{align*}
	Bounding $\|v_{t,\tau}\|_{\mathscr{X}}$ using [(5.16), \cite{H67}] with $M$ replaced by $\bar{M}$, and then proceeding as in the computations after [(5.6), \cite{H67}] results in the following modified version of [(3.4), \cite{H67}]:
	\begin{align} \label{eq:mod3.4}
	|u|^{t}_{s'} + \|u\|_{L^2} \leqc \|u\|_{\mathscr{X}} + \|X_0 u\|_{\mathscr{X}_{\delta_1, \delta_2}^*} +\delta_1 \sup_{0<|\tau| \le t^2}\|\grad v_{t,\tau}\|_{L^2} + \delta_2\sup_{0<|\tau| \le t^2}\|v_{t,\tau}\|_{L^\infty}.
	\end{align}	
	The estimate is uniform in $\delta_1$, $\delta_2$ and holds for $t$ sufficiently small. Moreover, a careful reading of \cite{H67} shows that in addition to depending of course on $K$ and $\Omega$, both the implicit constant and the smallness requirement on $t$ in (\ref{eq:mod3.4}) depend only on $r$, $C_0$, $N_0$, and an upper bound on $\sum_{j=0}^r \|X_j\|_{C^k(\Omega)}$ for some $k(N_0) > 0$. Applying (\ref{eq:finitediff2}), we thus obtain that for $\sigma < s < s'$ there holds
	\begin{equation} \label{eq:LemQuantHor1}
	\|u\|_{H^s} \le C(\|u\|_{\mathscr{X}} + \|X_0 u\|_{\mathscr{X}_{\delta_1, \delta_2}^*} +\delta_1 \sup_{0<|\tau| \le t^2}\|\grad v_{t,\tau}\|_{L^2} + \delta_2 \sup_{0<|\tau| \le t^2}\|v_{t,\tau}\|_{L^\infty}).
	\end{equation}
	
	It remains to bound the latter two terms of (\ref{eq:LemQuantHor1}) in terms of $\|u\|_{\mathscr{X}_{\delta_1, \delta_2}}$. Let $T_t = S_t$, $e^{tX_0}$,  $S_t^*$, or $(e^{tX_0})^*$. From the definition of $S_t$ (it is a finite product of operators that smooth along the vector fields $X_I$, $I \in \mathcal{J}$) it is straightforward to check that if $V_1$ and $V_2$ are open sets with $V_1 \subset \subset V_2 \subset \subset \Omega$, then for $t$ sufficiently small depending only on $r$, $N_0$, $V_1$, $V_2$, and $\sum_{j=0}^r \|X_j\|_{C^k(\Omega)}$, for any $g \in C_0^\infty(V_1)$ there holds 
	\begin{align} 
	& T_t g \in C_0^\infty(V_2), \\
	& \|\grad T_t g\|_{L^2} \le C\|g\|_{H^1}, \\ 
	&  \|T_t g\|_{L^\infty} \le C\|g\|_{L^\infty}.
	\end{align}
	Combining this with (\ref{eq:LemQuantHor1}) and recalling the definition of $v_{t,\tau}$ completes the proof.
\end{proof}

For $\mathscr{X}_\delta, \mathscr{X_\delta^*}$ as in \eqref{eq:defXdelta} with $X_j$ replaced by $Z_j$ and $g \in C_0^\infty(\R^d)$, let
\begin{equation}
\|g\|_{\Hhypd} : = \|g\|_{\mathscr{X_\delta}} + \|Z_{0,\epsilon} g\|_{\mathscr{X_\delta^*}},
\end{equation}
which is nothing more than the natural $\delta$-regularization of the $\Hhyp$ norm defined in \eqref{eq:defH1hyp}. Our main application of Lemma~\ref{lem:HineqOrigQuant} is a H\"{o}rmander inequality for $\Hhypd$ that is uniform in both $\delta \in [0,1]$ and $\epsilon \in (0,1)$. It is one of the key ingredients in the proofs of Lemmas~\ref{lem:Moser} and~\ref{lem:L2bound} carried out in Section~\ref{sec:Stationary}.

\begin{lemma}[H\"{o}rmander inequality for $\Hhypd$] \label{lem:HineqBall/Ann}
	 Let $R \ge 1$. There exists $s > 0$ such that for any $g \in C_0^\infty(B_R)$ there holds, uniformly in $\epsilon \in (0,1)$ and $\delta \in [0,1]$, 
	 $$ \|g\|_{H^s} \leqc R^{1-s} \|g\|_{H^1_{\text{hyp},\delta}}.$$
\end{lemma}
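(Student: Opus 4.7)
The plan is to reduce to the unit ball via a spatial rescaling exploiting the $p$-homogeneity of $N$, then invoke Lemma~\ref{lem:HineqOrigQuant}, carefully tracking how every norm transforms. Given $g \in C_0^\infty(B_R)$ with $R \geq 1$, I set $\tilde{g}(y) := g(Ry) \in C_0^\infty(B_1)$. A direct chain-rule calculation using $N(Ry) = R^p N(y)$ yields
$$(Z_{0,\epsilon} g)(Ry) = \tilde{Z}_0\tilde{g}(y), \qquad \tilde{Z}_0(y) := \epsilon Ay + \epsilon^\alpha By + R^{p-1}N(y).$$

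The central trick is to absorb the dangerous $R^{p-1}$ prefactor by introducing $W_0 := R^{1-p}\tilde{Z}_0 = N(y) + R^{1-p}\epsilon Ay + R^{1-p}\epsilon^\alpha By$. Since $p \ge 2$, $R \ge 1$, and $\epsilon \in (0,1)$, the linear coefficients $R^{1-p}\epsilon$ and $R^{1-p}\epsilon^\alpha$ both lie in $[0,1]$. Assumption~\ref{Assumption:spanning} applied on $B_2$ with these parameters therefore guarantees that $\{W_0, Z_1, \ldots, Z_r\}$ satisfies the uniform parabolic H\"{o}rmander condition on $B_2$ with constants independent of $R$ and $\epsilon$; moreover $\|W_0\|_{C^k(B_2)} \leqc 1$ uniformly in $R \ge 1, \epsilon \in (0,1)$. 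Applying Lemma~\ref{lem:HineqOrigQuant} with $\Omega = B_2$, $K = \overline{B_1}$, $X_0 = W_0$, and $X_j = Z_j$ then furnishes some $s > 0$ (depending only on the fixed constants from Assumption~\ref{Assumption:spanning} on $B_2$) such that, uniformly in $R, \epsilon, \delta$,
$$\|\tilde{g}\|_{H^s} \leqc \|\tilde{g}\|_{\mathscr{X}_\delta} + R^{1-p}\|\tilde{Z}_0\tilde{g}\|_{\mathscr{X}_\delta^*}.$$

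What remains is a change-of-variables bookkeeping. For $R \ge 1$ one checks directly that $\|\tilde{g}\|_{\mathscr{X}_\delta(B_1)} \leq R^{1-d/2}\|g\|_{\mathscr{X}_\delta(B_R)}$, while the correspondence $\tilde{\varphi}(y) \mapsto \varphi(x) := \tilde{\varphi}(x/R)$ maps the unit ball of $\mathscr{X}_\delta(B_1)$ into the ball of radius $R^{d/2}$ in $\mathscr{X}_\delta(B_R)$, so by duality
$$\|\tilde{Z}_0\tilde{g}\|_{\mathscr{X}_\delta^*(B_1)} \leq R^{-d/2}\|Z_{0,\epsilon}g\|_{\mathscr{X}_\delta^*(B_R)}.$$
Using $p \ge 2$ to absorb $R^{1-p} \le 1$, these combine to $\|\tilde{g}\|_{H^s(B_1)} \leqc R^{1-d/2}\|g\|_{\Hhypd}$. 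Since the Fourier scaling gives $\|\tilde{g}\|_{\dot{H}^s(B_1)} = R^{s-d/2}\|g\|_{\dot{H}^s(B_R)}$, multiplying by $R^{d/2-s}$ yields $\|g\|_{\dot{H}^s(B_R)} \leqc R^{1-s}\|g\|_{\Hhypd}$; bounding $\|g\|_{L^2} \leq \|g\|_{\Hhypd}$ gives the full $H^s$ estimate for $R \ge 1$.

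The main obstacle is ensuring that, after rescaling, the H\"{o}rmander inequality on the fixed domain $B_2$ has constants truly uniform in $R$ and $\epsilon$. The naive rescaled drift $\tilde{Z}_0$ has $\|{\cdot}\|_{C^k(B_2)}$ growing like $R^{p-1}$, which would invalidate a direct application of Lemma~\ref{lem:HineqOrigQuant}. Extracting the $R^{1-p}$ factor into $W_0$ is precisely what activates the uniform H\"{o}rmander structure of Assumption~\ref{Assumption:spanning}, and is the sole essential use of the structural hypothesis $p \ge 2$; everything else is a careful bookkeeping of scaling exponents.
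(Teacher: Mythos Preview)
Your proof is correct and follows essentially the same approach as the paper: rescale to the unit ball, normalize the rescaled drift by $R^{1-p}$ so that Assumption~\ref{Assumption:spanning} applies uniformly, invoke Lemma~\ref{lem:HineqOrigQuant} on the fixed domain $B_2$, and track the scaling of each norm. Your presentation is in fact slightly more explicit in separating the $R^{1-p}$ normalization from the dual-norm change of variables, and in handling the $\dot H^s$ versus $L^2$ parts of the final rescaling; the paper combines these into a single line.
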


\begin{proof}
Let $\bar{g}(x) = g(Rx)$ so that $\bar{g} \in C_0^\infty(B_1)$. Define
$$Z_0 = N + \epsilon R^{-p+1} Ax + \epsilon^{\alpha}R^{-p+1}Bx,$$
where $p$ is the homogeneity degree of $N$. By Assumption~\ref{Assumption:spanning}, $\{Z_0,Z_1,\ldots,Z_r\}$ satisfies H\"{o}rmander's condition on $B_{2}$ with constants $(N_0,C_0) \in \N \times (0,\infty)$ that do not depend on $\epsilon$, and so by Lemma~\ref{lem:HineqOrigQuant} there exists $s(N_0) > 0$ such that
\begin{equation} \label{eq:HineqBall/Ann1}
\|\bar{g}\|_{H^s} \leqc \|\bar{g}\|_{\mathscr{X_\delta}} + \|Z_0 \bar{g}\|_{\mathscr{X_\delta}^*}.
\end{equation}
The implicit constant in (\ref{eq:HineqBall/Ann1}) depends on $r$, $N_0$, and $C_0$, but not on $\epsilon$ or $\delta$. Now, if $\varphi \in C_0^\infty$ with $\|\varphi\|_{\mathscr{X_\delta}} \le 1$, then by rescaling we have
$$ \int_{B_1} \varphi Z_0 \bar{g} \le \left\|\varphi\left(\frac{\cdot}{R}\right)\right\|_{\mathscr{X_\delta}}R^{-d - p + 1}\|Z_{0,\epsilon}g\|_{\mathscr{X_\delta}^*} \le R^{-d/2 - p + 1}\|Z_{0,\epsilon}g\|_{\mathscr{X_\delta}^*}.$$
Combining with $\|\bar{g}\|_{\mathscr{X_\delta}} \le R^{-d/2 + 1}\|g\|_{\mathscr{X_\delta}}$ we obtain
$$ \|\bar{g}\|_{\mathscr{X_\delta}} +  \|Z_0\bar{g}\|_{\mathscr{X_\delta}^*} \le R^{-d/2 + 1}\|g\|_{\Hhypd}.$$
Since $\|\bar{g}\|_{H^s} = R^{-d/2 + s}\|g\|_{H^s}$ it follows then from (\ref{eq:HineqBall/Ann1}) that
$$R^{-d/2 + s}\|g\|_{H^s} \leqc R^{-d/2 + 1}\|g\|_{\Hhypd}, $$
as desired.
\end{proof}

\begin{remark}
	Since Lemma~\ref{lem:HineqOrigQuant} does not use the parabolic H\"{o}rmander condition, Lemma~\ref{lem:HineqBall/Ann} holds just as well when the uniform spanning condition in Theorem~\ref{thrm:stationary} is replaced with the analogous statement requiring only H\"{o}rmander's condition.
\end{remark}

\subsection{H\"{o}rmander inequality for spaces involving time} \label{sec:HineqTime}

In this section, we discuss a parabolic H\"{o}rmander inequality that is natural for proving uniform-in-$\epsilon$ $L^2 \to L^\infty$ regularization estimates for the semigroup generated by $\epsilon^{-1}L_\epsilon$.

We begin with some notation. For an open, bounded set $\Omega \subset \R^d$, an open set $\Omega' \subseteq \Omega$, $\{X_j\}_{j=1}^r \subseteq T(\Omega)$, $t_0 \in \R$, and $t > 0$ we define the H\"{o}rmander norm pair

\begin{align}\label{eq:defXparabolic}
\|g\|_{L^2((t_0,t_0+t); \mathscr{X}(\Omega'))}& : = \left(\int_{t_0}^{t_0+t} \|g(\tau)\|_{\mathscr{X}(\Omega')}^2 d\tau \right)^{1/2},  \\
\|g\|_{L^2((t_0,t_0+t); \mathscr{X}^*(\Omega'))} &: = \sup_{\varphi \in C_0^\infty((t_0,t_0+t)\times \Omega'), \|\varphi\|_{L^2((t_0,t_0+t); \mathscr{X}(\Omega'))} \le 1} \int_{\R \times \Omega'} \varphi g. \label{eq:defXstarparabolic}
\end{align}
The notation for the  dual norm is motivated by the fact that it is possible to show $(L^2 \mathscr{X})^* \cong L^2 \mathscr{X}^*$ though we will not require this fact\footnote{As $\mathscr{X}$ is a Hilbert space, the Radon-Nikodym theorem extends to Bochner integrals of the form $L^2(0,T;\mathscr{X})$, which allows to show that every continuous linear functional on $L^2(0,T;\mathscr{X})$ can be represented in the form $\Lambda(f) = \int_0^T \brak{f,g}_{\mathscr{X} \times \mathscr{X}^*} dt$ for $g \in L^2(0,T;\mathscr{X}^*)$.}. 
We use all of the same notations when $(\mathscr{X},\mathscr{X}^*)$ is replaced with a different H\"{o}rmander norm pair. 

In the above setting and notations, the parabolic H\"{o}rmander inequality is as follows. 
\begin{lemma}[Uniform parabolic H\"{o}rmander inequality]\label{lem:Horparabolic}
	Let $K \subset \Omega$ be compact and suppose that $\{X_j\}_{j=0}^r \subseteq T(\Omega)$ satisfies the uniform parabolic H\"{o}rmander condition on $\Omega$ with constants $(N_0,C_0) \in \N \times (0,\infty)$. Fix $t_0 \in \R$, $t \in [1,10]$, and $0 < \eta \le 1/4$. There exists $s(N_0) > 0$ and a constant $C$ such that for all  $u \in C_0^\infty((t_0 + \eta,t_0 + t - \eta) \times K)$ there holds, uniformly in $\epsilon > 0$ and $\delta \in [0,1]$,
	$$\int_{t_0}^{t_0 + t}\|u(\tau,\cdot)\|^2_{H^s(\R^d)}d\tau \le C\left(\|u\|^2_{L^2((t_0,t_0+t); \mathscr{X_\delta})} + \|(\epsilon \partial_t + X_0)u\|_{L^2((t_0,t_0+t); \mathscr{X_\delta^*})}^2\right). $$ 
	The constant $C$ is uniformly bounded with respect to $\eta$ varying over compact time intervals away from the origin, and depends on $\{X_j\}_{j=0}^r$ only through $r$, $N_0$, $C_0$, and an upper bound on
	$ \sum_{j=0}^r\|X_j\|_{C^k(\Omega)}$ for some $k = k(N_0)$ sufficiently large.
\end{lemma}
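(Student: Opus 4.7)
The first move is to rescale time to eliminate $\epsilon$. Setting $\tau' = \tau/\epsilon$ and $\tilde u(\tau', x) = u(\epsilon\tau', x)$, the operator $\epsilon\partial_\tau + X_0$ becomes $\partial_{\tau'} + X_0$, and all three norms in the desired inequality scale by the common factor $\epsilon$, so the claim reduces to the $\epsilon = 1$ statement for $\tilde u$ on the rescaled interval $\tilde I = (t_0/\epsilon, (t_0+t)/\epsilon)$, with cushion $\eta/\epsilon$ on each side. Uniformity in $\epsilon$ is thereby converted into the requirement that the resulting constant be independent of the length and position of $\tilde I$. A standard partition of unity in $\tau'$ into unit-length intervals achieves this: the cutoffs $\chi_i(\tau')$ introduce a commutator term $\chi_i'\tilde u$ in $(\partial_{\tau'} + X_0)(\chi_i \tilde u)$, but this is absorbed into the energy norm on the right-hand side via the embedding $L^2 \hookrightarrow \mathscr{X}_\delta^*$ and the bounded-overlap property of the partition.

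What remains is a parabolic Hörmander inequality on a unit time interval with drift $X_0^\sharp := \partial_{\tau'} + X_0$ and diffusion fields $X_1, \ldots, X_r$, proved by adapting the argument of Lemma \ref{lem:HineqOrigQuant} to the space-time setting. Assign smoothness weight $s_0 = 1/2$ to $X_0^\sharp$ and $s_j = 1$ to $X_j$ for $j\geq 1$, exactly as in \cite{H67}. The structural key is that each $X_j$, $j\geq 1$, is independent of $\tau'$, so $[X_0^\sharp, X_j] = [X_0, X_j]$ and $[X_j, X_k] = [X_j, X_k]$; every iterated commutator of depth $\geq 2$ built from $\{X_0^\sharp, X_1, \ldots, X_r\}$ therefore coincides with the corresponding one built from $\{X_0, X_1, \ldots, X_r\}$ and is a purely spatial vector field. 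The uniform parabolic Hörmander condition consequently furnishes, at every $(\tau', x) \in \tilde I \times \Omega$, brackets $\{Y_i\}_{i=1}^d \subseteq V_{N_0}$ satisfying the quantitative determinant bound \eqref{eq:span}, and these brackets are independent of $\tau'$.

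With this setup, one runs Hörmander's commutator iteration. The two base fractional-regularity estimates are
\begin{equation*}
|\tilde u|_{X_0^\sharp, 1/2}^{t_0''} \leqc \|(\partial_{\tau'} + X_0)\tilde u\|_{L^2(\tilde I;\mathscr{X}_\delta^*)}^{1/2}\,\|\tilde u\|_{L^2(\tilde I;\mathscr{X}_\delta)}^{1/2}
\end{equation*}
obtained from the finite-difference identity
$\tfrac12\|e^{\sigma X_0^\sharp}\tilde u - \tilde u\|_{L^2}^2 = \int_0^\sigma\!\int (X_0^\sharp \tilde u)(e^{(s-\sigma)X_0^\sharp}\tilde u - \tilde u)\,d\tau'\,dx\,ds$
combined with duality between $\mathscr{X}_\delta$ and $\mathscr{X}_\delta^*$ (noting that $e^{sX_0^\sharp}$ acts by $\tau'$-translation composed with the $X_0$-flow, both bounded on $L^2_{\tau'}\mathscr{X}_\delta$ for $|s|\leq 1$), and unit regularity along each $X_j$ for $j\geq 1$ from the energy norm. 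The regularizer $S_t$ is then built as a composition of fractional smoothings along the brackets $X_I$, $I\in\mathcal{J}$; since every such bracket with $|I|\geq 2$ is purely spatial, $S_t$ acts only on $x$ and commutes with $\tau'$-translation, so its operator-norm estimates transfer verbatim from the proof of Lemma \ref{lem:HineqOrigQuant}. Substituting the parabolic dual pairing for the $X_0$ pairing in [(5.15), \cite{H67}], exactly as in the modification in the proof of Lemma \ref{lem:HineqOrigQuant}, closes the induction and yields the $H^s$-in-$x$ bound integrated against $\tau'$.

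The main obstacle is precisely the uniformity in $\epsilon$, which has been reduced through the time rescaling to uniformity in the length of $\tilde I$; this is the reason for the partition-of-unity step above and for the care with which the spacetime constants were quantified in Lemma \ref{lem:HineqOrigQuant} (in particular, the dependence of the constant only on $r$, $N_0$, $C_0$ and on $\sum_{j}\|X_j\|_{C^k(\Omega)}$). Uniformity in $\delta$ is handled identically to Lemma \ref{lem:HineqOrigQuant}: the extra $\sqrt{\delta}\|\nabla g\|_{L^2}$ contribution in $\mathscr{X}_\delta$ is absorbed by the isotropic mollification step in $S_t$ and does not interact with the bracket iteration.
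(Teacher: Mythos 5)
Your high-level strategy --- rescale time by $\tau' = \tau/\epsilon$ to eliminate $\epsilon$, then patch the long $\tau'$-interval with a bounded-overlap temporal partition of unity, reducing to a unit-interval parabolic H\"ormander inequality at $\epsilon = 1$ --- is a legitimate alternative organization to the paper's. The paper keeps $\epsilon$ throughout and observes that $[\partial_t, X] = 0$ preserves the Campbell--Baker--Hausdorff decomposition (and the purely spatial action of the error $H^\tau_N$) under the replacement $X_0 \mapsto \epsilon\partial_t + X_0$; your rescaling achieves the same insensitivity externally. Both routes exploit the same structural fact that iterated brackets of depth $\geq 2$ built from $\{\partial_{\tau'} + X_0, X_1, \ldots, X_r\}$ are purely spatial. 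Your scaling computation (all three squared norms pick up a common factor $\epsilon$) and the absorption of the commutator $\chi_i'\tilde u$ through $L^2 \hookrightarrow \mathscr{X}_\delta^*$ are correct, though you should note that $\tilde I$ has length $t/\epsilon$ and cushion $\eta/\epsilon$, so the partition step only makes sense for $\epsilon$ bounded above; for large $\epsilon$ the rescaled cushion shrinks and a separate (if routine) treatment is needed.

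The genuine gap is the claim that the regularizer estimates ``transfer verbatim from the proof of Lemma~\ref{lem:HineqOrigQuant}.'' That is where the parabolic content actually lives, and the paper explicitly warns that it does \emph{not} transfer verbatim. Your base estimate $|\tilde u|_{X_0^\sharp,1/2}$ (cf.~Eq.~\eqref{eq:DefFiniteDiff3}) and the subsequent bracket iteration produce fractional-regularity control with the supremum over the increment taken \emph{outside} the time integral; the controllable quantity is the paper's $\tilde{M}(u)$, which is strictly weaker than $\int_0^1 |\bar{M}(u(t))|^2\,dt$. Integrating the time-independent regularizer bounds of [Lemma~5.2, \cite{H67}] pointwise in $t$ would put the latter, stronger quantity on the right-hand side, which is not available. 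This mismatch is precisely what the paper's Lemma~\ref{lem:parLemma5.2} resolves: it re-derives the $S_\tau$ estimates via Minkowski, Jensen, and Fubini so that they close using only $\tilde{M}$. Your sketch elides this step entirely. The fix is a reorganization rather than a new idea, but ``verbatim'' is exactly the wrong verdict on the one step that required genuine new care.
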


The remainder of this section is devoted to the proof of Lemma~\ref{lem:Horparabolic}. We will assume throughout that $t=1$, $t_0 = 0$, and $\eta = 1/4$ since the general case is no different. Moreover, by the same arguments from the proof of Lemma~\ref{lem:HineqOrigQuant} it suffices to consider the case when $\delta = 0$. We will also suppress the superscript notation in (\ref{eq:DefFiniteDiff1})--(\ref{eq:DefFiniteDiff3}), with the understanding that the increment is always taken sufficiently small depending only on $K$ and finitely many derivatives of $\{X_j\}_{j=0}^r$. Lastly, unless otherwise stated, all implicit constants in this section satisfy the same properties as $C$ from the lemma statement. 

The proof of Lemma~\ref{lem:Horparabolic} is again based very closely on \cite{H67}. However, the generalization is a little more subtle than in Lemma~\ref{lem:HineqOrigQuant} because to make use of the parabolic H\"{o}rmander condition we need to give the time direction a distinguished role, something not done in \cite{H67}. The first step is to generalize [Theorem 4.3, \cite{H67}], which says that a function $u: \R^d \to \R$ with some regularity along the vector fields $\{X_j\}_{j=0}^r$ must have some regularity in all directions. Recall from Definition~\ref{def:uniformHor} and Section~\ref{sec:HineqDef} the definitions of $V_j$ and the seminorms $|\cdot|_{X,s}$, $|\cdot|_{\epsilon \partial_t + X,s}$ for $X \in T(\Omega)$ and $s \in (0,1]$. 

\begin{lemma} \label{lem:Hor4.6/4.3}
	Let $\{(X_j, s_j)\}_{j=0}^r \subseteq T(\Omega) \times [1/2,1]$ and $\gamma \gtrsim N_0^{-1}$. For $|\tau|$ sufficiently small and any $X_I \in V_j$ with $j \leqc_{N_0} 1$ there holds, uniformly in $\epsilon \in (0,1)$,
	\begin{equation} \label{eq:lemPar1}
	\int_0^1\|e^{\tau^{m(I)}X_I}u(t,\cdot) - u(t,\cdot)\|_{L^2}^2 dt \leqc \tau^2|u|_{\epsilon \partial_t + X_0,s_0}^2 + \tau^2\int_0^1 \left(\sum_{j=1}^r|u(t)|_{X_j,s_j}^2 + |u(t)|_\gamma^2\right)dt
	\end{equation}
	for every $u \in C_0^\infty((1/4,3/4) \times K)$.
	As a consequence, when $\{X_j\}_{j=0}^r \subseteq T(\Omega)$ satisfies the parabolic H\"{o}rmander condition on $\Omega$ with constants $(N_0,C_0)$ there exists $s(N_0) > 0$ so that uniformly in $\epsilon \in (0,1)$ there holds
	\begin{equation} \label{eq:lemPar2}
	\int_0^1 |u(t)|_s^2 dt \leqc \int_0^1\|u(t)\|_{L^2}^2 dt +  \sum_{j=1}^r \int_0^1 |u(t)|_{X_j,s_j}^2 dt +  |u|_{\epsilon \partial_t + X_0,s_0}^2, \quad u \in C_0^\infty((1/4,3/4) \times K).
	\end{equation}
\end{lemma}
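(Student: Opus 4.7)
My plan is to adapt the induction scheme of [Theorem 4.3, \cite{H67}] with the spatial drift $X_0$ replaced throughout by the parabolic drift $Y:=\epsilon\partial_t+X_0$. The key structural observation is that $\partial_t$ commutes with every purely spatial vector field $V$, so $[Y,V]=[X_0,V]$; consequently every iterated commutator $X_I$ appearing in the statement is genuinely spatial and coincides with the bracket formed by substituting $Y$ for $X_0$ in its expression. Moreover, inside every conjugation of the form $e^{\alpha Y}\,e^{\beta V}\,e^{-\alpha Y}$ the time shifts of size $\pm\epsilon\alpha$ produced by $e^{\pm\alpha Y}$ cancel exactly, so the composition of parabolic and spatial flows we use to approximate $e^{\tau^{m(I)}X_I}$ is itself purely spatial---consistent with the fact that the left-hand side of \eqref{eq:lemPar1} involves only a spatial shift.

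I would prove \eqref{eq:lemPar1} by induction on $|I|$. The base case $|I|=1$, which forces $i_1\geq 1$, follows directly from the definition of $|u(t,\cdot)|_{X_{i_1},s_{i_1}}$ and the identity $m(\{i_1\})=1/s_{i_1}$. For $|I|=k\geq 2$, write $X_I=[X_{i_k},X_{I'}]$ with $|I'|=k-1$, set $Z_k=X_{i_k}$ when $i_k\geq 1$ and $Z_k=Y$ when $i_k=0$, and choose the parameters $\alpha=\tau^{1/s_{i_k}}$, $\beta=\tau^{m(I')}$. The identity $m(I)=m(I')+1/s_{i_k}$ then gives $\alpha\beta=\tau^{m(I)}$, and a standard Baker--Campbell--Hausdorff expansion yields
\begin{equation*}
e^{\alpha Z_k}\,e^{\beta X_{I'}}\,e^{-\alpha Z_k}\,e^{-\beta X_{I'}}
\;=\;e^{\tau^{m(I)} X_I}\cdot e^{R(\alpha,\beta)},
\end{equation*}
where $R$ is a sum of iterated $(Z_k,X_{I'})$-commutators of degree $\geq 3$ in $(\alpha,\beta)$. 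Telescoping the four-factor composition, using the elementary bound $\|e^{\alpha Z_k}v-v\|_{L^2(dt\,dx)}\leq\alpha^{s_{i_k}}|v|_{Z_k,s_{i_k}}=\tau|v|_{Z_k,s_{i_k}}$ (which specializes to $\tau|v|_{Y,s_0}$ when $Z_k=Y$) together with the inductive hypothesis applied to $e^{\beta X_{I'}}$ at parameter $\tilde\tau=\tau$ (so that $\tilde\tau^{m(I')}=\beta$), I obtain the desired $\tau^2$ bound on the right-hand side of \eqref{eq:lemPar1} after squaring and integrating in $t$. The remainder $R$, whose leading terms scale as $\tau^{m(I)(1+\sigma)}$ for some $\sigma>0$, is absorbed into $\tau^2\int_0^1|u(t)|_\gamma^2\,dt$ by Taylor expansion as in \cite{H67}, for any $\gamma\gtrsim N_0^{-1}$.

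The implication \eqref{eq:lemPar1} $\Rightarrow$ \eqref{eq:lemPar2} is then the determinantal/partition-of-unity step from \cite{H67}: the uniform parabolic H\"ormander condition guarantees the existence at each $x\in K$ of brackets $X_{I_1},\dots,X_{I_d}\in V_{N_0}$ with $|\det(X_{I_1}(x),\dots,X_{I_d}(x))|^{-1}\leq C_0$, and a localization argument then realizes an isotropic spatial shift of size $|h|$ as a bounded composition of shifts along the $X_{I_j}$'s of lengths $\lesssim|h|^{1/m(I_j)}$, so that summing the estimates from \eqref{eq:lemPar1} produces \eqref{eq:lemPar2} at the exponent $s\asymp(\max_j m(I_j))^{-1}\gtrsim N_0^{-1}$. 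The main technical obstacle I anticipate is the careful bookkeeping of the remainder $R(\alpha,\beta)$ and the flow error estimates to preserve simultaneously the $\tau^2$ scaling and the uniformity in $\epsilon\in(0,1)$ despite the time shifts $\pm\epsilon\alpha$ in each $e^{\pm\alpha Y}$ factor. Uniformity in $\epsilon$ is ultimately inherited from the definition of $|u|_{\epsilon\partial_t+X_0,s_0}$, which already absorbs these time shifts, while the cancellation of time shifts inside every conjugation ensures that the spatial objects produced by the induction---the commutators $X_I$ and their higher-commutator corrections---are themselves $\epsilon$-independent.
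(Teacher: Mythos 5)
Your key structural observation — that $\partial_t$ commutes with every spatial vector field, so time shifts cancel inside conjugations and the brackets $X_I$ remain spatial and $\epsilon$-independent — is exactly the paper's insight, and the deduction of \eqref{eq:lemPar2} from \eqref{eq:lemPar1} via the spanning condition and Besov interpolation is also as in the paper.

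However, the upward induction on $|I|$ as you set it up has a gap in how the remainder $R(\alpha,\beta)$ is controlled. After one BCH step, $R$ contains flows along \emph{longer} iterated brackets $X_J$ with $|I|<|J|\leq N$, together with a Taylor error of displacement $O(|\tau|^N)$. The Taylor error is absorbable into $\tau^2\int_0^1|u(t)|_\gamma^2\,dt$ precisely because $N$ is chosen with $N\gamma\geq 1$, but the intermediate correction flows $e^{\tau^{m(J)}X_J}$ have displacement of order $|\tau|^{m(J)}$, and when $m(J)\ll 1/\gamma$ these cannot be swept into the $\gamma$-seminorm. They need their own estimate of the form \eqref{eq:lemPar1} — yet your inductive hypothesis covers only \emph{shorter} multi-indices $I'$ and gives no control over these longer $J$, so the induction does not close. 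The correct recursion, which the paper takes from [(4.13), \cite{H67}] and [Lemma 4.6, \cite{H67}], expresses $e^{\tau^{m(I)}X_I}$ in terms of elementary flows, flows along \emph{longer} brackets $X_{I_j}$ with $|I|<|I_j|\leq N$, and the Taylor error $H_N^\tau$, and iterates \emph{toward longer} lengths; termination comes from the Taylor error once all lengths exceed $N$, not from a base case at $|I|=1$. If you reverse the induction direction (or run it simultaneously for all lengths $\leq N$ with $N\sim 1/\gamma$), the rest of your argument — in particular the exact cancellation of the $\pm\epsilon\alpha$ time shifts inside each conjugation and the role of the $|\cdot|_{\epsilon\partial_t+X_0,s_0}$ seminorm — goes through as you describe.
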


\begin{proof}[Proof sketch]
	Let $I$ be a multi-index and $N \in \N$ be such that $N > |I|$, where as usual we write $|I|$ to denote the length of $I$. There exists a finite product decomposition (see [(4.13), \cite{H67}] and the discussion leading up to it)
	\begin{equation} \label{eq:productdecomp1}
	e^{\tau^{m(I)}X_I} = \left(\Pi e^{\pm \tau^{m_j}X_j}\right)e^{\tau^{m(I_1)}X_{I_1}} \ldots e^{\tau^{m(I_\ell)}X_{I_\ell}}H_N^\tau,
	\end{equation}
	where each multi-index $I_j$, $1 \le j \le \ell$ satisfies $|I| < |I_j| \le N$, and 
	$$H_N^\tau v(x) = v(g(x,\tau)), \quad v \in C_0^\infty(K)$$ 
	for a smooth mapping $g: K \times (-t_0,t_0) \to \Omega$ satisfying 
	$$\sup_{x\in K}|g(x,\tau)-x| = \mathcal{O}(|\tau|^N), \quad |\tau| \le t_0$$
	for $t_0$ sufficiently small. The decomposition~(\ref{eq:productdecomp1}) is obtained by iteratively using that from the Cambell-Baker-Hausdorff formula one has
	\begin{equation} \label{eq:CBH1}
	e^{-\tau X}e^{-\tau Y}e^{\tau X}e^{\tau Y} = e^{\frac{\tau^2}{2}[X,Y] + \ldots}, \quad X,Y \in T(\Omega)
	\end{equation}
	in the sense of formal power series, where $+\ldots$ denotes a series of iterated commutators of length at least three formed with $\tau X$ and $\tau Y$; see e.g. [pg.~162, \cite{H67}]. Since $[\partial_t, X] = [\partial_t,Y] = 0$ for $X,Y \in T(\Omega)$ viewed as constant in time vector fields on $\R^{d+1}$, it is clear that (\ref{eq:CBH1}) remains true with $Y$ in the left-hand side replaced by $\epsilon \partial_t + Y$ for any $\epsilon > 0$. It follows that, when lifted to an operator on functions of spacetime, (\ref{eq:productdecomp1}) holds	with every occurrence of $X_0$ on the right-hand side replaced by $\epsilon \partial_t + X_0$. In particular, the error $H_N^\tau$ in the Taylor expansion acts only on the spatial variables: 
	$$H_N^\tau u(t,x) = u(t,g(x,\tau)), \quad u \in C_0^\infty((1/4,3/4) \times K).$$
	 By choosing $N \leqc N_0$ such that $N\gamma \ge 1$, it follows then from [(4.11), \cite{H67}] and [Lemma 4.2, \cite{H67}] that for any $X_I \in V_j$ with $j \leqc_{N_0} 1$ and $u \in C_0^\infty((1/4,3/4)\times K)$ there holds
	\begin{equation} \label{eq:Hor4.6.1}
	\begin{aligned}\int_0^1 \|e^{\tau^{m(I)}X_I}u(t,\cdot) &- u(t,\cdot)\|_{L^2}^2 dt  \leqc |\tau|^2|u|_{\epsilon \partial_t + X_0}^2 + |\tau|^2\sum_{j=1}^r \int_0^1|u(t)|_{X_j,s_j}^2dt \\ 
	& + \sum_{j=1}^\ell \int_0^1 \|e^{\tau^{m(I_j)}X_{I_j}}u(t,\cdot) - u(t,\cdot)\|_{L^2}^2 dt  + |\tau|^2 \int_0^1 |u(t)|^2_\gamma  dt,
	\end{aligned}
	\end{equation}
	where each $I_j$, $1 \le j \le \ell$ satisfies $|I| < |I_j| \le N$. Using (\ref{eq:Hor4.6.1}), the proof of (\ref{eq:lemPar1}) follows from the induction argument in [Lemma 4.6, \cite{H67}]. 
	
	Now we turn to (\ref{eq:lemPar2}). Applying (\ref{eq:lemPar1}) and the arguments that lead to [(4.14), \cite{H67}] yields that for $\sigma$ and $s'$ as in (\ref{eq:defsigma}) there holds
	\begin{equation} \label{eq:lemPar3}
	\sup_{0<|h|\ll 1}|h|^{-2s'}\int_0^1 \|u(t,\cdot + h) - u(t,\cdot)\|_{L^2}^2 dt \leqc |u|_{\epsilon \partial_t + X_0,s_0}^2 + \sum_{j=1}^r \int_0^1 |u(t)|_{X_j,s_j}^2 dt + \int_0^1 |u(t)|_{\sigma}^2 dt.
	\end{equation}
	Let now $\sigma < s < s'$. By (\ref{eq:Besov}) and Fubini's theorem we have the bound 
	$$ \int_0^1 \|u(t)\|_{\dot{H}^{s}}^2 dt \approx \int_0^1 \|u(t)\|_{\dot{B}^{s}_{2,2}}^2dt \leqc \sup_{0 < |h| \ll 1} |h|^{-2s'}\int_0^1 \|u(t,\cdot + h) - u(t,\cdot)\|_{L^2}^2 dt + \int_0^1 \|u(t)\|_{L^2}^2 dt. $$
	Since $s > \sigma$, for every $\delta > 0$ there exists $C_\delta$ such that
	$$|u(t)|_{\sigma} \leqc \|u(t)\|_{\dot{B}^\sigma_{2,\infty}} \leqc \|u(t)\|_{\dot{B}^\sigma_{2,2}} \le \delta\|u(t)\|_{\dot{H}^{s}} + C_\delta\|u(t)\|_{L^2}. $$
	The previous two estimates together with (\ref{eq:lemPar3}) yield (\ref{eq:lemPar2}) with $|u(t)|_s$ replaced by $\|u(t)\|_{H^s}$. The proof is then complete since $|u(t)|_s \leqc \|u(t)\|_{H^s}$.
\end{proof}

With Lemma~\ref{lem:Hor4.6/4.3} at our disposal, the proof of Lemma~\ref{lem:Horparabolic} is a straightforward generalization of [section 5, \cite{H67}]. Throughout the entire proof we write $L^2\mathscr{X}$ and $L^2 \mathscr{X}^*$ to mean the norms taken on the time interval $(0,1)$. Also, for convenience we define $X_{0,\epsilon} = \epsilon \partial_t + X_0$.

\begin{proof}[Proof sketch of Lemma~\ref{lem:Horparabolic}]
	For $\sigma > 0$, $\{s_j\}_{j=0}^r$, and $\mathcal{J}$ all as in the proof of Lemma~\ref{lem:HineqOrigQuant}, let 
	\begin{align} 
	\tilde{M}(u) = \|u\|_{L^2\mathscr{X}}^2 + \sum_{I \in \mathcal{J}}\sup_{0 < |\tau| \ll 1}|\tau|^{-2}\int_0^1 \|e^{\tau^{m(I)}X_I}u(t) - u(t)\|_{L^2}^2 dt + \int_0^1 |u(t)|^2_\sigma dt.
	\end{align}
	Note that $\tilde{M}(u)$ is not equivalent to $\int_0^1 |\bar{M}(u(t))|^2 dt$ because in the second term the supremum over the increment is outside of the time integral. Using Lemma~\ref{lem:Hor4.6/4.3}, it follows from the arguments between [(5.6), \cite{H67}] and [(5.11), \cite{H67}] that to complete the proof of Lemma~\ref{lem:Horparabolic} it suffices to show that for $\tau > 0$ sufficiently small there holds	
	\begin{equation} \label{eq:par5.11}
	\int_0^1\|e^{\tau^2 X_{0,\epsilon}}S_\tau u(t) - S_\tau u(t)\|_{L^2}^2 dt \leqc \tau^2 \tilde{M}(u) + \tau^2 \|X_{0,\epsilon}u\|^2_{L^2\mathscr{X}^*},
	\end{equation}
	where $S_\tau$ denotes the same regularizer introduced in the proof of Lemma~\ref{lem:HineqOrigQuant}.
	
	To prove (\ref{eq:par5.11}) we proceed as in \cite{H67} and define 
	$$f(s) = \left(\int_0^1\|e^{s X_{0,\epsilon}}S_\tau u(t) - S_\tau u(t)\|_{L^2}^2 dt\right)^{1/2} \quad 0 < |s| \le \tau^2 $$
	with the goal of showing that $f(\tau) \leqc |\tau|\sqrt{\tilde{M}(u)} + |\tau| \|X_{0,\epsilon}\|_{L^2 \mathscr{X}^*}$. Since $S_\tau$ does not regularize in the time variable we clearly have $[S_\tau,X_{0,\epsilon}] = [S_\tau,X_0]$, and so differentiating $f^2$ with respect to $s$ gives
	\begin{equation} \label{eq:ff'}
	\frac{1}{2}\frac{d}{ds}f^2(s) = \brak{e^{s X_{0,\epsilon}}[X_0,S_\tau]u, e^{s X_{0,\epsilon}}S_\tau u - S_\tau u} + \brak{X_{0,\epsilon}u,(e^{s X_{0,\epsilon}}S_\tau)^*(e^{s X_{0,\epsilon}}S_\tau u - S_\tau u)}, 
	\end{equation}
	where $\brak{\cdot, \cdot}$ denotes the $L^2$-inner product over $(0,1) \times \Omega$. To estimate the right-hand side of the expression above we need the following lemma, which is a variation of [Lemma 5.2, \cite{H67}]. 
	\begin{lemma} \label{lem:parLemma5.2}
		Let $V$ be an open set with $V\subset \subset \Omega$. For $\tau > 0$ sufficiently small and every $v \in C_0^\infty((0,1)\times V)$ there holds 
		\begin{align}
		\int_0^1\|\tau^{1/\sigma}\grad S_\tau v(t)\|_{L^2}^2 dt  &\leqc  \tau^2 \tilde{M}(v) \label{eq:parLemma5.2.1} \\ 
		\sum_{I \in \mathcal{J}} \int_0^1 \|\tau^{m(I)}X_I S_\tau v(t)\|_{L^2}^2 dt &\leqc \tau^2 \tilde{M}(v) \label{eq:parLemma5.2.2}\\ 
		\sum_{j=0}^r \int_0^1[\tau^{m_j}X_j,S_\tau]v(t)\|_{L^2}^2 dt & \leqc \tau^2 \tilde{M}(v).\label{eq:parLemma5.2.3}
		\end{align}	
	\end{lemma}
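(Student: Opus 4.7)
My plan is to reduce all three estimates to the time-independent bounds of [Lemma 5.2, \cite{H67}] applied pointwise in the time variable, then integrate in $t$. The crucial structural observation is that the regularizer $S_\tau$ acts only on spatial variables, so $[\partial_t, S_\tau] = 0$ and each commutator $[X_j, S_\tau]$ remains a purely spatial operator even when $X_0$ is lifted to $X_{0,\epsilon} = \epsilon\partial_t + X_0$. In particular, the $\epsilon$-dependence plays no role in any of the three bounds and can be ignored throughout.

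A careful reading of [Lemma 5.2, \cite{H67}] shows that the argument actually establishes, for each fixed time slice $v(t,\cdot) \in C_0^\infty(V)$, a ``single-$\tau$'' version of each of the three bounds. For instance, rather than the full seminorm $|v(t)|_{X_I,s(I)}^2$ with its supremum over the increment, H\"ormander's proof only uses the quantity $|\tau|^{-2}\|e^{\tau^{m(I)}X_I} v(t) - v(t)\|_{L^2_x}^2$ corresponding to the very same $\tau$ that parametrizes $S_\tau$. With this observation in hand, applying the H\"ormander spatial estimate pointwise in $t$ gives, for example in the case of (\ref{eq:parLemma5.2.1}),
\begin{equation}
\|\tau^{1/\sigma}\grad S_\tau v(t,\cdot)\|_{L^2_x}^2 \leqc \tau^2\bigg(\|v(t)\|_{\mathscr{X}}^2 + \sum_{I\in\mathcal{J}} |\tau|^{-2}\|e^{\tau^{m(I)}X_I}v(t) - v(t)\|_{L^2_x}^2 + |v(t)|_\sigma^2\bigg),
\end{equation}
with analogous single-$\tau$ inequalities for the quantities controlling (\ref{eq:parLemma5.2.2}) and (\ref{eq:parLemma5.2.3}).

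Next I would integrate in $t\in(0,1)$ and bound each resulting term against $\tilde{M}(v)$. The contributions $\int_0^1\|v(t)\|_{\mathscr{X}}^2 dt = \|v\|_{L^2\mathscr{X}}^2$ and $\int_0^1|v(t)|_\sigma^2 dt$ appear verbatim in $\tilde{M}(v)$. For each finite-difference term indexed by $I\in\mathcal{J}$, the factor $|\tau|^{-2}$ is constant in $t$ and can be pulled outside the time integral and then dominated by the supremum:
\begin{equation}
|\tau|^{-2}\int_0^1 \|e^{\tau^{m(I)}X_I}v(t) - v(t)\|_{L^2_x}^2 dt \le \sup_{0<|\tau'|\ll 1}|\tau'|^{-2}\int_0^1 \|e^{(\tau')^{m(I)}X_I}v(t) - v(t)\|_{L^2_x}^2 dt,
\end{equation}
which is exactly the middle term in the definition of $\tilde{M}(v)$. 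This completes (\ref{eq:parLemma5.2.1}), and (\ref{eq:parLemma5.2.2}) follows by the same procedure after replacing $\tau^{1/\sigma}\grad$ on the left with $\tau^{m(I)}X_I$.

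For the commutator bound (\ref{eq:parLemma5.2.3}), the commutator $[\tau^{m_j}X_j,S_\tau]$ with $j\ge 1$ is handled by the same single-$\tau$ spatial estimate used above, while for $j=0$ one uses $[\tau^{m_0}X_{0,\epsilon},S_\tau] = [\tau^{1/2}X_0,S_\tau]$, which is again purely spatial and $\epsilon$-independent. The integration step is identical. The main obstacle, and the step that will require the most care, is verifying the single-$\tau$ character of the pointwise estimates in H\"ormander's proof of [Lemma 5.2, \cite{H67}]: I would do this by tracing through the explicit construction of $S_\tau$ as an iterated convolution along the directions $X_I$, $I\in\mathcal{J}$, with kernels scaled at widths $\tau^{m(I)}$, and confirming that each application of the smoothing picks up only the specific finite-difference increment $\tau^{m(I)}$ rather than a supremum over all increments. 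Once this bookkeeping is carried out, the Fubini-type exchange above delivers the $\tilde{M}$-controlled bounds uniformly in $\tau$ and $\epsilon$.
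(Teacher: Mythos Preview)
Your approach has a genuine gap at the ``single-$\tau$'' claim, which is precisely the step the paper flags as nontrivial. The individual regularizer $\varphi_{\tau^{m(I)}X_I}$ is a convolution along the flow, so (cf.\ the formula preceding [(5.1), \cite{H67}])
\[
\tau^{m(I)}X_I\,\varphi_{\tau^{m(I)}X_I}v(t) \;=\; \int_{-1}^{1}\varphi'(s)\bigl(e^{s\tau^{m(I)}X_I}v(t)-v(t)\bigr)\,ds,
\]
which involves finite differences at \emph{every} increment $s\tau^{m(I)}$, $|s|\le 1$, not just the single increment $\tau^{m(I)}$. Consequently H\"ormander's pointwise-in-$t$ estimate genuinely needs the full seminorm $|v(t)|_{X_I,s(I)}$ with its supremum over increments; integrating in $t$ then produces $\int_0^1|v(t)|_{X_I,s(I)}^2\,dt$, which is strictly stronger than the middle term of $\tilde M(v)$ (where the supremum sits \emph{outside} the time integral). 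The paper states exactly this: ``the lemma does not follow simply by integrating the estimates in [Lemma 5.2, \cite{H67}].''

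The paper's fix is to interchange the $s$- and $t$-integrations \emph{before} invoking any supremum. From Minkowski/Jensen and Fubini one has
\[
\int_0^1\|\tau^{m(I)}X_I\varphi_{\tau^{m(I)}X_I}v(t)\|_{L^2}^2\,dt
\;\le\; \int_{-1}^{1}|\varphi'(s)|^2\!\int_0^1\|e^{s\tau^{m(I)}X_I}v(t)-v(t)\|_{L^2}^2\,dt\,ds,
\]
and after the substitution $\tau'=|s|^{1/m(I)}\tau$ the inner $t$-integral is bounded, uniformly in $s$, by the supremum-outside quantity appearing in $\tilde M(v)$; the residual $s$-integral $\int_{-1}^1|s|^{2/m(I)}|\varphi'(s)|^2\,ds$ is harmless. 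This handles the building block $\varphi_{\tau^{m(I)}X_I}$; the passage to the full product $S_\tau$ then follows the induction trick of [Lemma 5.2, \cite{H67}], and (\ref{eq:parLemma5.2.3}) is treated analogously. Your observation that $[\partial_t,S_\tau]=0$ and that $[\tau^{m_0}X_{0,\epsilon},S_\tau]=[\tau^{1/2}X_0,S_\tau]$ is correct and is indeed why no $\epsilon$-dependence enters, but the Fubini step above is the missing ingredient that makes the weaker $\tilde M$ sufficient.
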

A remark on the proof is in order because the second term in $\tilde{M}$ is weaker than 
$$ \sum_{I \in \mathcal{J}} \int_0^1 |u(t,\cdot)|_{X_I, s(I)}^2 dt, $$
and so the lemma does not follow simply by integrating the estimates in [Lemma 5.2, \cite{H67}].
\begin{proof}[Proof sketch of Lemma~\ref{lem:parLemma5.2}]
	The left-hand side of (\ref{eq:parLemma5.2.1}) is bounded using the latter term in the definition of $\tilde{M}$. The estimate follows in the same manner as [(5.12), \cite{H67}] because in this term the supremum over the increment is inside the time integral. 
	
	Now we turn to (\ref{eq:parLemma5.2.2}). Let $\varphi_{X}$ denote the regularizer defined in [section 5, \cite{H67}]. By Minkowski's inequality, Jensen's inequality, and the equation preceding [(5.1), \cite{H67}] there holds
	$$
	\int_0^1 \|\tau^{m(I)} X_I \varphi_{\tau^{m(I)} X_I} v(t)\|_{L^2}^2 dt  \le \int_0^1 \int_{-1}^1 \|e^{s\tau^{m(I)}X_I}v(t) - v(t)\|_{L^2}^2 |\varphi'(s)|^2 ds dt.
	$$
	Employing also Fubini's theorem we then get
	\begin{align*}
	\int_0^1 \|\tau^{m(I)} & X_I \varphi_{\tau^{m(I)} X_I} v(t)\|_{L^2}^2 dt \\ 
	&  \le \tau^2 \int_{-1}^1 |s|^{2/m(I)} |\varphi'(s)|^2 \left((|s|^{1/m(I)}|\tau|)^{-2} \int_0^1 \|e^{(|s|^{1/m(I)}\tau)^{m(I)}X_I}v(t) - v(t)\|_{L^2}^2 dt\right) ds \\ 
	& \le \tau^2 \tilde{M}(v) \int_{-1}^1 |s|^{2/m(I)} |\varphi'(s)|^2 ds \leqc \tau^2 \tilde{M}(v).	
	\end{align*}
	Hence, (\ref{eq:parLemma5.2.2}) holds with the summation replaced by a fixed $I \in \mathcal{J}$ and $S_\tau$ replaced by the individual regularizer $\varphi_{\tau^{m(I)}X_I}$. 
	The induction trick in the proof of [Lemma 5.2, \cite{H67}] then works to upgrade to (\ref{eq:parLemma5.2.2}). 
	Adapting the methods from [Lemma 5.2, \cite{H67}] to obtain (\ref{eq:parLemma5.2.3}) is done similarly.
\end{proof}
	
	Using (\ref{eq:parLemma5.2.3}) in (\ref{eq:ff'}) we have 
	$$f(s) f'(s) \leqc  \tau^{-1}\sqrt{\tilde{M}(u)}f(s) + \|X_{0,\epsilon}u\|_{L^2 \mathscr{X}^*}^2 + \|(e^{s X_{0,\epsilon}}S_\tau)^*(e^{s X_{0,\epsilon}}S_\tau u - S_\tau u)\|_{L^2\mathscr{X}}^2.$$
	From the elementary ODE computation proceeding [(5.15), \cite{H67}], it follows that to complete the proof of (\ref{eq:par5.11}) it suffices to show that the latter term above can be controlled by $\tilde{M}(u)$. To this end, for $v \in C_0^\infty((0,1)\times \Omega)$ we define 
	$$\tilde{N}_\tau(v) = \|v(t)\|_{L^2\mathscr{X}}^2 + \sum_{I \in \mathcal{J}}\int_0^1 \|\tau^{m(I)-1}X_I v(t)\|_{L^2}^2 dt + \int_0^1 \|\tau^{1/\sigma - 1}\grad v(t)\|_{L^2}^2 dt. $$ 
	Applying Lemma~\ref{lem:parLemma5.2} and the arguments that lead to [(5.17), \cite{H67}] and [(5.18), \cite{H67}] gives that for any open set $V \subset \subset \Omega$ and $v\in C_0^\infty((0,1)\times V)$, as long as $\tau$ is sufficiently small there holds
\begin{align}
	\tilde{N}_\tau(S_\tau v) & \leqc \tilde{M}(v), \label{eq:Ntbound1}  \\  
	\tilde{N}_\tau(e^{s X_I} v) & \leqc \tilde{N}_{\tau}(v), \qquad 0 \le |s| \le \tau^{m(I)}, \quad I \in \mathcal{J}. \label{eq:Ntbound2}
\end{align}		
	Because $[\partial_t,X_0] = 0$ and $e^{\pm s\epsilon \partial_t}$  is bounded with respect to $L^2 \mathscr{X}$ we have 
	\begin{align*}
	\|(e^{s X_{0,\epsilon}}S_\tau)^*(e^{s X_{0,\epsilon}}S_\tau u(t) - S_\tau u(t))\|_{L^2 \mathscr{X}}^2 \leqc \tilde{N}_\tau((e^{sX_0}S_\tau)^* e^{sX_0}S_\tau u) + \tilde{N}_\tau((e^{sX_0}S_\tau)^* S_\tau u). 
	\end{align*}
	Now, (\ref{eq:Ntbound1}) and \eqref{eq:Ntbound2} along with the form of $S_\tau$ imply that $(e^{sX_0}S_\tau)^*$ is bounded with respect to $\tilde{N}_\tau$. Hence, we have
	$$ \|(e^{s X_{0,\epsilon}}S_\tau)^*(e^{s X_{0,\epsilon}}S_\tau u(t) - S_\tau u(t))\|_{L^2\mathscr{X}}^2 \leqc \tilde{N}_\tau(e^{sX_0}S_\tau u) + \tilde{N}_\tau(S_\tau u) \leqc \tilde{M}(u), $$
	which completes the proof. 
	\end{proof}

\section{Uniform estimates on the stationary measure}\label{sec:Stationary}

\subsection{Uniform $L^2$ estimate for $f_\epsilon$} \label{sec:StationaryL2}
The purpose of this section is to prove Lemma \ref{lem:L2bound}. 

Recall from Section~\ref{sec:outline} that in order to more easily justify formal calculations, we introduce the following regularization: $\forall \delta,\epsilon > 0$, define $f_{\epsilon,\delta} \geq 0$ with $\int f_{\epsilon,\delta}  = 1$ to be the unique solution to the problem  
\begin{align}
\delta \Delta f_{\epsilon,\delta} + \frac{1}{\epsilon}L_\epsilon^\ast f_{\epsilon,\delta} = 0. \label{eq:fedeq}
\end{align}
In Appendix~\ref{sec:basics}, we sketch the proof that this problem is well-posed and that $f_{\epsilon,\delta} \in L^2$ $\forall \delta > 0$. Note also that $f_{\epsilon,\delta}$ satisfies the moment bound (\ref{eq:momentbound}) uniformly in $\epsilon, \delta \in (0,1)$ (see \eqref{eq:AppDistrMoment}) and from classical elliptic theory \cite{GT} there holds $\forall R > 0$, 
\begin{align}
\norm{f_{\epsilon,\delta}}_{H^k(B_R)} & \lesssim_{R,k,\delta,\epsilon} 1 \label{ineq:edfinHk}.
\end{align}
Now, we are interested in obtaining estimates uniform in both $\epsilon$ and $\delta$, and then passing to the limit $\delta \to 0$.

Define $\bar{\chi}(x) \in C^\infty_0(B_1)$ radially symmetric such that $\bar{\chi} = 1$ for $\abs{x} < 1/2$. Define $\chi(x) = \bar{\chi}(x/2) - \bar{\chi}(x)$ which is now a $C^\infty_0(B_2\setminus B_{1/2})$ function. Further, define $\chi_R  = \chi(x/R)$ and note that 
\begin{align}
1 = \bar{\chi} + \sum_{R = 2^j: j\ge 0} \chi_R. \label{ineq:chipartu}
\end{align}

\begin{proof}[Proof of Lemma \ref{lem:L2bound}]
  We proceed by proving that
  \begin{align}
  \sup_{\epsilon,\delta \in (0,1)} \norm{f_{\epsilon,\delta}}_{L^2} \lesssim 1.
\end{align}
  This implies the second inequality in (\ref{eq:H1hypuniform}) by passing $\delta \to 0$ and using lower-semicontinuity along with the uniqueness of $f_{\epsilon}$ for each $\epsilon > 0$ (see Lemma~\ref{lem:MarkovSemigroup} and the remark following it).
  The first inequality in (\ref{eq:H1hypuniform}) then follows from Lemma \ref{lem:AprioriBounds}.  

\textbf{Step 1:  estimates on $\bar{\chi}f_{\epsilon,\delta}$}:
Multiplying \eqref{eq:fedeq} by $\bar{\chi}$ and using the energy property $N(x)\cdot x = Bx \cdot x = 0$ together with the radial symmetry of $\bar{\chi}$, we obtain
\begin{align}
\left(\epsilon \delta \Delta + L_\epsilon^*\right)\bar\chi f_{\epsilon,\delta} = [\epsilon \delta\Delta,\bar\chi] f_{\epsilon,\delta} + \epsilon \sum_{j=1}^r [Z_j^2,\bar{\chi}] f_{\epsilon,\delta} + \epsilon[Ax\cdot \grad,\bar\chi]f_{\epsilon,\delta}.    
\end{align}
Pairing with $\bar{\chi} f_{\epsilon,\delta}$ gives the a priori estimate
\begin{align}
  \delta \norm{\grad (\bar{\chi}f_{\epsilon,\delta})}_{L^2}^2 + \sum_{j=1}^r\norm{Z_j (\bar{\chi} f_{\epsilon,\delta})}_{L^2}^2  \lesssim \norm{f_{\epsilon,\delta}}_{L^2}^2. \label{ineq:fedH1d} 
\end{align}
Similarly, we pair with a test function $v \in C_0^\infty(\R^d)$ satisfying $\|v\|_{\mathscr{X}_\delta} \le 1$ and obtain, using \eqref{ineq:fedH1d},
\begin{align}
  \abs{\int v Z_{0,\epsilon} \bar{\chi} f_{\epsilon,\delta} dx} & \lesssim \epsilon \delta \norm{\grad (\bar{\chi} f_{\epsilon,\delta})}_{L^2} \norm{\grad v}_{L^2} + \epsilon\sum_{j=1}^r\norm{Z_j (\bar{\chi} f_{\epsilon,\delta})}_{L^2} \norm{Z_j v}_{L^2} \\
& \quad + \epsilon \delta \norm{f_{\epsilon,\delta}}_{L^2} \norm{\grad v}_{L^2} + \epsilon\sum_{j=1}^r\norm{Z_j v}_{L^2} \norm{f_{\epsilon,\delta}}_{L^2} \\
& \quad + \epsilon \norm{f_{\epsilon,\delta}}_{L^2} \norm{v}_{L^2} \\ 
& \leqc \epsilon \|f_{\epsilon,\delta}\|_{L^2}.
\end{align}
Combining with \eqref{ineq:fedH1d} we then have, uniformly in $\delta,\epsilon$, 
\begin{align}
\norm{\bar{\chi} f_{\epsilon,\delta}}_{\Hhypd} \lesssim \norm{f_{\epsilon,\delta}}_{L^2}. \label{eq:chibarHhypbyL2}
\end{align}
Thus, by Lemma \ref{lem:HineqBall/Ann} and Sobolev embedding, $\exists \theta \in (0,1)$ (depending on dimension but not $\epsilon$) such that 
\begin{align} \label{ineq:bchif}
  \norm{\bar{\chi} f_{\epsilon,\delta}}_{L^2} \lesssim \norm{\bar{\chi} f_{\epsilon,\delta}}_{L^1}^{1-\theta} \norm{\bar{\chi}f_{\epsilon,\delta}}_{H^s}^\theta \lesssim \norm{\bar{\chi} f_{\epsilon,\delta}}_{L^1}^{1-\theta} \norm{\bar{\chi}f_{\epsilon,\delta}}_{\Hhypd}^\theta \lesssim \norm{f_{\epsilon,\delta}}_{L^2}^\theta.
\end{align}

\textbf{Step 2:  estimates on $\chi_R f_{\epsilon,\delta}$}:
For any $R \geq 1$, by applying the same arguments as in the case of $\bar{\chi}$ and using $\|\grad^j \chi_R\|_{L^\infty} \leqc R^{-j}$ to control the commutator error terms, we similarly obtain
\begin{align} \label{eq:H1hypByL2}
\norm{\chi_R f_{\epsilon,\delta}}_{\Hhypd} \lesssim \norm{f_{\epsilon,\delta}}_{L^2}. 
\end{align}
Therefore, again by Lemma \ref{lem:HineqBall/Ann} and Sobolev embedding, $\exists \theta \in (0,1)$ such that
\begin{align}
\norm{\chi_R f_{\epsilon,\delta}}_{L^2} \lesssim R\norm{\chi_R f_{\epsilon,\delta}}_{L^1}^{1-\theta} \norm{f_{\epsilon,\delta}}_{L^2}^\theta. \label{ineq:cRf}
\end{align}

\textbf{Step 3: $L^2$ estimates:}
By \eqref{ineq:chipartu}, Young's inequality, \eqref{ineq:bchif}, and \eqref{ineq:cRf}, we have
\begin{align}
\norm{f_{\epsilon,\delta}}_{L^2} & \leq \norm{\bar{\chi} f_{\epsilon,\delta}}_{L^2} + \sum_{2^j : j \geq 0} \norm{ \chi_{2^j} f_{\epsilon,\delta}}_{L^2} \\
& \leqc  \norm{f_{\epsilon,\delta}}_{L^2}^\theta + \norm{f_{\epsilon,\delta}}_{L^2}^\theta \sum_{j \geq 0} 2^j \norm{\chi_{2^j} f_{\epsilon,\delta}}_{L^1}^{1-\theta} \\ 
& \leqc \|f_{\epsilon,\delta}\|^\theta\left(1 + \sum_{j \ge 0}2^{-\frac{j}{\theta}} + \sum_{j \ge 0} 2^\frac{2j}{1-\theta} \|\chi_{2^j} f_{\epsilon,\delta}\|_{L^1} \right)  \\
& \leqc \norm{f_{\epsilon,\delta}}_{L^2}^\theta \left(1  + \sum_{2^j : j \geq 0} \norm{ \brak{\cdot}^{\frac{2}{1-\theta}} \chi_{2^j} f_{\epsilon,\delta}}_{L^1} \right). 
\end{align}
Applying \eqref{eq:AppDistrMoment} with $V$ as in \eqref{eq:momentbound} then implies
\begin{align}
\norm{f_{\epsilon,\delta}}_{L^2} & \leqc \norm{f_{\epsilon,\delta}}_{L^2}^\theta.
\end{align}
The desired result follows from $\theta < 1$ and $\|f_{\epsilon,\delta}\|_{L^2} < \infty$.
\end{proof} 

\begin{remark}
	An important consequence of the proof above which we require later is that
	\begin{equation} \label{eq:H1hypduniform}
	\sup_{\epsilon,\delta \in (0,1)}\|f_{\epsilon,\delta}\|_{\Hhypd} \leqc 1.
	\end{equation}
\end{remark}

\subsection{Hypoelliptic Moser iteration} \label{sec:StationaryMoser}

Next we apply a Moser iteration to obtain the local $L^\infty$ estimate in Lemma~\ref{lem:Moser}. 
\begin{proof}[Proof of Lemma~\ref{lem:Moser}]
Let $f \in C^\infty(\R^d)$ satisfy $f \geq 0$ and $(\epsilon \delta \Delta + L_\epsilon^*)f \ge 0$. By replacing $f$ with $f + \epsilon'$ and then sending $\epsilon' \to 0$ we may assume without loss of generality that $f > 0$. Fix $R \ge 1$ and for each $k \ge 0$ define $R_k = R(1+2^{-k})$. With $s$ as given in Lemma~\ref{lem:HineqBall/Ann}, let $\alpha>1$ be such that $H^s \hookrightarrow  L^{2\alpha}$ and define $w_k = f^{\alpha^k}$. We prove that $\exists C > 0$ (depending only on $R$ and dimension) such that  for $k \geq 0$, 
\begin{align}
\norm{w_k}_{L^{2\alpha}(B_{R_{k+1}})} \leq  C^{k} \norm{w_k}_{L^2(B_{R_{k}})}. \label{ineq:iterate}
\end{align}
By the convexity of $z \mapsto z^{\beta}$, 
\begin{align} \label{eq:MoserConvex}
\delta \Delta w_k + \sum_{j=1}^r Z_j^2 w_k + \frac{1}{\epsilon}Z_{0,\epsilon}w_k + \alpha^k \text{Tr} A w_k \geq 0. 
\end{align}
Let $\chi_k \in C_0^\infty(B_{R_{k}})$ be a radially-symmetric, smooth cutoff function satisfying $\chi_k(x) = 1$ for $|x| \le R_{k+1}$ and $|D^\beta \chi_k| \leqc R^{-1}2^{|\beta|k}$ for every multi-index $\beta$ with $|\beta| \le 2$. Denoting $v_{k} = \chi_{k} w_{k}$ and using \eqref{eq:MoserConvex} we obtain
\begin{align}
\delta \Delta v_{k} + \sum_{j=1}^r Z_j^2 v_k + \frac{1}{\epsilon}Z_{0,\epsilon}v_{k} + \alpha^k \text{Tr} A v_{k} - \mathcal{C} \geq 0, 
\end{align}
where
\begin{align}
\mathcal{C} = [\delta\Delta, \chi_k]w_k + \sum_{j=1}^r [Z_j^2,\chi_{k}] w_k + [Ax\cdot \grad,\chi_{k} ]w_k. 
\end{align}
Pairing with $v_k = \chi_k w_k$ we obtain the a priori estimate
\begin{align} \label{eq:subsolXdeltabound}
\delta \|\grad v_k\|_{L^2}^2 + \sum_{j=1}^r \|Z_j v_k\|_{L^2}^2 \lesssim \alpha^k \norm{v_k}_{L^2}^2 + 2^{2k} \norm{w_k}_{L^2(B_{R_{k}})}^2. 
\end{align}
Let $g$ be the unique solution to the Dirichlet problem
\begin{align}
  \left\{
  \begin{array}{l}
\delta \Delta g +\sum_{j=1}^r Z_j^2 g + \frac{1}{\epsilon}Z_{0,\epsilon} g + \alpha^{k} \text{Tr} A v_k  - \mathcal{C} = 0 \\
  g|_{\partial B_{2R+1}} = 0.
  \end{array}
  \right.
\end{align}
By the weak elliptic maximum principle we have $v_{k} \leq g$ and, in particular, for all $L^p$,  we have $\norm{v_k}_{L^p} \leq \norm{g}_{L^p}$. 
Moreover, we have the a priori estimate
\begin{align}
\delta\norm{\grad g}_{L^2}^2 + \sum_{j=1}^r \|Z_j g\|_{L^2}^2 + \norm{g}_{L^2}^2 \lesssim_R \alpha^{2k} \norm{v_k}_{L^2}^2 + 2^{4k} \norm{w_k}_{L^2(B_{R_{k}})}^2.
\end{align}
Multiplying by a radially-symmetric, smooth cutoff $\chi \in C_0^\infty(B_{2R + 1/2})$ with $\chi(x) = 1$ for $|x| \le 2R$ and applying the arguments we used in the proof of Lemma \ref{lem:L2bound} we obtain 
\begin{equation} 
\|\chi g\|_{\Hhypd} \leqc_R \alpha^{2k}\|v_k\|_{L^2} + 2^{2k}\|w_k\|_{L^2(B_{R_k})},
\end{equation}
and so by Lemma \ref{lem:HineqBall/Ann} we have
\begin{align}
\|w_k\|_{L^{2\alpha}(B_{R_{k+1}})}\le \norm{v_k}_{L^{2\alpha}} \le \norm{\chi g}_{L^{2\alpha}} \leqc_R \alpha^{2k}\norm{v_k}_{L^2} + 2^{2k} \norm{w_k}_{L^2(B_{R_k})}. 
\end{align}
This completes the proof of the iteration \eqref{ineq:iterate}. 

The bound \eqref{ineq:iterate} implies that for some $C > 0$ (depending only on $R$ and dimension) there holds  
\begin{align}
\norm{f}_{L^{2\alpha^{k+1}}(B_{R_{k+1}})} \leq  C^{k \alpha^{-k}} \norm{f}_{L^{2\alpha^k}(B_{R_k})}, 
\end{align}
which by iteration gives
\begin{align}
\norm{f}_{L^{2\alpha^{k+1}}(B_{R_{k+1}})} \leq C^{\sum_{j=0}^k j \alpha^{-j}} \norm{f}_{L^2(B_{2R})} 
\end{align}
for every $k \ge 0$. Using that $\alpha > 1$, we pass to the limit $k \to \infty$ and obtain the desired result. 
\end{proof}

As above, we use the regularization by $\delta$ and pass to the limit to deduce the final estimate on $f_\epsilon$.

\begin{proof}[Proof of (\ref{eq:ubd})]
	 Combining Lemmas~\ref{lem:Moser} and~\ref{lem:L2bound} we have, for every $R \ge 1$,
	 \begin{equation}
	 \sup_{\epsilon,\delta \in (0,1)} \|f_{\epsilon,\delta}\|_{L^\infty(B_{R})} \leqc \sup_{\epsilon,\delta \in (0,1)}\|f_{\epsilon,\delta}\|_{L^2(B_{2R})} \leqc 1.
	 \end{equation}
	 Sending $\delta \to 0$, the bound (\ref{eq:ubd}) follows from lower semicontinuity and the uniqueness of $f_\epsilon$.
\end{proof}

\subsection{Intermediate value lemma and proof of Lemma~\ref{lem:IVTtolbd}} \label{sec:StatIVT}

\begin{proof}[Proof of Lemma~\ref{lem:IVT}]
As discussed in Section~\ref{sec:Outlinelower}, we follow a compactness-rigidity scheme as in \cite{GIMV16}, obtaining the necessary compactness by H\"ormander inequalities and the necessary rigidity from Lemma \ref{lem:rigidity}. 

If the lemma fails, then there exists a sequence $\{(\delta_n,\epsilon_n)\}_{n=1}^\infty \subseteq (0,1) \times (0,1)$ with $\lim_n \epsilon_n = 0$ and $\{w_n\}_{n=1}^\infty \subseteq C^\infty(B_{2R})$ satisfying the following properties:
	
	\begin{itemize}
		\item $0 \le w_n \le 1$
		\item $|\{w_n = 0\} \cap B_R| \ge \alpha_1$
		\item $|\{w_n \ge 1-\frac{1}{n}\} \cap B_R| \ge \alpha_2$
		\item $|\{0<w_n<1 - \frac{1}{n}\} \cap B_R| < \frac{1}{n}$; 
	\end{itemize}
	and moreover
	\begin{align}
	0 \le \delta_n \epsilon_n \Delta w_n + L_{\epsilon_n}^* w_n  \le \sqrt{\epsilon_n}\left( 1 + \delta_n |\grad f_{\epsilon_n,\delta_n}|^2 + \sum_{j=1}^r |Z_j f_{\epsilon_n,\delta_n}|^2\right). \label{ineq:wneq}
	\end{align}	
	By the uniform estimate $\|w_n\|_{L^\infty} \le 1$ and the Banach-Alaoglu theorem, $\exists w \in L^\infty$ such that $$ w_n \rightharpoonup_* w$$ in $L^\infty$ up to extracting a subsequence (not relabled). 
	
	Now we obtain the needed compactness. Let $\chi \in C_0^\infty(B_{2R})$ be radially symmetric with $0 \le \chi \le 1$ and $\chi(x) = 1$ for $|x| \le R$. From the lower bound in (\ref{ineq:wneq}) and the arguments that led to (\ref{eq:subsolXdeltabound}), we have 
	\begin{equation} \label{eq:wnH1hyp}
	\delta_n \|\grad (\chi w_n)\|_{L^2}^2 + \sum_{j=1}^r\|Z_j (\chi w_n)\|_{L^2}^2 \leqc \int_{B_{2R}}|w_n|^2dx \leqc \abs{B_{2R}},
	\end{equation}
	where the constant is independent of $n$ using $0\le w_n \le 1$. Moreover, pairing (\ref{ineq:wneq}) with $\chi \varphi$ for $\varphi\in C_0^\infty$ yields 
	\begin{align} 
	\left|\int Z_{0,\epsilon_n} (\chi w_n)\varphi \right| & \leqc \sum_{j=1}^r \|Z_j \varphi\|_{L^2}\|Z_j(\chi w_n)\|_{L^2} + \delta_n\|\grad \varphi\|_{L^2}\|\grad(\chi w_n)\|_{L^2} \\ 
	& \quad + \|w_n\|_{L^2(B_{2R})}\left(\|\varphi\|_{L^2} + \delta_n\|\grad \varphi\|_{L^2} + \sum_{j=1}^r \|Z_j \varphi\|_{L^2}\right) \\ 
	& \quad + \|\varphi\|_{L^\infty}\left(1 + \|f_{\epsilon_n,\delta_n}\|_{H^1_{\text{hyp},\delta_n}}^2\right).
	\end{align}
	Combining with (\ref{eq:wnH1hyp}) and (\ref{eq:H1hypduniform}) it follows that 
	\begin{equation} \label{eq:wndual}
		\left|\int Z_{0,\epsilon_n} (\chi w_n)\varphi \right| \leqc \|\varphi\|_{L^\infty} + \|\varphi\|_{L^2} + \sum_{j=1}^r \|Z_j \varphi\|_{L^2} + \sqrt{\delta_n} \|\grad \varphi\|_{L^2}.
	\end{equation}
In the notations $\tilde{\mathscr{X}_\delta}$ and $\tilde{\mathscr{X}^*_\delta}$ from \eqref{eq:defXdeltainf}, the bounds \eqref{eq:wnH1hyp} and \eqref{eq:wndual} together imply 
\begin{equation}
\|\chi w_n\|_{\tilde{\mathscr{X}_{\delta_n}}} + \|\chi w_n\|_{\tilde{\mathscr{X}^*_{\delta_n}}} \leqc 1
\end{equation}
uniformly in $n$. Applying Lemma~\ref{lem:HineqOrigQuant} we conclude that for some $s > 0$, 
$$\sup_{n \ge 1}\|\chi w_n\|_{H^s} \leqc 1.$$

Therefore, by compact embedding (up to extracting another subsequence) $w_n \to w$ strongly in $L^p(B_{R})$ for some $p > 2$. In particular, $w_n \to w$ in measure. Moreover, using (\ref{eq:H1hypduniform}), passing $n\to \infty$ in the sense of distributions in (\ref{ineq:wneq}) we obtain that $w \in L^2(B_{R})$ is a distributional solution to
$$N \cdot \grad w = 0$$
on $B_{R}$. Convergence in measure and lower semicontinuity moreover provide
\begin{itemize}
	\item $0 \le w \le 1$ 
	\item $|\{w = 0\} \cap B_{R}| \ge \alpha_1$
	\item $|\{w = 1\} \cap B_{R}| \ge \alpha_2$
	\item $|\{0 < w < 1\} \cap B_{R}| = 0.$
\end{itemize}
Now, by the Banach-Alaoglu theorem, lower semicontinuity, and (\ref{eq:wnH1hyp}) we have 
\begin{equation} \label{eq:wXjL2}
Z_j w \in L^2(B_{R}), \quad 1 \le j \le r.
\end{equation}
Since $w$ is a characteristic function on $B_{R}$ and $H^1$ functions cannot have jump discontinuities, (\ref{eq:wXjL2}) implies that $Z_j w = 0$ for $j = 1,\ldots,r$. By Assumption~\ref{Assumption:spanning}, the collection $\{N,Z_1,\ldots,Z_r\}$ satisfies H\"{o}rmander's condition on $\R^d$. Thus, Lemma~\ref{lem:rigidity} and the second bullet imply that $w \equiv 0$ on $B_{R}$. However, this contradicts the third bullet, and so no such $w$ can exist. This completes the proof of Lemma~\ref{lem:IVT}.
\end{proof}

We will need a regularized version of the function $z \to z_+$ that smooths out the kink at the origin in such a way so that the signed term that appears when passing solutions through the resulting convex function does not blow up too fast.
\begin{lemma} \label{lem:phieps}
For all $\epsilon > 0$  $\exists \phi_\epsilon:\R \to \R$ that satisfies the following properties: 
\begin{itemize}
    \item $\phi_\epsilon$ is smooth with $\|\phi_\epsilon''\|_{L^\infty(\R)} \leqc \epsilon^{-1/4}$
    \item $\phi_\epsilon'' \ge 0$
    \item $\phi_\epsilon(x) = x$ when $x \ge \epsilon^{1/4}$
    \item $\phi_\epsilon(x) = 0$ when $x \le -\epsilon^{1/4}$
    \item $\phi_\epsilon(x)$ is nondecreasing with $\|\phi_\epsilon'\|_{L^\infty} \leqc 1$ and $\phi_\epsilon(x) > 0$ for $x > -\epsilon^{1/4}$
\end{itemize}
\end{lemma}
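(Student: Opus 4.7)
The natural construction is to mollify the positive part $z \mapsto z_+$ at scale $\epsilon^{1/4}$. Fix once and for all a symmetric bump $\eta \in C_0^\infty(\R)$ with $\supp \eta \subseteq [-1,1]$, $\eta \ge 0$, $\eta(-s) = \eta(s)$, $\eta > 0$ on $(-1,1)$, and $\int_\R \eta = 1$. Set $\eta_\epsilon(s) := \epsilon^{-1/4} \eta(\epsilon^{-1/4} s)$, which is supported in $[-\epsilon^{1/4},\epsilon^{1/4}]$ and satisfies $\|\eta_\epsilon\|_{L^\infty} \leqc \epsilon^{-1/4}$, $\int \eta_\epsilon = 1$. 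Then define
\begin{equation*}
\phi_\epsilon(x) := \bigl((\cdot)_+ \ast \eta_\epsilon\bigr)(x) = \int_{0}^{\infty} y\, \eta_\epsilon(x-y)\, dy.
\end{equation*}

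The verification of the listed properties is then a direct computation. Since the distributional second derivative of $z \mapsto z_+$ equals $\delta_0$, we obtain $\phi_\epsilon'' = \eta_\epsilon$; this is smooth, nonnegative, and bounded by $\|\eta_\epsilon\|_{L^\infty} \leqc \epsilon^{-1/4}$, giving the first two bullets. Next, $\phi_\epsilon'(x) = \mathbf{1}_{[0,\infty)} \ast \eta_\epsilon(x) = \int_{-\infty}^{x} \eta_\epsilon(s)\,ds$, which takes values in $[0,1]$, equals $0$ for $x \le -\epsilon^{1/4}$, equals $1$ for $x \ge \epsilon^{1/4}$, and is strictly positive on $(-\epsilon^{1/4}, \infty)$ because $\eta > 0$ on $(-1,1)$. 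Hence $\|\phi_\epsilon'\|_{L^\infty} = 1$ and $\phi_\epsilon$ is nondecreasing.

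It remains to check the piecewise matching conditions. For $x \le -\epsilon^{1/4}$, the support of $y \mapsto \eta_\epsilon(x-y)$ lies in $(-\infty,0]$, so the integrand in the definition of $\phi_\epsilon(x)$ is identically zero and $\phi_\epsilon(x) = 0$. For $x \ge \epsilon^{1/4}$, the same support is contained in $[0,\infty)$, so
\begin{equation*}
\phi_\epsilon(x) = \int_{\R} y\, \eta_\epsilon(x-y)\, dy = x \int_{\R} \eta_\epsilon(s)\, ds - \int_\R s\, \eta_\epsilon(s)\, ds = x,
\end{equation*}
where the last equality uses $\int \eta_\epsilon = 1$ and the symmetry $\int s\, \eta_\epsilon(s)\, ds = 0$. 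Finally, strict positivity $\phi_\epsilon(x) > 0$ for $x > -\epsilon^{1/4}$ follows from $\phi_\epsilon(-\epsilon^{1/4}) = 0$ together with the strict monotonicity of $\phi_\epsilon$ on $(-\epsilon^{1/4}, \infty)$ (a consequence of $\phi_\epsilon' > 0$ there).

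There is no real obstacle here; the only mild subtlety is choosing the mollification scale. The constraint $\|\phi_\epsilon''\|_{L^\infty} \leqc \epsilon^{-1/4}$ together with the requirement that $\phi_\epsilon$ transition from $0$ to the identity across an interval of length comparable to the scale dictates that the mollifier width be $\sim \epsilon^{1/4}$; the symmetry of $\eta$ is what guarantees the exact matching $\phi_\epsilon(x) = x$ on the right plateau, and is essential for having both endpoints $\pm\epsilon^{1/4}$ behave correctly.
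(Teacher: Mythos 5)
Your construction is correct and is essentially the paper's: both mollify $z \mapsto z_+$ against a symmetric bump supported at scale $\epsilon^{1/4}$ (the paper writes it as $\int \varphi(y)(x+y)_+\,dy$, which after the substitution $y \to -y$ and symmetry is identical to your $\bigl((\cdot)_+ \ast \eta_\epsilon\bigr)(x)$). The only difference is that you spell out the verifications, which the paper leaves as "follow directly."
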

\begin{proof} 
Define a symmetric, smooth function $\varphi \in C_0^\infty([-\epsilon^{1/4}, \epsilon^{1/4}])$ with $\varphi(x) > 0$ for $|x| < \epsilon^{1/4}$, $\int \varphi = 1$, and $\|\varphi\|_{L^\infty} \leqc \epsilon^{-1/4}$. Define $\phi_\epsilon$ to be the mollification 
\begin{equation}
    \phi_\epsilon(x) = \int_{-\epsilon^{1/4}}^{\epsilon^{1/4}} \varphi(y) (x+y)_+ dy. 
\end{equation}
Note:
\begin{equation}
    \phi_\epsilon(x) = 
    \begin{cases}
    0 & x \le - \epsilon^{1/4} \\ 
    x\int_{-x}^{\epsilon^{1/4}} \varphi(y)dy +\int_{-x}^{\epsilon^{1/4}} y\varphi(y)dy & -\epsilon^{1/4} < x < \epsilon^{1/4} \\ 
    x & x \ge \epsilon^{1/4}.
    \end{cases}.
\end{equation}
The properties asserted above follow directly. 
\end{proof} 

\begin{proof}[Proof of Lemma~\ref{lem:IVTtolbd}]
	A byproduct of proving Lemmas~\ref{lem:Moser} and ~\ref{lem:L2bound} is that 
	\begin{equation} \label{eq:regubd}
	\sup_{\epsilon \in (0,1)}\sup_{\delta \in (0,1)} \|f_{\epsilon,\delta}\|_{L^\infty(B_R)} \leqc_R 1, \quad R > 0.
	\end{equation}
	Combining this with $\int f_{\epsilon,\delta} = 1$ and \eqref{eq:AppDistrMoment}, we see that there exist constants $c_1, c_2, R_0 > 0$ independent of $\epsilon,\delta$ such that
	\begin{equation} \label{eq:wk0}
	|\{f_{\epsilon,\delta} \ge c_1\} \cap B_{R_0}| \ge c_2.
	\end{equation}
	Let $\tilde{f}_{\epsilon,\delta} = f_{\epsilon, \delta}/c_1$. For any $\theta \in (0,1)$ we define the sequence of functions
	\begin{equation} \label{eq:defwk}
	\tilde{w}_{k,\theta}^{\epsilon,\delta} = 1 - \left(\frac{4}{\theta}\right)^{k}\tilde{f}_{\epsilon,\delta}, \quad w_{k,\theta}^{\epsilon,\delta} = \phi_\epsilon(\tilde{w}^{\epsilon,\delta}_{k,\theta}),
	\end{equation}
	where $\phi_\epsilon$ is the function guaranteed by Lemma~\ref{lem:phieps}. When $\epsilon$ and $\delta$ are clear from context, we suppress them from the notation and simply write $w_{k,\theta}$. A direct consequence of the construction and (\ref{eq:wk0}) is that for any $\theta \in (1/2)$, $\delta \in (0,1)$, $\epsilon \in (0,1/16)$, and $k, \ell \in \N$ there holds
	\begin{align}
	& |\{w_{k,\theta} = 0\} \cap B_{R_0}| \ge c_2,  \label{eq:wk1} \\ 
	& \{w_{k+1,\theta} > 0\} \subseteq \{w_{k,\theta} \ge 1-\theta\}, \label{eq:wk2}\\ 
	&\{0 < w_{k,\theta} < 1-\theta\} \cap \{0 < w_{\ell,\theta} < 1-\theta\} = \emptyset,  \quad k \neq \ell. \label{eq:wk3} 
	\end{align}
	Moreover, we have the following lemma, which says that for $\epsilon$ and $\theta$ fixed, the sequence $\{w_{k,\theta}\}$ satisfies the inequalities in Lemma~\ref{lem:IVT} as long as $k$ is not too large.
	\begin{lemma} \label{lem:wkprop}
		Let $\theta_* \in (0,1)$,  $k_* \in \N$, and $R > 0$. There exists $\epsilon_*(k_*,\theta_*,R)$ so that whenever $\epsilon \in (0,\epsilon_*)$, $\delta \in (0,1)$, and $k \in \{1,\ldots,k_*\}$ the following is satisfied pointwise for $|x| < 2R$: 
		\begin{equation}
		0 \le \delta \Delta w_{k,\theta_*} + \frac{1}{\epsilon}L_\epsilon^* w_{k,\theta_*} \le \frac{1}{\sqrt{\epsilon}}\left(1+ \delta|\grad f_{\epsilon,\delta}|^2 +\sum_{j=1}^r |Z_j f_{\epsilon,\delta}|^2\right).  \label{eq:wk4}
		\end{equation}		
	\end{lemma}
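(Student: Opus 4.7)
The plan is a direct chain-rule computation exploiting the fact that $f_{\epsilon,\delta}$ solves \eqref{eq:fedeq}. Write $\mathcal{L} := \delta \Delta + \tfrac{1}{\epsilon}L_\epsilon^*$ and decompose $\mathcal{L} = \mathcal{L}_2 + \mathcal{L}_1 + \mathrm{Tr}(A)$ into the second-order part $\mathcal{L}_2 = \delta\Delta + \sum_{j=1}^r Z_j^2$, the first-order part $\mathcal{L}_1 = Ax\cdot\grad + \epsilon^{\alpha-1}Bx\cdot\grad + \epsilon^{-1}N\cdot\grad$, and the zeroth-order multiplier $\mathrm{Tr}(A)$. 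Set $u := \tilde{w}_{k,\theta_*}$ and $C := (4/\theta_*)^k/c_1$, so that $u = 1 - C f_{\epsilon,\delta}$. Since $\mathcal{L} f_{\epsilon,\delta} = 0$, a direct computation gives $\mathcal{L} u = \mathrm{Tr}(A)$, and hence $(\mathcal{L}_2+\mathcal{L}_1)u = \mathrm{Tr}(A)(1-u) = \mathrm{Tr}(A)\,C f_{\epsilon,\delta}$. Applying the chain rule to $\phi_\epsilon(u)$ then yields the pointwise identity
\begin{equation} \label{eq:LphiIdentityPlan}
\mathcal{L}\phi_\epsilon(u) = \phi_\epsilon''(u)\Bigl(\delta|\grad u|^2 + \sum_{j=1}^r |Z_j u|^2\Bigr) + \phi_\epsilon'(u)\,\mathrm{Tr}(A)\,C f_{\epsilon,\delta} + \mathrm{Tr}(A)\,\phi_\epsilon(u).
\end{equation}

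The lower bound in \eqref{eq:wk4} is immediate from \eqref{eq:LphiIdentityPlan}: each of the three terms on the right is nonnegative, since $\phi_\epsilon,\phi_\epsilon',\phi_\epsilon'',f_{\epsilon,\delta}\ge 0$ by Lemma~\ref{lem:phieps} and $\mathrm{Tr}(A)>0$ (as $A$ is positive definite).

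For the upper bound, substitute $|\grad u|^2 = C^2|\grad f_{\epsilon,\delta}|^2$ and $|Z_j u|^2 = C^2|Z_j f_{\epsilon,\delta}|^2$ and estimate each term of \eqref{eq:LphiIdentityPlan} separately. The bound $\|\phi_\epsilon''\|_\infty \leqc \epsilon^{-1/4}$ from Lemma~\ref{lem:phieps} controls the first term by
\[
\leqc \epsilon^{-1/4} C^2 \Bigl(\delta|\grad f_{\epsilon,\delta}|^2 + \sum_{j=1}^r |Z_j f_{\epsilon,\delta}|^2\Bigr),
\]
which fits into $\tfrac{1}{\sqrt{\epsilon}}\bigl(\delta|\grad f_{\epsilon,\delta}|^2 + \sum_j|Z_j f_{\epsilon,\delta}|^2\bigr)$ provided $\epsilon \leqc C^{-8}$. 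For the middle term, note that $\phi_\epsilon'(u) = 0$ whenever $u < -\epsilon^{1/4}$, so on $\mathrm{supp}\,\phi_\epsilon'(u)$ we have $Cf_{\epsilon,\delta} \le 1 + \epsilon^{1/4} \le 2$ for $\epsilon \le 1$; combined with $\|\phi_\epsilon'\|_\infty \leqc 1$, this bounds the middle term by a constant independent of $\epsilon$ and $C$. The third term obeys $\phi_\epsilon(u) \le u_+ + \epsilon^{1/4} \le 1+\epsilon^{1/4}$ (using $u\le 1$ since $f_{\epsilon,\delta}\ge 0$) and is likewise $\leqc 1$. Both contributions are absorbed into $\tfrac{1}{\sqrt{\epsilon}}\cdot 1$ for $\epsilon$ small.

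Choosing $\epsilon_*(k_*,\theta_*,R)$ small enough that $\epsilon_*^{1/4}\bigl((4/\theta_*)^{k_*}/c_1\bigr)^{2} \le 1$ gives \eqref{eq:wk4} uniformly for $\epsilon \in (0,\epsilon_*)$, $\delta \in (0,1)$, and $k \in \{1,\dots,k_*\}$; no genuine $R$-dependence enters this step, but it is kept in the statement for consistency with the rest of the iteration. The only delicate ingredient is the quantitative growth rate $\|\phi_\epsilon''\|_\infty \leqc \epsilon^{-1/4}$, which was built into Lemma~\ref{lem:phieps} precisely so that the unfavorable $C^2$ factor arising when differentiating $(4/\theta_*)^k f_{\epsilon,\delta}$ twice can still be reabsorbed on the right-hand side; everything else is bookkeeping.
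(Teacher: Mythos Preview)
Your proof is correct and follows the same chain-rule computation as the paper, arriving at the identical identity \eqref{eq:LphiIdentityPlan}. The one genuine difference is in how you bound the middle term $\phi_\epsilon'(u)\,\mathrm{Tr}(A)\,C f_{\epsilon,\delta}$: the paper invokes the already-established uniform bound $\|f_{\epsilon,\delta}\|_{L^\infty(B_{2R})} \leqc_R 1$ (equation \eqref{eq:regubd}), which is where the $R$-dependence of $\epsilon_*$ enters in the paper's version. You instead observe that $\phi_\epsilon'(u)$ vanishes unless $u \ge -\epsilon^{1/4}$, i.e.\ unless $C f_{\epsilon,\delta} \le 1+\epsilon^{1/4}$, so that the product $\phi_\epsilon'(u)\,C f_{\epsilon,\delta}$ is automatically $\leqc 1$ pointwise. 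This is a neat self-contained argument that removes the dependence on the a priori $L^\infty$ bound (and hence on $R$) for this lemma, confirming your remark that the $R$-dependence is not genuinely needed at this step. The paper's route is slightly less sharp here but has the advantage of not relying on the precise support structure of $\phi_\epsilon'$.
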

\begin{proof}
	A direct computation reveals that 
	\begin{align*} 
	\delta \Delta w_{k,\theta_*} + \frac{1}{\epsilon}L_\epsilon^* w_{k,\theta_*} & = \phi_\epsilon^{''}(\tilde{w}_{k,\theta_*})\left(\frac{4}{\theta_*}\right)^{2k}\left(\delta|\grad \tilde{f}_{\epsilon,\delta}|^2 +\sum_{j=1}^r |Z_j \tilde{f}_{\epsilon,\delta}|^2\right) \\ 
	& \quad + \text{Tr}(A)\phi'_\epsilon(\tilde{w}_{k,\theta_*}) \left(\frac{4}{\theta_*}\right)^k \tilde{f}_{\epsilon,\delta} + \text{Tr}(A) w_{k,\theta_*}. 
	\end{align*}
	The lower bound in (\ref{eq:wk4}) is then immediate for any $k \in \N$ due to $\phi_\epsilon'', \phi_\epsilon' \ge 0$.  As for the upper bound, by (\ref{eq:regubd}), $0 \le w_{k,\theta_*} \le 1$, $\|\phi_\epsilon''\|_{L^\infty} \leqc \epsilon^{-1/4}$, and $\|\phi_\epsilon'\|_{L^\infty} \leqc 1$, there exists a constant $C(R)$ such that for any $k \le k_*$ we have	
		$$  \delta \Delta w_{k,\theta_*} + \frac{1}{\epsilon}L_\epsilon^* w_{k,\theta_*} \le C(R) \epsilon^{-1/4} \left(\frac{4}{\theta_*}\right)^{2k_*}\left(1 + \delta|\grad f_{\epsilon,\delta}|^2 +\sum_{j=1}^r |Z_j f_{\epsilon,\delta}|^2\right). $$
	Choosing
	$$ \epsilon_*^{1/4} \le \frac{1}{C(R)} \left(\frac{\theta_*}{4}\right)^{2k_*}$$
	yields (\ref{eq:wk4}).
\end{proof}	

	 The main application of Lemma~\ref{lem:IVT} is the following.
	\begin{lemma} \label{lem:wkiteration}
		Let $R \ge R_0$, $\kappa > 0$, and $\delta \in (0,1)$. There exists $\theta_* \in (0,1/2)$, $\epsilon_* \in (0,1/16)$, and $K \in \N$, all depending only on $\kappa$ and $R$, so that whenever $\epsilon \in (0,\epsilon_*)$ there exists $k_* \in \N$ with $k_* \le K$ such that
		\begin{equation} \label{eq:wkiteration}
		\int_{B_R} |w_{k_*,\theta_*}|^2 \le \kappa.
		\end{equation}
	\end{lemma}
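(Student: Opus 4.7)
The plan is to argue by contradiction, using Lemma~\ref{lem:IVT} to produce a definite-size "transition layer" $\{0 < w_{k,\theta_*} < 1-\theta_*\}$ for every $k$ failing the conclusion, and then exhausting the volume of $B_R$ via the pairwise disjointness \eqref{eq:wk3}.

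First I would fix $R \ge R_0$ and $\kappa > 0$, and apply Lemma~\ref{lem:IVT} (with the present $R$) and the choices $\alpha_1 = c_2$, $\alpha_2 = \kappa$ to obtain $\theta_* \in (0,1/2)$, $\mu = \mu(R,\kappa) > 0$, and a threshold $\epsilon_0 = \epsilon_0(R,\kappa) \in (0,1)$. Set $K = \lceil |B_R|/\mu \rceil + 1$, so $K$ depends only on $R$ and $\kappa$. Next, apply Lemma~\ref{lem:wkprop} with $k_*=K$, $\theta_*$ as just chosen, and the given $R$ to extract $\epsilon_1 = \epsilon_*(K,\theta_*,R) > 0$, and define $\epsilon_* := \min(\epsilon_0,\epsilon_1,1/16)$; note that $\epsilon_*, \theta_*, K$ depend only on $R$ and $\kappa$, as required.

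Now suppose for contradiction that the conclusion \eqref{eq:wkiteration} fails, i.e.\ for some $\epsilon \in (0,\epsilon_*)$ one has
\begin{equation*}
\int_{B_R}|w_{k,\theta_*}|^2 \, dx > \kappa \qquad \text{for every } k \in \{1,\ldots,K\}.
\end{equation*}
Since $0 \le w_{k,\theta_*} \le 1$ and by the nesting property \eqref{eq:wk2}, for each such $k$,
\begin{equation*}
\kappa < \int_{B_R} |w_{k,\theta_*}|^2\,dx \;\le\; |\{w_{k,\theta_*} > 0\}\cap B_R| \;\le\; |\{w_{k-1,\theta_*} \ge 1-\theta_*\}\cap B_R|.
\end{equation*}
Hence for every $k' \in \{0,1,\ldots,K-1\}$ we have both $|\{w_{k',\theta_*} \ge 1-\theta_*\}\cap B_R| > \kappa = \alpha_2$ and, by \eqref{eq:wk1} together with $R \ge R_0$, $|\{w_{k',\theta_*} = 0\}\cap B_R| \ge c_2 = \alpha_1$.

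The last ingredient is that each $w_{k',\theta_*}$ satisfies the differential inequality \eqref{eq:IVTassumption} on $B_{2R}$. This is exactly what Lemma~\ref{lem:wkprop} provides for $k' \le K$ because $\epsilon < \epsilon_1 = \epsilon_*(K,\theta_*,R)$. Invoking Lemma~\ref{lem:IVT} (allowed since $\epsilon < \epsilon_0$) we deduce
\begin{equation*}
|\{0 < w_{k',\theta_*} < 1-\theta_*\} \cap B_R| \ge \mu, \qquad k' = 0,1,\ldots,K-1.
\end{equation*}
The disjointness \eqref{eq:wk3} of these level sets across distinct indices then yields
\begin{equation*}
|B_R| \;\ge\; \sum_{k'=0}^{K-1} |\{0 < w_{k',\theta_*} < 1-\theta_*\} \cap B_R| \;\ge\; K\mu \;>\; |B_R|,
\end{equation*}
which is the desired contradiction.

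The content of the argument is essentially combinatorial once Lemmas~\ref{lem:IVT} and~\ref{lem:wkprop} are in hand; the only subtlety is the order of parameter selection. The constants must be chosen so that $\theta_*$ (and hence $\mu$, and hence $K$) are fixed by $(R,\kappa)$ \emph{before} invoking Lemma~\ref{lem:wkprop}, which then dictates how small $\epsilon$ must be. I do not anticipate any real obstacle beyond this bookkeeping.
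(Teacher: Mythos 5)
Your proposal follows exactly the same approach as the paper's proof: argue by contradiction, apply Lemma~\ref{lem:IVT} with $\alpha_1 = c_2$, $\alpha_2 = \kappa$ to fix $\theta_*$, $\mu$, $\epsilon_0$, then apply Lemma~\ref{lem:wkprop} to justify the differential inequality for all $k$ up to $K$, and finally derive a volume contradiction from the pairwise disjointness \eqref{eq:wk3}. The order of parameter selection is correctly identified as the key bookkeeping point, and you handle it properly.

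There is one genuine (but small and easily repaired) gap: your tight choice $K = \lceil |B_R|/\mu\rceil + 1$ forces you to include the index $k' = 0$ in the count, since using only $k' \in \{1,\dots,K-1\}$ gives $(K-1)\mu = \lceil|B_R|/\mu\rceil\,\mu \ge |B_R|$, which is \emph{not} strict when $|B_R|/\mu \in \N$. But for $k' = 0$, \eqref{eq:wk1} is not available. Indeed, unwinding the definitions with Lemma~\ref{lem:phieps},
\begin{equation*}
\{w_{0,\theta_*} = 0\} = \{\tilde w_{0,\theta_*} \le -\epsilon^{1/4}\} = \{\tilde f_{\epsilon,\delta} \ge 1 + \epsilon^{1/4}\},
\end{equation*}
which is a \emph{subset} (not a superset) of $\{\tilde f_{\epsilon,\delta} \ge 1\}$, the set whose measure \eqref{eq:wk0} controls. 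For $k \ge 1$ the opposite happens: $\{w_{k,\theta_*} = 0\} = \{\tilde f_{\epsilon,\delta} \ge (\theta_*/4)^k(1+\epsilon^{1/4})\} \supseteq \{\tilde f_{\epsilon,\delta} \ge 1\}$, since $(\theta_*/4)^k(1+\epsilon^{1/4}) < 1$ when $\theta_* < 1/2$ and $\epsilon < 1/16$. Likewise Lemma~\ref{lem:wkprop} is only stated for $k \in \{1,\dots,k_*\}$ (though its proof does extend to $k=0$). The paper sidesteps this entirely by choosing $K$ with some slack ($K > 1 + 2|B_R|/\mu$) and only invoking the isoperimetric bound for $k \in \{1,\dots,K-1\}$, which already gives $(K-1)\mu > 2|B_R|$. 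Increasing your $K$ by one and restricting to $k' \ge 1$ closes the gap with no other changes.
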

	\begin{proof}
		Let $\epsilon_0 > 0$, $\theta_* \in (0,1/2)$, and $\mu > 0$ denote the parameters guaranteed by applying Lemma~\ref{lem:IVT} at radius $R$ with $\alpha_1 = c_2$ and $\alpha_2 = \kappa$. This fixes $\theta_*$ from the lemma statement. Note that since $c_2$ is universal, $\epsilon_0$, $\theta_*$, and $\mu$ depend only on $\kappa$ and $R$. 
		
		Let $K$ be the first natural number that exceeds $1 + 2|B_R|/\mu$ and observe that $K$ depends only on $R$ and $\kappa$. By Lemma~\ref{lem:wkprop} there exists $\bar{\epsilon}(K,\theta_*,R) < 1/16$ such that (\ref{eq:wk4}) holds whenever $\epsilon \in (0,\bar{\epsilon})$ and $k \le K$. Let $\epsilon_* = \min(\epsilon_0,\bar{\epsilon})$. To complete the proof, it suffices to show that for every $\epsilon \in (0,\epsilon_*)$ there exists $k_* \le K$ such that 
		$$\int_{B_R} |w_{k_*,\theta_*}|^2 \le \kappa.$$
		If this is not the case, then there exists
		$\epsilon' \in (0,\epsilon_*)$ such that
		\begin{equation} \label{eq:wkcontr}
		\int_{B_R} |w^{\epsilon',\delta}_{k,\theta_*}|^2 > \kappa 
		\end{equation}
		for all $k \le K$. In the remainder of this proof we write $w_{k,\theta_*} = w_{k,\theta_*}^{\epsilon',\delta}$.
		Since $0 \le w_{k,\theta_*} \le 1$, it follows from (\ref{eq:wk2}) and (\ref{eq:wkcontr}) that for every $k \le K-1 $ we have
		\begin{equation} 
		|\{w_{k,\theta_*} \ge 1-\theta_*\} \cap B_R| \ge \int_{B_R}|w_{k+1,\theta_*}|^2 \ge \kappa.
		\end{equation}
		Combining with (\ref{eq:wk1}) and (\ref{eq:wk4}), we see that for every $k \le K - 1$, the function $w_{k,\theta_*}$ satisfies the hypotheses of Lemma~\ref{lem:IVT} at radius $R$ with $\alpha_1 = c_2$ and $\alpha_2 = \kappa$. Since $\epsilon' < \epsilon_0$, we obtain that for every $k \le K - 1$ there holds
		\begin{equation}
		|\{0 < w_{k,\theta_*} < 1-\theta_*\} \cap B_R| \ge \mu,
		\end{equation}
		which along with (\ref{eq:wk3}) implies that $|B_R| \ge (K - 1) \mu \ge 2|B_R|$, a contradiction. 
	\end{proof}
	
	We are now ready to complete the proof of Lemma~\ref{lem:IVTtolbd}. For $R \ge R_0$ and $\kappa(R)$ to be chosen sufficiently small we app Lemma~\ref{lem:wkiteration} to obtain $\epsilon_*$, $\theta_*$, and $K$, all depending only on $R$, so that whenever $\epsilon \in (0,\epsilon_*)$ there exists $k_* \le K$ such that 
	\begin{equation}
	\int_{B_R}\left|w^{\epsilon,\delta}_{k_*,\theta_*}\right|^2 \le \kappa.
	\end{equation}	
	From the lower bound in Lemma~\ref{lem:wkprop} (which $w_{k,\theta_*}^{\epsilon,\delta}$ satisfies for any $\epsilon,\delta \in (0,1)$ and $k \in \N$) it follows by Lemma~\ref{lem:Moser} that there exists a constant $C(R)$ such that 
	\begin{equation}
	\|w_{k_*,\theta_*}\|_{L^\infty(B_{R/2})}\le C(R)\|w_{k_*,\theta_*}\|_{L^2(B_R)} \le C(R)\sqrt{\kappa}.
	\end{equation}
	Let $\kappa$ be small enough so that $C(R)\sqrt{\kappa} \le 1/2$. Since $\phi_\epsilon$ is monotone increasing with $\phi(1/2) = 1/2$ this implies that $\sup_{|x| \le R/2}\tilde{w}_{\theta_*,k_*}(x) \le 1/2$. Directly from (\ref{eq:defwk}) we get that 
	\begin{equation} \label{eq:lbdsmalleps}
	\inf_{\epsilon \in (0,\epsilon_*)}\inf_{\delta \in (0,1)} \inf_{|x| \le R/2}f_{\epsilon,\delta}(x) \ge \frac{c_1}{2}\left(\frac{\theta_*}{4}\right)^{K} \gtrsim_R 1.  
	\end{equation}
The bound (\ref{eq:IVTtolbd}) then follows from Lemma~\ref{lem:deltatozero}. If Assumption~\ref{Assumption:positivity} is satisfied, then Lemma~\ref{lem:largeps} and \eqref{eq:IVTtolbd} together yield (\ref{eq:lbd}), which completes the proof.
\end{proof}

Under Assumption~\ref{Assumption:positivity}, the arguments of this section yield
\begin{equation} \label{eq:reglbd}
\inf_{\epsilon, \delta \in (0,1), |x| \le R}f_{\epsilon,\delta}(x) \gtrsim_{R} 1
\end{equation}
for every $R > 0$. Indeed, this follows immediately from \eqref{eq:lbdsmalleps} and Lemma~\ref{lem:largeps}.

\subsection{Global bounds from local ones} \label{sec:StationaryGaussian}
The purpose of this section is to upgrade (\ref{eq:ubd}) to the Gaussian upper bound (\ref{eq:Gaussubd}). By Lemma~\ref{lem:deltatozero} it suffices to prove the following.
\begin{lemma} \label{lem:ubd2Gauss}
	There exists $\lambda > 0$ so that 
	\begin{equation}
	\sup_{\epsilon, \delta \in (0,1)} f_{\epsilon, \delta}(x) \leqc e^{-\lambda \abs{x}^2/2}.
	\end{equation}
\end{lemma}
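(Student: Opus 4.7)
The proof is a comparison argument with a Gaussian barrier. Set $g(x) := e^{-\lambda |x|^2/2}$ with $\lambda > 0$ to be chosen sufficiently small (uniformly in $\epsilon,\delta \in (0,1)$). Using $\grad g = -\lambda x g$, $\Delta g = (\lambda^2|x|^2 - d\lambda)g$, $Z_j^2 g = (\lambda^2(Z_j\cdot x)^2 - \lambda|Z_j|^2)g$, and the conservation identities $Bx \cdot x = N(x)\cdot x = 0$ (which force $Bx\cdot \grad g = N\cdot\grad g = 0$), a direct computation gives
\begin{equation*}
(\epsilon\delta\Delta + L_\epsilon^*) g \;=\; g(x) Q(x),
\end{equation*}
with
\begin{equation*}
Q(x) := \epsilon\delta(\lambda^2|x|^2 - d\lambda) + \epsilon \sum_{j=1}^r \big(\lambda^2(Z_j\cdot x)^2 - \lambda |Z_j|^2\big) - \epsilon\lambda (Ax\cdot x) + \epsilon\,\tr(A).
\end{equation*}
Since $A$ is positive definite and $\delta \le 1$, the dissipative term $-\epsilon\lambda(Ax\cdot x) \leqc -\epsilon\lambda|x|^2$ dominates the positive contributions for $\lambda$ small, uniformly in $\epsilon,\delta \in (0,1)$. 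Hence there exist $\lambda > 0$, $R_1 \ge 1$, and $c_0 > 0$, all uniform, such that $Q(x) \le -c_0\, \epsilon\lambda |x|^2 < 0$ on $\{|x| \ge R_1\}$.

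\textbf{Comparison via the ratio.} By the uniform local bound $\sup_{\epsilon,\delta \in (0,1)}\|f_{\epsilon,\delta}\|_{L^\infty(B_{R_1})} \leqc 1$ from Section~\ref{sec:StationaryMoser}, one can fix $K \ge 1$ (independent of $\epsilon,\delta$) so that $f_{\epsilon,\delta} \le K g$ on $\partial B_{R_1}$. The ratio $\phi := f_{\epsilon,\delta}/g$ is smooth and nonnegative; substituting $f_{\epsilon,\delta} = g\phi$ into $(\epsilon\delta\Delta + L_\epsilon^*)f_{\epsilon,\delta} = 0$ and dividing by $g$ yields the equation
\begin{equation*}
\mathcal L \phi + Q(x)\, \phi = 0,
\end{equation*}
where $\mathcal L$ is a second-order operator with ellipticity constant $\epsilon\delta$, smooth coefficients, and \emph{no} zero-order term (the positive coefficient $\epsilon\,\tr(A)$ of $L_\epsilon^*$ is absorbed into the scalar $Q$). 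Thus on $\{|x| \ge R_1\}$ the function $v := \phi - K$ satisfies $(\mathcal L + Q)v = -QK \ge 0$ with $v \le 0$ on $\partial B_{R_1}$, and the zero-order coefficient $Q$ of the operator $\mathcal L + Q$ is non-positive. For each fixed $\epsilon,\delta > 0$, the moment bound $\int e^{\gamma|x|^2} f_{\epsilon,\delta}\, dx \leqc 1$ from \eqref{eq:AppDistrMoment} combined with the Harnack inequality for the strictly elliptic operator $\epsilon\delta\Delta + L_\epsilon^*$ forces $f_{\epsilon,\delta}(x) \to 0$ faster than any Gaussian $e^{-\lambda'|x|^2}$ with $\lambda' < \gamma$, so $\phi(x) \to 0$ as $|x|\to\infty$ provided $\lambda < 2\gamma$. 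Applying the weak maximum principle to $v$ on $B_R\setminus B_{R_1}$ for $R$ sufficiently large that $\sup_{|x|=R}\phi \le K$ yields $v \le 0$ on the annulus; letting $R \to \infty$ gives $\phi \le K$ throughout $\{|x| \ge R_1\}$, and combining with the local $L^\infty$ bound on $B_{R_1}$ produces $f_{\epsilon,\delta}(x) \leqc e^{-\lambda|x|^2/2}$ uniformly in $\epsilon,\delta$.

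\textbf{Main obstacle.} The principal difficulty is the positive zero-order coefficient $\epsilon\,\tr(A)$ in $L_\epsilon^*$, which prevents direct application of the weak maximum principle to $f_{\epsilon,\delta}$ itself. The ratio trick $\phi = f_{\epsilon,\delta}/g$ trades this term for the strongly negative coefficient $Q(x) \leqc -\epsilon\lambda|x|^2$; this cancellation is possible precisely because the drift term $-\epsilon\lambda(Ax\cdot x)$ generated by $Ax\cdot\grad$ acting on $g$ dominates both $\epsilon\,\tr(A)$ and the positive quadratic contributions $\epsilon\lambda^2(Z_j\cdot x)^2$ and $\epsilon\delta\lambda^2|x|^2$ arising from $Z_j^2 g$ and $\epsilon\delta\Delta g$. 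This forces $\lambda$ to be strictly positive yet uniformly bounded in $\epsilon,\delta \in (0,1)$. The non-uniform decay $\phi(x) \to 0$ at infinity used to close the maximum principle argument is only a qualitative input and is permissible because it serves solely to propagate the \emph{uniform} boundary bound $K$ inward, producing a Gaussian estimate whose constants do not depend on either small parameter.
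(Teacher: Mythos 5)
Your proof is correct and takes a genuinely different route from the paper's. The paper proves Lemma~\ref{lem:ubd2Gauss} by a \emph{parabolic} first-crossing argument: it mollifies a cutoff of $f_{\epsilon,\delta}$, evolves it under $(\Pt_t^{\epsilon,\delta})^*$, tracks the first time the evolving density touches the Gaussian supersolution $G^+$, and rules out crossings either in $B_{R_0}$ (by the uniform local $L^\infty$ bound) or outside it (by the supersolution property at a touching point, where the zero-order terms automatically cancel). Your proof instead is \emph{elliptic}: substituting $f_{\epsilon,\delta}=g\phi$ trades the positive zero-order coefficient $\epsilon\,\mathrm{tr}(A)$ of $L_\epsilon^*$ for the negative potential $Q\leqc -\epsilon\lambda|x|^2$ of a clean second-order operator with no zero-order term, after which a one-shot weak maximum principle on the exterior domain closes the estimate. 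Both routes rely on the same barrier computation \eqref{eq:LstarG}--type identity and the same uniform local $L^\infty$ estimate to fix the boundary constant $K$, and both need a \emph{qualitative} (non-uniform in $\epsilon,\delta$) decay of $f_{\epsilon,\delta}$ at infinity to propagate the boundary data; the paper supplies this via weighted Sobolev bounds as in Lemma~\ref{lem:AppL2density}, while you invoke the Harnack inequality plus the moment bound \eqref{eq:AppDistrMoment}. What your approach buys is a shorter, stationary argument avoiding the semigroup evolution and time-asymptotic convergence; what it costs is some technical care in the Harnack step.

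That technical point deserves a flag. The drift $N(x)$ grows like $|x|^p$ with $p\ge 2$, so the Harnack constant on a unit ball $B(x_0,1)$ blows up at least exponentially in $|x_0|^p$, and the naive combination with the moment bound $\int e^{\gamma|x|^2}f_{\epsilon,\delta}\,dx\lesssim 1$ need not give any decay. The fix is standard but should be stated: apply Harnack on shrinking balls $B(x_0, r(x_0))$ with $r(x_0)\sim\epsilon\delta/|x_0|^p$ so that the drift-to-ellipticity ratio times the radius stays bounded; then the constant is universal, the averaging factor $r(x_0)^{-d}$ is polynomial, and the Gaussian weight from \eqref{eq:AppDistrMoment} still dominates, yielding $f_{\epsilon,\delta}(x)\lesssim_{\epsilon,\delta}|x|^{dp}e^{-\gamma|x|^2(1-o(1))}$ and hence $\phi(x)\to 0$ once $\lambda<2\gamma$. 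Alternatively you could appeal to the paper's weighted-$H^k$ bound, which by Sobolev embedding gives the pointwise Gaussian decay directly and avoids the Harnack degeneracies altogether. With either patch the argument is complete, and the constants $\lambda$, $R_1$, $K$ in the final bound are visibly independent of $\epsilon$ and $\delta$.
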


\begin{proof}
Let $G_\lambda(x) = \exp(-\lambda \abs{x}^2/2)$ for $\lambda > 0$. Note that $Bx \cdot \grad G_\lambda = N \cdot \grad G_\lambda = 0$ because $G_\lambda$ is radially symmetric. Hence, denoting $Z_j = (Z_j^{(1)},\ldots, Z_j^{(r)})$, we have 
\begin{equation} \label{eq:LstarG}
\begin{aligned}
(L^*_\epsilon + \epsilon \delta \Delta)G_\lambda(x) &= \left(\lambda^2\sum_{j=1}^r |Z_j \cdot x|^2 - \lambda \sum_{k,j} (Z_j^{(k)})^2 + \text{Tr}(A) - \lambda Ax\cdot x \right)\epsilon G_\lambda(x) \\ 
& \quad + \left(\lambda^2 \abs{x}^2 - d\lambda\right) \epsilon \delta G_\lambda(x). 
\end{aligned}
\end{equation}
Since $A$ is positive definite, there exists $\lambda_0 > 0$ sufficiently small and $R_0 \ge 1$ sufficiently large, both depending only on $A$, $\{Z_j\}_{j=1}^r$, and $d$, so that $(L_\epsilon^* + \epsilon \delta \Delta)G_{\lambda_0}(x) < 0$ whenever $|x| \ge R_0$. With $R_0$ fixed, we have from Lemmas~\ref{lem:Moser} and~\ref{lem:L2bound} that there exists a constant $C_0(A,\{Z_j\}_{j=1}^r,d)$ such that 
\begin{equation}\label{eq:regubdbarrier}
\sup_{\epsilon, \delta \in (0,1)}\|f_{\epsilon,\delta}\|_{L^\infty(B_{2R_0})} \le C_0.
\end{equation}
We then define the upper barrier function 
\begin{equation}
G^+(x) = 2C_0 e^{4\lambda_0 R_0^2/2}  G_{\lambda_0}(x).
\end{equation}
To prove the lemma it suffices to show for all $x \in \R^d$ there holds 
\begin{equation} \label{eq:Gaussubd0}
\sup_{\epsilon, \delta \in (0,1)} f_{\epsilon, \delta}(x) \leqc G^+(x).
\end{equation}

Let $\mathcal{J}_\eta$ denote the standard mollification at scale $\eta > 0$ and define 
\begin{equation}
g_{\eta} = \mathcal{J}_\eta\left(\mathbf{1}_{|x| \le R_0 + \frac{1}{2}} f_{\epsilon,\delta}  \right).
\end{equation}
Since $f_{\epsilon,\delta}$ is smooth, it follows from \eqref{eq:reglbd}, (\ref{eq:regubdbarrier}), and the definition of $G^+$ that there exists $\eta_0$ so that
\begin{equation} \label{eq:Gaussubd1}
\frac{1}{2}f_{\epsilon,\delta}(x) < g_{\eta_0}(x) \quad \forall|x| \le R_0,
\end{equation}
\begin{equation} \label{eq:Gaussubd2}
g_{\eta_0}(x) < \min\left(G^+(x),\frac{3}{2}f_{\epsilon,\delta}(x)\right) \quad \forall x \in \R^d.
\end{equation}
Due to \eqref{eq:Gaussubd1} and \eqref{eq:AppDistrMoment} we can assume that $R_0$ is large enough so that 
\begin{equation} \label{eq:Gaussubd3}
\int g_{\eta_0} \ge \frac{1}{4}.
\end{equation}

Let $\Pt_t^{\epsilon,\delta}$ denote the Markov semigroup generated by $L_\epsilon + \epsilon \delta \Delta$. For details on the construction and properties of $\Pt_t^{\epsilon,\delta}$ we refer to Appendix~\ref{sec:basics}. Let $\mu$ be the measure on $\R^d$ with density $(\int g_{\eta_0})^{-1}g_{\eta_0}$ and let $\tilde{g}_t: \R^d \to \R$ denote the density of  $(\Pt_t^{\epsilon,\delta})^*(\mu)$. Then, $g_t : = (\int g_{\eta_0}) \tilde{g}_t$ is a global smooth solution to the problem
\begin{equation}
\begin{cases}
\partial_t g_t = L_\epsilon^*g_t + \epsilon \delta \Delta g_t, \\ 
g_0(x) = g_{\eta_0}(x).
\end{cases}
\end{equation}
Define 
$$ t_* = \inf\{t \ge 0: \text{there exists }x\text{ such that }g_{t}(x) = G^+(x)\}  $$
with the convention that $t_* = \infty$ if $g_t(x) < G^+(x)$ for every $x \in \R^d$ and $t > 0$.
By the convergence 
$$\limsup_{t\to \infty}\|\tilde{g}_t - f_{\epsilon,\delta}\|_{L^1} \le \limsup_{t\to \infty}\|(\Pt_{t}^{\epsilon,\delta})^* \mu - \mu_{\epsilon,\delta}\|_{TV} = 0$$ for any $\epsilon, \delta > 0$ and (\ref{eq:Gaussubd3}) there exists a sequence $\{t_k\}_{k=1}^\infty$ such that
$$ \lim_{t \to \infty}g_{t_k}(x) = \left(\int g_{\eta_0}\right) f_{\epsilon,\delta}(x) \ge \frac{1}{4}f_{\epsilon,\delta}(x) $$
for almost every $x \in \R^d$. Thus, since $f_{\epsilon,\delta}$ is smooth, to prove \eqref{eq:Gaussubd0} it is enough to show that $t_* = \infty$.

As in the proof of Lemma~\ref{lem:AppL2density}, it is not hard to verify that $\forall k > 0$, $\exists \gamma' > 0$ such that for all $T < \infty$ and $\delta > 0$, 
\begin{align*}
\sup_{t \in [0,T]}\|e^{\gamma' |x|^2} g_t\|_{H^{k}} \lesssim_{T,\delta,k} 1,
\end{align*}
where $\gamma' < \gamma$ (with $\gamma$ as in Lemma~\ref{lem:AprioriBounds}) does not depend on $\epsilon$ or $\delta$.
Moreover, this implies similar estimates on $\partial_t g_t$ with $\gamma'$ replaced with $\gamma'/2$, and so $g_t$ takes values continuously in such spaces. 
Hence, if $\lambda_0 < \gamma'$ and $t_* < \infty$ there exists a ``first crossing time'' $t_* > 0$; i.e., $(t_*,x_*)$ is such that $g_{t_*}(x_*) = G^+(x_*)$ and $g_t(x) \le G^+(x)$ for all $t \le t_*$ and $x \in \R^d$. 
Suppose for the sake of contradiction that $t_* < \infty$. We have two cases. 

\textbf{Case 1:} $|x_*| < R_0$:
By (\ref{eq:Gaussubd2}) and the fact $(\Pt_t^{\epsilon,\delta})^*$ preserves positivity, we have $g_t(x) \le (3/2)f_{\epsilon,\delta}$ for all $t \ge 0$. Combining with \eqref{eq:regubdbarrier} we obtain
$$\sup_{t \ge 0, |x| < R_0}g_t(x) \le \frac{3C_0}{2}.$$
Since $G^+(x) \ge 2C_0$ whenever $|x| \le 2R_0$ we conclude that $|x_*| < R$ is impossible. 

\textbf{Case 2:} $|x_*| \ge R_0$:
Since $g \in C_t^1C_x^2((0,\infty) \times \R^d)$, it follows from (\ref{eq:Gaussubd2}) and a classical barrier function argument that $(L_\epsilon^* + \epsilon\delta \Delta)G^+(x_*) \ge 0.$ This is a contradiction because we chose $\lambda_0$ and $R_0$ so that  $(L_\epsilon^* + \epsilon\delta \Delta)G^+(x) < 0$ whenever $|x| \ge R_0$.
\end{proof}

\section{Geometric ergodicity} \label{sec:GeoErg}
In this section we prove (\ref{eq:PtVbound}),  Lemmas~\ref{lem:ParReg} and~\ref{lem:Linfty2L2decay}, and the optimality result Theorem~\ref{thrm:optimal}.

We begin with \eqref{eq:PtVbound}, which is a consequence of the following lemma. 
\begin{lemma} \label{lem:PtVbound}
	Let $V: \R^d \to \R$ be a uniform Lyapunov function for $\Pt_t^\epsilon$ with constants $\kappa$ and $b$. Then, for every measurable function $f:\R^d \to \R$ with $\|f\|_{V} < \infty$ there holds 
	$$ \sup_{t \ge 0}\|\Pt_t^\epsilon f\|_{V} \leqc_{\kappa, b} \|f\|_{V}.$$
\end{lemma}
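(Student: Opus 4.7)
The plan is to use the drift condition directly to bound $\Pt_t^\epsilon V$ pointwise, uniformly in $t$, and then pass this to the weighted supremum norm $\|\cdot\|_V$ by linearity and monotonicity of the Markov semigroup.

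First I would unpack the hypothesis: by the definition of a uniform Lyapunov function (Definition~\ref{def:uniformLyapunov}) with $\delta = 0$, we have $L_\epsilon V \leq -\epsilon \kappa V + \epsilon b$. Applying Dynkin's formula (the expression referenced as \eqref{eq:AppMoment} in the proof of Theorem~\ref{thrm:GeoErg}), this yields the pointwise bound
\begin{equation}
\Pt_t^\epsilon V(x) \leq e^{-\epsilon \kappa t} V(x) + \frac{b}{\kappa}\bigl(1 - e^{-\epsilon \kappa t}\bigr) \leq V(x) + \frac{b}{\kappa}
\end{equation}
for every $t \geq 0$ and $x \in \R^d$. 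Since $V \geq 0$ and $b, \kappa > 0$, this gives the uniform-in-$t$ estimate $1 + \Pt_t^\epsilon V(x) \leq (1 + b/\kappa)(1 + V(x))$.

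Next, suppose $f : \R^d \to \R$ is measurable with $\|f\|_V < \infty$. Then $|f(y)| \leq \|f\|_V (1 + V(y))$ for all $y$, and since $\Pt_t^\epsilon$ preserves positivity and is linear on bounded measurable functions, we have
\begin{equation}
|\Pt_t^\epsilon f(x)| \leq \Pt_t^\epsilon |f|(x) \leq \|f\|_V \bigl(1 + \Pt_t^\epsilon V(x)\bigr) \leq \|f\|_V\left(1 + \frac{b}{\kappa}\right)\bigl(1 + V(x)\bigr).
\end{equation}
Dividing by $1 + V(x)$ and taking the supremum over $x$ gives $\|\Pt_t^\epsilon f\|_V \leq (1 + b/\kappa)\|f\|_V$ uniformly in $t \geq 0$, which is the desired estimate.

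There is no real obstacle here; the only mild subtlety is justifying the Dynkin bound on $\Pt_t^\epsilon V$ despite $V$ being unbounded, but this is standard given the drift condition and the global well-posedness provided by Lemma~\ref{lem:StochasticWP} (a localization argument with stopping times at $\{|x| = R\}$ followed by Fatou as $R \to \infty$ works, since $V$ is continuous and nonnegative). Once that pointwise bound is in hand, the weighted norm estimate is immediate.
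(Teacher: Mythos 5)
Your proof is correct and takes essentially the same route as the paper: apply the uniform moment bound \eqref{eq:AppMoment} (equivalently \eqref{eq:drift}) to get $\Pt_t^\epsilon V(x) \le V(x) + b/\kappa$ uniformly in $t$, then pass to the weighted supremum norm via $|\Pt_t^\epsilon f| \le \Pt_t^\epsilon |f| \le \|f\|_V(1 + \Pt_t^\epsilon V)$. You spell out the monotonicity/positivity step and the stopping-time justification a bit more explicitly than the paper does, but the argument is the same.
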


\begin{proof}
	Recall from (\ref{eq:drift}) that 
	$$\Pt_t^\epsilon V(x) \leqc_{\kappa, b}1 + V(x).$$ 
	Hence, 
	\begin{align*}
	\|\Pt_t^\epsilon f\|_{V} = \sup_{x \in \R^d} \frac{|\Pt_t^\epsilon f|(x)}{1+V(x)} \le \|f\|_{V}\sup_{x \in \R^d}\frac{1 + \Pt_t^\epsilon V(x)}{1+V(x)} \leqc_{\kappa, b}\|f\|_{V}.
	\end{align*}
\end{proof}

\subsection{$L^\infty \to L^2_{\mu_\epsilon}$ decay for $\Pt^\epsilon_t$} 

In this section we prove Lemma~\ref{lem:Linfty2L2decay}. A key ingredient is the following hypoelliptic weak Poincar\'{e} type inequality. Recall the notations defined in (\ref{eq:defXparabolic}) and (\ref{eq:defXstarparabolic}).

\begin{lemma} \label{lem:poincinterp}
	Let $R > 0$. For every $\delta > 0$ there exists a constant $C_\delta$ such that for all $\epsilon \in [0,1]$, $t_0 \ge 0$, and $f \in C^\infty((t_0,t_0+1) \times B_{R+1})$ there holds 
	\begin{align*} \|f-\bar{f}\|_{L^2((t_0 + 1/4,t_0 + 3/4)\times B_R)} &\le \delta \|f\|_{L^\infty((t_0,t_0+1) \times B_{R+1})} + C_\delta \sum_{j=1}^r\|Z_j f\|_{L^2((t_0,t_0 + 1)\times B_{R+1})} \\ 
	& \quad + C_\delta \|(\partial_t + Z_{0,\epsilon})f\|_{L^2((t_0,t_0+1), \mathscr{X^*}(B_{R+1}))},
	\end{align*}
	where 
	$$\bar{f} = 2|B_R|^{-1}\int_{t_0 + 1/4}^{t_0 + 3/4}\int_{B_R}f$$
	is the average value of $f$ on $(t_0 + 1/4,t_0 + 3/4) \times B_R$.
\end{lemma}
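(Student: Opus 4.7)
The plan is to follow a compactness–rigidity contradiction scheme analogous to the proof of Lemma~\ref{lem:IVT}, with the compactness furnished by the uniform parabolic H\"ormander inequality (Lemma~\ref{lem:Horparabolic}) and the rigidity by Lemma~\ref{lem:rigidity}. Suppose the stated inequality fails: there exist $R > 0$, $\delta_0 > 0$, and sequences $\epsilon_n \in [0,1]$, $t_{0,n} \ge 0$, and $f_n \in C^\infty((t_{0,n},t_{0,n}+1)\times B_{R+1})$ violating the inequality with $C = n$. Time translation invariance of $Z_{0,\epsilon}$ and the $Z_j$ lets me assume $t_{0,n} = 0$. Since both $Z_j$ and $Z_{0,\epsilon_n}$ annihilate constants, subtracting $\bar{f_n}$ leaves the derivative quantities and the left-hand side unchanged; after additionally dividing by $\|f_n-\bar{f_n}\|_{L^2((1/4,3/4)\times B_R)}$, I may assume
\begin{equation*}
\bar{f_n}=0, \qquad \|f_n\|_{L^2((1/4,3/4)\times B_R)}=1, \qquad \|f_n\|_{L^\infty((0,1)\times B_{R+1})}\le 2/\delta_0,
\end{equation*}
with $\sum_{j=1}^r\|Z_j f_n\|_{L^2}+\|(\partial_t+Z_{0,\epsilon_n})f_n\|_{L^2((0,1);\mathscr{X}^*(B_{R+1}))}\le 2/n$. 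After extracting a subsequence I also assume $\epsilon_n\to\epsilon_\infty\in[0,1]$.

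For the compactness step, fix a radially symmetric space-time cutoff $\chi\in C_0^\infty((1/8,7/8)\times B_{R+1/2})$ with $\chi\equiv 1$ on $(1/4,3/4)\times B_R$. Lemma~\ref{lem:Horparabolic}, applied with $X_0=Z_{0,\epsilon_n}$, its internal parameter taken equal to one, and $\delta=0$—the constants being uniform in $n$ thanks to Assumption~\ref{Assumption:spanning} and the uniform bound on $\|Z_{0,\epsilon_n}\|_{C^k(B_{R+1.5})}$ for $\epsilon_n\in[0,1]$—gives
\begin{equation*}
\int_0^1 \|\chi f_n(\tau)\|_{H^s}^2\, d\tau\ \leqc\ \|\chi f_n\|_{L^2\mathscr{X}}^2+\|(\partial_t+Z_{0,\epsilon_n})(\chi f_n)\|_{L^2\mathscr{X}^*}^2
\end{equation*}
for some $s>0$. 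Distributing derivatives with the Leibniz rule and using that $\chi$, $\partial_t\chi$, and $Z_{0,\epsilon_n}\chi$ are bounded uniformly on the support of $\chi$, together with $\|f_n\|_{L^\infty}\le 2/\delta_0$ and the smallness of the derivative terms, I conclude that $\chi f_n$ is bounded in $L^2_t H^s_x$ uniformly in $n$. The same commutator expansion shows that $\partial_t(\chi f_n)=(\partial_t+Z_{0,\epsilon_n})(\chi f_n)-Z_{0,\epsilon_n}(\chi f_n)$ is bounded in $L^2_t H^{-1}_x$, so the Aubin–Lions lemma yields a subsequence along which $\chi f_n\to f$ strongly in $L^2((0,1)\times B_{R+1})$; in particular $f_n\to f$ strongly on $(1/4,3/4)\times B_R$ and $f_n \overset{*}{\rightharpoonup} f$ in $L^\infty$.

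The limit $f\in L^\infty\cap L^2$ then satisfies $\|f\|_{L^2((1/4,3/4)\times B_R)}=1$, $\int_{(1/4,3/4)\times B_R}f=0$, and, distributionally on $\Omega:=(0,1)\times B_{R+1}$, both $Z_j f=0$ for $j=1,\ldots,r$ and $(\partial_t+Z_{0,\epsilon_\infty})f=0$ (for the latter, the convergence $\epsilon_n\to\epsilon_\infty$ makes the coefficients of $Z_{0,\epsilon_n}$ converge locally uniformly). View $\{X_0,X_1,\ldots,X_r\}:=\{\partial_t+Z_{0,\epsilon_\infty},Z_1,\ldots,Z_r\}$ as vector fields on the open connected set $\Omega\subset\R^{d+1}$. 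Every iterated Lie bracket reduces to a purely spatial bracket of $\{Z_{0,\epsilon_\infty},Z_1,\ldots,Z_r\}$ because $\partial_t$ commutes with every time-independent vector field, and these spatial brackets span $\R^d$ in the spatial variables by Assumption~\ref{Assumption:spanning}; together with $X_0\in V_0$, which supplies the $\partial_t$ component, the full tangent space $\R^{d+1}$ is spanned. Lemma~\ref{lem:rigidity} then forces $f$ to be constant on $\Omega$, which together with the vanishing mean condition on $(1/4,3/4)\times B_R$ gives $f\equiv 0$—contradicting $\|f\|_{L^2}=1$.

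The main obstacle is the compactness step. Lemma~\ref{lem:Horparabolic} produces regularity only in the spatial variable, only in an integrated-in-time sense, and only for functions compactly supported away from the temporal boundary, so upgrading this to strong $L^2$ convergence in space-time requires the Aubin–Lions argument above and a commutator analysis preserving uniformity in $n$ despite the $\epsilon_n$-dependent coefficients. A secondary delicate point is that in the space-time Lie algebra $\partial_t$ is never produced through iterated commutators: the time direction must instead be recovered directly from the inclusion $X_0\in V_0$ afforded by the general (non-parabolic) H\"ormander condition required in Lemma~\ref{lem:rigidity}, which is why the rigidity lemma must be applied in its non-parabolic form.
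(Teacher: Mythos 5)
Your proof is correct, and the contradiction–compactness–rigidity scheme matches the paper's, but the compactness step takes a genuinely different route. You invoke Lemma~\ref{lem:Horparabolic} (the parabolic H\"ormander inequality, taken with its internal parameter equal to one), which only yields $L^2_t H^s_x$ regularity, forcing you to supplement with an Aubin--Lions argument (controlling $\partial_t(\chi f_n)$ in $L^2_t H^{-1}_x$ via the divergence-form rewriting of $Z_{0,\epsilon_n}\cdot\grad$) to obtain strong space-time $L^2$ convergence. The paper instead applies Lemma~\ref{lem:HineqOrigQuant} (the time-independent quantitative H\"ormander inequality) directly on the $(d+1)$-dimensional space-time domain, using exactly the observation you reserve for the rigidity step --- that $\{\partial_t + Z_{0,\epsilon_n}, Z_1,\ldots,Z_r\}$ satisfies the full \emph{non-parabolic} uniform H\"ormander condition on $(0,1)\times B_{R+1}$, with $X_0 \in V_0$ supplying the $\partial_t$ direction since brackets with $\partial_t$ vanish. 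This yields $\|\chi g_n\|_{H^s(\R^{d+1})}\leqc \delta^{-1}$ in one stroke, so compact embedding gives strong space-time $L^2$ convergence immediately, with no Aubin--Lions step. Your route is heavier but valid; the paper's is cleaner because the same space-time spanning argument does double duty for both compactness and rigidity. A minor imprecision: you assert the limiting distributional equations on all of $\Omega=(0,1)\times B_{R+1}$, yet $f$ is primarily the limit of $\chi f_n$ (supported in $(1/8,7/8)\times B_{R+1/2}$); this is salvaged by the separately extracted weak-$*$ $L^\infty$ limit of the un-cutoff $f_n$, which you do invoke, but it would be tighter to run Lemma~\ref{lem:rigidity} on the smaller open set where $\chi\equiv 1$ and where strong convergence is actually available --- the contradiction with $\|f\|_{L^2((1/4,3/4)\times B_R)}=1$ and zero mean only needs $f$ constant there.
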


\begin{proof}[Proof of Lemma~\ref{lem:poincinterp}]
  It suffices to prove the inequality for $t_0 = 0$. Suppose for the sake of contradiction that the result is false. Then, there exists $\delta > 0$ and a sequence $\{(f_n, \epsilon_n)\}_{n=1}^\infty \subseteq C^\infty((0,1) \times B_{R+1}) \times [0,1]$ such that 
	\begin{equation} \label{eq:Poinclem1}
	\begin{aligned}
	\|f_n - \bar{f}_n\|_{L^2((1/4,3/4)\times B_R)} &\ge \delta \|f_n\|_{L^\infty((0,1)\times B_{R+1})} + n\sum_{j=1}^r\|Z_j f_n\|_{L^2((0,1)\times B_{R+1})} \\ 
	& \quad + n\|(\partial_t + Z_{0,\epsilon_n}) f_n\|_{L^2((0,1);\mathscr{X^*}(B_{R+1}))}    
	\end{aligned}
	\end{equation}
	for every $n \in \N$. Let 
	$$g_n = \frac{f_n - \bar{f}_n}{\|f_n - \bar{f}_n\|_{L^2((1/4,3/4)\times B_R)}}.$$
	Dividing (\ref{eq:Poinclem1}) by $\|f_n - \bar{f}_n\|_{L^2((1/4,3/4)\times B_R)}$ and using that 
	$$\|g_n\|_{L^\infty} \le \frac{2}{\|f_n - \bar{f}_n\|_{L^2((1/4,3/4)\times B_R)}}\|f_n\|_{L^\infty((0,1)\times B_{R+1})}$$
	we obtain
	\begin{equation} \label{eq:Poinclem2}
	1 \ge \frac{\delta}{2} \|g_n\|_{L^\infty((0,1)\times B_{R+1})} + n\sum_{j=1}^r\|Z_j g_n\|_{L^2((0,1)\times B_{R+1})} + n\|(\partial_t + Z_{0,\epsilon_n}) g_n\|_{L^2((0,1);\mathscr{X^*}(B_{R+1}))}.
	\end{equation}
	Let $\chi \in C_0^\infty((0,1)\times B_{R+1})$ be a smooth cutoff function with $0 \le \chi \le 1$ and $\chi \equiv 1$ on $(1/8,7/8) \times B_{R+1/2})$. From (\ref{eq:Poinclem2}) it follows readily that 
	\begin{equation} 
	\|\chi g_n\|_{L^2((0,1)\times B_{R+1})} + \sum_{j=1}^r \|Z_j (\chi g_n)\|_{L^2((0,1)\times B_{R+1})} + \|(\partial_t + Z_{0,\epsilon_n})(\chi g_n)\|_{L^2((0,1);\mathscr{X}^*(B_{R+1}))} \leqc \delta^{-1}.
	\end{equation}
	By Assumption~\ref{Assumption:spanning}, $\{\partial_t + Z_{0,\epsilon_n}, Z_1, \ldots Z_r\}$ satisfies the uniform H\"{o}rmander condition on $(0,1) \times B_{R+1}$ with constants that do not depend on $\epsilon_n$ (depending on $R$ however). Thus, Lemma~\ref{lem:HineqOrigQuant} implies that there exists $s > 0$ such that 
	$$ \sup_{n \in \N}\|\chi g_n\|_{H^s(\R \times \R^d)} \leqc \delta^{-1}. $$
	By compact embedding there exists $g_\infty \in L^2((1/8,7/8)\times B_{R+1/2})$ such that (up to a subsequence that we do not relabel) $g_n \to g_\infty$ strongly in $L^2((1/8,7/8)\times B_{R+1/2})$. Moreover,
	\begin{align}
	\int_{1/4}^{3/4}\int_{B_R} g_\infty & = 0, \label{eq:Poinclem3}\\ 
	\sum_{j =1}^r\int_{1/8}^{7/8}\int_{B_{R+1/2}}|Z_j g_\infty|^2 & = 0, \label{eq:Poinclem4} \\ 
	\int_{1/4}^{3/4}\int_{B_R}|g_\infty|^2 & = 1. \label{eq:Poinclem5}
	\end{align}
	By extracting a subsequence can ensure that $\epsilon_n \to \epsilon_\infty \in [0,1].$ Since any function $\varphi\in C_0^\infty((1/8,7/8)\times B_{R+1/2})$ can be extended by zero to a function $\varphi \in C_0^\infty((0,1)\times B_{R+1})$ with $\|\varphi\|_{\mathscr{X}} < \infty$, we must have $(\partial_t + Z_{0,\epsilon_\infty})g_\infty = 0$ in the sense of distributions on $(1/8,7/8) \times B_{R+1/2}$ by \eqref{eq:Poinclem2}. Thus, due to Lemma~\ref{lem:rigidity} we have that $g_\infty$ is constant on $(1/8,7/8) \times B_{R+1/2}$, which contradicts the combination of (\ref{eq:Poinclem3}) and (\ref{eq:Poinclem5}).
\end{proof}

We use Lemma~\ref{lem:poincinterp}  along with Theorem~\ref{thrm:stationary} to prove the following decay estimate, which is the main step in the proof of Lemma~\ref{lem:Linfty2L2decay}.

\begin{lemma} \label{lem:weakpoinc}
	Suppose that Assumption~\ref{Assumption:positivity} holds. Then, there exists a nonincreasing function $\beta:(0,\infty) \to [1,\infty)$ so that for every $s > 0$ and bounded, measurable $f:\R^d \to \R$ there holds, uniformly in $t_0 \ge 1$ and $\epsilon \in (0,1)$, 
	\begin{equation} \label{eq:WPI}
	\int_{t_0+1/4}^{t_0+3/4}\int |\Pt^\epsilon_t f - \mu_\epsilon(f)|^2 d\mu_\epsilon dt \le \beta(s)\sum_{j=1}^r \int_{t_0}^{t_0+1}\int |Z_j \Pt^\epsilon_t f|^2 d\mu_\epsilon dt + s\|f - \mu_\epsilon(f)\|^2_{L^\infty}.
	\end{equation}
\end{lemma}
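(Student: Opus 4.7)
The plan is to apply Lemma~\ref{lem:poincinterp} to $u(t,x) := \Pt_t^\epsilon f(x) - \mu_\epsilon(f)$ on a large ball $B_{R+1}$, then use the uniform pointwise two-sided bounds on $f_\epsilon$ from Theorem~\ref{thrm:stationary} (with the lower bound supplied by Assumption~\ref{Assumption:positivity}) to transfer the resulting estimate to $\mu_\epsilon$-integrals on all of $\R^d$. Two inputs will be used throughout: by hypoelliptic smoothing $u$ belongs to $C^\infty((t_0,\infty) \times \R^d)$ and is a classical solution of the backward Kolmogorov equation, which may be rewritten as $(\partial_t + Z_{0,\epsilon}) u = \epsilon \sum_{j=1}^r Z_j^2 u$; and $\|u\|_\infty \le \|f - \mu_\epsilon(f)\|_\infty$ since $\Pt_t^\epsilon$ is Markov and preserves constants.

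For $R \ge 1$ I will split $\int|u|^2 d\mu_\epsilon$ into integrals over $B_R$ and $B_R^c$. The tail is controlled by $\|u\|_\infty^2 \mu_\epsilon(B_R^c) \lesssim \|f - \mu_\epsilon(f)\|_\infty^2 e^{-\lambda_- R^2/2}$ via the Gaussian upper bound \eqref{eq:Gaussubd}, and on $B_R$ the same upper bound lets one replace $d\mu_\epsilon$ by $dx$ at an $R$-dependent cost. To apply Lemma~\ref{lem:poincinterp} I must handle its Lebesgue space-time average $\bar u$ over $(t_0 + 1/4, t_0 + 3/4) \times B_R$, which has no a priori smallness; instead I will center at the corresponding $\mu_\epsilon$-average $\bar u_\mu$, using the variational characterization
\begin{equation*}
\iint_{B_R}|u - \bar u_\mu|^2 d\mu_\epsilon\, dt \le \iint_{B_R}|u - \bar u|^2 d\mu_\epsilon\, dt.
\end{equation*}
Smallness of $\bar u_\mu$ comes from the stationarity identity $\int u\,d\mu_\epsilon = 0$: for each $t$, $|\int_{B_R} u\,d\mu_\epsilon| = |\int_{B_R^c} u\,d\mu_\epsilon| \le \|u\|_\infty \mu_\epsilon(B_R^c)$, and once $R$ is large enough that $\mu_\epsilon(B_R) \ge 1/2$ this gives $|\bar u_\mu|^2 \lesssim \|u\|_\infty^2 e^{-\lambda_- R^2/2}$.

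Squaring Lemma~\ref{lem:poincinterp} applied to $u$ on $B_{R+1}$ yields $\iint_{B_R}|u - \bar u|^2 dx\, dt \lesssim \delta^2 \|u\|_\infty^2 + C_\delta^2 \sum_{j=1}^r \|Z_j u\|_{L^2((t_0,t_0+1)\times B_{R+1})}^2 + C_\delta^2 \|(\partial_t + Z_{0,\epsilon})u\|_{L^2((t_0,t_0+1);\,\mathscr{X}^*(B_{R+1}))}^2$. The last term is absorbed into the second because, by duality and integration by parts (using that each $Z_j$ is a constant vector field), $\|\epsilon Z_j^2 u\|_{\mathscr{X}^*(B_{R+1})} \le \epsilon \|Z_j u\|_{L^2(B_{R+1})}$ and $\epsilon \le 1$. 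The uniform lower bound $f_\epsilon \gtrsim_R 1$ on $B_{R+1}$ from Theorem~\ref{thrm:stationary} then converts the Dirichlet term to its $\mu_\epsilon$-version, producing a bound of the form
\begin{equation*}
\int_{t_0+1/4}^{t_0+3/4}\!\!\int|u|^2 d\mu_\epsilon\, dt \le C_0(R)\bigl(\delta^2 + e^{-\lambda_- R^2/2}\bigr)\|f - \mu_\epsilon(f)\|_\infty^2 + C(R,\delta) \sum_{j=1}^r \int_{t_0}^{t_0+1}\!\!\int|Z_j \Pt_t^\epsilon f|^2 d\mu_\epsilon\, dt.
\end{equation*}
Given $s > 0$, I will choose $R(s)$ large enough to make $C_0(R)e^{-\lambda_- R^2/2} \le s/2$, then $\delta(s)$ small enough to make $C_0(R)\delta^2 \le s/2$, and define $\beta(s) := \max\bigl(1, C(R(s),\delta(s))\bigr)$, monotonized if necessary by replacing with $\sup_{s' \ge s}\beta(s')$.

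The main obstacle is the centering step: Lemma~\ref{lem:poincinterp} provides control on $u$ modulo its Lebesgue average on $B_R$, which has no a priori smallness, while the target inequality involves $\|u\|_{L^2_{\mu_\epsilon}(\R^d)}$, whose smallness comes exclusively from the stationarity identity $\int u\,d\mu_\epsilon = 0$. Bridging the two requires introducing the $\mu_\epsilon$-average as an auxiliary constant and combining the Gaussian tail of $\mu_\epsilon$ with the variational characterization of the mean. Everything else is bookkeeping around the uniform two-sided bounds on $f_\epsilon$ from Theorem~\ref{thrm:stationary}.
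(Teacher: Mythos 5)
Your proposal follows essentially the same route as the paper: split into ball and tail, exploit stationarity and the variational characterization of the mean to compare the $\mu_\epsilon$-average with the Lebesgue average that Lemma~\ref{lem:poincinterp} naturally centers at, pass between $d\mu_\epsilon$ and $dx$ via the two-sided pointwise bounds of Theorem~\ref{thrm:stationary}, absorb the $(\partial_t + Z_{0,\epsilon})u = \epsilon\sum_j Z_j^2 u$ term by duality, and finally choose $R(s)$ then $\delta(s)$. The only cosmetic differences are that the paper uses the moment bound rather than the Gaussian decay for the tail and normalizes with the conditional measure $\mu_R$, neither of which changes the argument.
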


\begin{proof}
	For simplicity we omit the $\epsilon$ dependence in the notation. 
	
	If $s \ge 1/2$ then the claimed inequality is trivial. Fix $s < 1/2$ and let $g(t) = \Pt_t f - \mu(f)$. By the moment bound (\ref{eq:momentbound}), there exists $R(s)$ sufficiently large so that $\mu(B_R^c) \le s/2$ uniformly in $\epsilon$. Using that $\Pt_t$ propagates $L^\infty$ bounds we have then 
	\begin{equation}  \label{eq:weakpoinc1}
	\int_{t_0+1/4}^{t_0+3/4}\int |g(t)|^2 d\mu dt  \le \int_{t_0+1/4}^{t_0+3/4}\int_{|x| \le R}|g(t)|^2 d\mu dt + \frac{s}{4}\|f - \mu(f)\|_{L^\infty}^2.
	\end{equation}
	
	The goal now is to bound the first term on the right-hand side of (\ref{eq:weakpoinc1}). Let 
$$ \mu_R(C) = \frac{\mu(C \cap B_R)}{\mu(B_R)}, \quad C \in \mathcal{B}(\R^d)$$
and 
	$$g_R = 2 \int_{t_0 + 1/4}^{t_0 + 3/4} \int g(t)d\mu_R dt.$$ 
By adding and subtracting $g_R$ we have 
	\begin{align*}
	\int_{t_0+1/4}^{t_0+3/4}&\int_{|x| \le R} |g(t)|^2 d\mu dt = \mu(B_R)\int_{t_0+1/4}^{t_0+3/4}\int_{|x| \le R} |g(t)|^2 d\mu_{R} dt \\
	& \le 2 \mu(B_R) \int_{t_0+1/4}^{t_0+3/4} \int \left|g(t) - g_R\right|^2 d\mu_R dt  + \mu(B_R)|g_R|^2.
	\end{align*}
	Now, for each $t \ge 0$ we have $\int g(t)d\mu = 0$, and so 
	\begin{align*} 
|g_R|^2 = \frac{4}{\mu(B_R)^2} \left(\int_{t_0 + 1/4}^{t_0 + 3/4} \int_{|x| > R} g(t) d\mu dt\right)^2 \le \left( \frac{\mu(B_R^c)}{\mu(B_R)} \right)^2 \|f - \mu(f)\|_{L^\infty}^2 \le \frac{s}{4}\|f- \mu(f)\|_{L^\infty}^2,
	\end{align*}
	where in the last inequality we used that 
	$$ \left( \frac{\mu(B_R^c)}{\mu(B_R)} \right)^2 \le \left( \frac{s/2}{3/4}\right)^2 = \frac{4s^2}{9} \le \frac{s}{4}. $$
	Combining our estimates thus far and using that $\text{Var}_\nu h \le \mathbb{E}_\nu (h-c)^2$ for any $\nu \in \mathcal{M}(\R^d)$, $c \in \R$, and $h \in L^2_\nu$ we obtain
	\begin{equation} \label{eq:weakpoinc2}
	\int_{t_0+1/4}^{t_0+3/4}\int_{|x| \le R} |g(t)|^2 d\mu dt \le 2\mu(B_R) \int_{t_0+1/4}^{t_0+3/4} \int\left|g(t) - \bar{g}_R \right|^2 d\mu_R dt + \frac{s}{4} \|f - \mu(f)\|_{L^\infty}^2,
	\end{equation}
	where we have introduced 
	$$ \bar{g}_R = \frac{2}{|B_R|}\int_{t_0 + 1/4}^{t_0 + 3/4} \int_{|x| \le R} g(t)dx dt.$$
	Using (\ref{eq:Gaussubd}), there exists a constant $c > 0$ that does not depend on $\epsilon$ such that 
	$$ 2\mu(B_R)\int_{t_0 + 1/4}^{t_0 + 3/4} \int_{|x| \le R} |g(t) - \bar{g}_R|^2 d\mu_R dt \le c \int_{t_0 + 1/4}^{t_0 + 3/4} \int_{|x| \le R} |g(t) - \bar{g}_R|^2 dx dt.$$
By Lemma~\ref{lem:poincinterp} applied with $\delta = \sqrt{s/(8c)}$ and the fact that 
	$$\partial_t g + Z_{0,\epsilon}g = \epsilon \sum_{j=1}^r Z_j^2 (\Pt_t f),$$
	 there exist constants $C_s$ and $C_s'$ such that 
	\begin{align*}
	c\int_{t_0+1/4}^{t_0+3/4} &\int_{|x| \le R}\left|g(t) - \bar{g}_R \right|^2 dx dt \le C_s \sum_{j=1}^r\int_{t_0}^{t_0 + 1}\int_{|x| \le R+1} |Z_j \Pt_t f|^2 dx dt + +  \frac{s}{2}\|f-\mu(f)\|_{L^\infty}^2 \\ 
	& + \epsilon^2 C_s \sup_{\varphi \in C_0^\infty((t_0,t_0+1) \times B_{R+1}), \|\varphi\|_{L^2(t_0, t_0+1; \mathscr{X})} \le 1} \left|\sum_{j=1}^r \int_{t_0}^{t_0+1}\int_{|x| \le R+1} \varphi Z_j^2(\Pt_t f) dx dt\right|^2 \\ 
	& \le C_s' \sum_{j=1}^r\int_{t_0}^{t_0 + 1}\int_{|x| \le R+1} |Z_j \Pt_t f|^2 dx dt +  \frac{s}{2}\|f-\mu(f)\|_{L^\infty}^2.
	\end{align*}
	From (\ref{eq:lbd}) and the fact that $R$ depends only $s$ we have
$$
	C_s' \sum_{j=1}^r \int_{t_0}^{t_0+1}\int_{|x| \le R+1} |Z_j \Pt_t f|^2 dx dt \le C_s'' \sum_{j=1}^r \int_{t_0}^{t_0+1}\int_{|x| \le R+1} |Z_j \Pt_t f|^2 d\mu dt,
    $$
	which along with the estimates proceeding (\ref{eq:weakpoinc2}) yields 
	\begin{equation} \label{eq:weakpoinc3}
	\int_{t_0 + 1/4}^{t_0 + 3/4}\int_{|x| \le R} |g(t)|^2 d\mu dt \le C_s''\sum_{j=1}^r \int_{t_0}^{t_0+1}\int_{|x| \le R+1} |Z_j \Pt_t f|^2 d\mu dt + \frac{3s}{4}\|f- \mu(f)\|_{L^\infty}^2.
	\end{equation}
	Combining (\ref{eq:weakpoinc3}) and (\ref{eq:weakpoinc1}) completes the proof.
\end{proof}

\begin{proof}[Proof of Lemma~\ref{lem:Linfty2L2decay}]
Let $f: \R^d \to \R$ be bounded and Borel measurable. Computing with $\partial_t (\Pt_t^\epsilon f) = L (\Pt_t^\epsilon f)$ we obtain the identity
\begin{equation} \label{eq:lem2.8.1}
    \frac{1}{2}\frac{d}{d\tau}\int|\Pt^\epsilon_\tau f - \mu_\epsilon(f)|^2 d\mu_\epsilon = -\epsilon \sum_{j=1}^r \int |Z_j \Pt_\tau^\epsilon f|^2 d\mu_\epsilon.
\end{equation}
Integrating over $\tau \in (t,t+1)$ for $t \ge 1$ and using Lemma~\ref{lem:weakpoinc} gives, for any $s > 0$, 
\begin{equation} \label{eq:lem2.8.2}
\|\Pt_{t+1}^\epsilon f - \mu_\epsilon(f)\|^2_{L^2_{\mu_\epsilon}} - \|\Pt_{t}^\epsilon f - \mu_\epsilon (f)\|^2_{L^2_{\mu_\epsilon}} \le -\frac{2\epsilon}{\beta(s)} \int_{t+1/4}^{t+3/4}\int \|\Pt_{\tau}^\epsilon f - \mu_\epsilon(f)\|^2_{L^2_{\mu_\epsilon}} d\tau + \frac{2\epsilon s}{\beta(s)}\|f - \mu_\epsilon(f)\|_{L^\infty}^2.
\end{equation}
Let $E(t) = \|\Pt_t^\epsilon f - \mu_\epsilon(f)\|_{L^2_{\mu_\epsilon}}^2$. By (\ref{eq:lem2.8.1}), the energy $E(t)$ is nonincreasing, and so the previous estimate implies
\begin{equation}
    E(t+1) \le \left(1 + \frac{\epsilon}{\beta(s)} \right)^{-1}E(t) + \left(1 + \frac{\epsilon}{\beta(s)} \right)^{-1}\frac{2\epsilon s}{\beta(s)}\|f - \mu_\epsilon(f)\|_{L^\infty}^2 \quad \forall\text{ }t\ge 1.
\end{equation}
Iterating over $t = 1,2,\ldots, n-1$ yields
\begin{align}
    E(n) & \le \left(1 + \frac{\epsilon}{\beta(s)} \right)^{-(n-1)}E(1) + \frac{2\epsilon s}{\beta(s)}\|f - \mu_\epsilon(f)\|_{L^\infty}^2 \sum_{k = 1}^{n-1}\left(1 + \frac{\epsilon}{\beta(s)} \right)^{-k} \\ 
    & \le \left(1 + \frac{\epsilon}{\beta(s)} \right)^{-(n-1)}\|f - \mu_\epsilon(f)\|_{L^\infty}^2 + 2 s \|f- \mu_\epsilon(f)\|_{L^\infty}^2.
\end{align}
This implies that there exists a universal constant $\delta > 0$ such that for all $\epsilon \in (0,1)$ there holds
\begin{equation} \label{eq:lem2.8.3}
E(n) \le e^{-n \delta \epsilon/\beta(s)}\|f- \mu_\epsilon(f)\|_{L^\infty}^2 + 2 s \|f- \mu_\epsilon(f)\|_{L^\infty}^2, \quad \N \ni n \ge 2.
\end{equation}

Let $\bar{\psi}:[0,\infty) \to (0,1]$ be defined by
\begin{equation}
    \bar{\psi}(t) = \inf\{s > 0: e^{-t\delta/\beta(s)} \le s\}.
\end{equation}
It is clear that $\bar{\psi}$ is non-increasing with $\lim_{t\to \infty}\bar{\psi}(t) = 0$, and moreover by (\ref{eq:lem2.8.3}) we have proven that 
$$E(n) \le 3\bar{\psi}(n\epsilon)\|f - \mu_\epsilon(f)\|_{L^\infty}^2, \quad \N \ni n \ge 2.$$
Because $E(t)$ is nonincreasing it follows that 
$$\|\Pt_t^\epsilon f - \mu_\epsilon(f)\|_{L^2_{\mu_\epsilon}}^2 \le 3 \bar{\psi}(\lfloor t \rfloor \epsilon) \|f - \mu_\epsilon(f)\|_{L^\infty}^2 \le 3\bar{\psi}(\epsilon t - 1)\|f-\mu_\epsilon(f)\|_{L^\infty}^2, \quad t \ge 2.$$
Since $\|\Pt_t^\epsilon f - \mu_\epsilon(f)\|_{L^2_{\mu_\epsilon}} \le \|f- \mu_\epsilon(f)\|_{L^\infty}$ for any $t \ge 0$, the proof of (\ref{eq:Linfty2L2decay}) is complete by setting 
$$\psi(t) = \begin{cases} 
3 & t\le 2, \\ 
3\bar{\psi}(t-1) & t > 2.
\end{cases}
$$
\end{proof}

\subsection{$L^2 \to L^\infty$ regularization for $\Pt_t$} \label{sec:ParRegul}

In this section we prove Lemma \ref{lem:ParReg}, which proceeds by a parabolic version of the arguments in Section \ref{sec:StationaryMoser}. 

\begin{proof}[Proof of Lemma~\ref{lem:ParReg}]

Since $\Pt_{t}^\epsilon$ is strong Feller (see Lemma~\ref{lem:MarkovSemigroup}), it follows from the semigroup property and the monotonicity \eqref{eq:lem2.8.1} that it suffices to prove the result for continuous $f$. As above, it is convenient to regularize the problem with $\delta \Delta$ and pass to the limit. Let $\tilde{\Pt}_t^{\epsilon,\delta}$ denote the Markov semigroup generated by $\delta \Delta + \epsilon^{-1}L_\epsilon$ and as before write $\mu_{\epsilon,\delta}$ for its unique invariant measure. For $k \geq 0$, define $R_k = R(1+2^{-k})$ and $t_k = \frac{1}{4} - 2^{-k-3}$. Let $\alpha \in (1,4)$ be such that with $s$ given as in Lemma \ref{lem:Horparabolic} there holds
\begin{align}
\norm{g}_{L^{2\alpha}_t L^{2\alpha}_x} \lesssim \norm{g}_{L^\infty_t L^2_x} + \norm{g}_{L^2_t H^s_x} \label{def:alphap},
\end{align}
and for $k \ge 0$ define $w_k = (\tilde{\Pt}_t^{\epsilon,\delta} f)^{\alpha^k}$. We will show that $\exists C > 0$ such that for every $k \ge 0$ there holds 
\begin{equation} \label{ineq:wk}
\|w_{k}\|_{L_t^{2\alpha}L_x^{2\alpha}((t_{k+1},2-t_{k+1}) \times B_{R_{k+1}})} \le C^k \|w_k\|_{L_t^2 L_x^2((t_k,2-t_k)\times B_{R_k})}.
\end{equation}

Let $\chi_k \in C_0^\infty((t_{k},2-t_{k}) \times B_{R_{k}})$ be a time-dependent, radially-symmetric in space, smooth
cutoff function satisfying $\chi_k(t,x) = 1$ for $\abs{x} \leq R_{k+1}$ and  $2-t_{k+1} \ge t \ge t_{k+1}$. Moreover, we may choose $\chi_k$ so that $|\partial_t \chi_k| \leqc 2^k$ and $|D_x^\beta \chi| \leqc 
R^{-1}2^{|\beta|k}$ for every multi-index with $|\beta| \le 2$. Let $v_k = \chi_k w_k$. By splitting $f= \max(f,0) - \max(-f,0)$ and regularizing with a small constant we may assume without loss of generality that $f > 0$. From the convexity and smoothness of $z \mapsto z^\beta$ away from the origin, for all $k \geq 0$ we then have
\begin{align} \label{eq:v_kPar}
  \partial_s v_k \leq \delta \Delta v_k + \sum_{j=1}^r Z_j^2 v_k - \frac{1}{\epsilon}Z_{0,\epsilon} v_k + S_k
\end{align}
where
\begin{align}
S_k = -[\chi_k,\partial_s]w_k + [\chi_k,\delta \Delta +  \sum_{j=1}^r Z_j^2 - Ax\cdot \grad]w_k. 
\end{align}
Let $g$ be a solution to the Dirichlet problem 
\begin{align} \label{def:gP}
\left\{
\begin{array}{l}
  \partial_s g  = \delta\Delta g + \sum_{j=1}^r Z_j^2 g - \frac{1}{\epsilon}Z_{0,\epsilon} g + S_k \qquad (t,x) \in (0,2) \times B_{2R+1} \\ 
  g|_{t = 0} = 0 \\ 
  g|_{\abs{x} = 2R+1} = 0.
\end{array}
\right.
\end{align}
By the weak parabolic maximum principle, there holds $v_k \leq g$. Pairing \eqref{def:gP} with $g$ and using Gr\"{o}nwall's lemma we obtain
\begin{align}
\norm{g}_{L^\infty_t L^2_x} + \norm{g}_{L^2_t \mathscr{X}_\delta} \lesssim 2^{2k}\|w_k\|_{L^2_t L^2_x((t_{k},2-t_k)\times B_{R_k})}. 
\end{align}
Introducing a radially-symmetric in space cutoff $\chi \in C_0^\infty((1/16,31/16)\times B_{2R+1/2})$ with $\chi(t,x) = 1$ for $(t,x) \in (1/8,15/8) \times 2R$ and using \eqref{def:gP} again we then deduce
\begin{equation}
\|\chi g\|_{L^2 \mathscr{X}_\delta} + \|(\epsilon \partial_t + Z_{0,\epsilon})(\chi g)\|_{L^2 \mathscr{X}_\delta^*} \leqc 2^{2k}\|w_k\|_{L^2_t L^2_x((t_{k},2-t_k)\times B_{R_k})}.
\end{equation}
Therefore, by Assumption~\ref{Assumption:spanning} and the parabolic H\"ormander inequality, Lemma \ref{lem:Horparabolic}, we obtain the bound
\begin{align}
\norm{\chi g}_{L^\infty_t L^2_x} + \norm{\chi g}_{L^2_t H^s_x} \lesssim_R 2^{2k}\|w_k\|_{L^2_t L^2_x((t_{k},2-t_k)\times B_{R_k})}.
\end{align}
By $v_k \leq g$, recalling \eqref{def:alphap}, and the definition of $\chi_k$, there is a constant $C > 0$ such that
\begin{align}
\|w_k\|_{L_{t,x}^{2\alpha}((t_{k+1}, 2-t_{k+1})\times B_{R_{k+1}})}\le \norm{v_k}_{L^{2\alpha}_{t,x}} \leq C^k\|w_k\|_{L^2_{t,x}((t_{k},2-t_k)\times B_{R_k})}, 
\end{align}
which completes the proof of \eqref{ineq:wk}. 

By definition, \eqref{ineq:wk} implies  
\begin{align}
\norm{\tilde{\mathcal{P}}_s^{\epsilon,\delta}f }_{L^{2\alpha^{k+1}}_{t,x} ((t_{k+1} , 2-t_{k+1}) \times B_{R_{k+1}})}
& \leq C^{\sum_{j=0}^k j \alpha^{-j}} \norm{\tilde{\mathcal{P}}^{\epsilon,\delta}_sf}_{L^2_t\left(\frac{1}{8},\frac{15}{8};L^2_x(B_{2R})\right)}. 
\end{align}
Passing to the limit and using $\sum_{j=0}^\infty j \alpha^{-j} < \infty$ along with the definitions of $t_k, R_k$ yields (passing to the limit also in $\delta \to 0$)
\begin{align}\label{eq:ParRegThrm2}
\norm{\tilde{\mathcal{P}}_s^{\epsilon}f}_{L^\infty_t\left(\frac{1}{4},\frac{7}{4};L^\infty_x(B_R)\right)} \leqc_R \norm{\tilde{\mathcal{P}}^{\epsilon}_sf}_{L^2_t\left(\frac{1}{8},\frac{15}{8};L^2_x(B_{2R})\right)}. 
\end{align}
Finally by the uniform lower bound \eqref{eq:lbd} followed by the monotonicity \eqref{eq:lem2.8.1} we have 
\begin{align} \label{eq:ParRegThrm3}
\norm{\tilde{\mathcal{P}}^{\epsilon}_sf}_{L^2_t\left(\frac{1}{8},\frac{15}{8};L^2_x(B_{2R})\right)} \leq \int_{0}^2 \norm{\tilde{\mathcal{P}}_s^{\epsilon} f}_{L^2(B_{2R})}^2 ds \lesssim_R \int_{0}^2 \norm{\tilde{\mathcal{P}}^\epsilon_s f}_{L^2_{\mu_\epsilon}}^2 ds \lesssim \norm{f}_{L^2_{\mu_\epsilon}}^2.  
\end{align}
Combining \eqref{eq:ParRegThrm2} and \eqref{eq:ParRegThrm3} completes the proof of Lemma \ref{lem:ParReg}.
\end{proof}

\subsection{Optimality of Theorem~\ref{thrm:GeoErg}}

In this section we prove Theorem~\ref{thrm:optimal}. The idea is essentially that if one starts the process $(x_t^\epsilon)_{t\ge 0}$ at the origin, then the expected value of the energy $\E|x_t^\epsilon|^2$ must take at least time $t \gtrsim \epsilon^{-1}$ to reach equilibrium. For the basic properties of $(x_t^\epsilon)_{t\ge 0}$, see Lemma~\ref{lem:StochasticWP}.

\begin{proof}[Proof of Theorem~\ref{thrm:optimal}]
		We will only prove the statement about the case $s < 1$, since the other is treated in the same way. Suppose that the claim is false. Then, there exists $s < 1$ and $K,\delta > 0$ so that for all measurable functions $f: \R^d \to \R$ with $\|f\|_{V} < \infty$ there is a sequence $\{\epsilon_n\}_{n=1}^\infty$ with $\lim_n \epsilon_n = 0$ such that for all $t \ge 0$ and $n \in \N$ there holds 
	\begin{equation} \label{eq:optimalcontr}
	\|\Pt_t^{\epsilon_n} f - \mu_{\epsilon_n}(f)\|_{V} \le Ke^{-\delta \epsilon_n^s t}\|f - \mu_{\epsilon_n}(f)\|_{V}.
	\end{equation}
	We will derive a contradiction by considering $f:\R^d \to \R$ defined by $f(x) = x^2$, which clearly satisfies $\|f\|_{V} < \infty$.
	
	By It\^{o}'s formula we have
	\begin{equation} \label{eq:Ito}
	\frac{1}{2}\E |x_t^\epsilon|^2 = \frac{1}{2}\E |x_0^\epsilon|^2 - \epsilon \int_0^t \E (Ax_s^\epsilon \cdot x_s^\epsilon)ds + \epsilon t \sum_{j=1}^r |Z_j|^2.
	\end{equation}
	In statistical steady state this reduces to 
	\begin{equation} \label{eq:EquilEnerg}
	\mu_{\epsilon_n}(f) \approx \int_{\R^d} (Ax \cdot x) \mu_\epsilon(dx) = \sum_{j=1}^r |Z_j|^2,
	\end{equation}
	Next, applying (\ref{eq:Ito}) with $x_0^\epsilon \equiv 0$ gives 
	\begin{equation}
	(\Pt_t^{\epsilon_n}f)(0)  \leqc \epsilon_n t.
	\end{equation}
	Combining the previous two equations we see that there are constants $c, \eta > 0$ sufficiently small so that 
	\begin{equation}
|(\Pt_{c\epsilon_n^{-1}}^{\epsilon_n}f)(0) - \mu_{\epsilon_n}(f)| \ge \eta, \quad \forall n\in \N.
	\end{equation}
	Hence, by (\ref{eq:optimalcontr}) and the upper bound in (\ref{eq:EquilEnerg}) we have
	\begin{equation} 
	\frac{\eta}{2} \le \frac{|(\Pt_{c\epsilon_n^{-1}}^{\epsilon_n}f)(0) - \mu_{\epsilon_n}(f)| }{1+V(0)} \le \|\Pt_{c\epsilon_n^{-1}}^{\epsilon_n}f - \mu_{\epsilon_n}(f)\|_{V} \leqc e^{-\delta c \epsilon_n^{s-1}},
	\end{equation}
	where the implicit constant does not depend on $n$. Since  $s < 1$, sending $n \to \infty$ yields the desired contradiction. 
\end{proof}

\appendix

\section{Qualitative properties and basic well-posedness theorems} \label{sec:basics}

In this section we give the basic well-posedness and regularity results that justify the computations in the paper. We also discuss the qualitative results for $\epsilon \gtrsim 1$ that we need. 

We begin with well-posedness of (\ref{eq:SDE}). In what follows, $\set{W_t^{(j)}}_{j=1}^r$, $\set{\tilde{W}_t^{(k)}}_{k=1}^d$ are independent one-dimensional Wiener processes on a complete probability space $(\Omega, \mathcal{F}, \P)$, $\tilde{W}_t = (\tilde{W}_t^{(1)},\ldots, \tilde{W}_t^{(d)})$, and $\mathcal{F}_t$ denotes the $\sigma$-algebra generated by $\{W_s^{(j)}, \tilde{W}_s: 1 \le j \le r, 0 \le s \le t\}$ and the $\P$-null sets of $\mathcal{F}$. Also, we write $W_t = (W_t^{(1)}, \ldots, W_t^{(r)}, \tilde{W}_t^{(1)}, \ldots, \tilde{W}_t^{(d)})$. 

\begin{lemma} \label{lem:StochasticWP}
	Suppose that Assumption~\ref{Assumption:VF} holds (Assumption~\ref{Assumption:spanning} is not needed). Let $X_0 \in L^2(\Omega;\P)$ be a random variable independent of the $\sigma$-algebra generated by $\cup_{t \ge 0} \mathcal{F}_t$, and let $\mathcal{F}_t^{X_0}$ denote the $\sigma$-algebra generated by $\mathcal{F}_t$ and $X_0$. For $\epsilon \in (0,1)$ and $\delta \in [0,1]$, consider the SDE
	\begin{equation} \label{eq:AppSDE}
	\begin{cases}
	dX_{t}^{\epsilon,\delta} = -\epsilon A X_{t}^{\epsilon,\delta} dt - \epsilon^\alpha B X_{t}^{\epsilon,\delta} dt -  N(X_{t}^{\epsilon,\delta})dt + \sqrt{2\epsilon}\sum_{j=1}^r Z_j dW_t^{(j)} + \sqrt{2\epsilon \delta}d\tilde{W}_t, \\ 
	X_{0}^{\epsilon,\delta} = X_0.
	\end{cases}
	\end{equation} 
	There exists a unique (up to indistinguishability), globally defined $\mathcal{F}_t^{X_0}$-adapted process $(X_{t}^{\epsilon,\delta})_{t\ge 0}$ with continuous sample paths solving the integral form of (\ref{eq:AppSDE}) $\P$-a.s. and such that
	$ \int_0^T\E|X_t|^2 dt < \infty$ for every $T \ge 0$. Let $X_{t,x}^{\epsilon,\delta}$ denote the unique solution with $X_0 = x \in \R^d$. If $x_n \to x$ in $\R^d$ then $X_{t,x_n}^{\epsilon,\delta}$ converges to $X_{t,x}^{\epsilon,\delta}$ $\P$-a.s. uniformly on compact time intervals. Moreover, the solution is continuous with respect to the Wiener trajectory in the sense that there exists a set $\Omega' \subseteq \Omega$ with full measure so that for every fixed $0 < T < \infty$ and $\omega_1,\omega_2 \in \Omega'$ one has that
\begin{equation}
	 \sup_{0 \le t \le T}|X_{t,x}^{\epsilon,\delta}(\omega_1) - X_{t,x}^{\epsilon,\delta}(\omega_2)| \to 0
	  \quad\text{as}\quad \sup_{0 \le t \le T}|W_t(\omega_1) - W_t(\omega_2)|  \to 0.
\end{equation}
	 Similarly, if $\{\delta_n\}_{n=1}^\infty$ is a sequence with $\delta_n \to 0$ then 
	 \begin{equation} \label{eq:ParabolicDeltanToZero}
	 \lim_{n \to \infty}\sup_{0 \le t \le T}|X_{t,x}^{\epsilon,\delta_n}(\omega)-X_{t,x}^{\epsilon}(\omega)| = 0 \quad \P\text{-a.s.}
	 \end{equation}
	  Lastly, if $V$ is any uniform Lyapunov function (see Definition~\ref{def:uniformLyapunov}) with $\kappa, b > 0$ as in \eqref{eq:LyapFuncGen}, then uniformly in $\epsilon \in (0,1)$, $\delta \in [0,1]$, and $x \in \R^d$ there holds
	\begin{equation} \label{eq:AppMoment}
	\E V(X_{t,x}^{\epsilon,\delta}) \le \frac{b}{\kappa} + e^{-\epsilon \kappa t}V(x).
	\end{equation} 
\end{lemma}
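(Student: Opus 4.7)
Because the coefficients of \eqref{eq:AppSDE} are smooth in $x$, they are locally Lipschitz on every ball, so classical SDE theory (e.g.\ the standard fixed-point argument for Itô equations with locally Lipschitz coefficients) furnishes a unique strong $\mathcal{F}_t^{X_0}$-adapted solution up to an explosion time $\tau_\infty := \lim_{n\to\infty}\tau_n$, where $\tau_n = \inf\{t\ge 0: |X_t^{\epsilon,\delta}|\ge n\}$. The first task is to show $\tau_\infty = \infty$ almost surely, after which global existence and uniqueness follow. The key structural ingredients are $N(x)\cdot x = 0$ and $Bx\cdot x = 0$, which make the polynomial drift $N$ energy-conservative and thereby preclude blow-up despite its super-linear growth.

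The non-explosion and the moment bound \eqref{eq:AppMoment} will be obtained simultaneously from the Lyapunov hypothesis. Pick any uniform Lyapunov function $V$ as in Definition~\ref{def:uniformLyapunov}; a concrete choice is $V(x) = e^{\gamma|x|^2}$ for sufficiently small $\gamma$, whose Lyapunov property is a routine calculation using $N(x)\cdot x = 0$, $Bx\cdot x = 0$, and positive definiteness of $A$. Applying Itô's formula to $V(X_{t\wedge \tau_n}^{\epsilon,\delta})$, the stochastic integrals are true martingales on the localized interval and vanish in expectation, giving
\begin{equation*}
\E V(X_{t\wedge \tau_n}^{\epsilon,\delta}) = V(x) + \E\int_0^{t\wedge \tau_n}(\epsilon\delta \Delta + L_\epsilon)V(X_s^{\epsilon,\delta})\,ds \le V(x) + \int_0^t\bigl(-\epsilon\kappa \E V(X_{s\wedge \tau_n}^{\epsilon,\delta}) + \epsilon b\bigr)ds.
\end{equation*}
Grönwall yields $\E V(X_{t\wedge\tau_n}^{\epsilon,\delta}) \le e^{-\epsilon\kappa t}V(x) + b/\kappa$, uniformly in $n$. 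Since $V \to \infty$ at infinity this forces $\P(\tau_n \le t) \to 0$, hence $\tau_\infty = \infty$ almost surely, and Fatou's lemma passes the bound to $\E V(X_t^{\epsilon,\delta})$, establishing \eqref{eq:AppMoment} and the $L^2$ integrability $\int_0^T\E|X_t|^2\,dt < \infty$ on taking $V(x) = e^{\gamma|x|^2}$.

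The continuity statements are then standard consequences of pathwise uniqueness combined with the moment bound. For continuous dependence on the initial datum, writing the difference of two solutions $X_t^{\epsilon,\delta}$ (with data $x_n, x$) as a (deterministic-drift-only) integral equation and using the local Lipschitz constant of $N$ on a ball of radius $R$, a Grönwall argument on the stopping times $\sigma_R = \inf\{t: |X_{t,x_n}^{\epsilon,\delta}| \vee |X_{t,x}^{\epsilon,\delta}| \ge R\}$ controls the difference up to time $T\wedge\sigma_R$; the moment bound \eqref{eq:AppMoment} and Markov's inequality then show $\P(\sigma_R \le T) \to 0$ uniformly in $n$ as $R\to\infty$, delivering the claimed locally uniform convergence. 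Continuity in the Wiener trajectory exploits the additive-noise structure: setting
\begin{equation*}
Y_t := X_t^{\epsilon,\delta} - \sqrt{2\epsilon}\sum_{j=1}^r Z_j W_t^{(j)} - \sqrt{2\epsilon\delta}\,\tilde W_t,
\end{equation*}
the process $Y_t$ solves a \emph{random ODE} whose right-hand side depends continuously (indeed smoothly) on $Y_t$ and on the $W$-trajectory, so the desired pathwise continuity follows by the same localize-and-Grönwall argument. The $\delta_n \to 0$ convergence \eqref{eq:ParabolicDeltanToZero} is handled identically: form the difference $X_{t,x}^{\epsilon,\delta_n} - X_{t,x}^{\epsilon}$, apply Itô/Grönwall localized on $\{|X| \le R\}$, and use \eqref{eq:AppMoment} uniform in $\delta\in[0,1]$ to send $R\to\infty$, noting that $\sqrt{2\epsilon\delta_n}\tilde W_t \to 0$ uniformly on $[0,T]$ almost surely. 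The only genuinely delicate step is the non-explosion argument of the previous paragraph, which rests crucially on the cancellations encoded in Assumption~\ref{Assumption:VF}; everything else is bookkeeping around locally Lipschitz SDE theory.
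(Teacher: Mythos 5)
Your roadmap matches the paper's: locally Lipschitz theory for local existence, Lyapunov/It\^o with a localizing stopping time for non-explosion and the moment bound, and pathwise ODE arguments exploiting additive noise for the three continuity statements. Two technical points deserve attention, however.

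First, the displayed Gr\"onwall step is not valid as written. Passing from $\E \int_0^{t\wedge\tau_n}\bigl(-\epsilon\kappa V(X_s)+\epsilon b\bigr)\,ds$ to $\int_0^t\bigl(-\epsilon\kappa\,\E V(X_{s\wedge\tau_n})+\epsilon b\bigr)\,ds$ discards a boundary term of the wrong sign: on $\{s\ge\tau_n\}$ one has $\mathbf{1}_{s<\tau_n}\bigl(-\epsilon\kappa V(X_{s\wedge\tau_n})\bigr)=0 \ge -\epsilon\kappa V(X_{s\wedge\tau_n})$, so the indicator pushes the inequality in the opposite direction. Consequently the localized quantity $\phi_n(s)=\E V(X_{s\wedge\tau_n})$ is not a subsolution of $\dot\phi\le-\epsilon\kappa\phi+\epsilon b$, and the claimed uniform-in-$n$ bound does not follow from Gr\"onwall directly. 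The crude bound $\phi_n(t)\le V(x)+\epsilon b t$ (discard the negative term before any manipulation) does suffice to prove $\P(\tau_n\le t)\to0$ and hence non-explosion, but the sharp bound \eqref{eq:AppMoment} requires the standard fix the paper uses: apply It\^o to $e^{\kappa\epsilon t}V(X_t)$ (not $V(X_t)$), so that the Lyapunov condition yields a one-sided bound on the $ds$-integrand with no Gr\"onwall needed, stop at $\tau_n$, take expectations, and then send $n\to\infty$ via Fatou.

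Second, for the continuity statements, your invocation of Markov's inequality to control $\P(\sigma_R\le T)$ would only deliver convergence in probability, while the lemma asserts $\P$-a.s.\ locally uniform convergence. The argument should instead be purely pathwise (as in the paper): for a.e.\ fixed $\omega$, the reference trajectory is bounded on $[0,T]$ by some finite $C(\omega)$, so one fixes that $\omega$, runs a deterministic Gr\"onwall on the difference of the two solutions on the time interval where they remain within distance $\epsilon'$ of each other, and shows by bootstrap/continuation that this interval is all of $[0,T]$ once the data (initial condition, driving path, or $\delta$) are close enough. This gives the pathwise statement without any probabilistic tail estimate. The additive-noise reduction to a random ODE that you identify is exactly the mechanism the paper uses, so once these two points are tightened your proof coincides with the paper's.
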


\begin{proof}
	Since the noise is additive and the drift is smooth, uniqueness follows from the usual ODE argument using Gr\"{o}nwall's lemma. Due to the energy conservation property $N(x) \cdot x = 0$, global existence can be proven with an approximation scheme that relies on standard energy estimates and a routine stopping time argument. The details needed to carry out the procedure can all be found in \cite{Oksendal2003} and [section 3, \cite{Flandoli2008}]. 
		
	To prove the moment bound (\ref{eq:AppMoment}) we begin by applying It\^{o}'s formula to obtain
	\begin{align}
	e^{\kappa \epsilon t}V(X_{t,x}^{\epsilon,\delta}) &= V(x) + \int_0^t e^{\kappa \epsilon s}(LV(X_{s,x}^{\epsilon,\delta}) + \epsilon \delta \Delta V(X_{s,x}^{\epsilon,\delta}) + \epsilon\kappa V(X_{s,x}^{\epsilon,\delta}))ds \label{eq:AppLyapIto} \\ 
	& \quad + \sqrt{2\epsilon} \sum_{j=1}^r \sum_{k=1}^d \int_0^t \frac{\partial V}{\partial x_k}(X_{s,x}^{\epsilon,\delta})Z_j^{(k)}dW_s^{(j)} + \sqrt{2\epsilon \delta}\sum_{k=1}^d \int_0^t \frac{\partial V}{\partial x_k}(X_{s,x}^{\epsilon,\delta}) d\tilde{W}^{(k)}_s ds \label{eq:AppStochasticInt}.
	\end{align}
	Let $\tau_n(\omega) = \inf\{s \in [0,t]:|X_{s,x}^{\epsilon,\delta}| = n\}$. Applying (\ref{eq:LyapFuncGen}) to estimate (\ref{eq:AppLyapIto}) and then localizing with $\tau_n$ (so that the stochastic integral becomes a martingale) we obtain, uniformly in $n \in \N$,
	\begin{equation}
	\E V(X^{\epsilon,\delta}_{t\wedge \tau_n,x}) \le V(x)\E e^{-\kappa \epsilon t \wedge \tau_n} + \frac{\beta}{\kappa}.
	\end{equation}
	Sending $n \to \infty$ the desired result follows from Fatou's lemma and the fact that $\tau_n \uparrow t$ $\P$-a.s. 
	
	Now we turn to continuity with respect to the Wiener trajectory. For notational convenience we define $Z_0(x) = -\epsilon Ax - \epsilon^\alpha Bx - N(x)$.  Fix $T > 0$ and $x \in \R^d$. For $j = 1,2$ let $F_j:[0,T] \to \R^d$ be continuous and suppose that $x_j:[0,T] \to $ is a continuous solution to the integral equation 
	$$ x_j(s) = x + \int_0^t Z_0(x_j(s))ds + F_j(t), \quad j=1,2.$$
	Since we consider additive noise, it is enough to show that 
	\begin{equation} \label{eq:AppNoiseContGoal}
	\lim_{\delta' \to 0}\sup_{F_2:\|F_1 - F_2\|_{C([0,T];\R^d)} \le \delta'}\sup_{0 \le t \le T}|x_1(t) - x_2(t)| = 0.	\end{equation}
	Since $x_1$ is continuous, there exists $C > |x| + 1$ so that $\sup_{0 \le t \le T}|x_1(t)| \le C$. For $\epsilon' > 0$ fixed and $F_2$ to be chosen close to $F_1$, let $T_*$ be the maximal time so that $|x_1(t) - x_2(t)| \le \epsilon'$ for all $t \in [0,T_*]$. By continuity we have $T_* > 0$, and moreover by a simple Gr\"{o}nwall argument there holds 
	\begin{equation}
	\sup_{0 \le t \le T_*} \leqc_{C,T} \|F_1 - F_2\|_{C([0,T];\R^d)}.
	\end{equation}
	Hence, as long as $\|F_1 - F_2\|_{C([0,T];\R^d)}$ is small in terms of $T$, $C$, and $\epsilon'$, it follows from a bootstrap argument that $T_* = T$. This yields (\ref{eq:AppNoiseContGoal}). Both (\ref{eq:ParabolicDeltanToZero}) and continuity with respect to the initial condition follow from a similar argument. This completes the proof. 
\end{proof}

Recall that for a Polish space $\mathcal{X}$ we write $\mathcal{M}(\mathcal{X})$ for the space of Borel probability measures on $\mathcal{X}$. Also, we denote the space of bounded, Borel measurable function $f:\mathcal{X} \to \R$ by $B_b(\mathcal{X})$. In the setting of Lemma~\ref{lem:StochasticWP} the unique, global solution $X_{t,x}$ is a Markov process with respect to the filtration $\mathcal{F}_t$, and $(x,\omega) \to X_{t,x}(\omega)$ is measurable for fixed $t \ge 0$. This allows one to define the transition probabilities $\Pt^{\epsilon,\delta}_t(x,A) = \P(X^{\epsilon,\delta}_{t,x} \in A)$ and the associated Markov semigroup $\Pt_t^{\epsilon,\delta}: B_b(\R^d) \to B_b(\R^d)$ by $\Pt_t^{\epsilon,\delta} f(x) = \E f(X_{t,x}^{\epsilon,\delta})$. The next lemma is about the regularizing properties of $\Pt_t^{\epsilon,\delta}$ and the uniqueness of its invariant measure. 

\begin{lemma} \label{lem:MarkovSemigroup}
	Suppose that Assumptions~\ref{Assumption:VF} and~\ref{Assumption:spanning} both hold. Then, the Markov semigroup $\Pt_t^{\epsilon,\delta}: B_b(\R^d) \to B_b(\R^d)$ is smoothing in the sense that if $f \in B_b(\R^d)$, then $\Pt_t^{\epsilon,\delta}f$ is smooth in space for each $t > 0$. Similarly, $(\Pt_t^{\epsilon,\delta})^* \mu$ has a smooth density with respect to Lebesgue measure for any $\mu \in \mathcal{M}(\R^d)$ and $t > 0$. Moreover, $\Pt_t^{\epsilon,\delta}$ admits a unique invariant measure $\mu_{\epsilon,\delta}$ and it has a smooth density $f_{\epsilon,\delta}$ satisfying
	\begin{equation} \label{eq:AppDistrMoment}
	\sup_{\epsilon \in (0,1),\delta \in [0,1]}\int V(x)f_{\epsilon,\delta} dx \leqc 1
	\end{equation}
	for any uniform Lyapunov function $V$.
\end{lemma}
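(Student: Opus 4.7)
The strategy breaks naturally into four pieces: the smoothing (strong Feller) property of $\Pt_t^{\epsilon,\delta}$ and its dual, existence of an invariant measure, uniqueness, and the uniform moment bound~\eqref{eq:AppDistrMoment}.

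For the smoothing property, I would invoke H\"{o}rmander's hypoellipticity theorem applied to the spacetime backward Kolmogorov operator $\partial_t - (\epsilon\delta\Delta + L_\epsilon)$. When $\delta > 0$ this is uniformly parabolic and smoothing is classical. When $\delta = 0$, Assumption~\ref{Assumption:spanning} ensures that $\{Z_{0,\epsilon},Z_1,\ldots,Z_r\}$ satisfies the parabolic H\"{o}rmander condition, which lifts to the classical H\"{o}rmander condition for $\{\partial_t + Z_{0,\epsilon}, Z_1, \ldots, Z_r\}$ on $(0,\infty) \times \R^d$, making $\partial_t - L_\epsilon$ hypoelliptic on spacetime. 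Standard arguments then produce a transition density $p_t^{\epsilon,\delta}(x,y) \in C^\infty((0,\infty)\times\R^d\times\R^d)$. Writing $\Pt_t^{\epsilon,\delta} f(x) = \int p_t^{\epsilon,\delta}(x,y) f(y)\,dy$ for $f \in B_b(\R^d)$ and similarly expressing the density of $(\Pt_t^{\epsilon,\delta})^* \mu$ as $y \mapsto \int p_t^{\epsilon,\delta}(x,y)\,d\mu(x)$, smoothness in $x$ or $y$ follows by differentiating under the integral, justified by the decay of $p_t^{\epsilon,\delta}$ supplied by the Gaussian Lyapunov bound~\eqref{eq:AppMoment}.

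For existence and smoothness of $f_{\epsilon,\delta}$, I would apply the Krylov-Bogoliubov argument with $V(x) = e^{\gamma|x|^2}$ for small $\gamma$. The estimate~\eqref{eq:AppMoment} applied at $x = 0$ renders the time-averages $\mu_T := T^{-1}\int_0^T \Pt_t^{\epsilon,\delta}(0,\cdot)\,dt$ tight, and any weak limit point $\mu_{\epsilon,\delta}$ is invariant. The uniform moment bound~\eqref{eq:AppDistrMoment} then follows from~\eqref{eq:AppMoment} together with weak lower semicontinuity and monotone convergence: truncating $V_N = V \wedge N$ gives $\int V_N\,d\mu_{\epsilon,\delta} \le \liminf_T \int V_N\,d\mu_T \le b/\kappa + V(0)$, and sending $N \to \infty$ yields $\int V\,d\mu_{\epsilon,\delta} \le b/\kappa + V(0)$ uniformly in $\epsilon,\delta$. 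Applying the smoothing of step~(1) to the invariance relation $\mu_{\epsilon,\delta} = (\Pt_t^{\epsilon,\delta})^* \mu_{\epsilon,\delta}$ at any $t > 0$ then produces the smooth density $f_{\epsilon,\delta}$.

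Uniqueness I would handle via Doob's theorem, which requires strong Feller (immediate from (1), since $x \mapsto p_t^{\epsilon,\delta}(x,\cdot)$ is $L^1(dy)$-continuous) together with topological irreducibility, i.e., $\Pt_t^{\epsilon,\delta}(x,U) > 0$ for every nonempty open $U \subseteq \R^d$, $x \in \R^d$, and $t > 0$. The latter is a consequence of the Stroock-Varadhan support theorem: the support of $\Pt_t^{\epsilon,\delta}(x,\cdot)$ equals the closure of the endpoint set of the associated control system driven by smooth controls, and Assumption~\ref{Assumption:spanning} combined with the geometric-control arguments of~\cite{ScalSatGHM,HerMat15} guarantees that this reachable set is all of $\R^d$. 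The main technical subtlety is the $\delta = 0$ hypoellipticity step: one must verify that the parabolic H\"{o}rmander condition genuinely lifts to spacetime and extract enough uniform decay of $p_t^{\epsilon,\delta}$ and its derivatives to justify differentiating under the integral for merely bounded measurable $f$. These ingredients are classical in hypoelliptic diffusion theory but genuinely rely on the structural hypotheses of the paper.
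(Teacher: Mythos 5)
Your proposal follows the paper's strategy closely for the smoothing step (H\"ormander's theorem applied to the spacetime Kolmogorov operator), existence (Krylov--Bogoliubov with the moment bound~\eqref{eq:AppMoment}), and the uniform moment estimate~\eqref{eq:AppDistrMoment} (truncating $V$ and passing to the limit). The genuine divergence is in the uniqueness argument. You invoke Doob's theorem, which needs full topological irreducibility, i.e.\ $\Pt_t^{\epsilon,\delta}(x,U) > 0$ for every nonempty open $U$, and you propose to supply this via the Stroock--Varadhan support theorem together with the geometric-control-theory results of~\cite{ScalSatGHM,HerMat15}. The paper instead uses a strictly weaker and more elementary fact: strong Feller together with a single point lying in the support of \emph{every} invariant measure already forces uniqueness, and that common point is the origin. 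The origin lies in $\supp\Pt_t^{\epsilon,\delta}(x,\cdot)$ for every $x$ and large enough $t$ because, with positive probability, the Wiener trajectory stays uniformly small on $[0,T]$; on that event the solution, which depends continuously on the Wiener path (Lemma~\ref{lem:StochasticWP}), is contracted toward the origin by the dissipative drift $-\epsilon A x$ and the structural condition $N(x)\cdot x = 0$. This argument is fully self-contained under Assumptions~\ref{Assumption:VF}--\ref{Assumption:spanning}.

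This matters because your controllability claim is not actually guaranteed by Assumptions~\ref{Assumption:VF}--\ref{Assumption:spanning} alone: the reachability results in~\cite{ScalSatGHM,HerMat15} require additional structural input verified model-by-model, which is exactly why the paper records strict positivity of the density (the conclusion of such control arguments) as the \emph{separate} Assumption~\ref{Assumption:positivity}, not used in this lemma. As written, your uniqueness step proves more than needed while silently importing hypotheses the lemma does not grant; either add those explicitly or fall back to the support-point argument, which requires nothing beyond what you already have.
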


\begin{proof}
		For $\mu \in \mathcal{M}(\R^d)$, the measure $\mu_t = (\Pt_{t}^{\epsilon,\delta})^* \mu$ is a distributional solution to Kolmogorov forward equation
		\begin{equation} \label{eq:AppDistrMut}
		(\partial_t - \epsilon \delta \Delta - L_\epsilon^*)\mu_t = 0. 
		\end{equation}
		Since $\{\epsilon Ax + \epsilon^\alpha Bx + N, Z_1,\ldots, Z_r\}$ satisfies the parabolic H\"{o}rmander condition, it is easy to see that $\partial_t - (\epsilon \delta \Delta + L^*_{\epsilon})$ satisfies H\"{o}rmander's condition on $\R^{d+1}$. Hence, the fact that $\mu_t$ has a smooth density with respect to Lebesgue measure for $t > 0$ is a direct consequence of H\"{o}rmander's theorem \cite{H67}. Similarly, it is classical consequence of It\^{o}'s formula that if $f \in C(\R^d)$, then $\Pt_t^{\epsilon,\delta}f$ is a distributional solution to the backward equation 
		\begin{equation} \label{eq:AppDistrf}
		(\partial_t - \epsilon \delta \Delta - L_\epsilon)\Pt_t^{\epsilon,\delta}f = 0.
		\end{equation}
		Using that $X_{t,x}^{\epsilon,\delta}$ has a smooth density with respect to Lebesgue measure for fixed $x$, one can show with a standard approximation argument that (\ref{eq:AppDistrf}) holds when $f$ is just bounded and measurable. The regularity of $\Pt_t^{\epsilon,\delta}f$ for $f\in B_b(\R^d)$ then follows again by H\"{o}rmander's theorem. 
	
	Existence of an invariant measure follows from the moment bound (\ref{eq:AppMoment}) and the Krylov-Boguliubov theorem (see e.g. [Theorem 3.1.1, \cite{DaPratoZabczyk1996}]). Since $\Pt_t^{\epsilon,\delta}$ is strong Feller, to prove uniqueness it suffices to show that any invariant measure contains the origin in its support. This is a standard consequence of the dissipative structure of (\ref{eq:AppSDE}), the continuity with respect to the Wiener trajectory proven in Lemma~\ref{lem:StochasticWP}, and the fact that $\P(\sup_{0 \le t \le T}|W_t| \le \epsilon') > 0$ for any $\epsilon', T > 0$. Lastly, the moment bound (\ref{eq:AppDistrMoment}) is proven in the usual way by approximating $V$ with $\min(V,n)$ for $n \in \N$, iteratively applying (\ref{eq:AppMoment}), and then sending $n \to \infty$. 
\end{proof} 

\begin{remark} \label{rem:uniqueness}
	As a consequence of the uniqueness described in Lemma~\ref{lem:MarkovSemigroup}, for any $\epsilon \in (0,1)$ and $\delta \in [0,1]$ the only probability measure $\mu$ solving 
	$$(L_\epsilon^* + \epsilon \delta \Delta)\mu = 0$$
	in the sense of distributions is $\mu_{\epsilon,\delta}$.
\end{remark}

An important qualitative result used in the proof of Theorem~\ref{thrm:stationary} is that the density of $\mu_{\epsilon,\delta}$ is in $L^2$. Since this is a distinctively PDE type estimate, it requires an argument beyond the classical probabilistic ones used above. The goal is essentially to make rigorous the computation in Remark~\ref{rem:ellipticL2}. 

\begin{lemma}\label{lem:AppL2density}
	In the setting and assumptions of Lemma~\ref{lem:MarkovSemigroup}, the smooth density $f_{\epsilon,\delta}$ of $\mu_{\epsilon,\delta}$ is in $L^2$ whenever $\delta > 0$.
\end{lemma}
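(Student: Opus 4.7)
The plan is to upgrade the moment bound $\int e^{\gamma|x|^2} f_{\epsilon,\delta}\,dx < \infty$ from \eqref{eq:AppDistrMoment} to a pointwise Gaussian bound $f_{\epsilon,\delta}(x) \leq C\, e^{-\lambda_0|x|^2/2}$, which trivially yields $f_{\epsilon,\delta}\in L^2(\R^d)$. The argument replays the proof of Lemma~\ref{lem:ubd2Gauss}, with the simplification that all constants may depend on $\epsilon$ and $\delta$. As a preliminary, I would observe that since $\delta > 0$, the equation $(\epsilon\delta\Delta + L_\epsilon^*)f_{\epsilon,\delta}=0$ is uniformly elliptic on every bounded set with smooth coefficients. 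Classical local elliptic regularity (Moser iteration on balls, or bootstrapping \eqref{ineq:edfinHk} through Sobolev embedding) combined with $\int f_{\epsilon,\delta}=1$ then gives $\|f_{\epsilon,\delta}\|_{L^\infty(B_R)}<\infty$ for every $R>0$.

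Next, for $G_\lambda(x)=e^{-\lambda|x|^2/2}$ the computation in \eqref{eq:LstarG} (which crucially uses $Bx\cdot x = 0$ and $N\cdot x = 0$) gives
\begin{align*}
(\epsilon\delta\Delta + L_\epsilon^*)G_\lambda = G_\lambda\Bigl(\lambda^2\bigl[\epsilon\delta|x|^2 + \epsilon\textstyle\sum_j (Z_j\cdot x)^2\bigr] - \lambda\epsilon(Ax\cdot x) + \epsilon\,\mathrm{Tr}(A) - \lambda\epsilon\textstyle\sum_j|Z_j|^2 - \lambda\epsilon\delta d\Bigr).
\end{align*}
Since $A$ is positive definite, the coefficient of $|x|^2$ is strictly negative for $\lambda=\lambda_0$ sufficiently small, so there exists $R_0$ (depending on $\epsilon,\delta$) with $(\epsilon\delta\Delta + L_\epsilon^*)G_{\lambda_0}(x)<0$ for $|x|\geq R_0$.

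Setting $C_0:=\|f_{\epsilon,\delta}\|_{L^\infty(B_{2R_0})}<\infty$ and $G^+(x):=2C_0\, e^{2\lambda_0 R_0^2}G_{\lambda_0}(x)$, I would then run the time-evolution barrier argument from the proof of Lemma~\ref{lem:ubd2Gauss} verbatim: mollify $\mathbf{1}_{|x|\le R_0+1/2} f_{\epsilon,\delta}$ to obtain $g_0$ satisfying $g_0 \leq (3/2) f_{\epsilon,\delta}$, $g_0 < G^+$ pointwise, and $\int g_0 \geq 1/4$ (enlarging $R_0$ if necessary); evolve $g_t=(\Pt_t^{\epsilon,\delta})^*g_0$, which by Lemma~\ref{lem:MarkovSemigroup} converges in $L^1$ (and hence a.e.\ along a subsequence) to $\bigl(\int g_0\bigr) f_{\epsilon,\delta}$; and conclude $g_t\leq G^+$ for all $t\geq 0$ by a first-crossing-time maximum-principle argument. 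At a putative first crossing $(t_*,x_*)$, the case $|x_*|<R_0$ is ruled out by $g_{t_*}(x_*)\leq (3/2) f_{\epsilon,\delta}(x_*)\leq (3/2)C_0 < G^+(x_*)$, while $|x_*|\geq R_0$ is ruled out because $(\epsilon\delta\Delta+L_\epsilon^*)G^+(x_*)<0$ contradicts the parabolic maximum principle at the touching point. Passing $t_k\to\infty$ along the convergent subsequence yields $f_{\epsilon,\delta}\leq 4 G^+$ a.e., and the pointwise Gaussian decay immediately gives $f_{\epsilon,\delta}\in L^2(\R^d)$.

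The main technical obstacle is justifying the first-crossing-time argument carefully — specifically, ensuring that $g_t$ takes values continuously in a weighted function space that sees pointwise comparison with $G^+$, so that the infimum of crossing times is genuinely attained at some $(t_*,x_*)$. This requires weighted parabolic $H^k$ estimates of the form $\sup_{t\in[0,T]}\|e^{\gamma'|x|^2} g_t\|_{H^k}<\infty$ for some $\gamma'<\gamma$, together with corresponding control on $\partial_t g_t$; both follow from the Lyapunov structure of $V$ combined with the uniform ellipticity provided by $\epsilon\delta\Delta$. These weighted estimates are exactly those sketched in the proof of Lemma~\ref{lem:ubd2Gauss} (where they are obtained uniformly in $\epsilon,\delta$), so here they go through with constants depending on $\epsilon$ and $\delta$ without any additional difficulty.
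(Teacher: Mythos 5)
Your proof is correct in its broad strokes, but it takes a genuinely different route from the paper's. The paper's proof of Lemma~\ref{lem:AppL2density} is a Krylov--Bogoliubov compactness argument: one evolves a smooth compactly supported initial density $\rho$ under the (truncated-drift) forward equation, propagates Gaussian-weighted $H^k$ norms, passes $n\to\infty$, and then uses the energy identity plus the Gagliardo--Nirenberg inequality as in Remark~\ref{rem:ellipticL2} to show $\int_0^t \|\rho_s\|_{H^1}\,ds \lesssim_\delta t$. Averaging in time and extracting a weak $H^1$-limit then identifies $f_{\epsilon,\delta}$ as an element of $H^1\subset L^2$. Your approach instead proves the stronger statement $f_{\epsilon,\delta}(x) \lesssim_{\epsilon,\delta} e^{-\lambda_0|x|^2/2}$ by a barrier argument, obtaining the needed local $L^\infty$ bound from classical ($\delta>0$, hence uniformly elliptic on bounded sets) De Giorgi--Nash--Moser regularity plus $\int f_{\epsilon,\delta}=1$ rather than from the uniform Lemma~\ref{lem:L2bound}/\ref{lem:Moser} chain. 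This correctly avoids the circularity one might worry about: Lemma~\ref{lem:L2bound}'s bootstrap $\|f_{\epsilon,\delta}\|_{L^2}\lesssim\|f_{\epsilon,\delta}\|_{L^2}^\theta$ requires the qualitative finiteness that Lemma~\ref{lem:AppL2density} supplies, and your preliminary bound does not touch it. Note also that strict positivity of $f_{\epsilon,\delta}$ (used implicitly when you demand $g_0 < (3/2)f_{\epsilon,\delta}$ near the edge of the cutoff) follows for $\delta>0$ from the elliptic Harnack inequality and does not require Assumption~\ref{Assumption:positivity}, consistent with the lemma's hypotheses.

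One organizational caveat worth flagging: the paper establishes the weighted parabolic estimates $\sup_{t\le T}\|e^{\gamma'|x|^2}g_t\|_{H^k}\lesssim_{T,\delta,k}1$ \emph{inside} the proof of Lemma~\ref{lem:AppL2density} (via the truncated approximation $\rho_t^{(n)}$), and the proof of Lemma~\ref{lem:ubd2Gauss} cites those estimates from here. So when you write that you will invoke the weighted estimates ``exactly those sketched in the proof of Lemma~\ref{lem:ubd2Gauss},'' you are really pointing back to machinery the paper develops in the very proof you are replacing. This is not a logical gap — the estimates concern the Cauchy problem with smooth compactly supported data and do not presuppose $f_{\epsilon,\delta}\in L^2$ — but your write-up should derive them explicitly (the Gaussian-weight commutator computation is elementary given the Lyapunov structure of $V=e^{\gamma|x|^2}$) rather than cite a downstream lemma. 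With that bookkeeping done, both proofs are sound: yours buys a pointwise Gaussian tail (and hence $f_{\epsilon,\delta}\in L^p$ for all $p$) at the cost of a more delicate first-crossing argument, whereas the paper's averaging argument is shorter and only needs weak compactness in $H^1$.
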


	\begin{proof}
		Fix $\epsilon, \delta > 0$. Let  $X_{t,x}^{(n)}$ denote the unique, global solution with initial condition $x \in \R^d$ to the SDE \eqref{eq:AppSDE} with $B$ and $N$ multiplied by a radially symmetric cutoff $\chi_n \in C_0^\infty(B_{2n})$ with $\chi(x) = 1$ for $|x| \le n$. Let $\Pt_t^{n}$ denote the semigroup generated by $(X_t^{(n)})_{t \ge 0}$. Note that both $(\Pt_t^{(n)})^*$ and $(\Pt_t^{\epsilon,\delta})^*$ are well-posed on $L^1$ and preserve positivity by the well-posedness of the underlying stochastic flows.
		
		Let $\rho \in C_0^\infty(B_1)$ be a probability density function with $\mu(dx) = \rho(x)dx$, $(\Pt_{t}^{\epsilon,\delta})^*\mu =  \rho_t(x)dx$, and $(\Pt_t^{n})^*\mu = \rho_t^{(n)}(x)dx$. For all $\delta > 0$ and $n < \infty$, the Kolmogorov equation for $\rho_t^{(n)}$ is a compact perturbation of a Fokker-Plank operator, and is thus well-posed on $L^2$ spaces with inverse Gaussian weights, in particular, for $\gamma$ sufficiently small, we have that $e^{\gamma \abs{x}^2}\rho \in L^2 \Rightarrow e^{\gamma \abs{x}^2} \rho_t^{(n)} \in L^2$ and that the norm can be estimated above independently of $n$. 
Using standard energy estimates, for all $\delta > 0$, $n < \infty$ one can further show finite-time propagation of the following norms (using the standard multi-index notation, $\alpha \in \mathbb N^d$), 
\begin{align*}
\norm{\rho}_{N_k} = \sum_{\abs{\alpha} \leq k} \norm{e^{ (1+\abs{\alpha})^{-1} \gamma \abs{x}^2} D^\alpha \rho }_{L^2},  
\end{align*}
again with an $n$-independent upper bound. 
Passing to the limit in $n \to \infty$ (using uniqueness for $\rho_t$) we see that $\rho_t^{(n)} \to \rho_t$ (up to extraction of a subsequence) strongly in $H^k$ for any $k < \infty$. This allows us to justify energy estimates on the equation for $\rho_t$, namely, 
		\begin{equation} \label{eq:AppParabolic}
		\begin{cases}
		\partial_t \rho_t(x) = L_\epsilon^* \rho_t(x) + \epsilon \delta \Delta \rho_t(x) & (t,x) \in (0,\infty) \times \R^d \\ 
		\rho_0(x) = \rho(x) & x \in \R^d.
		\end{cases}
		\end{equation}
In particular, we have 
		\begin{equation}
		\frac{d}{dt}\| \rho_t\|_{L^2}^2 + \|\grad \rho_t \|_{L^2}^2 \leqc_{\delta} \|\rho_t\|_{L^2}^2.
		\end{equation}
		Applying the Gagliardo-Nirenberg inequality and $\int \rho_t = 1$ we can then show with an estimate analogous to the one in Remark~\ref{rem:ellipticL2} that 
		\begin{equation} \label{eq:rhoH1est}
		\int_0^t \|\rho_s\|_{H^1}ds \leqc t + \int_0^t \|\grad \rho_s\|_{L^2}^2 ds \leqc_{\delta} t.
		\end{equation}
		For $n \ge 1$ we define the probability density function
		$$\rho_{n,\text{KB}} = \frac{1}{n} \int_0^n \rho_s ds.$$
		By \eqref{eq:rhoH1est}, the sequence $\{\rho_{n,\text{KB}}\}_{n=1}^\infty$ is uniformly bounded in $H^1$, and so passing to a subsequence (which we do not relabel) we obtain a limit $\rho_{\infty} \in H^1$ with $\lim_{n \to \infty}\rho_{n,\text{KB}} = \rho_\infty$ weakly in $H^1$ and strongly in $L^2$ on compact subsets. We may also assume that $\rho_{n,\text{KB}} \to \rho_\infty$ pointwise a.e., so $\rho_\infty \ge 0$ a.e. Moreover, due to (\ref{eq:AppMoment}) there holds $\sup_{t \ge 0} \int e^{\gamma |x|^2} \rho_t(x)dx < \infty$ for $\gamma > 0$ small enough, which when combined with the strong $L^2_{\text{loc}}$ convergence $\rho_{n,\text{KB}} \to \rho_\infty$ implies that $\int \rho_\infty = 1$. Similar to the proof of the  Krylov-Bogoliubov theorem we can show that $\rho_\infty$ solves $(L_{\epsilon}^* + \epsilon \delta \Delta)\rho_\infty = 0$ in the sense of distributions. By the uniqueness described in Lemma~\ref{lem:MarkovSemigroup} we conclude that $f_{\epsilon,\delta} = \rho_\infty \in H^1$, which completes the proof.
\end{proof}

Next, we have a lemma regarding the elliptic regularization, which justifies our approximation arguments with $f_{\epsilon,\delta}$.
\begin{lemma} \label{lem:deltatozero}
	For all $\epsilon > 0$, $k \geq 0$, and $R> 0,$ 
	\begin{align}
	\sup_{\delta \in [0,1]}\norm{f_{\epsilon,\delta}}_{H^k(B_R)} & \lesssim_{k,\epsilon,R} 1. \label{ineq:efinHk}
	\end{align}
	For each fixed $\epsilon > 0$ there holds, for all $k \geq 0$ and $R > 0$, 
	\begin{align}
	\lim_{\delta \to 0} \norm{f_{\epsilon,\delta} - f_{\epsilon}}_{H^k(B_R)} & = 0. \label{ineq:delconv}
	\end{align}
\end{lemma}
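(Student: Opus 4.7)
The plan is to deduce the uniform bound \eqref{ineq:efinHk} from a $\delta$-independent hypoelliptic regularity estimate for the augmented operator $L_\epsilon^* + \epsilon\delta\Delta$, and then to extract the convergence \eqref{ineq:delconv} by Rellich compactness together with uniqueness of $f_\epsilon$ as the stationary density for $L_\epsilon^*$.

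For \eqref{ineq:efinHk}, I would fix $\epsilon\in(0,1)$ and rewrite
\begin{equation*}
L_\epsilon^* + \epsilon\delta\Delta \;=\; \epsilon\sum_{j=1}^r Z_j^2 + \epsilon\delta\sum_{k=1}^d \partial_{x_k}^2 + \epsilon\,\mathrm{Tr}(A) + Z_{0,\epsilon},
\end{equation*}
regarding it as a H\"ormander operator with augmented noise fields $\{Z_1,\ldots,Z_r,\sqrt{\delta}\,\partial_{x_1},\ldots,\sqrt{\delta}\,\partial_{x_d}\}$ and drift $Z_{0,\epsilon}$. Since parabolic H\"ormander is strictly stronger than H\"ormander, Assumption~\ref{Assumption:spanning} supplies a bracket set from $\{Z_1,\ldots,Z_r,Z_{0,\epsilon}\}$ that spans $\R^d$ on $B_{R+1}$, and this same set serves the augmented system; hence the spanning constants $(N_0,C_0)$ in \eqref{eq:span} are inherited uniformly in $\delta\in[0,1]$. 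The extra vector fields $\sqrt{\delta}\,\partial_{x_k}$ moreover have every $C^\ell$-norm bounded by $\sqrt{\delta}\le 1$, so all commutator coefficient bounds needed in the hypoelliptic a priori estimate are also uniform in $\delta$. Iterating the basic gain-$s$ inequality of Lemma~\ref{lem:HineqOrigQuant} in the standard Kohn-type bootstrap to higher Sobolev order (see \cite{H67}) applied to $(L_\epsilon^* + \epsilon\delta\Delta)f_{\epsilon,\delta}=0$ yields, for every $m\ge 0$,
\begin{equation*}
\|f_{\epsilon,\delta}\|_{H^{m+s}(B_R)} \;\le\; C_{m,\epsilon,R}\,\|f_{\epsilon,\delta}\|_{L^2(B_{R+1})},
\end{equation*}
with $C_{m,\epsilon,R}$ independent of $\delta$. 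Combining with Lemma~\ref{lem:L2bound} and choosing $m$ so that $m+s\ge k$ gives \eqref{ineq:efinHk}.

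For \eqref{ineq:delconv}, fix $\epsilon>0$, $k\ge 0$, $R>0$ and let $\delta_n\to 0$ with $\delta_n\in(0,1]$. The bound above applied at regularity $k+1$ on $B_{R+1}$ together with Rellich compactness produces a subsequence (not relabeled) and $g\in H^k(B_R)$ with $f_{\epsilon,\delta_n}\to g$ strongly in $H^k(B_R)$. A diagonal argument over a countable exhaustion of $\R^d$, using the uniform Gaussian tail \eqref{eq:Gaussubd} for tightness, upgrades the convergence to $L^1(\R^d)$, so $g\ge 0$ and $\int g=1$. Passing to the distributional limit in $(L_\epsilon^* + \epsilon\delta_n\Delta)f_{\epsilon,\delta_n}=0$ (the perturbation $\epsilon\delta_n\Delta f_{\epsilon,\delta_n}$ tends to zero in $\mathcal{D}'(\R^d)$ by the uniform local $L^2$ bound) yields $L_\epsilon^* g=0$ in the sense of distributions. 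Remark~\ref{rem:uniqueness} then forces $g=f_\epsilon$, and since every subsequence has the same limit the whole family converges in $H^k(B_R)$.

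The main technical obstacle is verifying that the Kohn-type iteration of Lemma~\ref{lem:HineqOrigQuant} indeed produces constants $C_{m,\epsilon,R}$ that are truly independent of $\delta$. This reduces to checking (i) the spanning set for \eqref{eq:span} can be selected entirely from brackets of the $\delta=0$ family, and (ii) every commutator involving the added field $\sqrt{\delta}\,\partial_{x_j}$ inherits the pre-factor $\sqrt{\delta}\le 1$ and hence has coefficient bounds on $B_{R+1}$ depending only on the polynomial degrees of $Z_{0,\epsilon}$ and on $\epsilon$. With (i) and (ii) in hand, the gain-$s$ estimate propagates uniformly in $\delta$ to every Sobolev order, which is all that is needed.
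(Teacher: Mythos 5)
Your overall route (bootstrap Lemma~\ref{lem:HineqOrigQuant} to higher Sobolev order, then compactness plus uniqueness of the stationary density) matches the paper's proof, but there are two concrete issues.

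For \eqref{ineq:efinHk}, the augmented-noise reframing with $\{Z_1,\ldots,Z_r,\sqrt{\delta}\,\partial_{x_1},\ldots,\sqrt{\delta}\,\partial_{x_d}\}$ is just a repackaging of the $\mathscr{X}_\delta$ norm already built into Lemma~\ref{lem:HineqOrigQuant}, which is stated and proved uniformly for all $\delta\in[0,1]$; so the $\delta$-uniformity you flag as the ``main technical obstacle'' via items (i) and (ii) is already settled before the bootstrap begins. The step you pass over with ``standard Kohn-type bootstrap'' is the one that actually carries the weight: after localizing and setting $v_j = \brak{\grad}^{sj}\chi_j f_{\epsilon,\delta}$, one has to control the commutator $\mathcal{C}_j = [\brak{\grad}^{sj}, Z_{0,\epsilon}\cdot\grad]\chi_j f_{\epsilon,\delta}$ between the fractional derivative used to raise the regularity and the polynomial first-order drift, and verify that its $L^2$ norm is controlled by $\|v_j\|_{L^2}+\|f_{\epsilon,\delta}\|_{L^2}$ so the iteration closes. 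This is done in the paper by a Fourier-side splitting between $|\xi-\eta|>|\eta|/2$ and $|\xi-\eta|\le|\eta|/2$ with a mean-value bound on $|\brak{\xi}^{sj}-\brak{\eta}^{sj}|$ in the second region; your proposal never identifies this as the point that needs an argument.

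For \eqref{ineq:delconv}, invoking the Gaussian tail \eqref{eq:Gaussubd} for tightness is circular. Inequality \eqref{eq:Gaussubd} is the statement of Theorem~\ref{thrm:stationary}, and Section~\ref{sec:StationaryGaussian} explicitly uses Lemma~\ref{lem:deltatozero} (``By Lemma~\ref{lem:deltatozero} it suffices to prove the following'') to pass from the $f_{\epsilon,\delta}$ bound of Lemma~\ref{lem:ubd2Gauss} to \eqref{eq:Gaussubd}; even the $\delta$-uniform Gaussian bound on $f_{\epsilon,\delta}$ is built on ingredients (for instance \eqref{eq:reglbd} and Lemma~\ref{lem:largeps}) that use \eqref{ineq:efinHk} and Assumption~\ref{Assumption:positivity}. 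The correct, non-circular ingredient is the elementary Lyapunov moment bound \eqref{eq:AppDistrMoment} from Lemma~\ref{lem:MarkovSemigroup}, which holds uniformly in $\delta$ and $\epsilon$, is established independently of anything in the proof at hand, and already gives $\int_{|x|>R}f_{\epsilon,\delta}\,dx \leqc e^{-\gamma R^2}$; this is exactly what the paper uses to conclude $\int g = 1$ before applying uniqueness (Remark~\ref{rem:uniqueness}).
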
 
\begin{proof}
	Let $s \in (0,1)$ be given as in Lemma \ref{lem:HineqOrigQuant}.
	Let $k \leq sJ$ for $J \in \N$ fixed and define a decreasing sequence of radially-symmetric, smooth
	cutoff functions $\chi_j$ which satisfy $\chi_j(x) = 1$ for $\abs{x} \leq R + J - j  $ and $\chi_j(x) = 0$ for $\abs{x} > R + J - j + 1$.
    Define $\brak{\grad}^s$ as the Fourier multiplier
\begin{align*}
\widehat{\brak{\grad}^s u}(\xi) = \left(1 + \abs{\xi}^2\right)^{s/2} \widehat{u}(\xi). 
\end{align*}
    Let $v_0 = \chi_0 f_{\epsilon,\delta}$ and $v_j = \brak{\grad}^{sj}\chi_j f_{\epsilon,\delta}$. Then, 
	\begin{align}
	\epsilon \delta \Delta v_j + L^*_\epsilon v_j + \brak{\grad}^{sj}[\chi_j,\epsilon \delta \Delta + \epsilon \sum_{k=1}^r Z_k^2]f_{\epsilon,\delta} + \brak{\grad}^{sj}[\chi_j,Ax\cdot \grad]f_{\epsilon,\delta} + \mathcal{C}_j = 0, \label{eq:vjtriv}
	\end{align}
	where we denote
	\begin{align}
	\mathcal{C}_j = [\brak{\grad}^{sj}, Z_{0,\epsilon}\cdot \grad]\chi_j f_{\epsilon,\delta}.    
	\end{align}
	Note that
	\begin{align}
	\epsilon \delta\norm{\grad v_j}_{L^2}^2 + \epsilon\sum_{k=1}^r\norm{Z_k v_j}_{L^2}^2 \lesssim_{R,k} \norm{v_j}_{L^2}^2 + \mathbf{1}_{j \ge 1} \|\brak{\grad}^s v_{j-1}\|_{L^2}^2 + \|f_{\epsilon,\delta}\|_{L^2}^2 + \abs{\int v_j \mathcal{C}_j dx}.  
	\end{align}
	To bound the term involving $C_j$, we first rewrite it on the Fourier side to obtain
	\begin{equation}
	\left| \int v_j C_j dx\right| \leqc \int \int |\hat{v}_j(\xi)| |\brak{\xi}^{sj} - \brak{\eta}^{sj}| |\widehat{\chi Z_{0,\epsilon}}(\xi - \eta)| |\eta| |\widehat{\chi_j f_{\epsilon,\delta}}(\eta)| d\eta d\xi,
	\end{equation}
	where $\chi \in C_0^\infty(\R^d)$ is a smooth cutoff with $\chi(x) = 1$ for all $|x| \le R+J+2$. By splitting the integral between the regions $|\xi - \eta| > |\eta|/2$, $|\xi - \eta| \le |\eta|/2$ and using the mean value theorem in the latter piece to deduce $|\brak{\xi}^{sj} - \brak{\eta}^{sj}| \leqc \brak{\xi - \eta}\brak{\eta}^{sj-1}$ we can show
	\begin{align}
	\abs{\int v_j \mathcal{C}_j dx} \lesssim \norm{v_j}_{L^2} \left(\norm{v_{j}}_{L^2} + \norm{f_{\epsilon,\delta}}_{L^2} \right). 
	\end{align}
	Pairing \eqref{eq:vjtriv} with test functions similarly gives
	\begin{align}
	\norm{Z_{0,\epsilon} v_j}_{\mathscr{X}_\delta^*} \lesssim \norm{v_{j}}_{L^2} + \norm{f_{\epsilon,\delta}}_{L^2} + \mathbf{1}_{j \ge 1}\|\brak{\grad}^s v_{j-1}\|_{L^2}.
	\end{align}
	Therefore, by Lemma \ref{lem:HineqOrigQuant}, we have, independent of $\delta$, 
	\begin{align}
	\|v_j\|_{H^s} \lesssim \|v_j\|_{L^2} + \mathbf{1}_{j\ge 1}\|\brak{\grad}^s v_{j-1}\|_{L^2} + \|f_{\epsilon,\delta}\|_{L^2} \leqc \|f_{\epsilon,\delta}\|_{L^2} + \mathbf{1}_{j \ge 1}\|v_{j-1}\|_{H^s}.
	\end{align}
	Iterating gives \eqref{ineq:efinHk}. From there, to deduce \eqref{ineq:delconv} we first use compact embedding to extract a subsequence $\{f_{\epsilon,\delta_n}\}_{n=1}^\infty$ with $\delta_n \to 0$ and a limit $f_{\epsilon,0} \in C^\infty$ with $\lim_{n \to \infty}f_{\epsilon,\delta_n} = f_{\epsilon,0}$ in $H^k_{\text{loc}}$ for every $k$. Clearly, $f_{\epsilon,0} \ge 0$ and $L_\epsilon^* f_{\epsilon,0} = 0$. Moreover by (\ref{eq:AppDistrMoment}) we have $\int f_{\epsilon,0} = 1$. Hence, $f_{\epsilon,0} = f_\epsilon$ by uniqueness, which completes the proof.
\end{proof} 

We conclude with a qualitative lower bound for $f_{\epsilon,\delta}$ that holds for $\epsilon \gtrsim 1$. 

\begin{lemma} \label{lem:largeps}
Suppose that Assumption~\ref{Assumption:positivity} holds. Then, for any $R \ge 1$ and $\epsilon_* \in (0,1)$ there exists $C(\epsilon_*,R) > 0$ such that 
\begin{equation}
\inf_{\epsilon \in [\epsilon_*,1], \delta \in [0,1]} \inf_{|x| \le R} f_{\epsilon,\delta}(x) \ge C.
\end{equation}
\end{lemma}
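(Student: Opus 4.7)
The plan is to argue by contradiction via a compactness/uniqueness scheme. Suppose the claim fails; then there exist sequences $\epsilon_n \in [\epsilon_*,1]$, $\delta_n \in [0,1]$, and $x_n \in \overline{B_R}$ with $f_{\epsilon_n,\delta_n}(x_n) \to 0$. By compactness of $[\epsilon_*,1]\times [0,1]\times \overline{B_R}$ pass to a subsequence (not relabeled) so that $\epsilon_n \to \epsilon_0 \in [\epsilon_*,1]$, $\delta_n \to \delta_0 \in [0,1]$, and $x_n \to x_0$. Since $\epsilon_0 \geq \epsilon_*>0$, the goal is to extract a smooth limit $g$ of the densities, identify $g$ with $f_{\epsilon_0,\delta_0}$, and use positivity of the limiting density to reach a contradiction.

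The first step is to upgrade Lemma~\ref{lem:deltatozero} to the uniform bound
$$\sup_{\epsilon \in [\epsilon_*,1],\,\delta\in[0,1]} \|f_{\epsilon,\delta}\|_{H^k(B_{R'})} \leqc_{k,R',\epsilon_*} 1, \qquad k\in\N,\ R'>0.$$
Inspecting the proof of Lemma~\ref{lem:deltatozero}, the $\epsilon$-dependence enters only through (i) factors of $\epsilon^{-1}$ when extracting the $\mathscr{X}_\delta$-norm from the energy pairing and (ii) commutator terms involving the $C^k$-norm of the drift $Z_{0,\epsilon} = N + \epsilon Ax + \epsilon^\alpha Bx$ on compact sets. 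The former is controlled by $\epsilon_*^{-1}$ and the latter is a polynomial in $\epsilon \in [0,1]$, so the commutator iteration closes with constants that depend only on $k,R',\epsilon_*$. Choose $k>d/2+2$ so that $H^k(B_{R'})\embeds C^2(B_{R'})$, and use a diagonal Rellich argument to extract a further subsequence converging in $C^2_{\mathrm{loc}}(\R^d)$ to some $g\in C^2_{\mathrm{loc}}(\R^d)$ with $g\ge 0$ and $g(x_0)=0$.

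Next I identify $g=f_{\epsilon_0,\delta_0}$. The uniform exponential moment bound \eqref{eq:AppDistrMoment} with $V(x)=e^{\gamma|x|^2}$ makes the family $\{f_{\epsilon_n,\delta_n}\}$ tight, so by Vitali's theorem $\int g = 1$. The coefficients of $\epsilon_n\delta_n\Delta + L_{\epsilon_n}^*$ converge locally uniformly to those of $\epsilon_0\delta_0\Delta + L_{\epsilon_0}^*$, and $C^2_{\mathrm{loc}}$ convergence of $f_{\epsilon_n,\delta_n}$ then lets me pass to the limit in $(\epsilon_n\delta_n\Delta + L_{\epsilon_n}^*)f_{\epsilon_n,\delta_n}=0$ to conclude that $g$ is a smooth probability density satisfying $(\epsilon_0\delta_0\Delta+L_{\epsilon_0}^*)g=0$. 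By Remark~\ref{rem:uniqueness}, $g=f_{\epsilon_0,\delta_0}$.

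The contradiction is then immediate. If $\delta_0=0$, then $f_{\epsilon_0}(x_0)=g(x_0)=0$, contradicting Assumption~\ref{Assumption:positivity} at $\epsilon_0 > 0$. If $\delta_0>0$, the equation for $f_{\epsilon_0,\delta_0}$ is uniformly elliptic with smooth coefficients and the underlying regularized SDE \eqref{eq:AppSDE} has nondegenerate additive noise $\sqrt{2\epsilon_0\delta_0}\,d\tilde W_t$, so standard elliptic theory (or positivity of the transition densities) gives $f_{\epsilon_0,\delta_0}>0$ everywhere, again contradicting $g(x_0)=0$. The main obstacle is the uniformity claim in step one — carefully tracking the $\epsilon$-dependence of the constants through the Lemma~\ref{lem:HineqOrigQuant}-based iteration of Lemma~\ref{lem:deltatozero} — after which the remaining compactness and uniqueness steps are routine.
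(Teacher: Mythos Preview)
Your proposal is correct and follows essentially the same compactness/uniqueness scheme as the paper's proof: contradict, extract a convergent subsequence via uniform $H^k_{\mathrm{loc}}$ bounds (the paper just cites \eqref{ineq:efinHk} and the argument of \eqref{ineq:delconv}), identify the limit as $f_{\epsilon_0,\delta_0}$, and contradict strict positivity. Your write-up is more explicit about the uniformity-in-$\epsilon$ of the $H^k$ constants and about the $\delta_0=0$ versus $\delta_0>0$ dichotomy for positivity (the paper invokes the elliptic Harnack inequality for the latter), but the underlying argument is the same.
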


\begin{proof}
	First, note that $f_{\epsilon,\delta}$ is strictly positive for all $\epsilon \in (0,1)$, $\delta \in [0,1]$. Indeed, $f_{\epsilon} > 0$ by assumption, and the fact that $f_{\epsilon,\delta} > 0$ when $\delta > 0$ follows from the classical elliptic Harnack inequality. Now, if the claim is false, then using \eqref{ineq:efinHk} and the argument used to prove \eqref{ineq:delconv} we can obtain $(\epsilon_0, \delta_0, x_0) \in [\epsilon_*,1]\times [0,1] \times \bar{B}_R$ such that $f_{\epsilon_0,\delta_0}(x_0) = 0$, which contradicts $f_{\epsilon_0,\delta_0} > 0$.
\end{proof}

\phantomsection
\addcontentsline{toc}{section}{References}
\bibliographystyle{abbrv}
\begin{bibdiv}
\begin{biblist}

\bib{FarhanGiulio19}{article}{
      author={Abedin, Farhan},
      author={Tralli, Giulio},
       title={Harnack inequality for a class of {Kolmogorov--Fokker--Planck}
  equations in non-divergence form},
        date={2019},
     journal={Archive for Rational Mechanics and Analysis},
      volume={233},
      number={2},
       pages={867\ndash 900},
}

\bib{AgrachevSachkov}{inproceedings}{
      author={Agrachev, Andrei~A.},
      author={Sachkov, Yuri~L.},
       title={Control theory from the geometric viewpoint},
        date={2004},
}

\bib{Anceschi2019}{article}{
      author={Anceschi, Francesca},
      author={Polidoro, Sergio},
      author={Ragusa, Maria~Alessandra},
       title={Moser’s estimates for degenerate kolmogorov equations with
  non-negative divergence lower order coefficients},
        date={2019},
     journal={Nonlinear Analysis},
      volume={189},
       pages={111568},
}

\bib{AM19}{article}{
      author={Armstrong, Scott},
      author={Mourrat, Jean-Christophe},
       title={Variational methods for the kinetic {Fokker-Planck} equation},
        date={2019},
     journal={arXiv:1902.04037},
}

\bib{Arnold2001}{article}{
      author={Arnold, Anton},
      author={Markowich, Peter},
      author={Toscani, Giuseppe},
      author={Unterreiter, Andreas},
       title={On convex sobolev inequalities and the rate of convergence to
  equilibrium for fokker-planck type equations},
        date={2001},
     journal={Comm. in Part. Diff. Eqns.},
      volume={26},
      number={1-2},
       pages={43\ndash 100},
         url={https://doi.org/10.1081/PDE-100002246},
}

\bib{BCD11}{book}{
      author={Bahouri, H.},
      author={Chemin, J.Y.},
      author={Danchin, R.},
       title={Fourier analysis and nonlinear partial differential equations},
      series={Grundlehren der mathematischen Wissenschaften},
   publisher={Springer Berlin Heidelberg},
        date={2011},
        ISBN={9783642168307},
         url={https://books.google.com/books?id=CcTnaveQkn0C},
}

\bib{BakryCattGuill2008}{article}{
      author={Bakry, Dominique},
      author={Barthe, Franck},
      author={Cattiaux, Patrick},
      author={Guillin, Arnaud},
       title={A simple proof of the {Poincaré inequality} for a large class of
  probability measures},
        date={2008},
     journal={Electron. Commun. Probab.},
      volume={13},
       pages={60\ndash 66},
         url={https://doi.org/10.1214/ECP.v13-1352},
}

\bib{Bak08}{article}{
      author={Bakry, Dominique},
      author={Cattiaux, Patrick},
      author={Guillin, Arnaud},
       title={Rate of convergence for ergodic continuous markov processes:
  Lyapunov versus poincaré},
        date={2008},
        ISSN={0022-1236},
     journal={Journal of Functional Analysis},
      volume={254},
      number={3},
       pages={727 \ndash  759},
  url={http://www.sciencedirect.com/science/article/pii/S0022123607004259},
}

\bib{BBPS19}{article}{
      author={Bedrossian, Jacob},
      author={Blumenthal, Alex},
      author={Punshon-Smith, Sam},
       title={The {Batchelor} spectrum of passive scalar turbulence in
  stochastic fluid mechanics},
        date={2019},
     journal={arXiv preprint arXiv:1911.11014},
}

\bib{BBPS18}{article}{
      author={Bedrossian, Jacob},
      author={Blumenthal, Alex},
      author={Punshon-Smith, Samuel},
       title={Lagrangian chaos and scalar advection in stochastic fluid
  mechanics},
        date={2018},
     journal={arXiv preprint arXiv:1809.06484},
}

\bib{Bouchut2002}{article}{
      author={Bouchut, F},
       title={Hypoelliptic regularity in kinetic equations},
        date={2002},
        ISSN={0021-7824},
     journal={Journal de Mathématiques Pures et Appliquées},
      volume={81},
      number={11},
       pages={1135 \ndash  1159},
  url={http://www.sciencedirect.com/science/article/pii/S0021782402012643},
}

\bib{Butkovsky2014}{article}{
      author={Butkovsky, Oleg},
       title={Subgeometric rates of convergence of markov processes in the
  wasserstein metric},
        date={2014},
     journal={Ann. Appl. Probab.},
      volume={24},
      number={2},
       pages={526\ndash 552},
         url={https://doi.org/10.1214/13-AAP922},
}

\bib{CintiEtAl2010}{article}{
      author={Cinti, Chiara},
      author={Nystr{\"o}m, Kaj},
      author={Polidoro, Sergio},
       title={A note on harnack inequalities and propagation sets for a class
  of hypoelliptic operators},
        date={2010},
     journal={Potential Analysis},
      volume={33},
      number={4},
       pages={341\ndash 354},
}

\bib{DaPratoZabczyk1996}{book}{
      author={Da~Prato, G.},
      author={Zabczyk, J.},
       title={Ergodicity for infinite dimensional systems},
      series={London Mathematical Society Lecture Note Series},
   publisher={Cambridge University Press},
        date={1996},
}

\bib{DiFranPolidori2006}{article}{
      author={Di~Francesco, Marco},
      author={Polidoro, Sergio},
      author={others},
       title={Schauder estimates, harnack inequality and gaussian lower bound
  for kolmogorov-type operators in non-divergence form},
        date={2006},
     journal={Advances in Differential Equations},
      volume={11},
      number={11},
       pages={1261\ndash 1320},
}

\bib{Ditlevsen2010}{book}{
      author={Ditlevsen, Peter~D},
       title={Turbulence and shell models},
   publisher={Cambridge University Press},
        date={2010},
}

\bib{Douc2009}{article}{
      author={Douc, Randal},
      author={Fort, Gersende},
      author={Guillin, Arnaud},
       title={Subgeometric rates of convergence of f-ergodic strong markov
  processes},
        date={2009},
        ISSN={0304-4149},
     journal={Stochastic Processes and their Applications},
      volume={119},
      number={3},
       pages={897 \ndash  923},
  url={http://www.sciencedirect.com/science/article/pii/S0304414908000641},
}

\bib{Durmus2016}{article}{
      author={Durmus, Alain},
      author={Fort, Gersende},
      author={Moulines, Éric},
       title={Subgeometric rates of convergence in wasserstein distance for
  markov chains},
        date={2016},
     journal={Ann. Inst. H. Poincaré Probab. Statist.},
      volume={52},
      number={4},
       pages={1799\ndash 1822},
         url={https://doi.org/10.1214/15-AIHP699},
}

\bib{EM}{article}{
      author={E, Weinan},
      author={Mattingly, Jonathan~C.},
       title={Ergodicity for the navier-stokes equation with degenerate random
  forcing: Finite-dimensional approximation},
        date={2001},
     journal={Communications on Pure and Applied Mathematics},
      volume={54},
      number={11},
       pages={1386\ndash 1402},
         url={https://onlinelibrary.wiley.com/doi/abs/10.1002/cpa.10007},
}

\bib{EN01}{inproceedings}{
      author={Engel, Klaus-Jochen},
      author={Nagel, Rainer},
       title={One-parameter semigroups for linear evolution equations},
organization={Springer},
        date={2001},
   booktitle={Semigroup forum},
      volume={63},
       pages={278\ndash 280},
}

\bib{Flandoli2008}{article}{
      author={Flandoli, Franco},
       title={An introduction to 3d stochastic fluid dynamics},
        date={2008},
     journal={Lecture Notes in Mathematics -Springer-verlag-},
      volume={1942},
       pages={51\ndash 150},
}

\bib{FM95}{article}{
      author={Flandoli, Franco},
      author={Maslowski, Bohdan},
       title={Ergodicity of the 2-d navier-stokes equation under random
  perturbations},
        date={1995},
     journal={Communications in mathematical physics},
      volume={172},
      number={1},
       pages={119\ndash 141},
}

\bib{GT}{book}{
      author={Gilbarg, D.},
      author={Trudinger, N.S.},
       title={Elliptic partial differential equations of second order},
      series={Classics in Mathematics},
   publisher={Springer Berlin Heidelberg},
        date={2015},
        ISBN={9783642617980},
         url={https://books.google.com/books?id=l9L6CAAAQBAJ},
}

\bib{ScalSatGHM}{article}{
      author={Glatt-Holtz, Nathan},
      author={Herzog, David},
      author={Mattingly, Jonathan},
       title={Scaling and saturation in infinite-dimensional control problems
  with applications to stochastic partial differential equations},
        date={2017},
     journal={Annals of PDE},
      volume={4},
}

\bib{GM05}{article}{
      author={Goldys, B},
      author={Maslowski, B},
       title={Exponential ergodicity for stochastic {Burgers} and {2D
  Navier--Stokes} equations},
        date={2005},
     journal={Journal of Functional Analysis},
      volume={226},
      number={1},
       pages={230\ndash 255},
}

\bib{GM06}{article}{
      author={Goldys, Ben},
      author={Maslowski, Bohdan},
      author={others},
       title={Lower estimates of transition densities and bounds on exponential
  ergodicity for stochastic pdes},
        date={2006},
     journal={The Annals of Probability},
      volume={34},
      number={4},
       pages={1451\ndash 1496},
}

\bib{GIMV16}{article}{
      author={Golse, F},
      author={Imbert, Cyril},
      author={Mouhot, Clément},
      author={Vasseur, A},
       title={Harnack inequality for kinetic {Fokker-Planck} equations with
  rough coefficients and application to the {Landau} equation},
        date={2016},
     journal={to appear in Annali della Scuola Normale Superiore di Pisa},
}

\bib{GV15}{article}{
      author={Golse, François},
      author={Vasseur, Alexis},
       title={Hölder regularity for hypoelliptic kinetic equations with rough
  diffusion coefficients},
        date={2015},
     journal={arXiv:1506.01908},
}

\bib{Grothaus2019}{article}{
      author={Grothaus, Martin},
      author={Wang, Feng-Yu},
       title={Weak poincaré inequalities for convergence rate of degenerate
  diffusion processes},
        date={2019},
     journal={Ann. Probab.},
      volume={47},
      number={5},
       pages={2930\ndash 2952},
         url={https://doi.org/10.1214/18-AOP1328},
}

\bib{HairerMattinglyScheutzow2011}{article}{
      author={Hairer, M},
      author={Mattingly, JC},
      author={Scheutzow, M},
       title={Asymptotic coupling and a general form of {Harris'} theorem with
  applications to stochastic delay equations},
        date={2011},
     journal={Prob. Theory Rel. Fields},
      volume={149},
       pages={223\ndash 259},
         url={http://dx.doi.org/10.1007/s00440-009-0250-6},
}

\bib{H11}{article}{
      author={Hairer, Martin},
       title={On {Malliavinʼs} proof of {H{\"o}rmanderʼs} theorem},
        date={2011},
     journal={Bulletin des sciences mathematiques},
      volume={135},
      number={6-7},
       pages={650\ndash 666},
}

\bib{HM11}{article}{
      author={Hairer, Martin},
      author={Mattingly, Jonathan},
       title={A theory of and unique ergodicity for semilinear stochastic
  pdes},
        date={2011},
        ISSN={1083-6489},
     journal={Electron. J. Probab.},
      volume={16},
       pages={no. 23, 658\ndash 738},
         url={http://ejp.ejpecp.org/article/view/875},
}

\bib{HM06}{article}{
      author={Hairer, Martin},
      author={Mattingly, Jonathan~C.},
       title={Ergodicity of the {2D Navier-Stokes} equations with degenerate
  stochastic forcing},
        date={2006},
        ISSN={0003486X},
     journal={Annals of Mathematics},
      volume={164},
      number={3},
       pages={993\ndash 1032},
         url={http://www.jstor.org/stable/20160014},
}

\bib{HM08}{article}{
      author={Hairer, Martin},
      author={Mattingly, Jonathan~C.},
       title={Spectral gaps in {Wasserstein} distances and the {2D stochastic
  Navier–Stokes} equations},
        date={2008},
     journal={Ann. Probab.},
      volume={36},
      number={6},
       pages={2050\ndash 2091},
         url={https://doi.org/10.1214/08-AOP392},
}

\bib{HMHarris}{inproceedings}{
      author={Hairer, Martin},
      author={Mattingly, Jonathan~C.},
       title={Yet another look at {Harris'} ergodic theorem for {Markov}
  chains},
        date={2011},
   booktitle={Seminar on stochastic analysis, random fields and applications
  vi},
      editor={Dalang, Robert},
      editor={Dozzi, Marco},
      editor={Russo, Francesco},
   publisher={Springer Basel},
     address={Basel},
       pages={109\ndash 117},
}

\bib{Hermann1977}{article}{
      author={Hermann, Robert},
      author={Krener, Arthur},
       title={Nonlinear controllability and observability},
        date={1977},
     journal={IEEE Transactions on automatic control},
      volume={22},
      number={5},
       pages={728\ndash 740},
}

\bib{HerMat15}{article}{
      author={Herzog, David~P},
      author={Mattingly, Jonathan~C},
       title={A practical criterion for positivity of transition densities},
        date={2015},
     journal={Nonlinearity},
      volume={28},
      number={8},
       pages={2823},
}

\bib{Hu2019}{article}{
      author={Hu, Shulan},
      author={Wang, Xinyu},
       title={Subexponential decay in kinetic fokker–planck equation: Weak
  hypocoercivity},
        date={2019},
     journal={Bernoulli},
      volume={25},
      number={1},
       pages={174\ndash 188},
         url={https://doi.org/10.3150/17-BEJ982},
}

\bib{H67}{article}{
      author={Hörmander, Lars},
       title={Hypoelliptic second order differential equations},
        date={1967},
     journal={Acta Math.},
      volume={119},
       pages={147\ndash 171},
         url={https://doi.org/10.1007/BF02392081},
}

\bib{IM15}{article}{
      author={Imbert, Cyril},
      author={Mouhot, Cl{\'e}ment},
       title={H{\"o}lder continuity of solutions to hypoelliptic equations with
  bounded measurable coefficients},
        date={2015},
     journal={arXiv preprint arXiv:1505.04608},
}

\bib{KP10}{article}{
      author={Karimi, Alireza},
      author={Paul, Mark~R},
       title={Extensive chaos in the {L}orenz-96 model},
        date={2010},
     journal={Chaos: An interdisciplinary journal of nonlinear science},
      volume={20},
      number={4},
       pages={043105},
}

\bib{Polidoro16}{article}{
      author={Kogoj, Alessia~E.},
      author={Polidoro, Sergio},
       title={Harnack inequality for hypoelliptic second order partial
  differential operators},
        date={2016},
     journal={Potential Anal.},
      volume={45},
      number={14},
       pages={545\ndash 555},
}

\bib{KNS20}{article}{
      author={Kuksin, Sergei},
      author={Nersesyan, Vahagn},
      author={Shirikyan, Armen},
       title={Exponential mixing for a class of dissipative pdes with bounded
  degenerate noise},
        date={2020},
     journal={Geometric and Functional Analysis},
       pages={1\ndash 62},
}

\bib{KNS20II}{article}{
      author={Kuksin, Sergei},
      author={Nersesyan, Vahagn},
      author={Shirikyan, Armen},
       title={Mixing via controllability for randomly forced nonlinear
  dissipative pdes},
        date={2020},
     journal={Journal de l'{\'E}cole polytechnique—Math{\'e}matiques},
      volume={7},
       pages={871\ndash 896},
}

\bib{KZ20}{article}{
      author={Kuksin, Sergei},
      author={Zhang, Huilin},
       title={Exponential mixing for dissipative pdes with bounded
  non-degenerate noise},
        date={2020},
     journal={Stochastic Processes and their Applications},
}

\bib{Kup}{article}{
      author={Kupiainen, Antti},
       title={Ergodicity of two dimensional turbulence},
        date={2010},
     journal={arXiv:1005.0587},
}

\bib{LanconelliEtAl2020}{article}{
      author={Lanconelli, Alberto},
      author={Pascucci, Andrea},
      author={Polidoro, Sergio},
       title={Gaussian lower bounds for non-homogeneous kolmogorov equations
  with measurable coefficients},
        date={2020},
     journal={Journal of Evolution Equations},
       pages={1\ndash 19},
}

\bib{LanconelliPolidoro1994}{article}{
      author={Lanconelli, Ermanno},
      author={Polidoro, Sergio},
       title={On a class of hypoelliptic evolution operators},
        date={1994},
     journal={Rend. Sem. Mat. Univ. Politec. Torino},
      volume={52},
      number={1},
       pages={29\ndash 63},
}

\bib{Liggett1991}{article}{
      author={Liggett, Thomas~M.},
       title={${L}_2$ rates of convergence for attractive reversible nearest
  particle systems: The critical case},
        date={1991},
     journal={Ann. Probab.},
      volume={19},
      number={3},
       pages={935\ndash 959},
         url={https://doi.org/10.1214/aop/1176990330},
}

\bib{Lorenz1996}{inproceedings}{
      author={Lorenz, Edward~N},
       title={Predictability: A problem partly solved},
        date={1996},
   booktitle={Proc. seminar on predictability},
      volume={1},
}

\bib{LK98}{article}{
      author={Lorenz, Edward~N},
      author={Emanuel, Kerry~A},
       title={Optimal sites for supplementary weather observations: Simulation
  with a small model},
        date={1998},
     journal={Journal of the Atmospheric Sciences},
      volume={55},
      number={3},
       pages={399\ndash 414},
}

\bib{LvovEtAl98}{article}{
      author={L'vov, Victor~S},
      author={Podivilov, Evgenii},
      author={Pomyalov, Anna},
      author={Procaccia, Itamar},
      author={Vandembroucq, Damien},
       title={Improved shell model of turbulence},
        date={1998},
     journal={Physical Review E},
      volume={58},
      number={2},
       pages={1811},
}

\bib{Majda16}{book}{
      author={Majda, Andrew~J},
       title={Introduction to turbulent dynamical systems in complex systems},
   publisher={Springer},
        date={2016},
}

\bib{MattPard06}{article}{
      author={Mattingly, Jonathan~C.},
      author={Pardoux, Étienne},
       title={Malliavin calculus for the stochastic 2d navier—stokes
  equation},
        date={2006},
     journal={Communications on Pure and Applied Mathematics},
      volume={59},
      number={12},
       pages={1742\ndash 1790},
         url={https://onlinelibrary.wiley.com/doi/abs/10.1002/cpa.20136},
}

\bib{MTBook}{book}{
      author={Meyn, Sean},
      author={Tweedie, Richard~L.},
       title={Markov chains and stochastic stability},
     edition={2},
   publisher={Cambridge University Press},
     address={USA},
        date={2009},
        ISBN={0521731828},
}

\bib{MT94}{article}{
      author={Meyn, Sean~P.},
      author={Tweedie, R.~L.},
       title={Computable bounds for geometric convergence rates of markov
  chains},
        date={1994},
     journal={Ann. Appl. Probab.},
      volume={4},
      number={4},
       pages={981\ndash 1011},
         url={https://doi.org/10.1214/aoap/1177004900},
}

\bib{Mouhot2018}{inproceedings}{
      author={Mouhot, Cl{\'e}ment},
       title={De giorgi--nash--moser and h{\"o}rmander theories: new
  interplays},
organization={World Scientific},
        date={2018},
   booktitle={Proceedings of the international congress of mathematicians—rio
  de},
      volume={3},
       pages={2467\ndash 2493},
}

\bib{Nazarenko2011}{book}{
      author={Nazarenko, Sergey},
       title={Wave turbulence},
   publisher={Springer Science \& Business Media},
        date={2011},
      volume={825},
}

\bib{Oksendal2003}{book}{
      author={{\O}ksendal, B.},
       title={Stochastic differential equations: An introduction with
  applications},
      series={Hochschultext / Universitext},
   publisher={Springer},
        date={2003},
        ISBN={9783540047582},
         url={https://books.google.com/books?id=kXw9hB4EEpUC},
}

\bib{Pascucci2004}{article}{
      author={Pascucci, Andrea},
      author={Polidoro, Sergio},
       title={The {Moser's} iterative method for a class of ultraparabolic
  equations},
        date={2004},
     journal={Communications in Contemporary Mathematics},
      volume={6},
      number={03},
       pages={395\ndash 417},
}

\bib{Polidoro1997global}{article}{
      author={Polidoro, Sergio},
       title={A global lower bound for the fundamental solution of
  kolmogorov-fokker-planck equations},
        date={1997},
     journal={Archive for Rational Mechanics and Analysis},
      volume={137},
      number={4},
       pages={321\ndash 340},
}

\bib{RockWang01}{article}{
      author={Röckner, Michael},
      author={Wang, Feng-Yu},
       title={Weak poincaré inequalities and ${L}^2$-convergence rates of
  markov semigroups},
        date={2001},
        ISSN={0022-1236},
     journal={Journal of Functional Analysis},
      volume={185},
      number={2},
       pages={564 \ndash  603},
  url={http://www.sciencedirect.com/science/article/pii/S0022123601937760},
}

\bib{Vasseur2016}{article}{
      author={Vasseur, Alexis~F},
       title={The de giorgi method for elliptic and parabolic equations and
  some applications},
        date={2016},
     journal={Lectures on the analysis of nonlinear partial differential
  equations},
      volume={4},
}

\bib{Villani2009}{book}{
      author={Villani, C.},
       title={Hypocoercivity},
      series={Hypocoercivity},
   publisher={American Mathematical Society},
        date={2009},
      number={nos. 949-951},
        ISBN={9780821844984},
         url={https://books.google.com/books?id=JtrNAwAAQBAJ},
}

\bib{WangZhang2009}{article}{
      author={Wang, WenDong},
      author={Zhang, LiQun},
       title={The $c^{\alpha}$ regularity of a class of non-homogeneous
  ultraparabolic equations},
        date={2009},
     journal={Science in China Series A: Mathematics},
      volume={52},
      number={8},
       pages={1589\ndash 1606},
}

\bib{WangZhang2011}{article}{
      author={Wang, Wendong},
      author={Zhang, Liqun},
       title={The {$C^{\alpha}$} regularity of weak solutions of ultraparabolic
  equations},
        date={2011},
     journal={Discrete \& Continuous Dynamical Systems-A},
      volume={29},
      number={3},
       pages={1261},
}

\end{biblist}
\end{bibdiv}

\end{document}